\title{Finite length for unramified $GL_2$: beyond multiplicity one}
\author{Lucrezia Bertoletti}
\date{\today}
\theoremstyle{plain}
\newtheorem{thm}{Theorem}[subsection]
\newtheorem{prop}[thm]{Proposition}
\newtheorem{lemma}[thm]{Lemma}
\newtheorem{deflem}[thm]{Definition-lemma}
\newtheorem{corollary}[thm]{Corollary}
\newtheorem{newthm}{Theorem}[section]
\theoremstyle{remark}
\newtheorem{example}[thm]{Example}
\newtheorem{remark}[thm]{Remark}
\newtheorem{newremark}[newthm]{Remark}
\theoremstyle{definition}
\newtheorem{definition}[thm]{Definition}
\newtheorem{newdefinition}[newthm]{Definition}
\newcommand{\defeq}{:=}
\DeclareMathOperator{\coker}{coker} 
\DeclareMathOperator{\id}{id} 
\DeclareMathOperator{\im}{im} 
\DeclareMathOperator{\gr}{gr} 
\DeclareMathOperator{\grm}{gr_{\mathfrak{m}} } 
\DeclareMathOperator{\Gt}{\widetilde{\Gamma}} 
\DeclareMathOperator{\grL}{gr(\Lambda)}
\DeclareMathOperator{\radGt}{rad_{\widetilde{\Gamma}}} 
\DeclareMathOperator{\soc}{soc} 
\DeclareMathOperator{\socG}{soc_{\Gamma}} 
\DeclareMathOperator{\socGt}{soc_{\Gt}} 
\DeclareMathOperator{\socR}{soc_{GL_2(\mathcal{O}_K)} } 
\DeclareMathOperator{\cosoc}{cosoc} 
\DeclareMathOperator{\cosocG}{cosoc_{\Gamma} } 
\DeclareMathOperator{\cosocGt}{cosoc_{\Gt} } 
\DeclareMathOperator{\cosocR}{cosoc_{GL_2(\mathcal{O}_K)} } 
\DeclareMathOperator{\injGt}{Inj_{\Gt}} 
\DeclareMathOperator{\GL}{GL} 
\DeclareMathOperator{\Gal}{Gal} 
\DeclareMathOperator{\Hom}{Hom} 
\DeclareMathOperator{\Ext}{Ext} 
\DeclareMathOperator{\End}{End} 
\newcommand{\repr}{\overline{\rho}} 
\newcommand{\glrepr}{\overline{r}} 
\newcommand{\mgl}{\mathfrak{m}_{\overline{r}}}
\DeclareMathOperator{\F}{\mathbb{F}} 
\DeclareMathOperator{\JH}{JH} 
\DeclareMathOperator{\Proj}{Proj} 
\newcommand{\ProjI}{\Proj_{I/Z_1}} 
\DeclareMathOperator{\ProjG}{Proj_{\Gamma}} 
\DeclareMathOperator{\ProjGt}{Proj_{\widetilde{\Gamma}}} 
\DeclareMathOperator{\Frob}{Frob} 
\DeclareMathOperator{\tr}{tr} 
\DeclareMathOperator{\Norm}{Norm} 
\DeclareMathOperator{\Tr}{Tr} 
\DeclareMathOperator{\Ind}{Ind} 
\newcommand{\IndI}{\mathrm{Ind}^{\mathrm{GL}_2(\mathcal{O}_K)}_I} 
\DeclareMathOperator{\cInd}{c-\Ind} 
\DeclareMathOperator{\Fr}{Fr} 
\DeclareMathOperator{\Sym}{Sym} 
\newcommand{\mytwist}{\mathrm{det}(\overline{\rho})\omega^{-1}}
\newcommand{\mI}{\mathfrak{m}} 
\newcommand{\mK}{\mathfrak{m}_{K_1} } 
\newcommand{\barmI}{\overline{\mI}}
\newcommand{\PhG}{\Phi\Gamma^{\text{ét} }_{\mathbb{F} } }
\newcommand{\PhGhat}{\widehat{\Phi\Gamma}^{\text{ét} }_{\mathbb{F} }} 
\newcommand{\V}{V}
\newcommand{\E}[1][2f]{\mathrm{E}^{#1}_{\Lambda} }
\newcommand{\Eg}[1][2f]{\mathrm{E}^{#1}_{\grL} }
\newcommand{\Dvee}[1][\pi]{D^\vee_\xi(#1)}
\newcommand{\Nell}[1][\ell]{N(\overline{\rho},{#1})}
\newcommand{\Nnull}{N(\overline{\rho})}
\newcommand{\rwt}[1][\pi',\sigma]{r({#1})}
\newcommand{\rell}[1][\pi',\ell]{r({#1})}
\newcommand{\rnull}[1][\pi']{r({#1})}
\newcommand{\Vwt}[1][\pi',\sigma]{V({#1})}
\newcommand{\Vell}[1][\pi',\ell]{V({#1})}
\newcommand{\Vnull}[1][\pi']{V(#1)}
\newcommand{\Wbar}{\overline{W}_{\chi,3}}
\newcommand{\W}[1]{W_{\chi,#1}}
\newcommand{\IW}{\mathrm{Ind}_{I} ^{\GLring}(\Wbar)}
\newcommand{\GLring}[1][K]{\GL_2(\mathcal{O} _{#1})} 
\newcommand{\GLfield}{\GL_2(K)}
\newcommand{\GLres}{\GL_2(\F_q)}
\newcommand{\inertia}[1][K]{I(\overline{#1}/#1)} 
\newcommand{\KK}{\GL_2(\mathcal{O} _K)K^\times } 
\newcommand{\Mring}[1][K]{\mathrm{M}_2(\mathcal{O}_{#1})} 
\newcommand{\agal}[1][F]{\Gal(\overline{#1}/#1)} 
\newcommand{\lgR}{\lg_{\GLring}} 
\newcommand{\Diagr}{D(\overline{\rho})} 
\newcommand{\DZero}{D_0(\overline{\rho})} 
\newcommand{\DOne}{D_1(\overline{\rho})} 
\newcommand{\DZeroEll}[1][\ell]{D_0 (\overline{\rho})_{#1} } 
\newcommand{\DOneEll}[1][\ell]{D_1 (\overline{\rho})_{#1} } 
\newcommand{\Dx}[1][\sigma]{D_{0,{#1}} (\overline{\rho})}
\newcommand{\DOnex}[1][\sigma]{D_{1,{#1}} (\overline{\rho})}
\newcommand{\DEll}[1][\ell]{D (\overline{\rho})_{#1} }
\newcommand{\diagr}{{D}^{\prime (j)}}
\newcommand{\dZero}{{D_0}^{\prime (j)}}
\newcommand{\dOne}{{D_1}^{\prime(j)}}
\newcommand{\tilDZero}{\widetilde{D}_0 (\overline{\rho})} 
\newcommand{\tilDZeroEll}[1][\ell]{\widetilde{D}_0 (\overline{\rho})_{#1} } 
\newcommand{\tilDx}[1][\sigma]{\widetilde{D}_{0,{#1}} (\overline{\rho})}
\newcommand{\xtwoheadrightarrow}[2][]{%
  \xrightarrow[#1]{#2}\mathrel{\mkern-14mu}\rightarrow
}
\begin{document}
\maketitle

\begin{abstract}
Let $p$ be a prime number and 
$K$ a finite unramified extension of 
$\mathbb{Q}_p$. 
Building on recent work of 
Breuil, Herzig, Hu, Morra and Schraen,
we study the smooth mod $p$ representations 
of $\mathrm{GL}_2(K)$ appearing 
in a tower of mod $p$ Hecke eigenspaces 
of the cohomology of Shimura curves,
under mild genericity assumptions but notably 
no multiplicity one assumption at tame level,
and prove that these representations 
are of finite length, thereby extending 
the results of \cite{BHHMS4}.
\end{abstract}

\tableofcontents

\section{Introduction}
	\label{sec:introduction}
\subsection{Main results}
	\label{sec:main-results} 

Fix a prime number $p$,
a totally real number field $F$ unramified
at places above $p$, and a quaternion algebra
$D$ with center $F$ which is split at all places
above $p$ and at exactly one infinite place.
We denote by $\mathbb{A}_F^{ \infty }$
the ring of finite adèles of $F$.
Fix one place $v\mid p$, and a compact open
subgroup $V^v$ of 
$(D \otimes_{F}\mathbb{A}_F^{\infty,v})^{\times}$. 
If $V_v$ is any compact open subgroup
of $(D \otimes_{F}F_v)^{\times} \cong \GL_2(F_v)$,
then $V^vV_v$ is a compact open subgroup of
$(D \otimes_{F}\mathbb{A}^\infty_F)^{\times}$,
and we can consider the associated smooth
projective Shimura curve $X_{V^vV_v}$, 
which is defined over $F$, with the conventions of
\cite[§3.1]{BD14}.

If $\F$ is a finite extension of $\F$,
which we assume to be sufficiently large,
and if
$\overline{r} \colon \Gal( \overline{F}/F)
\to \GL_2(\F)$ is a continuous absolutely
irreducible Galois representation,
then we study
the admissible smooth representation
\begin{equation}
	\label{eq:piShi-def}
\pi(V^{v}) \defeq \varinjlim_{V_v} \Hom_{\Gal(\overline{F}/F)}(\overline{r}, H^1_{ \text{ét}} (X_{V^vV_v} \times_{F}\overline{F}, \F))
\end{equation}
of $\GL_2(F_v)$ over $\F$,
where we let $V_v$ vary in the set of
compact open subgroups of  
$(D \otimes_{F}F_v)^{\times} \cong \GL_2(F_v)$.
The reader should note that,
despite our notation, $\pi(V^{v})$
depends on all global choices, and not only on
the prime-to-$v$ level.
We restrict ourselves to those
$\overline{r}$ that give a nonzero $\pi$.
These representations have seen some
attention in recent years
(e.g.\ in \cite{HW22}, \cite{BHHMS1}, \cite{BHHMS2}, \cite{Wan23}), 
though for the most part they
remain mysterious when $F_v \neq \mathbb{Q}_p$.
An important contribution to
 our understanding comes from 
the recent paper \cite{BHHMS4},
establishing that \eqref{eq:piShi-def} has
finite length, under
some genericity assumption on $\overline{r}$,
and a multiplicity one assumption on 
$\pi$ (which is called the 
\emph{minimal case}).

In this paper
we drop the multiplicity one assumption,
and try to extend the results 
of \cite{BHHMS4}.
Unlike \emph{loc.\ cit.}\ we 
assume from now on that the restriction
$\glrepr|_{I(\overline{F}_v/F_v)} $
of $\glrepr$ to the inertia subgroup at $v$ 
is semisimple.
In the rest of the introduction we 
present our results.

We set $K \defeq F_v$, $f \defeq [K:\mathbb{Q}_p]$,
and $q \defeq p^{f},$
and we denote by $\omega$ the 
mod $p$ cyclotomic character of
$\Gal(\overline{K}/K)$,
which we regard as a character of $K^{\times}$via local class field theory,
 with the convention that uniformisers 
are sent to geometric
Frobenius elements.
We denote by $\omega_f,$ $\omega_{2f} $
Serre's fundamental characters of the 
inertia subgroup $\inertia$ of 
$ \Gal(\overline{K}/K)$ of level
$f$, $2f$ respectively.
Following \cite{BHHMS4}, we say that $\overline{r}$
is \emph{generic} if the following conditions
are satisfied
\begin{enumerate}[(i)]
\item 
\label{point:generic(i)} 
the restriction 
$\overline{r}|_{\Gal(\overline{F}/F(\mu_p))} $ is absolutely irreducible;
\item 
\label{point:generic(ii)} 
	if $w\nmid p$ is a place where either
$D$ or $\overline{r}$ is ramified, then
the framed deformation ring of 
$\overline{r}|_{\Gal(\overline{F}_w/F_w)} $
over the Witt vectors $W(\F)$ is formally smooth;
\item 
\label{point:generic(iii)} 
	the restriction 
$\overline{r}|_{\inertia} $ of $\overline{r}$
to the inertia subgroup at $v$
is, up to twist, of the form
\[
\begin{pmatrix}
\omega_{f}^{\sum_{j=0} ^{f-1}(r_j+1)p^j }	
&  0\\
0 & 1
\end{pmatrix}, 
\]
with $\max \{12,2f+1\} <r_j<p- \max \{15,2f+4\} $,
or
\[
\begin{pmatrix}
\omega_{2f}^{\sum_{j=0} ^{f-1}(r_j+1)p^j }
& 0 \\
0 & 
\omega_{2f}^{\sum_{j=0} ^{f-1}(r_j+1)p^{j+f} }	
\end{pmatrix},
\]
with $\max \{12,2f+1\} \le  r_i\le p-\max \{15,2f+4\}$ for $j>0$,
and with
$\max \{13,2f+2\} \le r_0\le p-\max \{14,2f+3\}$.
\end{enumerate}
The bounds on the $r_i$ of 
\ref{point:generic(iii)} all come
from \cite{BHHMS4}: we don't need any stronger
bound to make our arguments work.
Notice however that (iii) implies that
$\glrepr|_{\inertia}$ is semisimple,
which on the contrary 
is not assumed in \emph{loc.\ cit.}

The following is our main result.
\begin{thm}[\Cref{thm:global-part-fl}]
	\label{thm:pre-main-0}
Assume $\glrepr$ generic.
Then, the representation $\pi(V^{v})$
has finite length as a $\GLfield$-representation.
\end{thm}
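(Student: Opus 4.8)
The strategy is to reduce the global statement to a purely local finiteness result about the $\GLfield$-representation $\pi(V^v)$, using the structure theory developed in \cite{BHHMS4} and the patching/multiplicity machinery that governs $\pi(V^v)$. The first step is to recall that $\pi(V^v)$ carries an action of a big Hecke algebra and, via Taylor–Wiles patching (or the Emerton–Gee–Savitt formalism adapted in \cite{BHHMS4}), is built from a patched module $M_\infty$ over a patched deformation ring $R_\infty$; concretely $\pi(V^v)$ is cut out as $M_\infty \otimes_{R_\infty} \F$ after suitable localisation at $\mgl$. Because we have dropped multiplicity one, $M_\infty$ need not be free of rank one over $R_\infty[\GLring]$-type algebras, so the key new input is to control the multiplicities of Serre weights in $\pi(V^v)^{K_1}$, equivalently the rank of $M_\infty$ as a module over the local deformation ring. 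I would show that this rank is bounded (it equals the dimension of a space of automorphic forms at minimal level, which is finite), and that this boundedness is all one needs to run the length argument.

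Second, I would isolate the $\GLring$-socle and the $K_1$-invariants: by \cite{BHHMS4} the genericity hypothesis \ref{point:generic(iii)} — in particular the semisimplicity of $\glrepr|_{\inertia}$ — pins down $\socR \pi(V^v)$ and hence $\pi(V^v)^{K_1}$ as a module over $\grm \F[[\GLring/K_1]]$, or rather over the graded ring $\gr$ of the completed group algebra. The point is that finite length of $\pi(V^v)$ as a $\GLfield$-representation is equivalent to finite length (equivalently, finite dimensionality over $\F$ after taking a lattice, or finite generation as a graded module) of an associated graded module $\grm \pi(V^v)^{K_1}$ over a Noetherian graded ring, by the arguments of \cite[§5–6]{BHHMS4}. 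So the plan is: (a) identify the relevant graded module $N \defeq \grm(\pi(V^v)^{\vee})$ or its $K_1$-coinvariant analogue; (b) show $N$ is finitely generated over the Iwasawa-theoretic graded ring $\gr$; (c) invoke the equivalence between finite generation of $N$ and finite length of $\pi(V^v)$.

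Third — and this is where the multiplicity-one hypothesis in \cite{BHHMS4} is actually used and where I expect the main obstacle — one must handle the fact that, without multiplicity one, $\pi(V^v)^{K_1}$ is no longer a \emph{cyclic} module over the Iwahori-Hecke / graded algebra, but only finitely generated with generators indexed by a basis of the minimal-level automorphic space. I would prove finite generation by the following device: let $d$ be the multiplicity (the dimension of the minimal-level Hecke eigenspace); then $\pi(V^v)$ embeds into, or is a subquotient of, $d$ copies of the "universal" representation $\Pi$ produced by patching with a multiplicity-one coefficient system, or alternatively $\pi(V^v)$ is an extension built from $d$ pieces each of which is controlled by the minimal case. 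Concretely I would filter $M_\infty$ by a composition series as an $R_\infty$-module whose graded pieces are the minimal-case patched modules, transport this to a filtration of $\pi(V^v)$ with subquotients of finite length by \cite{BHHMS4}, and conclude finite length of $\pi(V^v)$ since finite length is stable under extensions. The delicate point is that such a filtration of $M_\infty$ need not exist on the nose; what one really gets is that $M_\infty$ is supported on finitely many components of $\Spec R_\infty$ with bounded generic rank, and one must upgrade "bounded rank + \cite{BHHMS4} on each component" to an actual finite filtration — this requires knowing that the relevant deformation rings are domains (or at least that $M_\infty$ is Cohen–Macaulay and faithfully flat over them), which is where the genericity bounds and the formal smoothness at $w \nmid p$ (condition \ref{point:generic(ii)}) get used.

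Finally, having reduced to the graded/module-theoretic statement, the endgame is to quote the Noetherianity results: the graded ring $\gr \F[[\GLring]]$ (or the relevant subalgebra $\gr$ attached to $K_1$) is Noetherian, $N$ is finitely generated over it by the filtration argument, and a finitely generated graded module over a Noetherian graded ring with the right grading has finite length as a $\GLfield$-representation after one checks that the Hecke action makes the support finite — precisely the mechanism of \cite[Thm.\ (global-part-fl)]{BHHMS4}. I expect that essentially all the local representation-theoretic work of \cite{BHHMS4} goes through verbatim once the semisimplicity of $\glrepr|_{\inertia}$ is assumed (this is why the genericity bounds are imported unchanged), and that the genuinely new content is this passage from multiplicity one to bounded multiplicity, i.e.\ steps (a)–(c) above together with the filtration of $M_\infty$.
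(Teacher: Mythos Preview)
Your proposal misses the actual architecture of the proof and contains a genuine gap.

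The paper's argument has two layers, and the proof of \emph{this} theorem is only the (short, elementary) outer layer. First, by shrinking $V^v$ one may assume $V_w$ is normal in $\GLring[F_w]$ for $w\in S_p\setminus\{v\}$; then Frobenius reciprocity from $V^v$ up to the larger level $U^v$ (with $U_w=\GLring[F_w]$ at those $w$) rewrites $\pi(V^v)$ as a Hom out of $\bigotimes_{w\in S_p\setminus\{v\}}\Ind_{V_w}^{\GLring[F_w]}1$. A Jordan--H\"older d\'evissage on this finite-length tensor product filters $\pi(V^v)$ so that each subquotient embeds into some $\pi(V^v,\sigma_p^v)$ with $\sigma_p^v=\bigotimes_w\sigma_w$. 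There are finitely many such pieces, so it suffices that each $\pi(V^v,\sigma_p^v)$ has finite length. That is the local result (\Cref{thm:finite-length-yay}), proved earlier by showing that a subrepresentation $\pi'\subseteq\pi(V^v,\sigma_p^v)$ is determined by a tuple of subspaces $V(\pi',\ell)\subseteq\F^r$, so a strictly increasing chain of subrepresentations forces a strictly increasing chain in $\F^{r(f+1)}$.

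Your plan goes wrong in two places. First, the multiplicity you want to bound is \emph{not} a rank of $M_\infty$ over $R_\infty$ that you then slice by filtering $M_\infty$; patching enters only indirectly, to certify the four local hypotheses \ref{hypothesis:i}--\ref{hypothesis:iv} for $\pi(V^v,\sigma_p^v)$ (this is \Cref{thm:compilation}). There is no ``filtration of $M_\infty$ by minimal-case pieces'' in the argument, and you yourself note such a filtration need not exist---that is the gap you have not closed. Second, your step (c), that finite generation of $\gr_{\mathfrak m}(\pi^\vee)$ over a Noetherian graded ring is equivalent to finite $\GLfield$-length of $\pi$, is false as stated: $\gr_{\mathfrak m}(\pi^\vee)$ is always finitely generated (it is generated in degree $0$), yet this says nothing about $\GLfield$-length. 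The paper never appeals to such an equivalence; finite length comes from the concrete bound via the subspaces $V(\pi',\ell)$, together with the d\'evissage over Serre weights at the auxiliary primes $w\in S_p\setminus\{v\}$ that you have omitted entirely.
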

Observe that we don't need any hypothesis
on the prime-to-$v$ level $V^{v}$.

The proof of \Cref{thm:pre-main-0}
reduces ``by dévissage''
to the study of some subquotients of \eqref{eq:piShi-def},
which we now describe.
For every $w \neq v$ lying over $p$,
choose an (absolutely) irreducible
$\GLring[w]$-representation
$\sigma_w$,
and set $\sigma_p^{v} \defeq 
\bigotimes_{ \begin{substack}
w\mid p, \\ w\neq v
\end{substack} } \sigma_w $.
Then, our strategy 
(which mirrors the proof of 
\cite[Corollary~8.4.6]{BHHMS1})
is to show that $\pi(V^{v})$ admits 
a \emph{finite} filtration whose subquotients 
are submodules
of $\GLfield$-representations
of the form
\begin{equation}
	\label{eq:piShi-dev} 
\pi(V^{v},\sigma_p^{v})\defeq \varinjlim_{V_v} 
\Hom_{\prod_{
w\mid p, w\neq v
} \GL_2(\mathcal{O}_{F_w} )}
\left(
\sigma_p^{v},
\Hom_{\Gal(\overline{F}/F)}
\left(\overline{r}, H^{1}_{\text{ét}}(X_{V^vV_v} \times_{F}\overline{F}, \F)\right)\right).
\end{equation}
Then, we show that every such
$\pi(V^{v},\sigma_p^{v})$
is of finite length, concluding the proof.

The two theorems below describe in more precise terms
the properties of $\pi(V^{v},\sigma_p^{v})$.
Remember that, by \cite[Theorem~1.9]{BHHMS1},
there is a unique integer
$r \ge 1$, called the \emph{multiplicity},
such that $\dim_{\F} \Hom_{\GLring}(\sigma, \pi) \in \{0,r\}$
for every (absolutely) irreducible representation
$\sigma$ of $\GLring$ over $\F$.

From now on we let $\repr \defeq \overline{r}^\vee |_{\Gal(\overline{K}/K)},$
where $\overline{r}^\vee $ is the 
dual of $\overline{r}$.
While \cite[Theorem~1.1.1]{BHHMS4}
assumes that $r=1$,
the following is a generalisation for arbitrary
$r \ge 1$.
\begin{thm}
	\label{thm:pre-main-1} 
Assume $\overline{r}$ generic,
and let $\pi$ be a $\GLfield$-representation
of the form \eqref{eq:piShi-dev}.
\begin{enumerate}[(i)]
\item 
\label{point:pre-main-1(i)} 
	If $\repr$ is reducible split,
say of the form $\repr \cong 
\begin{pmatrix} 
\chi_1& 0 \\
0 & \chi_2 \\
\end{pmatrix}$,
 then
\begin{equation}
	\label{eq:pi-red-decomposition}
	\pi \cong \Ind_{B(K)} ^{\GLfield}(\chi_2 \otimes \chi_1 \omega^{-1} )^{ \oplus r} \oplus \pi' \oplus 
	\Ind_{B(K)} ^{\GLfield}(\chi_1 \otimes \chi_2 \omega^{-1} )^{ \oplus r},
\end{equation}
where $\pi'$ has finite length, bounded 
above by $r\cdot (f-1)$.
The Jordan-H\"older constituents of
$\pi'$ are supersingular.
\item 
\label{point:pre-main-1(ii)} 
If $\repr$ is irreducible then 
$\pi$ has finite length, bounded above by $r$.
The Jordan-H\"older constituents of
$\pi$ are supersingular.
\end{enumerate}
\end{thm}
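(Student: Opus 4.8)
The plan is to reduce this statement to a local assertion on smooth $\GLfield$-representations and then to run the argument of \cite{BHHMS4} with its multiplicity-one inputs replaced throughout by multiplicity-$r$ ones. Write $\pi\defeq\pi(V^{v},\sigma_p^{v})$; by admissibility of the cohomology it is admissible with a central character. First I would record the local properties of $\pi$: combining the genericity of $\glrepr$ with the local--global compatibility underlying \cite[Theorem~1.9]{BHHMS1} and the description of $\socR\pi$ and of $\pi^{K_1}$ in terms of $\repr$ from \cite{BHHMS2,BHHMS4}, one checks that $\pi$ satisfies the hypotheses of \cite[Theorem~1.1.1]{BHHMS4} \emph{verbatim, except that ``multiplicity one'' is replaced by ``multiplicity $r$''}: that is, $\dim_{\F}\Hom_{\GLring}(\sigma,\pi)\in\{0,r\}$ for every irreducible $\sigma$; the diagram attached to $\pi$ (equivalently $\pi^{K_1}$ with its extra structure) is isomorphic to $r$ copies of the one arising in the minimal case of \emph{loc.\ cit.}; and $\pi$ is generated over $\GLfield$ by $\pi^{K_1}$. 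Dually, $\pi^{\vee}$ becomes a module over the relevant Iwasawa algebra which is finitely generated over the Noetherian ring $R$ governing the $\Phi\Gamma$-module / graded situation of \cite{BHHMS4}, of the same Gelfand--Kirillov dimension $f$ but of multiplicity $r$ over $R$ in place of $1$. Note that genericity~(iii) forces $\repr\cong\glrepr^{\vee}|_{\Gal(\overline{K}/K)}$ to be semisimple, so only the two cases of the statement occur.

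Granting this, consider first the reducible split case $\repr\cong\chi_1\oplus\chi_2$. Since $\glrepr$ is generic, the two principal series $\Ind_{B(K)}^{\GLfield}(\chi_2\otimes\chi_1\omega^{-1})$ and $\Ind_{B(K)}^{\GLfield}(\chi_1\otimes\chi_2\omega^{-1})$ are irreducible, and their $\GLring$-socle weights occur in $\socR\pi$. Promoting, via the action of the Iwahori--Hecke algebra on $I_1$-invariants exactly as in \cite{BHHMS4}, a weight embedding $\sigma\hookrightarrow\pi$ to an embedding of the corresponding principal series, and using that $\dim_{\F}\Hom_{\GLring}(\sigma,\pi)=r$ together with the fact that an irreducible principal series is generated over $\GLfield$ by any of its $\GLring$-socle weights, one finds that each of these principal series occurs in $\pi$ with a multiplicity space of dimension exactly $r$; as in \cite{BHHMS4}, splitting off the corresponding part of $\pi^{\vee}$ realises $\Ind_{B(K)}^{\GLfield}(\chi_2\otimes\chi_1\omega^{-1})^{\oplus r}\oplus\Ind_{B(K)}^{\GLfield}(\chi_1\otimes\chi_2\omega^{-1})^{\oplus r}$ as a direct summand of $\pi$. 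The complement $\pi'$ has no principal-series subquotient --- one would introduce a Serre weight absent from $\socR\pi$ --- hence is a successive extension of supersingular representations; it is of finite length because $\pi'^{\vee}$ is finitely generated over $R$ with multiplicity $r$ times the minimal value, and, by the structure of $\DZero$ in the split case of \cite{BHHMS4}, the contribution of each generator is bounded by $f-1$ constituents, whence $\mathrm{length}(\pi')\le r(f-1)$. In the irreducible case no principal series can occur as a subquotient (the Serre weights of $\pi$ are those of $\repr$), so the same finite-generation argument shows at once that $\pi$ is a successive extension of at most $r$ supersingular representations, which is~\ref{point:pre-main-1(ii)}.

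I expect the real work to lie in two places. On the global side, the delicate point is to upgrade the multiplicity statement of \cite{BHHMS1} from the level of $\GLring$-isotypic components to the level of the whole module $\pi^{K_1}$, i.e.\ the assertion that the attached diagram is $r$ copies of the minimal one; this should follow from the rigidity of the minimal $\pi^{K_1}$, hence from the fine structure of $\DZero$ in \cite{BHHMS2,BHHMS4}, but it needs to be checked carefully. On the local side, one must audit the argument of \cite{BHHMS4} and verify that each use of multiplicity one is genuinely a use of ``(the relevant module is) generated by one element'', which then survives replacing $1$ by $r$; the two most delicate points there are the \emph{exact} count ``$r$ copies of each principal series'' in the reducible split case --- rather than merely an upper bound --- and the complementation of the principal-series part, for which one needs either the matching decomposition of $\pi^{\vee}$ as a module or, equivalently, the vanishing of $\Ext^{1}_{\GLfield}$ between a generic principal series and a supersingular representation in this setting.
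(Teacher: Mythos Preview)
Your overall plan---verify that $\pi$ satisfies multiplicity-$r$ analogues of the local hypotheses and then rerun \cite{BHHMS4}---is exactly the paper's, and you correctly locate the global input (\Cref{thm:compilation}) and the need to track $r$ copies of the diagram. But two of your specific arguments are genuine gaps rather than routine audits.

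\textbf{Finite length.} Finite generation of $\pi^{\prime\vee}$ over $R$ does not by itself bound $\lg_{\GLfield}(\pi')$; finitely generated $R$-modules of the relevant dimension need not have finitely many $\GLfield$-submodules. The paper's mechanism is different and more structural: one shows that every subrepresentation $\pi''\subseteq\pi$ is determined by a tuple of subspaces $(\Vell[\pi'',\ell])_{0\le\ell\le f}\subseteq\V^{f+1}$ (\Cref{cor:direction}), and crucially that $\pi''$ is generated by its $\GLring$-socle (\Cref{thm:finite-length-yay}(i)). The latter reduces to proving that the surjection $\theta_1\colon N_1\twoheadrightarrow\grm(\pi''^{\vee})$ is an isomorphism (\Cref{prop:main}(i)), which is the technical heart: it uses the conjugate subrepresentation $\widetilde{\pi}''$ of \Cref{def:conjugate-srep}, the characteristic cycle machinery, and a purity argument on $\grL$-modules. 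Once this is in place, a strict chain $\pi_0'\subsetneq\cdots\subsetneq\pi'$ forces a strict chain of subspaces in $\bigoplus_\ell\Vell[\pi',\ell]$, giving the bound $r(f-1)$ (resp.\ $r$) directly. Your phrase ``contribution of each generator is bounded by $f-1$ constituents'' is not an argument for this.

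\textbf{Supersingularity.} The claim that a principal-series subquotient of $\pi'$ ``would introduce a Serre weight absent from $\socR\pi$'' is false as stated: the socle weight of a principal series could well lie in $W(\repr)_\ell$ for some $1\le\ell\le f-1$. The paper instead proves (\Cref{cor:final-corollary}(ii)) that any irreducible subquotient $\overline{\pi}$ has $\overline{\pi}^{K_1}\cong\bigoplus_\ell\DZeroEll^{\oplus r(\overline{\pi},\ell)}$, forcing $\lg_{\GLring}(\socR\overline{\pi})\ge\binom{f}{\ell_0}$ for some $\ell_0$, which is too large for any non-supersingular irreducible. This structural result rests on the exactness of $K_1$-invariants on subquotients (\Cref{prop:K1-exact}), itself a substantial argument passing through $\mK^2$-torsion and the $\Gt$-module $\tilDZero$. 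You have not accounted for this input.

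The principal-series splitting also needs more than you indicate: the paper uses essential self-duality (hypothesis~\ref{hypothesis:iii}) together with \cite[Proposition~5.4]{Koh17} to produce a candidate retraction $\pi\twoheadrightarrow V^\vee\otimes(\pi_0'\oplus\pi_f')$, and then shows by a line-by-line argument (Step~2 of \Cref{thm:principal-series}) that the composite $V\otimes\pi_i\hookrightarrow\pi\twoheadrightarrow V^\vee\otimes\pi_i'$ is an isomorphism. Your suggestion to use $\Ext^1_{\GLfield}$-vanishing between principal and supersingular series is plausible but is not how the paper proceeds, and is not obviously easier.
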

Part \ref{point:pre-main-1(i)} follows
from \Cref{thm:finite-length-yay}(ii),
\Cref{thm:principal-series}(i)
and \Cref{cor:supersingularity-for-free}(i),
while \ref{point:pre-main-1(ii)} follows
from \Cref{thm:finite-length-yay}(ii)
and \Cref{cor:supersingularity-for-free}(ii).
Ideally, we should expect that
for every irreducible subquotient $\overline{\pi}$
of $\pi$ we have $[\pi : \overline{\pi}]=r$.
However, such a result is currently out of our reach.

Let $K_1\unlhd \GLring $ 
be the set of matrices with trivial reduction mod $p $, 
$I\le \GLring $ 
the set of matrices which are upper triangular mod $p $,
and $I_1\unlhd I $ 
the set of matrices which are upper unipotent mod $p $.
Define $Z_1 := (1+ p \mathcal{O} _{K} )\cdot \id $
to be the centre of $I_1 $,
and
$\Lambda \defeq \F [\![I_1/Z_1]\!]  $
to be the completed group algebra of $I_1/Z_1 $,
which is a noetherian noncommutative local ring of Krull
dimension $3f$,
with maximal ideal that
we denote by $\mI$.
By \cite[Theorem~1.9]{BHHMS1} 
and by \cite[Theorem~6.3(ii)]{Wan23},
$\pi$ has a central character.
In particular, for all subquotients
$\pi'$ of $\pi$, 
the linear dual $ \Hom_{\F}(\pi', \F)$
is a $\Lambda$-module,
which is moreover finitely generated
since $\pi$ is admissible.
Recall that a nonzero finitely generated 
$\Lambda$-module $M$ is Cohen-Macaulay of
grade $c \ge 0$ if
$ \Ext^{i}_{\Lambda}(M, \Lambda)\neq 0$ if
and only if $i = c$.

The following result generalises
\cite[Theorem~1.1.2]{BHHMS4} to the case
$r \ge 1$.
\begin{thm}[]
	\label{thm:pre-main-2} 
Assume $\overline{r}$ generic, and 
$\repr$ semisimple.
Let $\pi$ be a $\GLfield$-representation
of the form \eqref{eq:piShi-dev}.
\begin{enumerate}[(i)]
\item 
\label{point:pre-main-2(i)} 
	If $\pi'$ is a subquotient of $\pi$,
then its linear dual $ \Hom_{\F}(\pi', \F)$
is a Cohen-Macaulay $\Lambda$-module of 
grade $2f$.
\item 
\label{point:pre-main-2(ii)} 
	Any subquotient of $\pi$ is generated
	by its $\GLring$-socle.
\item
\label{point:pre-main-2(iii)} 
	For any subquotient $\pi'$ of $\pi$,
we have 
\[
\dim_{\F (\!(X)\!) } \Dvee[\pi']= 
\lgR(\socR(\pi')),
\]
where $\Dvee[\pi']$ 
is the cyclotomic $(\varphi,\Gamma)$-module
associated to $\pi'$ in \cite[§2.2.1]{BHHMS2},
and where $\lgR $ denotes the length of a $\GLring$-representation.
\item
\label{point:pre-main-2(iv)} 
	For any subrepresentations $\pi_1 \subseteq \pi_2$ of $\pi$, there is a split
short exact sequence of $\GLring$-representations
\[
0 \to \socR (\pi_1) \to
\socR (\pi_2) \to
\socR (\pi_2/\pi_1) \to 0.
\]
\item
\label{point:pre-main-2(v)} 
	For any subrepresentations $\pi_1 \subseteq \pi_2$ of $\pi$,
and for any $n \ge 1$, there is a
short exact sequence of $\GLring$-representations
\[
0 \to \pi_1[\mI^n] \to \pi_2[\mI^n] \to
(\pi_2/\pi_1)[\mI^n] \to 0,
\]
which is split for $n \le \max\{6,f+1\}$.
\item
\label{point:pre-main-2(vi)} 
	For any subrepresentations $\pi_1 \subseteq \pi_2$ of $\pi$, there is a
short exact sequence of $\GLring$-representations
\[
	0 \to  \pi_1^{K_1} \to
\pi_2^{K_1} \to
(\pi_2/\pi_1)^{K_1} \to 0.
\]
\end{enumerate}
\end{thm}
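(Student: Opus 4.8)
The plan is to establish the six items in the order (i), (iii), (iv), (ii), then (v) and (vi), leaning on three inputs: the structural result \Cref{thm:pre-main-1}, which identifies the Jordan–Hölder constituents of $\pi$ as irreducible principal series $\Ind_{B(K)}^{\GLfield}(\chi)$ and supersingular representations occurring with bounded multiplicity; the exactness of the functor $\pi'\mapsto\Dvee[\pi']$ together with its ``rank equals socle length'' property for irreducibles, both from \cite{BHHMS2}; and the homological algebra over the Auslander regular local ring $\Lambda$ (of Krull dimension $3f$).

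For (i) I would first note, from the long exact sequence of $\Ext^\bullet_\Lambda(-,\Lambda)$, that an extension of two Cohen–Macaulay $\Lambda$-modules of grade $c$ is again Cohen–Macaulay of grade $c$; by \Cref{thm:pre-main-1} it then suffices to show that the dual of an irreducible principal series and of each supersingular constituent of $\pi$ is Cohen–Macaulay of grade $2f$. For principal series this is a direct computation with the $\Lambda$-module structure (implicit in \cite{BHHMS2}). For a supersingular constituent $\tau$, the grade of $\tau^\vee$ is at least $2f$ because $\tau$ is a subquotient of $\pi$, hence of the relevant isotypic part of the mod $p$ cohomology of Shimura curves, which has Gelfand–Kirillov dimension at most $f$ (the standard bound); the reverse inequality, together with purity, should come from $\Dvee[\tau]\neq 0$ (its $\GLring$-socle consisting of generic Serre weights by \Cref{thm:pre-main-1}) and the local identification in \cite{BHHMS2} of $\tau^\vee$ with an explicit pure module of Gelfand–Kirillov dimension $f$. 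Running this analysis without the multiplicity one crutch of \cite{BHHMS4} is where I expect the real work to lie.

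Granting (i), for (iii) I would use that $\Dvee[-]$ is exact on the relevant subcategory to reduce to irreducible $\pi'$, where $\dim_{\F(\!(X)\!)}\Dvee[\pi']=\lgR(\socR(\pi'))$ is the local computation of \cite{BHHMS2} for supersingulars (the genericity bounds keeping the relevant Serre weights separated so the socle is controlled) and a direct check for principal series. For (iv), given $0\to\pi_1\to\pi_2\to\pi_3\to 0$, left exactness of the socle and the fact that the image in $\pi_3$ of the semisimple module $\socR(\pi_2)$ is semisimple, hence contained in $\socR(\pi_3)$, yield $0\to\socR(\pi_1)\to\socR(\pi_2)\to\socR(\pi_3)$ with $\socR(\pi_1)=\pi_1\cap\socR(\pi_2)$; applying exactness of $\Dvee[-]$ and (iii) gives $\lgR(\socR(\pi_2))=\lgR(\socR(\pi_1))+\lgR(\socR(\pi_3))$, so the image of $\socR(\pi_2)$ in $\socR(\pi_3)$ is everything, and the sequence is exact and split (a short exact sequence of semisimple $\GLring$-modules splits). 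For (ii), with $N:=\langle\GLfield\cdot\socR(\pi')\rangle\subseteq\pi'$ one has $\socR(N)=N\cap\socR(\pi')=\socR(\pi')$, so (iv) applied to $0\to N\to\pi'\to\pi'/N\to 0$ forces $\socR(\pi'/N)=0$, i.e.\ $\pi'/N=0$.

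For (v) I would dualise to the short exact sequence $0\to\pi_3^\vee\to\pi_2^\vee\to\pi_1^\vee\to 0$ of finitely generated $\Lambda$-modules; since $\pi_i[\mI^n]^\vee\cong\pi_i^\vee/\mI^n\pi_i^\vee$, the exactness of the $\mI^n$-torsion sequence amounts to the vanishing of the map $\operatorname{Tor}_1^\Lambda(\Lambda/\mI^n,\pi_1^\vee)\to\pi_3^\vee/\mI^n\pi_3^\vee$, which I would extract from an explicit description of $\grm(\pi_i^\vee)$ — hence of $\pi_i^\vee/\mI^n\pi_i^\vee$ — as built from the graded pieces of the diagram $(\DOne\hookrightarrow\DZero)$ and its $r$-fold analogue; for $n\le\max\{6,f+1\}$ these pieces stay semisimple, of a shape forced by the genericity bounds, which gives the additional splitting. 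For (vi), $\pi^{K_1}$ dualises to the $K_1/Z_1$-coinvariants of $\pi^\vee$ over $\F[\![K_1/Z_1]\!]$, and exactness again reduces to a $\operatorname{Tor}_1$-vanishing read off the corresponding description of $\pi^{K_1}$ as an $r$-fold copy of $\DZero$. The main obstacle in the whole argument is the claim in (i) that the supersingular constituents have pure Cohen–Macaulay duals of Gelfand–Kirillov dimension $f$ in the absence of multiplicity one; once that and the machinery of \cite{BHHMS2} are available, the rest is essentially formal. Note that (iv) must be routed through the additivity of $\dim\Dvee[-]$ rather than a naive count of constituents, precisely because we do not know $[\pi:\overline\pi]=r$ for every constituent $\overline\pi$.
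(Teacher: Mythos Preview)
Your strategy of reducing (i) and (iii) to irreducible constituents is precisely where the argument breaks. For (i), the ``local identification in \cite{BHHMS2} of $\tau^\vee$ with an explicit pure module'' that you invoke for a supersingular constituent $\tau$ does not exist when $r>1$: the paper explicitly records (after \Cref{cor:final-corollary}) that even the multiplicity $[\pi:\overline\pi]=r$ is out of reach, let alone the structure of $\overline\pi^\vee$. So you cannot establish Cohen--Macaulayness constituent by constituent. For (iii), the reduction via exactness of $\Dvee[-]$ fails for a different reason: $\dim\Dvee[-]$ is additive, but $\lgR\socR(-)$ is not additive in short exact sequences a priori --- that additivity is exactly the content of (iv), which you have not yet proved. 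Your argument for (iv) then invokes (iii) for the quotient $\pi_3$, so the chain (iii)$\Rightarrow$(iv) is circular.

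The paper avoids irreducibles entirely. The key step is to compute $\gr_{\mI}(\pi_1^\vee)$ for a \emph{subrepresentation} $\pi_1\subseteq\pi$ directly: one shows (\Cref{prop:main}) that the surjection $N_1\twoheadrightarrow\gr_{\mI}(\pi_1^\vee)$ of \Cref{lemma:gr} is an isomorphism, where $N_1=\bigoplus_\ell \Vell[\pi_1,\ell]^\vee\otimes\Nell$ is a direct summand of $N\cong\gr_{\mI}(\pi^\vee)$. Since $N$ is Cohen--Macaulay of grade $2f$, so is $N_1$, hence $\pi_1^\vee$; the subquotient case follows by showing the induced filtration on $(\pi/\pi_1)^\vee$ is $\mI$-adic. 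This same explicit description of $\gr_{\mI}$ gives exactness of $[\mI^n]$-torsion by a dimension count (\Cref{cor:pi1-pi2-exact}), which in turn yields (iv) via \Cref{cor:direction}; (iii) for subquotients then follows from (iv) and exactness of $\Dvee[-]$, not the other way around. For (vi) the argument is substantially more delicate than a $\mathrm{Tor}_1$ vanishing: one has to pass through $\mK^2$-torsion and the modules $\Theta_\tau$, $\im(\phi_\tau)$ of \cite{HW22} (see \Cref{prop:K1-exact} and \Cref{rem:sadly-K1}).
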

Part \ref{point:pre-main-2(i)} comes
from \Cref{cor:final-corollary}(iii),
\ref{point:pre-main-2(ii)} is
\Cref{cor:final-corollary}(iv),
\ref{point:pre-main-2(iii)} is
\Cref{cor:final-corollary}(ii),
\ref{point:pre-main-2(iv)} is
\Cref{lemma:soc-ex},
\ref{point:pre-main-2(v)} is
\Cref{cor:pi1-pi2-exact},
\ref{point:pre-main-2(vi)} is
\Cref{prop:K1-exact}.

\subsection{Outline of the article}
We begin with the study of \eqref{eq:piShi-dev},
which we denote simply by $\pi$.
Crucially, if $\repr$ is generic then
there is a unique positive integer $r \ge 1$
such that the following four hypotheses hold:
\begin{enumerate}[(i)]
	\item \label{hypothesis:pre-i}
(\cite[Theorem~8.4.2]{BHHMS1})
If $\DZero $ is the $\GLring$-representation  
defined in \cite[Proposition~13.1]{BP12},
on which we make $K^{\times}$
act by the central character of $\pi$,
then there is an integer $r\ge 1$ such that $\pi^{K_1}\cong D_0(\overline{\rho})^{\oplus r}$ as representations of $\KK$.
	\item \label{hypothesis:pre-ii}
(\cite[Proposition~6.4.6]{BHHMS1} and \cite[Theorem~8.4.2]{BHHMS1})
		If 
$\chi:I\to \F^\times  $ is a smooth character
appearing in 
$\pi^{I_1}=\pi[\mI]$,
then we have an equality of multiplicities
		\[
			[\pi[\mI^3]\colon \chi] = [\pi[\mI]\colon \chi]=r.
		\]

	\item \label{hypothesis:pre-iii}
		(\cite[Theorem~8.2]{BHHMS4} 
and \cite[Theorem~8.4.1]{BHHMS1})
		The linear dual $\pi^\vee
\defeq \Hom_{\F}(\pi, \F)$ is \textit{essentially self-dual} of grade $2f$,
	i.e.\ there exists a $\GL_2(K)$-equivariant isomorphism of $\Lambda$-modules
	\[
		\Ext^{2f}_{\Lambda}(\pi^\vee,\Lambda)\cong \pi^\vee \otimes \chi_\pi.
	\]	

	\item \label{hypothesis:pre-iv}
(\cite[Proposition~2.6.2]{BHHMS4})
If $\chi \colon I \to \F ^\times $ is a 
smooth character and $i \ge 0$, 
then $\Ext^i_{I/Z_1} (\chi,\pi)\neq 0$
if and only if $[\pi[\mI]:\chi]\neq 0$,
in which case
\[
	\dim_{\F} \Ext^i_{I/Z_1} (\chi,\pi)=
	\binom{2f}{i} r.
\]
\end{enumerate}

In \cite{BHHMS2} and in \cite{BHHMS4}
the authors keep to the case where $r=1$;
we extend their arguments to the general case.

Although the representation \eqref{eq:piShi-dev}
is a global object, our arguments are local:
we take any 
admissible smooth representation of $\GLfield$
with a central character
satisfying \ref{hypothesis:pre-i} to \ref{hypothesis:pre-iv},
and deduce some of its properties, 
culminating in \Cref{thm:pre-main-1} 
and \Cref{thm:pre-main-2}.

In \Cref{sec:structure-pi},
we study the subrepresentations of $\pi$,
mainly in terms of their diagrams,
in the sense of \cite{Pas04}.
Remember that to a Galois representation $\repr $ as in \Cref{sec:main-results}.
one can associate a diagram 
$\Diagr=(\DOne \xhookrightarrow{r} \DZero)$,
which we take to be the diagram $\mathcal{D} (\pi_{\text{glob}} (\repr)) $
of \cite[Theorem~1.3]{DL21}
(or equivalently the one of
\cite[Theorem~1.3.2]{BHHMS2}).

With this choice,
the isomorphism of \ref{hypothesis:pre-i}
promotes to an isomorphism
\begin{equation}
	\label{eq:pre-1}
\iota_{\pi} \colon
(\pi^{I_1} \hookrightarrow  \pi^{K_1})
\cong
\Diagr ^{\oplus r} 
\end{equation}
of diagrams:
this is 
\cite[Theorem~1.3.2]{BHHMS2}.

If $\pi'$ is a subrepresentation of $\pi$,
then $\iota_\pi(\pi^{\prime I_1} \hookrightarrow \pi^{\prime K_1})$
is a subdiagram of 
$\Diagr^{\oplus r}\cong \F^{r} \otimes_{\F} \Diagr$,
and in \Cref{sec:structure-srep}
we determine what subdiagrams can occur.
More precisely, remember that
if $\repr$ is irreducible, 
then $\Diagr$ is indecomposable
(as a diagram)
by \cite[Theorem~15.4(i)]{BP12},
and that if $\repr$ is reducible split, 
then $\Diagr$ decomposes as 
$ \bigoplus_{ \ell=0 }^{f} \DEll$
by \cite[Theorem~15.4(ii)]{BP12},
for some subdiagrams $\DEll$
of $\Diagr$
(for their definition, see
\eqref{eq:ell-diagr-def}).
Then, we show in \Cref{cor:direction}
that the subdiagram
$\iota_\pi(\pi^{\prime I_1} \hookrightarrow \pi^{\prime K_1}) \subseteq \F^{r} \otimes_{\F} \Diagr$ 
is always of this form:
if $\repr$ is reducible split, then
\[
	\iota_\pi(\pi^{\prime I_1} \hookrightarrow \pi^{\prime K_1}) =
	\bigoplus_{ \ell=0 }^{ f} V'_{\ell} \otimes_{\F} \DEll \subseteq \F^{r} \otimes_{\F}\Diagr, 
\]
for some $\F$-vector subspaces 
$V'_\ell \subseteq \F^r$,
for  $0 \le \ell \le f$,
and if $\repr$ is irreducible, then
\[
	\iota_\pi(\pi^{\prime I_1} \hookrightarrow \pi^{\prime K_1}) =
	V' \otimes_{\F} \DEll \subseteq 
\F^{r} \otimes_{\F} \Diagr, 
\]
for some $\F$-vector subspace
$V'\subseteq \F^r$.
The technical difficulty that emerges when
$r>1$ 
is then to coordinate all our constructions
to make sure that these $\F$-vector subspaces 
are respected.

\Cref{sec:generation} concludes with
\Cref{thm:principal-series},
where we show the decomposition
\eqref{eq:pi-red-decomposition} of
\Cref{thm:pre-main-1}.

In \Cref{sec:CM-fl},
we prove some of the properties that
a subquotient $\pi'$ of $\pi$ satisfies.
First, in \Cref{sec:CM},
we determine the structure of
$\grm(\pi^{ \prime \vee}) \defeq 
\bigoplus_{ n \ge 0} \mI^{n}\pi^{ \prime\vee}/\mI^{n+1}\pi^{ \prime\vee}$
as a module over
$\grL \defeq
\bigoplus_{ n \ge 0} \mI^{n}/\mI^{n+1}$.
In particular, we find that it is Cohen-Macaulay
of grade $2f$, and 
\Cref{thm:pre-main-2}(i)
follows from
\cite[Proposition~III.2.2.4]{LvO}.

In \Cref{sec:fl}, we establish
the upper bounds on the length of $\pi'$
(and in particular of $\pi$), 
which concludes the proof of
\Cref{thm:pre-main-1}(i),(ii).

In \Cref{sec:mK-torsion}, 
we 
conclude the proof of
\Cref{thm:pre-main-2}.

Finally, in \Cref{sec:global}, we prove
the finite length of \eqref{eq:piShi-def}
passing from the finite length of
\eqref{eq:piShi-dev}, by 
 ``dévissage,'' thus concluding the proof of
\Cref{thm:pre-main-0}.

\paragraph{Acknowledgements:}
The author wishes to thank her advisor, Prof.\ Christophe Breuil, and her co-advisor, Prof.\ Stefano Morra,
for suggesting this topic
and for guiding the learning process.
Owing to their help and their encouragement, this has been a challenging
and rewarding experience
for the author.

\section{Preliminaries}

In this section we introduce 
the notation and the material
that will be used throughout the text.
We fix $K/\mathbb{Q}_p $ an unramified extension of (inertial) degree $f$, and let $q \defeq  p^f $.
We let
 $I \le \GLring$ be the subgroup of matrices
 that are upper triangular modulo $p$,
$I_1\unlhd I$ be its maximal pro-$p$ subgroup, consisting of those matrices that are unipotent modulo $p$,
and we let $K_1\le I_1$ be the subgroup of matrices that are sent to the identity modulo $p$,
which is also the pro-$p$ radical of $\GLring $.
In particular, $\GLring/K_1 \cong \GLres$,
which we also denote by $\Gamma$.

We let $N$ be the normaliser of $I$ in $\GL_2(K)$,
which is generated by $I$,  $K^\times $, and the matrix $\Pi \defeq \begin{pmatrix} 0 & 1 \\ p & 0 \end{pmatrix} $.

The following completed group algebra
will play an important role.
\begin{newdefinition}
If $Z_1 := (1+ p \mathcal{O} _{K} )\cdot \id $ denotes the centre of $I_1 $,
then we let
$\Lambda$
be the completed group algebra 
$\Lambda \defeq \mathbb{F} [\![I_1/Z_1]\!]$,
whose unique maximal ideal we call $\mI$,
and which we endow with the $\mI$-adic topology.
\end{newdefinition}
We know from
\cite{Clozel17}
and from
\cite[Theorem 5.3.4]{BHHMS1}, 
while keeping the notation of \cite[paragraph~before~Remark~3.1.2.8]{BHHMS2},
 that the graded ring $\grL \defeq \gr_{\mI}(\Lambda )$
 associated to $\Lambda $ is an 
Auslander-regular
 noncommutative algebra over $\mathbb{F} $
generated by the variables $\{y_i, z_i, h_i\mid 0\le i <f\}$,
which commute for different indices,
and are subject to the commutator relations 
\begin{align}
&[y_i,z_i]=h_i, \nonumber\\ 
	\label{eq:grL-commutator-relations}
&[h_i, y_i]=0, \\
&[h_i, z_i]=0. \nonumber
\end{align}
For the definition of Auslander-regularity,
see \cite[Definition~III.2.1.3]{LvO},
\cite[Definition~III.2.1.7]{LvO}.
In particular, $\Lambda$ is Auslander-regular
by \cite[Theorem~III.2.2.5]{LvO}.
Then, we define
\begin{align}
	\label{eq:R-definition}
	R  \defeq& \grL  /(h_0,\dots, h_{f-1} ) \cong \F [y_i,z_i\mid 0\le i < f], \\
	\label{eq:R-bar-definition}
 \overline{R}  \defeq& \grL/J = R/(y_iz_i \mid 0\le i < f)\cong \\ 
\nonumber \cong&
\F [y_i,z_i\mid 0\le i < f]/(y_iz_i\mid 0\le i < f),
\end{align}
	 where $J:=(h_i,y_iz_i\mid 0\le i < f)$.
	We are interested in the following invariant, which is defined in \cite[paragraph before Lemma~3.1.4.3]{BHHMS2}:
\begin{newdefinition}[]
	\label{def:multiplicity}
		Let $N$ be a finitely generated module over $\grL$, 
annihilated by some power $J^n$ of $J$.
We define the \emph{multiplicity} of $N$ at a minimal prime $\mathfrak{q} $
of $\overline{R}$ by
		\[
			m_{\mathfrak{q} }(N) \defeq  \sum_{i=0} ^{n-1} \lg_{\overline{R}_{\mathfrak{q} } } (J^iN/J^{i+1} N)_{\mathfrak{q} } . 
		\]
	Here, $\lg_{\overline{R}_{\mathfrak{q}}}$ denotes the length of an $\overline{R}_{\mathfrak{q}}$-module.	
	\end{newdefinition}
	Of the $2^f$ minimal prime ideals of $\overline{R} $,
	we fix in particular $\mathfrak{p} _0 \defeq  (z_i\mid 0\le i < f)$,
	and we remind the reader that $m_{\mathfrak{q} }  $
	is additive,
	cf.\ \cite[Lemma~3.1.4.3]{BHHMS2}.

\subsection{Diagrams}
In this section, we review the theory of diagrams
and in particular the construction of the diagram
$\Diagr$ of
\cite[Theorem~13.8]{BP12}.

If $\chi:I \to \mathbb{F}^\times$ is a smooth character, we define
$\chi^s(\cdot) \defeq \chi(\Pi \cdot \Pi^{-1} )$.
Remember that
$\chi$ factors through a character of 
the finite torus $H \defeq I/I_1$.

\begin{definition}[{\cite[Definition~9.7]{BP12}}]
	A \emph{diagram} is a triple $(D_0, D_1, r)$,
where $D_0$ is a smooth representation of $\GLring K^ \times $ over $\mathbb{F}$, $D_1$ is a smooth representation of $N$ over $\mathbb{F}$,
		and $r:D_1\to D_0$ is an $IK^ \times $-equivariant map.

A \emph{basic diagram} is a diagram $(D_0,D_1, r)$ such that $\begin{pmatrix} p & \\ & p \end{pmatrix} $ acts trivially
and such that
$r$ induces an isomorphism $D_1 \xrightarrow{\sim}
 D_0^{I_1} \hookrightarrow D_0 $.
\end{definition}

\begin{example}
 If $\pi $ is a smooth representation of $\GLring $, then it induces the diagram
		\[
			(\pi^{I_1}
\xhookrightarrow{r} \pi^{K_1}),
		\]
	where $r $ is the canonical inclusion.	
\end{example}

We are also interested in another diagram:
the diagram 
$\Diagr $ of 
 \cite[Proposition~13.1]{BP12}. We now recall its construction.
Fix once and for all an embedding
$\mathbb{F} _{q^2} \hookrightarrow \mathbb{F}  $.
If $\inertia $ denotes the inertia subgroup of $\Gal(\overline{K}/K) $,
we denote by 
\begin{align*}
	\omega_f: &I(\overline{K}/ K)\twoheadrightarrow \mathbb{F}_q^\times  \hookrightarrow \mathbb{F}^\times , \\
	\omega_{2f} : &I(\overline{K}/ K)\twoheadrightarrow \mathbb{F}_{q^2}^\times  \hookrightarrow \mathbb{F}^\times 
\end{align*}
Serre's fundamental characters of level
$f$, $2f$ respectively.

Following the second paragraph of
\cite[§1.3]{BHHMS4},
we say that 
 a continuous Galois representation 
 $ \repr:\Gal(\overline{K}/K)\to \GL_2(\mathbb{F} )$ 
is $n$-\emph{generic},
where $n \ge 0$ is a nonnegative integer,
if, up to twist, 
$\repr|_{\inertia} 
$
is not isomorphic to $\omega \oplus 1$,
and if one of the following holds:
\begin{enumerate}
	\item \emph{(the irreducible case)}   the restriction $\repr|_{\inertia}  $ is isomorphic, up to twist, to
		\[
			\begin{pmatrix}
				\omega_{f}^{\sum_{j=0} ^{f-1}(r_j+1)p^j }	
			& \ast \\
			0 & 1
			\end{pmatrix} ,
		\]
for some integers $r_j $ with
$ n\le r_j \le p-3-n$,
	\item \emph{(the reducible case)}
		the restriction $\repr|_{\inertia}  $ is isomorphic, up to twist, to
		\[
			\begin{pmatrix}
				\omega_{2f}^{\sum_{j=0} ^{f-1}(r_j+1)p^j }	
			& 0 \\
			0 & 
		\omega_{2f}^{\sum_{j=0} ^{f-1}(r_j+1)p^{j+f} }	
			\end{pmatrix} ,
		\]
	for some integers $r_j $ with
	$n+1\le r_0 \le p-2-n$,
	and with
	$n\le r_j \le p-3-n$
	for $j>0 $.
\end{enumerate}
When $\repr$ is irreducible,
note that the condition
$\repr|_{\inertia} 
\not \cong 
\omega \oplus 1$
up to inertia is equivalent to
$(r_0,\dots,r_{f-1} )\notin
\{(0,\dots,0),(p-3,\dots,p-3) \} $.
In particular, $\repr$ is $0$-generic
precisely when it is generic in the 
sense of \cite[Definition~11.7]{BP12}.

To a $0$-generic $\repr$ 
we can associate a set of Serre weights $W(\repr) $,
following
\cite[§§3.1-2]{BDJ10}.
Remember that a Serre weight
$\sigma $ is an isomorphism class of irreducible representations of $\GLring $ over $\mathbb{F} $,
or equivalently of $\Gamma$.

A classification of Serre weights
can be found in \cite[Proposition~2.17]{NewYork}:
it states that
Serre weights can be uniquely written as 
\begin{equation}
\Sym^{r_0} \F^2 \otimes_{\F} 
(\Sym^{r_1} \F^2)^{\Fr} \otimes_{\F} 
\cdots \otimes_{\F} 
(\Sym^{r_{f-1}} \F^2)^{\Fr^{f-1}} \otimes_{\F} 
{\det}^m,
\label{eq:weight-characterisation}
\end{equation}
for $0\le r_i \le p-1 $ and for $0\le m < q-1 $.
Here, the notation 
$(\ )^{\Fr^{i}}$
denotes the restriction of scalars
via the group homomorphism
$\Gamma \xrightarrow{\Fr^{i}} \Gamma$
that sends 
$ \left( \begin{smallmatrix}
a & b \\
c & d
\end{smallmatrix} \right) \mapsto 
\left( \begin{smallmatrix}
a^{p^i} & b^{p^{i}} \\
c^{p^i} & d^{p^{i}}
\end{smallmatrix} \right)$.
We denote the Serre weight
\eqref{eq:weight-characterisation}
by $(r_0,\dots, r_{f-1} )\otimes \det^m
$.
From this characterisation one can also see
that 
$\sigma^{I_1} $ is always 1-dimensional,
and that if the character $\chi:I\to \F^\times  $ describes the action of $I $ on $\sigma^{I_1} $, 
then we can recover $\sigma $ from $\chi $
as long as $\chi\neq \chi^s $.
Note that the genericity assumption on 
$\repr $ makes sure that for all $\sigma\in W(\repr) $
we have $\chi^s \neq \chi$.

Following \cite[§2]{HW22}, we say that
a Serre weight is $n$-\emph{generic},
where $n \ge 0$ is a nonnegative integer,
if the integers $r_i$ of \eqref{eq:weight-characterisation},
for $i \in \{0, \dots, f-1\}$,
satisfy
$n \le r_i \le p-2-n$.
We say that a smooth character 
$ \chi \colon I \to \F^{\times}$
is $n$-generic if $\chi = \sigma^{I_1}$
for an $n$-generic Serre weight $\sigma$.
If $\repr$ is $n$-generic, then 
we remark that any $\sigma \in W(\repr)$
is $n$-generic, and any $\lambda \in \mathscr{P}$
is $(n-1)$-generic if $n \ge 1$.

Suppose that $\repr$ is $0$-generic.
In order to  construct the diagram $\Diagr $, 
we begin with the following definition, coming from \cite[p.~65, last paragraph]{BP12}:
\begin{deflem}[{\cite[Corollary~3.12]{BP12}}]
	\label{deflem:I}
If $\sigma,\tau$ 
are two Serre weights,
then there exists at most one (up to isomorphism) indecomposable $\Gamma $-representation 
with socle $\sigma $ and cosocle $\tau $,
and such that $\sigma $ appears with multiplicity one.
We let $I(\sigma, \tau) $ be this representation, if it exists, and we let $I(\sigma, \tau)=0  $ otherwise.

\end{deflem}
Recall that the socle filtration 
$(\soc_i(M))_{i \ge 0} $ of a $\Gamma$-module
$M$ is defined inductively as follows:
$\soc_{0}(M)=0$, and for $i >  0$,
$\soc_i(M)$ is the preimage
of $\socG (M/\soc_{i-1} (M))$ in $M$.
If there exists an $\ell \ge 0$ such that
$\soc_\ell(M)=M$, then we call the
smallest such $\ell$ the 
\emph{Loewy length} of $M$.

Then, for any $I(\sigma, \tau)$,
we let $\ell(\sigma, \tau)\in \mathbb{Z} _{>0} \cup \infty $ 
 be $\infty $ if $I(\sigma, \tau)=0$,
and otherwise the Loewy length of $I(\sigma,\tau)$.
Finally, we set $\ell(\repr, \tau) \defeq  \min \left\{ \ell(\sigma,\tau)\mid \sigma\in W(\repr) \right\}  $.
If $\ell(\repr,\tau)<\infty $, this minimum 
is realised by exactly one $\sigma \in W(\repr) $, 
cf.\ \cite[Lemma~12.8]{BP12}.
In this case,
following \cite[Paragraph before Lemma~13.3]{BP12}
we set $I(\repr,\tau) \defeq  I(\sigma,\tau) $.
\label{def:I-for-inductive}

Define $\Dx $ as the direct limit 
\begin{equation}
\label{eq:diagr-from-I}
\Dx \defeq \varinjlim_{\le} I(\repr, \tau) ,
\end{equation}
where the direct limit is taken over
the set of $I(\repr, \tau) $ with socle $\sigma$,
to which we give the partial order defined in 
\cite[Paragraph after Lemma~13.3]{BP12}.
It is clear by construction that 
$\socG \Dx[\sigma] = \sigma$.
Then, according to \cite[Proposition~13.4]{BP12}, we can define $D_0(\repr) $ as the direct sum 
\begin{equation} \label{eq:diagr-def} D_0(\repr) \defeq  \bigoplus_{\sigma \in W(\repr)} \Dx[\sigma],  \end{equation}
which is multiplicity free
by \cite[Corollary~13.5]{BP12}.
We regard all these as representations of $\GLring $, by inflation of scalars.

Next, we define 
\begin{equation}
	\label{eq:diagr1-def} 
\begin{array}{ll}
\DOnex[\sigma] \defeq \Dx[\rho]^{I_1}, & 
\quad \DOne\defeq 
\DZero^{I_1}= \bigoplus_{ \sigma \in W(\repr )} \DOnex[\sigma].
\end{array}
\end{equation}
This determines the diagram $\Diagr$
up to the choice of certain parameters,
which we take to be the ones of
\cite[Theorem~1.3]{DL21}.
In any case, notice that $\Pi$ will send
an eigenvector $v_\chi \in \DOne$
 with eigencharacter $\chi$ 
to an eigenvector $\Pi v_{\chi} $
with eigencharacter $\chi^{s}$.

Finally, we have $K^\times$ act on $\DZero $ in the following way:
we consider the character 
$\mytwist$ of the Galois group,
where $\omega $ is the mod $p $ cyclotomic character,
and we see this as a character $K^\times \to  \F^\times $ via local class field theory, which
we normalise so that uniformisers correspond to geometric Frobeniuses.
Then, we let $K^{\times}$ act by
$\mytwist$,
and since
$\mytwist \colon\agal[K]\to 
\F^{\times}$
is tame (wild inertia is a 
pro-$p$ subgroup of $\agal[K]$),
we see that $p$ acts trivially on $\DZero$.

As always, suppose that $\repr$
is $0$-generic.
Following \cite[§4]{Bre14},
if $\lambda = (\lambda_0(x_0), \dots,\lambda_{f-1}(x_{f-1}))$
is an $f$-tuple, with $\lambda_{i} (x_i)
\in  \mathbb{Z} \pm x_i$
for $i \in \{0, \dots, f-1\}$,
we define
\begin{equation}
	\label{eq:e-lambda-def}
\begin{aligned}
e(\lambda) &\defeq \frac{1}{2} 
\left( \sum _{i=0}^{f-1} p^{i}(x_i-\lambda_i(x_i))\right)
\text{ if }\lambda_{f-1}(x_{f-1})
\in \mathbb{Z}+x_{f-1} , \\
e(\lambda) &\defeq \frac{1}{2} 
\left(p^{f}-1+ \sum _{i=0}^{f-1} p^{i}(x_i-\lambda_i(x_i))\right)
\text{ if }\lambda_{f-1}(x_{f-1})
\in \mathbb{Z}-x_{f-1}.
\end{aligned}
\end{equation}

Then, we let $\mathscr{P}$ be the set of 
$f$-tuples 
defined in \emph{loc.\ cit.}\ and denoted there by $\mathscr{PD}$
if $\repr$ is reducible,
and by $\mathscr{PID}$ if $\repr$
is irreducible.
To $\lambda \in \mathscr{P}$
(which is of the form $ \lambda_{i} (x_i)
\in  \mathbb{Z} \pm x_i$
for $i \in \{0, \dots, f-1\}$)
we associate the Serre weight
$\sigma_{\lambda} \defeq(\lambda(r_0), \dots,\lambda(r_{f-1}))
\otimes \det^{e(\lambda)(r_0,\dots,r_{f-1})}$,
and let 
$\chi_{\lambda} \colon I \to \F^{\times}$
be the character acting on 
$\sigma_\lambda^{I_1}$.
It follows from Proposition 4.2 of 
\emph{loc.\ cit.}
that $\lambda \mapsto \chi_\lambda$
is a bijection between $\mathscr{P}$
and the set of eigencharacters
of $\DOne$.

We also let $\mathscr{D}$
be the subset of $\mathscr{P}$
defined in \cite[§4]{Bre14}
and denoted there by $ \mathscr{D}$
if $\repr$ is reducible,
and by $\mathscr{ID}$ if $\repr$ 
is irreducible.
By \cite[Lemma~11.2]{BP12}
and \cite[Lemma~11.4]{BP12},
$\mathscr{D}$ is in bijection with
the Serre weights of $W(\repr)$
via $\lambda \mapsto \sigma_{\lambda}$.
We report here the definition
of $\mathscr{D}$ when $\repr$ is \emph{reducible},
for later use.
\begin{definition}[]
	\label{def:D-red-irr}
If $\repr$ is reducible, we define 
$\mathscr{D}$ as
$\{x_0, p-3-x_0\}$ for $f=1$,
while for $f>1$ we let
$(\lambda_0(x_0), \dots,\lambda_{f-1}(x_{f-1})) \in \mathscr{D}$
if and only if the following three
conditions are satisfied, for 
$i \in \{0, \dots, f-1\}$:
\begin{enumerate}[(i)]
\item 
	\label{condition:D-red-i}
$\lambda_i(x_i) \in \{x_i,x_i+1,p-2-x_i,p-3-x_i\}$,
\item 
	\label{condition:D-red-ii}
$\lambda_i(x_i) \in \{x_i, x_i+1\} \implies
\lambda_{i+1}(x_{i+1}) \in \{x_{i+1} ,p-2-x_{i+1}\}$,
\item 
	\label{condition:D-red-iii}
$\lambda_i(x_i) \in \{p-2-x_i, p-3-x_i-1\} \implies
\lambda_{i+1}(x_{i+1})\in \{p-3-x_{i+1},x_{i+1}+1\}$,
\end{enumerate}
with the conventions 
$x_f = x_0$, $\lambda_f(x_f)=\lambda_0(x_0)$.
\end{definition}

To a Serre weight $\sigma\in W(\repr)$ 
we can associate a positive integer $\ell(\sigma)$
as follows. 
\begin{definition}
	\label{def:J-and-ell}
If $\lambda \in \mathscr{P}$,
following \cite[Eq.~(10)]{BHHMS4} we define 
$J_{\lambda} \defeq \{j \in \{0, \dots, f-1\}
\mid
\lambda_j(x_j) \in \{x_j + 1, x_j + 2,
p-3-x_j\}\}$,
and we set the length $\ell(\lambda) $ 
of $\lambda$ as 
$\ell(\lambda) \defeq |J_{\lambda}|$.
By \Cite[§11]{BP12}),
the assignment $\lambda \mapsto J_{\lambda}$
gives a bijection between $\mathscr{D}
$
and the set of subsets of $\{0, \dots, f-1\}$.

For $0 \le \ell \le f$, we set 
$\mathscr{P}_{\ell} \defeq 
\{\lambda \in \mathscr{P} \mid \ell(\lambda)=\ell\} $.
For a Serre weight $\sigma\in W(\repr) $, if 
$\lambda\in \mathscr{D} \subseteq \mathscr{P}$
is the unique
$f$-tuple such that $\sigma^{I_1}= \chi_\lambda	,$
then we define the length 
$\ell(\sigma) $ to be $\ell(\lambda)$,
and for $0 \le \ell \le f$ we set 
$W(\repr)_{\ell} \defeq 
\{\sigma\in W(\repr) \mid \ell(\sigma)=\ell\} $.

\end{definition}
In the case where $\repr $ is reducible split,
it is shown in \cite[Theorem~15.4(ii)]{BP12} that $\Diagr $ decomposes into a direct sum
\begin{equation}
	\label{eq:ell-decomposition}
\Diagr = \bigoplus_{\ell=0} ^f \DEll	
\end{equation}
of diagrams, where the subdiagram
$\DEll = (\DOneEll[\ell]
\hookrightarrow \DZeroEll[\ell])$
of $\Diagr$ is defined as follows:
\begin{equation}
\label{eq:ell-diagr-def}
\begin{array}{ll}
\DZeroEll[\ell] \defeq \displaystyle \bigoplus_{\sigma \in W(\repr)_{\ell}}
\Dx[\sigma], \quad & 
\DOneEll[\ell] \defeq \DZeroEll[\ell]^{I_1}, \nonumber
\end{array}
\end{equation}
with the unique diagram structure that makes 
$\DEll$ into a subdiagram of $\Diagr$.

We conclude this section by introducing
weight cycling on $W(\repr)$.
First, a notation coming from \cite[p.~9]{BP12}.
\begin{definition}
\label{def:s-at-D0-level}
Given a Serre weight $\sigma $,
if the character $\chi:I\to \overline{\mathbb{F}} _p ^\times  $
describes the action of $I $ on $\sigma^{I_1} $,
then there exists a unique Serre weight, which we denote by $\sigma^{[s]} $,
such that $\sigma^{[s]} $ is distinct from $\sigma $
and such that $\chi^s$ describes the action of $I $ on $(\sigma^{[s]})^{I_1} $.
\end{definition}
Like before, if
$\sigma\in W(\repr) $,
and
if $\chi $
is the character given by the action of $I $ on $\sigma^{I_1} $,
then $\chi^s \neq \chi$ 
follows from the genericity assumption on $\repr $.
In particular,
the condition $\sigma^{[s]}\neq \sigma$
is automatic.

Now we define weight cycling, 
for which we employ the notation of \cite[§15, second paragraph]{BP12}.
For simplicity, we always write $\delta $ instead of $\delta_r $ or $\delta_i $.
\begin{definition}
For a given Serre weight $\sigma \in W(\repr) $,
which we regard inside $\socR \DZero  $,
consider
$\chi $ the character of $I $ given by $\sigma^{I_1} $,
then consider
$\chi^s $,
which also occurs inside
$\DOne $ 
(indeed, as the image of $\sigma^{I_1}$
under the action of $\Pi$).
We define $\delta(\sigma) $ to be the unique Serre weight $\delta(\sigma) \in W(\repr)$
such that $\chi^s \subseteq D_{1,\delta(\sigma)} (\repr) $,
or equivalently such that $\ell(\delta(\sigma), \sigma^{[s ] } ) = \ell(\repr, \sigma^{[s]} ) $, cf.\ \eqref{eq:diagr-from-I}.
Then, we define
	$\delta^n(\sigma) $ by iterating this procedure.
	\label{def:weight-cycling}
	\end{definition}
The fact that this definition agrees with the one given in \emph{loc.\!\ cit.\ }follows from \cite[Lemma~15.2]{BP12}
applied to the case $\tau=\sigma $
(note that in this case
we have $\mathcal{S} ^+=\mathcal{S} ^-=\emptyset $,
with the notation of the paragraph before
\cite[Lemma~15.1]{BP12}).

\subsection{$(\varphi, \Gamma)$-modules}
In this section we review the functor $\Dvee$
of \cite{Breuil15}
taking values in (pro-)étale 
$(\varphi,\Gamma)$-modules.
Let 
\begin{equation}
	\label{eq:N0-def}
N_0 \defeq  \left\{ \begin{pmatrix} 1 & \mathcal{O} _K \\ 
& 1\end{pmatrix}  \right\} \cong \mathcal{O} _K,
\end{equation}
which comes with the trace map 
$N_0 \cong \mathcal{O} _K 
\xtwoheadrightarrow{\Tr}
\mathbb{Z} _p,$
and let $N_1$ be its kernel.
\begin{definition}
	\label{def:Colmez_generalised}	
If $T_{2,K} \subseteq \GL_{2,K}  $
denotes the maximal torus of diagonal matrices, then we define the morphisms of algebraic groups
\begin{align*}
	\xi: \mathbb{G} _m 
	& \to T_{2,K}   \\
	x & \mapsto \begin{pmatrix} x & \\ & 1 \end{pmatrix} 
\end{align*}
and
\begin{align*}
	\theta: T_{2,K} 
	& \to \mathbb{G} _m \\
	\begin{pmatrix} x_1 & \\ & x_2 \end{pmatrix} & \mapsto x_1 ,
\end{align*}
which coincide with 
\cite[example~2.1.1.3]{BHHMS2} for $n=2$.
Note that, for $x\in \mathbb{Z} _p\setminus \{0\} $,
$\xi(x)N_1\xi(x^{-1} )\subseteq N_1 $.
\end{definition}

Now fix a field of coefficients $\mathbb{F}$,
which we assume to be a (big enough) finite field of coefficients, of characteristic $p$.
\begin{remark}
	\label{rem:rep-to-mod}	
If $\pi$ is a smooth representation  of $\GLfield$
over $\F $,
we can take its $N_1$-invariants
and we can see $\pi^{N_1} $ as a smooth representation of $N_0/N_1 \cong \mathbb{Z} _p$.
\end{remark}
With this in mind, from now on we fix the identification
\[
	\mathbb{F}[\![ N_0/N_1]\!] \xrightarrow{\sim} \mathbb{F} [\![ \mathbb{Z} _p ]\!] \cong \mathbb{F} [\![ X ]\!],
\]
where the last isomorphism sends $X$ to $[1] - 1$.

Define $\mathbb{F} [\![X]\!] [F]$ to be the noncommutative polynomial ring over $\mathbb{F} [\![X]\!] $ with the relation $FS(X)= S(X^p)F$ for $S(X)\in \mathbb{F} [\![X]\!] $.
Then, 
we enrich the structure of  $\mathbb{F} [\![X]\!] $-module on
$\pi^{N_1} $
to an $\mathbb{F} [\![X]\!] [F]$-module structure
by making $F$ act as 
$F(v) = \sum_{n_1\in N_1/{\xi(p)} N_1 {\xi(p)}^{-1}} 
n_1\xi(p)v \in \pi^{N_1 } $ on $v\in \pi^{N_1 }  $.

We can also endow $\pi^{N_1} $ with an action of $\mathbb{Z} _p ^ \times  $, 
by making $x \in \mathbb{Z} _p ^ \times  $
act by $\xi(x) $.
The fact that, for 
$x\in \mathbb{Z} _p ^ \times   $,
$\xi(x) $ normalises $N_1$ implies that
the action of $\mathbb{Z} _p ^ \times   $ commutes with $F$, while
the identity
\[
	\begin{pmatrix} x & 0 \\0 & 1 \end{pmatrix} 	
	\begin{pmatrix} 1 & 1 \\0 & 1 \end{pmatrix} 
	\begin{pmatrix} x ^{-1} &0 \\0 & 1 \end{pmatrix} 
	=
	\begin{pmatrix} 1 & x \\0 & 1  \end{pmatrix},
\]
implies the commutation relation
\[
	\xi(x) \circ (1+X) = (1+X)^x \circ \xi(x)
\]
of endomorphisms of $\pi^{N_1} $.

We adopt the convention of
\cite{Breuil15}
of denoting by $\PhG$ the category of 
finite-dimensional étale $(\varphi, \Gamma) $-modules
over $\mathbb{F} (\!(X)\!)  $,
and of denoting by $\PhGhat $ its pro-completion. 
Again following \emph{loc.\!\ cit.},
we now define a contravariant functor $\Dvee [-]$.
\begin{definition}
	\label{def:Dvee}
Given $\pi$ a smooth representation of $\GLfield $
over $\F $,
we can consider the set of all finitely generated 
$\mathbb{F} [\![X]\!] [F] $-submodules
$M \subseteq \pi^{N_1} $
which are stable by the action of  
$\mathbb{Z} _p^\times  $ 
and which are moreover \emph{admissible},
i.e.\ such that $M[X]= \{m\in M\mid Xm=0\} $
is finite dimensional over $\mathbb{F}  $.
If $M^\vee \defeq  \Hom_{\mathbb{F} } (M,\mathbb{F} ) $ is the 
algebraic $\mathbb{F}  $-linear dual of $M $,
we define
\[
	D^\vee_\xi(\pi) \defeq  \varprojlim_{M} M^\vee[X ^{-1}]
\]
to be the inverse limit over this set.
It naturally lies in $\PhGhat$ by
\cite[Proposition~2.7(i)]{Breuil15}.
\end{definition}

\subsection{The main four hypotheses}
	\label{sec:hypotheses} 
From now on, we assume that 
$\repr$ is semisimple.
Then, we will use the following
set of hypotheses on
an admissible smooth representation 
$\pi$ of $\GL_2(K)$ over $\mathbb{F}$
with a central character:
\begin{enumerate}[(i)]
	\item \label{hypothesis:i}
		there is an integer $r\ge 1$ such that
\[
\pi^{K_1}\cong D_0(\overline{\rho})^{\oplus r}
\]
		as representations of $\KK$;
	\item \label{hypothesis:ii} for any $\lambda\in \mathscr{P} $ , we have an equality of multiplicities
		\[
			[\pi[\mI^{3}]\colon \chi_{\lambda}] = [\pi[\mI]\colon \chi_\lambda];
		\]

	\item \label{hypothesis:iii}
if we let 
$\E[j](-) \defeq  \Ext^j_{\Lambda} (-, \Lambda)
= \Ext^j_{I/Z_1} (-, \Lambda)$,
then the $\Lambda$-module
$\pi^\vee$ is \textit{essentially self-dual}
of grade $2f$,
i.e.\ there exists a $\GL_2(K)$-equivariant isomorphism of $\Lambda$-modules
\[
\E(\pi^\vee)\cong \pi^\vee \otimes (\det(\overline{\rho})\omega^{-1})
\]	
where the action of $\GLfield $ on the
left-hand side is defined by Kohlhaase in 
\cite[Proposition~3.2]{Koh17};

	\item \label{hypothesis:iv}
for any smooth character $\chi \colon I \to \F ^\times $
and for $i \ge 0$
we have $\Ext^i_{I/Z_1} (\chi,\pi)\neq 0$
if and only if $[\pi[\mI]:\chi]\neq 0$,
in which case
\[
	\dim_{\F} \Ext^i_{I/Z_1} (\chi,\pi)=
	\binom{2f}{i} r,
\]
where $r \ge 1$ is the integer of \ref{hypothesis:i},
and where $\Ext^{i}_{I/Z_1}(\chi,-) $ 
is computed in the category of smooth $I/Z_1$-representations.
\end{enumerate}

Notice that, by the proof of \cite[Corollary~5.3.5]{BHHMS1},
hypothesis \ref{hypothesis:ii} implies that the 
$\grL $-module $\grm(\pi^\vee)
\defeq \bigoplus _{i \ge 0}
\mathfrak{m}^{i}\pi^\vee
/\mathfrak{m}^{i+1}\pi^\vee$
is annihilated by $J $, and so
its multiplicity $m_{\mathfrak{q} }(\grm(\pi^\vee)) $ is 
well-defined for each 
minimal prime
$\mathfrak{q}\subseteq \overline{R}$.

\section{On the structure of $\pi$}
	\label{sec:structure-pi}
The main goal of this section is to 
show \Cref{thm:principal-series} below,
stating that $\pi$ contains a direct sum of
$2r$ principal series 
when $\repr$ is reducible split.
In doing so, we set up the building
blocks for the rest of the article.

\subsection{On the structure of subrepresentations}
	\label{sec:structure-srep} 
Assume that $\pi$ satisfies hypothesis \ref{hypothesis:i}
of \Cref{sec:hypotheses},
and set $\V\defeq\F^{r}$.
Remember that,
in the paragraph after \eqref{eq:diagr1-def},
we chose the structure of
diagram on $\Diagr$ to be the same
of \cite[Theorem~1.3]{DL21},
which by the last paragraph of
\cite[§3.4.1]{BHHMS2}
coincides with the one of
\cite[Theorem~3.4.1.1]{BHHMS2}.
By \emph{loc.\ cit.}
we know that we can fix an isomorphism of diagrams
\begin{equation}
	\label{eq:1}
\iota_{\pi} :
( \pi^{I_1}\hookrightarrow\pi^{K_1})
\cong
\V \otimes_{\F} 
\Diagr.
\end{equation}
If $\pi'$ is a subrepresentation of $\pi$,
then $\iota_\pi(\pi^{\prime I_1} \hookrightarrow \pi^{\prime K_1})$
is a subdiagram of $\Diagr^{\oplus r}$,
and the goal of this section is to
determine which subdiagrams can occur.

We begin at the level of socles:
for each Serre weight $\sigma \in W(\repr)$,
let
\begin{equation}
	\label{eq:V-def} 
\Vwt \defeq \Hom_\Gamma(\sigma, \iota_\pi(\pi^{\prime K_1})) \hookrightarrow 
\Hom_\Gamma(\sigma, \iota_\pi(\pi^{K_1})) =
\Hom_\Gamma(\sigma, \V\otimes_{\F}\Diagr) \cong \V,
\end{equation}
where the isomorphism on the right
is given by
\begin{align*}
\Hom_\Gamma(\sigma,\V \otimes_{\F} \Diagr) \cong 
\V \otimes_{\F}\Hom_\Gamma(\sigma, \Diagr) \overset{ \eqref{eq:diagr-def}}{ = }&
\V \otimes_{\F}\Hom_\Gamma(\sigma, \Dx[\sigma])\\=&  
\V \otimes_{\F}\Hom_\Gamma(\sigma, \sigma)\cong 
\V.
\end{align*}
It is important to note that, 
despite our notation, $\Vwt$ also depends 
on the choice of\eqref{eq:1}.
\begin{remark}
	\label{rem:V-are-commensurable} 
With this definition, we can compare 
$\Vwt[\pi',\sigma_1] \subseteq \V \supseteq \Vwt[\pi',\sigma_2]$
for different Serre weights 
$\sigma_1,\sigma_2 \in W(\repr)$,
and we will freely do so throughout the rest.

Notice also that if $\pi_1 \subseteq \pi_2$
are two subrepresentations of $\pi$, then
$\Vwt[\pi_1,\sigma] \subseteq \Vwt[\pi_2,\sigma]$
for all $\sigma \in W(\repr)$.
Moreover,
$\Vwt[-,\sigma]$ only depends
on the $\GLring$-socle in the sense that
$\socR\pi_1 = \socR\pi_2$ implies
$\Vwt[\pi_1,\sigma] =  \Vwt[\pi_2,\sigma]$.
\end{remark}
\bigskip

If $\pi'$ is a subrepresentation of $\pi$
and if $V_{\pi'}^\sigma $ denotes the
$\sigma $-isotypic component of 
$ \socR \pi'$ (which is possibly zero),
then \eqref{eq:V-def} implies that
$\iota_\pi(V_{\pi'} ^{\sigma})=
\Vwt[\pi',\sigma] \otimes _{\F} \sigma$,
and so $\iota_ \pi$ restricts to an isomorphism
\begin{equation}
	\label{diag:V-def}
	\begin{tikzcd}[column sep=small]
	V^\sigma_{\pi'} \ar[r, "\subseteq ", phantom]  \ar[d, "\iota_ \pi"', "\wr"]  & V^\sigma_ \pi \ar[d, "\iota_ \pi", "\wr"'] \\   
	\Vwt \otimes_{\F} \sigma \ar[r, "\subseteq ", phantom] & \V \otimes_{\F} \sigma 
	\end{tikzcd}
\end{equation}
of $\GLring$-representations.
If
\begin{equation}
	\label{eq:def-multiplicity}
	\rwt \defeq [\socR(\pi'):\sigma]
\end{equation}
denotes the multiplicity of $\sigma\in W(\repr) $ in the $\GLring$-socle of $\pi' $,
then $\dim_{\F} \Vwt=\rwt.$

We now promote the inclusion of
$\iota_\pi(V_{\pi'} ^{\sigma})=
\Vwt[\pi',\sigma] \otimes _{\F} \sigma$
into
$\iota_\pi(\pi^{\prime K_1})$
to an inclusion of
$\Vwt[\pi',\sigma] \otimes _{\F} \Dx[\sigma]$.
\begin{lemma}
\label{lemma:fill-up}
Let $\pi $ be a admissible smooth representation of 
$\GLfield $ over $\mathbb{F}  $ satisfying 
hypothesis \ref{hypothesis:i},
and let $\pi' $ be a subrepresentation of $\pi $.
For each
Serre weight $\sigma \in W(\repr)$
the $\GLring $-representation 
$\iota_ \pi(\pi'^{K_1})$
contains $\Vwt \otimes_{\F} \Dx$.
\end{lemma}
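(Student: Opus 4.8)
The plan is to build up $\Vwt \otimes_{\F} \Dx$ inside $\iota_\pi(\pi'^{K_1})$ one socle layer of $\Dx$ at a time, the essential point being that $\iota_\pi$ is an isomorphism of \emph{diagrams}: thus $\iota_\pi(\pi'^{I_1}) = \iota_\pi(\pi'^{K_1})^{I_1}$ is not merely an $IK^\times$-submodule of $\V \otimes_{\F} \DOne$ but an $N$-submodule, hence stable under $\Pi$, and it is this extra symmetry that promotes the fact that $\pi'^{K_1}$ is detected by its socle weights to the fact that it contains the whole blocks $\Dx$.

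First I would reduce to a single $\sigma$. Since $\DZero = \bigoplus_{\sigma \in W(\repr)} \Dx$ is multiplicity free (\cite[Corollary~13.5]{BP12}), the $\Gamma$-representations $\V \otimes_{\F} \Dx$, for $\sigma \in W(\repr)$, have pairwise disjoint sets of Jordan--H\"older constituents; hence every $\GLring$-submodule $W$ of $\V \otimes_{\F} \DZero$ splits as $W = \bigoplus_{\sigma}(W \cap (\V \otimes_{\F} \Dx))$. Taking $W = \iota_\pi(\pi'^{K_1})$ and setting $W_\sigma \defeq W \cap (\V \otimes_{\F} \Dx)$, the square \eqref{diag:V-def} together with $\socG \Dx = \sigma$ (and multiplicity freeness again) identifies $\socG W_\sigma$ with $\Vwt \otimes_{\F} \sigma$; so it remains to prove $\Vwt \otimes_{\F} \Dx \subseteq W_\sigma$ for every $\sigma$.

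For this I would run a simultaneous induction over all $\sigma \in W(\repr)$, showing $\Vwt \otimes_{\F} \soc_i(\Dx) \subseteq W_\sigma$ for all $i \ge 0$; the base cases $i = 0,1$ are immediate from the definition of $\Vwt$. In the inductive step the $(i+1)$-st socle layer $\soc_{i+1}(\Dx)/\soc_i(\Dx)$ is a direct sum of Serre weights $\tau$, and for each such $\tau$ one must lift the $\Vwt$-worth of copies of $\tau$ into $W_\sigma$ modulo $\Vwt \otimes_{\F} \soc_i(\Dx)$. The weight $\tau$ typically does not contribute to $\Dx^{I_1}$, so it cannot be detected in this block directly; instead one passes through $\Pi$ to an adjacent block $\Dx[\delta^k(\sigma)]$, where the explicit description of $\Diagr$ recalled from \cite{BP12} --- the parametrisation of the eigencharacters of $\DOne$ by $\mathscr{P}$, the action $\chi_\lambda \mapsto \chi_\lambda^s$ of $\Pi$, and weight cycling (\Cref{def:weight-cycling}, \cite[Lemma~15.2]{BP12}) --- identifies certain $I_1$-eigenvectors that are already in $\iota_\pi(\pi'^{I_1})$ by the inductive hypothesis for $\delta^k(\sigma)$, and which, viewed back in the block $\Dx$ after applying a suitable element of $N$, generate over $\GLring$ the missing copies of $\tau$. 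Since $\iota_\pi$ is $N$-equivariant and $\iota_\pi(\pi'^{I_1})$ is $N$-stable, these copies land in $W_\sigma$, and since each $\Dx$ is finite dimensional the induction terminates, giving $\Vwt \otimes_{\F} \Dx \subseteq W_\sigma$.

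The step I expect to be the main obstacle is the bookkeeping of the $\F$-vector subspaces, which is exactly what is new relative to the minimal ($r=1$) case of \cite{BHHMS2}: every time one applies $\Pi$ (or another element of $N$) one changes block, and one must verify that the subspace $\Vwt \subseteq \V$ attached to $\sigma$ is carried onto the subspace attached to the new socle weight under the canonical identifications induced by the choice in \eqref{eq:1} and by \cite{BP12}'s parametrisation of the weights and eigencharacters of $\Diagr$. Only once this compatibility is in place does the induction close with the \emph{same} subspace $\Vwt$ at every layer, rather than with some a priori unrelated one; all of the remaining ingredients (multiplicity freeness of $\DZero$, the structure of the $I(\repr,\tau)$, weight cycling) are inherited from \cite{BP12}.
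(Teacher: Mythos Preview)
Your approach diverges from the paper's, and the step you flag as ``the main obstacle'' is an unresolved gap rather than mere bookkeeping. Your simultaneous layer-by-layer induction requires, at the inductive step for $\sigma$, passing via $\Pi$ to some other block $\Dx[\sigma']$ and invoking the hypothesis there; but that hypothesis furnishes only $\Vwt[\pi',\sigma']\otimes_{\F}(\text{something})$, with the subspace attached to $\sigma'$, not to $\sigma$. To close the loop with the \emph{same} subspace $\Vwt[\pi',\sigma]$ you would need to know $\Vwt[\pi',\sigma]=\Vwt[\pi',\sigma']$ along the $\delta$-orbit, which is exactly the content of \Cref{lemma:main} --- and in the paper that lemma is proved \emph{using} the present one, so invoking it here is circular. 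There is also a secondary vagueness: $\Pi$ sends the socle eigencharacter $\chi_\sigma$ of block $\sigma$ into block $\delta(\sigma)$ at layer $\ell(\delta(\sigma),\sigma^{[s]})$, which bears no a priori relation to your induction index $i$, so the claimed bridge from layer $\le i$ of one block to layer $i{+}1$ of another is not substantiated.

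The paper sidesteps both issues by a different induction: not layer-by-layer within a block, but whole-block-by-whole-block around the $\delta$-cycle, following \cite[Theorem~19.10]{BP12}. Fixing $V'\defeq\Vwt[\pi',\sigma]$ once, one passes from $V'\otimes_{\F}\sigma\subseteq\iota_\pi(\pi'^{K_1})$ to $V'\otimes_{\F}\Dx[\delta(\sigma)]\subseteq\iota_\pi(\pi'^{K_1})$ --- same $V'$, next block --- and then iterates until $\delta^n(\sigma)=\sigma$. The mechanism is not the diagram action of $\Pi$ on $\pi'^{I_1}$ alone but Frobenius reciprocity: the $\GLfield$-map $V'\otimes_{\F}\cInd_{\KK}^{\GLfield}\sigma\to\pi'$, restricted to the finite $\GLring$-piece $R(\sigma)$ of \cite[Definition~17.9]{BP12}, is analysed via \cite[Lemmas~19.5, 19.7]{BP12}, which show its image contains a full copy of $\Dx[\delta(\sigma)]$ with socle $\F e_i\otimes_{\F}\delta(\sigma)$ for each basis vector $e_i$ of $V'$. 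Because the entire next block is obtained in one stroke, no comparison of the subspaces $\Vwt[\pi',\sigma']$ for varying $\sigma'$ is ever needed, and the argument is available \emph{before} \Cref{lemma:main}.
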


\begin{proof}
We fix once and for all a Serre weight $\sigma \in W(\repr)$,
and we set $r' \defeq \rwt$, $V'\defeq \Vwt$.
Let $(e_i)_{i=1} ^r$ be the
standard basis of $\V = \F^r$,
and  notice that we can fix the identification $\iota_\pi $
so that $V'$
is the vector subspace 
$ \left\langle e_1,\dots e_{r'}  \right\rangle_{\F} $
spanned by $e_1, \dots, e_{r'} $.

Now, we follow the proof of 
\cite[Theorem~19.10]{BP12}.
Our strategy will be the following:
we show that $\iota_ \pi(\pi'^{K_1})$
contains 
$V'\otimes_{\F} \Dx[\delta(\sigma)]$.
Then, we iterate this procedure to 
show that
$\iota_ \pi(\pi'^{K_1})$
contains
$V'\otimes_{\F} \Dx[\delta^{n}(\sigma)]$
for all $n\ge 1$.
Since $\delta^n(\sigma)=\sigma$ for some $n\ge 1$,
we can conclude.

\paragraph{Step 1.}
We prove that $\iota_\pi({\pi'}^{K_1})$ contains 
$V' \otimes_{\F}  I(\repr, \sigma^{[s]})$.
We essentially follow
the proof of \cite[Theorem~19.10]{BP12},
but need to be more careful about multiplicities.

If $ \cInd \sigma$
denotes the compact induction of $\sigma $ from 
$\KK $ to $\GLfield $,
then from
\[
	V' \otimes_{\F}  \sigma 
\xrightarrow[\sim]{\iota_\pi ^{-1}}
	V_{\pi'} ^\sigma \subseteq \pi',
\]
coming from \eqref{eq:soc-line-formal-computation},
we obtain by Frobenius reciprocity a nonzero morphism
\begin{equation}
	\label{eq:pre-alpha}
 V' \otimes_{\F}  \cInd \sigma \to \pi'
\end{equation}
of $\GLfield$-representations.
If $R(\sigma)$
is the $\GLring$-subrepresentation
of $\cInd \sigma $
defined in 
\cite[Definition~17.9]{BP12},
then $K_1$ acts trivially on  $R(\sigma)$
by definition: 
in \emph{loc.\ cit.}\ it is defined as a representations of $\Gamma $, 
and we are 
considering it a representation of $\GLring $ by inflation of scalars.
So, \eqref{eq:pre-alpha}
restricts to
$V' \otimes_{\F}  R(\sigma)\to \pi'^{K_1}$,
which we then compose with $\iota_\pi$:
this defines the morphism
\begin{equation}
		\label{eq:alpha-def}
	\begin{tikzcd}
		\alpha \colon V' \otimes_{\F}  R(\sigma) 
		\ar[r] &  
		{\pi'}^{K_1}
		\ar[r, hook, "\iota_\pi"] &  
		\V \otimes_{\F}  \DZero
	\end{tikzcd}
\end{equation}
of $\GLring$-representations.

Note that, by definition of \eqref{eq:pre-alpha},
$\alpha$ restricts to the identity
on $V' \otimes_{\F}  \sigma$:
\begin{equation}
\begin{tikzcd}
		\label{eq:alpha-explicit}
	\id \colon
	V' \otimes_{\F}  \sigma \ar[r, "\sim"',"\iota_\pi ^{-1}"] \ar[d, phantom,  "\rotatebox{-90}{$\subseteq$} "] &
		V^\sigma_{\pi'} 	 \ar[r, "\sim"', "\iota_\pi"] \ar[d, phantom,  "\rotatebox{-90}{$\subseteq$} "] & 
		V' \otimes_{\F}  \sigma \ar[d, "\rotatebox{-90}{$\subseteq$} ", phantom] 	\\
 \alpha \colon V' \otimes_{\F}  R(\sigma) 
 \ar[r] &  
 {\pi'}^{K_1}
 \ar[r, hook, "\iota_\pi"] &  
 \V \otimes_{\F}  \DZero.
\end{tikzcd}
\end{equation}
Now, we use the action of $\Pi $ on $I_1$-invariants: 
for the sake of clarity,
for the rest of the proof
we denote the action
of $\Pi$ on $\DOne$
by $(-)^s$.

For $x \in \sigma^{I_1}\subseteq R(\sigma)^{I_1}$ 
and for $1\le i \le r'$
fixed, we have
$\alpha(e_i \otimes x)= e_i \otimes x$
by \eqref{eq:alpha-explicit}.
Then
$\Pi x$,
which a priori is only an element of 
$\cInd \sigma$, also lies in $R(\sigma)$
(take $J= \emptyset$
and $\tau= \sigma^{[s]}$
in \cite[Definition~17.9]{BP12}),
and we claim that
\begin{equation}
	\label{eq:Pi-and-alpha}
	\alpha(e_i \otimes \Pi x)=
	\alpha(e_i \otimes x)^s
	\overset{\eqref{eq:alpha-explicit}}{=}	e_i \otimes x^s .
\end{equation}
The claim follows from 
\eqref{eq:alpha-def},
using that
\eqref{eq:pre-alpha} is $\GLfield$-equivariant
and that $\iota_\pi$ is a morphism
of diagrams. 

Consider
the $\GLring$-subrepresentation
$\left\langle \GLring \cdot \Pi x \right\rangle $
of $R(\sigma) $
generated by $\Pi x$,
and consider its image
\begin{equation}
	\label{eq:temp-representation}
\alpha
\big(\F e_i \otimes_{\F}  \left\langle \GLring \cdot \Pi x \right\rangle
\big)
\overset{\eqref{eq:Pi-and-alpha}}{=}
\F e_i \otimes_{\F}  \left\langle \GLring \cdot x^s \right\rangle
\subseteq \V \otimes_{\F} \DZero. 
\end{equation}
We claim that \eqref{eq:temp-representation}
is the subrepresentation 
$\F e_i \otimes_{\F}  I(\repr, \sigma^{[s]}) $
of $\V \otimes_{\F}  \DZero$
(for the definition of $\sigma^{[s]}$
see \Cref{def:s-at-D0-level},
whereas 
$I(\overline{\rho}, \sigma^{[s]})$
is defined in \Cref{deflem:I}).
By construction,
$x \in \sigma^{I_1}$ 
implies $x^{s} \in (\sigma^{[s]})^{I_1}$,
and consequently $x^{s}$ belongs to
$ I(\repr, \sigma^{[s]})$
and is not mapped to zero in the cosocle
$\sigma^{[s]}=\cosocR I(\repr, \sigma^{[s]})$.
We deduce that we have an inclusion
$\F e_i \otimes_{\F}  \left\langle \GLring \cdot  x^{s} \right\rangle \subseteq 
 \F e_i \otimes_{\F}  I(\repr, \sigma^{[s]})$,
which is a surjection on the cosocles,
hence an equality.
This establishes an isomorphism
\begin{equation}
	\label{eq:convoluted-iso}
\alpha \colon \F e_i \otimes_{\F}  \left\langle \GLring \cdot \Pi x \right\rangle
\xrightarrow{\sim}
\F e_i \otimes_{\F}  I(\repr, \sigma^{[s]}).
\end{equation}

\paragraph{Step 2.}
We prove that $\iota_\pi({\pi'}^{K_1})$
contains $V' \otimes_{\F}  \Dx[\delta(\sigma)]$.
Let $Q_i \defeq \alpha(\F e_i \otimes_{\F}  R(\sigma)) $.
We want to apply
\cite[Lemma~19.5]{BP12}
to conclude that $Q_i $ is isomorphic,
as a $\Gamma$-representation,
to the quotient 
$Q(\repr, \sigma^{[s]} ) $
of $R(\sigma) $
defined in \emph{loc.\!\ cit.}
We have seen
in \eqref{eq:convoluted-iso} that
(ii) of \emph{loc.\ cit.} is satisfied.
As for (i) of \emph{loc.\ cit.}, we use that
$Q_i$ is also a subrepresentation of 
$\V \otimes_{\F}  \DZero$,
which has $\GLring$-socle
$\V \otimes_{\F}  \bigoplus_{\sigma\in W(\repr)} \sigma$,
together with the fact that
$R(\sigma)$ is multiplicity free
(cf.\ \cite[Lemma~17.11]{BP12}).
We conclude that
$Q_i \cong Q(\repr, \sigma^{[s]} ) $,
in particular $Q_i$
contains a subrepresentation
$Q'_i$ isomorphic to $\Dx[\delta(\sigma)]$,
by \cite[Lemma~19.7]{BP12}.
Note that 
\[
	\socR Q'_i = \F e_i \otimes_{\F} \delta(\sigma) \subseteq \F e_i \otimes_{\F}  \Dx[\delta(\sigma)],
\]
cf.\ \eqref{eq:convoluted-iso}.
But there is only one subrepresentation of 
$\V \otimes_{\F}  \Dx[\delta(\sigma)]$
isomorphic to $\Dx[\delta(\sigma)]$
and with socle $\F e_i \otimes_{\F}  \delta(\sigma)$,
namely 
$\F e_i \otimes_{\F}  \Dx[\delta(\sigma)]$.

Since $Q'_i \subseteq \im \alpha \subseteq \iota_\pi({\pi'}^{K_1})$,
we have shown that, for each $1\le i\le r'$,
$\iota_\pi({\pi'}^{K_1})$
contains
$\F e_i \otimes_{\F}  \Dx[\delta(\sigma)]$,
hence it contains
$V' \otimes_{\F}   \Dx[\delta(\sigma)]$,
which concludes the proof.
\end{proof}

Now, we prove a generalisation of
 \cite[Lemma~3.3.5.2]{BHHMS2},
to the case $r \ge 1 $.
\begin{lemma}
\label{lemma:main}
Let $\pi $ be a admissible smooth representation of 
$\GLfield $ over $\mathbb{F}  $ satisfying 
hypothesis \ref{hypothesis:i},
and let $\pi' $ be a subrepresentation of $\pi $.
\begin{enumerate}[(i)]
\item 
	\label{point:main-1}
	The multiplicity $\rwt=[\socR(\pi'):\sigma]$
only depends on the length $\ell(\sigma)$ of $\sigma$.
Moreover, if $\repr $ is irreducible,
$\rwt$ only depends on $\pi' $.

\item 
	\label{point:main-2}
If $\sigma,\sigma'\in W(\repr) $
have the same length $ \ell $, then 
$\Vwt[\pi', \sigma]= \Vwt[\pi', \sigma'] $
inside $\V$.
Moreover, if $\repr $ is irreducible,
then for any choice of $\sigma,\sigma'\in W(\repr) $
we have
$\Vwt[\pi', \sigma]=\Vwt[\pi', \sigma']$.
\end{enumerate}
\end{lemma}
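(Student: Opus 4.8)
The strategy is to leverage \Cref{lemma:fill-up} together with the weight-cycling combinatorics of \Cref{def:weight-cycling}, in order to transport the information about socle multiplicities and the vector subspaces $V'$ along the orbit of $\delta$ on $W(\repr)$. First I would recall two facts about $\delta$ which come from \cite[§15]{BP12}: the permutation $\delta$ acts on $W(\repr)$, and on the level of lengths it either preserves $\ell(\sigma)$ or, more precisely, the $\delta$-orbits are exactly the sets $W(\repr)_\ell$ of Serre weights of a fixed length $\ell$ (this is essentially \cite[Lemma~15.3]{BP12} or the discussion around it; in the reducible split case it matches the decomposition \eqref{eq:ell-decomposition}, and in the irreducible case $\delta$ acts transitively on all of $W(\repr)$ since $\Diagr$ is indecomposable by \cite[Theorem~15.4(i)]{BP12}). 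Granting this, part \ref{point:main-1} and part \ref{point:main-2} will follow from the single observation that $\rwt$ and $\Vwt$ are constant along $\delta$-orbits.

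For the constancy along $\delta$-orbits, fix $\sigma \in W(\repr)$ and set $V' \defeq \Vwt[\pi',\sigma]$, $r' \defeq \dim_\F V' = \rwt$. By \Cref{lemma:fill-up}, $\iota_\pi(\pi'^{K_1})$ contains $V' \otimes_\F \Dx[\delta(\sigma)]$; in particular its $\GLring$-socle contains $V' \otimes_\F \delta(\sigma)$. Hence $\Vwt[\pi',\delta(\sigma)] \supseteq V' = \Vwt[\pi',\sigma]$, where I use \eqref{diag:V-def} and the fact that $\delta(\sigma)$ occurs in $\DZero$ with multiplicity one, so that $\Hom_\Gamma(\delta(\sigma), \V\otimes_\F\DZero) \cong \V$ is computed through $\Dx[\delta(\sigma)]$. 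Iterating, $\Vwt[\pi',\delta^n(\sigma)] \supseteq \Vwt[\pi',\sigma]$ for all $n \ge 1$; choosing $n$ with $\delta^n(\sigma) = \sigma$ forces all these inclusions to be equalities along the orbit. Taking $\F$-dimensions gives $\rwt[\pi',\delta^n(\sigma)] = \rwt$ for all $n$, i.e.\ $\rwt$ and $\Vwt$ depend only on the $\delta$-orbit of $\sigma$.

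It then remains to identify the $\delta$-orbits with the level sets $W(\repr)_\ell$. For the irreducible case this is immediate from indecomposability of $\Diagr$: the subrepresentation of $\DZero$ generated (under the diagram action, i.e.\ using $\Pi$ and $\GLring$) by any single $\Dx[\sigma]$ must be all of $\DZero$, which by the argument above shows $\Vwt$ is independent of $\sigma$ and hence $\rwt$ is too — this is the ``moreover'' clauses. For the reducible split case, \eqref{eq:ell-decomposition} shows that $\delta$ preserves each $\DEll[\ell]$, so the orbit of $\sigma \in W(\repr)_\ell$ stays within $W(\repr)_\ell$; transitivity on $W(\repr)_\ell$ then follows from indecomposability of each $\DEll[\ell]$ as a diagram (again \cite[Theorem~15.4(ii)]{BP12}), which gives that $\delta$ is transitive on its socle constituents. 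The main obstacle I anticipate is the bookkeeping needed to be sure that \Cref{lemma:fill-up} can be applied with the \emph{same} subspace $V'$ at each stage of the iteration — that is, that the copy of $\delta(\sigma)$ produced in the socle is genuinely indexed by $V' \subseteq \V$ and not merely by some subspace of the same dimension; this is exactly the "technical difficulty when $r>1$" flagged in the outline, but it is precisely what \Cref{lemma:fill-up} was set up to handle, since its statement gives $\Vwt \otimes_\F \Dx[\delta(\sigma)]$ on the nose, with the identification of $\iota_\pi$ fixed once and for all.
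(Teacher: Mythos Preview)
Your argument is correct, but one citation needs repair: the \emph{statement} of \Cref{lemma:fill-up} only asserts $\Vwt[\pi',\sigma]\otimes_\F \Dx[\sigma]\subseteq\iota_\pi(\pi'^{K_1})$ (same $\sigma$ on both factors), not $\Vwt[\pi',\sigma]\otimes_\F \Dx[\delta(\sigma)]$. What you actually need is Step~2 of its \emph{proof}, which establishes precisely that $V'\otimes_\F \Dx[\delta(\sigma)]\subseteq\iota_\pi(\pi'^{K_1})$ with $V'=\Vwt[\pi',\sigma]$. Once you cite that, your cycling argument (the chain of inclusions $\Vwt[\pi',\sigma]\subseteq\Vwt[\pi',\delta(\sigma)]\subseteq\cdots$ forced to collapse by periodicity of $\delta$) goes through, and your identification of $\delta$-orbits with the sets $W(\repr)_\ell$ via indecomposability of $\DEll$ (resp.\ $\Diagr$) is correct.

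The paper organises the proof differently. Rather than propagating $V'$ around the $\delta$-orbit, it fixes $\sigma_1\in W(\repr)_\ell$ of \emph{maximal} multiplicity $r'$, works one basis vector $e_j$ at a time, and for each $j$ forms the pullback $D_0'^{(j)}$ of $\iota_\pi(\pi'^{K_1})$ with $\F e_j\otimes_\F\DZeroEll$ inside $V\otimes_\F\DZero$. Using \Cref{lemma:fill-up} (now only its statement) it decomposes $D_0'^{(j)}$ along socle constituents, checks the complementary summand is $\Pi$-stable, and invokes indecomposability of $\DEll$ to force $D_0'^{(j)}\cong\DEll$; maximality of $r'$ then upgrades the resulting inclusions $V'\subseteq\Vwt[\pi',\sigma]$ to equalities. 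Your route is more direct and avoids the maximality trick and the line-by-line pullback; the paper's route avoids reaching inside the proof of \Cref{lemma:fill-up}. Both rest on the same two ingredients (weight cycling and diagram indecomposability), so either is fine.
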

\begin{remark}
\label{rem:length-uniformity}
In the case where $\repr $ is irreducible,
we should observe that 
the length of Serre weights,
while being defined, 
doesn't play a key role.
Despite this, speaking of length even in the irreducible case has the advantage of making 
later arguments uniform in the two cases.
\end{remark}
\begin{proof}
We begin with 
the case $\repr $ reducible split.
Fix $\ell \in \{0, \dots, f\}$,
and fix $\sigma_1\in W(\overline\rho)$ a Serre weight of length $ \ell $,
such that its multiplicity
\[
r' \defeq \rwt[\pi',\sigma_1]=[\socR(\pi'):\sigma_1]
\]
is maximal
among Serre weights of length $ \ell$.
If $r'=0$, then \ref{point:main-1}
and \ref{point:main-2} are clear,
so assume that $r' >0$.

Let $(e_i)_{i=1} ^r$ be the
standard basis of $\V = \F^r$.
As in the proof of \Cref{lemma:fill-up}, 
we may choose $\iota_\pi $ 
so that $V' \defeq \Vwt[\pi',\sigma_1]$
is the subspace 
$\left\langle e_1,\dots, e_{r'}  \right\rangle_{\F} $
spanned by $e_1, \dots, e_{r'} $.

For each $1 \le j \le r'$
we consider the pullback 
\begin{equation}
\begin{tikzcd}
	\label{eq:pullback-reducible}
    \dZero\ar[ddrr, phantom,  "\square"]\ar[rr, hook]\ar[dd, hook] &&
\F e_{j} \otimes_{\F}  \DZeroEll[\ell] \ar[d, hook]\\
{}&& \F e_{j} \otimes_{\F} \DZero\ar[d, hook] \\
\pi^{\prime K_1}\ar[r, phantom, "\subseteq "] & \pi^{K_1}\ar[r, "\sim"',"\iota_\pi "] &
V \otimes_{\F}  \DZero
\end{tikzcd}
\end{equation}
of $\GLring $-representations
(which is set-theoretically simply an intersection).
As $r' > 0$,
$\dZero $ contains one copy of $\sigma_1$ in its socle;
in particular, $\dZero$ is nonzero.
Moreover, if we let 
$\dOne \defeq (\dZero)^{I_1} $,
then it clearly coincides with the intersection $\pi^{\prime I_1}\cap  (\F e_j \otimes_{\F}  \DOneEll) $,
and thus
 $\diagr \defeq  (\dOne \hookrightarrow \dZero) $ defines
a diagram.

We now claim that
$\diagr$ is a direct summand of $\DEll$, and 
we begin by looking at
$\dZero $.
Remember that we have the decompositions
\begin{align*}
	\DZeroEll[\ell]&\overset{\eqref{eq:ell-diagr-def}}{=}\bigoplus_{\ell(\sigma)=\ell} \Dx.
\end{align*}

As a consequence of 
\Cref{lemma:fill-up}, we have
\[
	\dZero \cong \bigoplus_{\sigma \subseteq \soc \dZero} \Dx[\sigma],
\]
and this gives the direct sum decomposition 
of representations
\begin{equation}
	\label{eq:ugly-decomposition}
	\DZeroEll[\ell] \cong 
\dZero \oplus D''_0,
\end{equation}
where
\[
	D''_0 \defeq 
\bigoplus_{\substack{\sigma \not \subseteq  \soc \dZero \\ \ell(\sigma)= \ell}} \Dx[\sigma].
\]

Since $\diagr$ is a diagram we know that $\dOne$
is $\Pi$-stable, and we claim that 
\[
	D''_1 \defeq (D''_0)^{I_1}=\bigoplus_{\substack{\sigma\not \subseteq  \soc \dZero \\ \ell(\sigma)= \ell}} \DOnex[\sigma]
\]
is also $\Pi $-stable,
so that we have a decomposition of diagrams.

To see this, 
remember that weight cycling on $\Diagr$
is such that, for each $H$-eigenvector
$v^{\chi}\in D''_1$
of eigencharacter $\chi:H\to \F^\times $,
there is a unique $\sigma\in W(\repr)$
(of length $\ell$,
since $\repr$ is reducible)
such that $\Pi v^{\chi}\in \DOnex $.
But
$\sigma $ cannot be in $\soc D_0^{\prime(j)}$,
since $D_0^{\prime (j)}$
is itself $\Pi$-stable.

Then, according to \cite[Theorem~15.4(ii)]{BP12},  
we conclude that the diagram $\diagr$,
defined after \eqref{eq:pullback-reducible},
is isomorphic to $\DEll$.

In particular,
for a fixed $\sigma\in W(\repr)$
of length $\ell$, and
for every $1\le j\le r'$,
$\iota_\pi({\pi'}^{K_1})$
contains $\F e_j \otimes_{\F} \sigma$ in its socle, 
hence
\begin{equation}
	\label{eq:aligning-the-V}
V' \otimes_{\F}  \sigma \subseteq 
\Vwt \otimes_{\F}  \sigma 
\end{equation}
and so the multiplicity 
$[\socR (\pi'^{K_1}):\sigma] $
is at least $r' $.
By the maximality of $\sigma_1 $, 
\eqref{eq:aligning-the-V} is an equality,
which proves
\ref{point:main-1} and \ref{point:main-2}
of the statement at the same time.

In the case $\repr $ irreducible
the argument is very similar:
for each $1\le j\le r'$, 
the pullback 
\begin{equation}
	\label{eq:pullback-irreducible}
\begin{tikzcd}
\dZero\ar[drr, phantom,  "\square"]\ar[rr, hook]\ar[d, hook]& &
\F e_j \otimes_{\F} \DZero\ar[d, hook], \\
\pi'^{K_1}\ar[r, phantom, "\subseteq "] & \pi^{K_1}\ar[r, "\sim"',"\iota_\pi "] &V \otimes_{\F}  \DZero 
\end{tikzcd}
\end{equation}
admits a direct summand
$\bigoplus_{\sigma\not \subseteq  \soc \dZero} D_{0,\sigma}$,
and the direct sum decomposition 
\[
\DZero =
\dZero \oplus \bigoplus_{\sigma\not \subseteq  \soc \dZero}\Dx[\sigma] 
\]
gives a decomposition of diagrams as well.
According to \cite[Theorem~15.4(i)]{BP12},  
we conclude that 
$\diagr\cong \Diagr$.

Hence, for $\repr$ irreducible
\eqref{eq:aligning-the-V}
holds without restrictions on $\sigma\in W(\repr)$,
from which 
\ref{point:main-1} and \ref{point:main-2}
follow.
\end{proof}
Using \Cref{lemma:main},
we set $\rell \defeq [\socR(\pi'):\sigma]$ and
$\Vell \defeq  \Vwt$,
where  $\sigma\in W(\overline \rho)$ is any Serre weight of length $\ell$.
Moreover, if $\repr $ is irreducible, then we set 
$\rnull \defeq [\socR(\pi'):\sigma]$
and $\Vnull \defeq \Vwt$.
In the spirit of \Cref{rem:length-uniformity},
we will sometimes use the first notation even in the irreducible case.

\bigskip
The following lemma will allow us 
to make ``linewise'' 
(in the sense of 
\Cref{lemma:line-intersection}(i))
arguments rigorous.
\begin{lemma}
	\label{lemma:line-intersection}
Let $S$ be a (not necessarily commutative)
$\F$-algebra,
$D$ an $S$-module that admits a
(finite) composition series,
$W$ a finite-dimensional $\F$-vector space.
Suppose that the composition factors of $D$ 
are pairwise non-isomorphic
(i.e.\ $D$ is multiplicity free),
and denote by $\JH(D)$ the set of composition
factors of $D$.
Suppose moreover that 
for every $\tau \in \JH(D)$
we have $\End_S(\tau) = \F$.
\begin{enumerate}[(i)]
\item 
Let $M_1,M_2 \subseteq W \otimes_{\F}D$
be two $S$-submodules.
If $M_1 \cap (L \otimes_{\F}D)=
M_2 \cap (L \otimes_{\F}D)$
inside $W \otimes_{\F}D$
for every line $L \subseteq W$,
then $M_1=M_2$.
\item 
Assume that $D$ decomposes into a direct sum 
$D = \bigoplus_{\sigma \in \soc_S(D)}D_\sigma$,
for some $S$-submodules $D_\sigma \subseteq D$
with socle $\soc_S(D_\sigma)= \sigma$.

Let $M \subseteq W \otimes_{\F}D$
be an $S$-submodule.
Then, there exists a unique direct sum decomposition 
$M = \bigoplus_{\sigma \in \soc_S(D)}M_\sigma$
such that $M_\sigma \subseteq W \otimes_{\F} D_\sigma$.
Moreover, there exist unique
$\F$-vector subspaces
$W_\sigma \subseteq W$ for
$\sigma \in \soc_S(D)$
such that 
$\soc_S(M_\sigma)=W_\sigma \otimes_{\F}\sigma$,
and there is an inclusion
$M_\sigma \subseteq
W_\sigma \otimes_{\F}D_\sigma$.
\end{enumerate}
\end{lemma}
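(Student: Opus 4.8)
\textbf{Proof plan for \Cref{lemma:line-intersection}.}
The plan is to reduce both parts to the key computation $\Hom_S(\tau, W \otimes_{\F} D) = W \otimes_{\F} \Hom_S(\tau,D)$, which holds because $W$ is finite-dimensional over $\F$, together with the observation that for a multiplicity-free $D$ with $\End_S(\tau) = \F$ for each composition factor $\tau$, the functor $M \mapsto \Hom_S(\tau, M)$ applied along the composition series is especially rigid. First I would record the following general fact: if $D$ is multiplicity free with $\JH(D)$ the set of composition factors, and $M \subseteq W \otimes_{\F} D$ is an $S$-submodule, then $\JH(M) \subseteq \JH(W \otimes_{\F} D) = \{\tau^{\oplus \dim_\F W} \mid \tau \in \JH(D)\}$, and for each $\tau$ the $\tau$-isotypic part of $\soc_S(M)$ is of the form $W_\tau \otimes_{\F} \tau$ for a well-defined subspace $W_\tau \subseteq W$ (using $\Hom_S(\tau, W \otimes_{\F} D) \cong W \otimes_{\F} \Hom_S(\tau, D)$ and $\End_S(\tau)=\F$, so that a $\tau$-submodule of $W \otimes_{\F}\tau \subseteq W \otimes_\F D$ corresponds exactly to a subspace of $W$).

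For part (i): suppose $M_1 \neq M_2$; without loss of generality pick $x \in M_1 \setminus M_2$ with the $S$-submodule $\langle x \rangle$ of minimal length among all such, so that $\langle x \rangle \cap M_2$ is a maximal submodule of $\langle x \rangle$ and $\langle x \rangle / (\langle x \rangle \cap M_2)$ is some simple $\tau \in \JH(D)$. Consider the image $\bar{x}$ of $x$ in $W \otimes_{\F} \tau'$, where $\tau'$ is the unique composition factor of $D$ isomorphic to $\tau$; I would argue that $\bar x \neq 0$, since otherwise $x$ lies in $W \otimes_\F D'$ for a proper submodule $D' \subsetneq D$ not containing $\tau'$, and by downward induction on the length of $D$ (the base case $D$ simple being immediate, as then $W \otimes_\F D$ decomposes as a direct sum of lines $L \otimes_\F D$) we would already have $M_1 \cap (W \otimes_\F D') = M_2 \cap (W \otimes_\F D')$, contradicting $x \notin M_2$. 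Now $\bar x = \sum_i w_i \otimes v_i$ with the $v_i$ a basis of the one-dimensional $\tau'$, i.e. $\bar x = w \otimes v$ for a single nonzero $w \in W$; taking $L = \F w$, the hypothesis gives $M_1 \cap (L \otimes_\F D) = M_2 \cap (L \otimes_\F D)$, and I would then lift this equality through the composition series to conclude $x \in M_2$, the desired contradiction. The lifting step is where one uses multiplicity freeness: the map "project to the $\tau'$-layer" identifies $M_i \cap (L \otimes_\F D)$ with submodules of $L \otimes_\F D \cong D$ having the same image $L \otimes_\F \tau'$ in the relevant layer, and uniqueness of the relevant submodule of $L \otimes_\F D$ (again by multiplicity freeness, cf.\ the argument in \Cref{lemma:fill-up}) pins things down.

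For part (ii): I would set $M_\sigma \defeq M \cap (W \otimes_{\F} D_\sigma)$, which visibly lies in $W \otimes_\F D_\sigma$; the sum $\sum_\sigma M_\sigma$ is direct since the $W \otimes_\F D_\sigma$ are, and the content is that $M = \bigoplus_\sigma M_\sigma$. For this I would note both sides have the same socle: $\soc_S(M)$ decomposes according to $\soc_S(W \otimes_\F D) = \bigoplus_\sigma W \otimes_\F \sigma$, and each $\sigma$-isotypic piece of $\soc_S(M)$ lies in $W \otimes_\F D_\sigma$ hence in $M_\sigma$, so $\soc_S(M) \subseteq \bigoplus_\sigma M_\sigma$; since $\bigoplus_\sigma M_\sigma \subseteq M$ and an $S$-module with a composition series equals any submodule containing its socle only when\ldots{} — more carefully, I would instead argue by induction on the length of $M$ using that $\bigoplus_\sigma M_\sigma$ and $M$ have the same socle together with multiplicity freeness of $D$ to force equality layer by layer, exactly as in part (i). Uniqueness of the decomposition follows because any decomposition $M = \bigoplus N_\sigma$ with $N_\sigma \subseteq W \otimes_\F D_\sigma$ forces $N_\sigma \subseteq M \cap (W \otimes_\F D_\sigma) = M_\sigma$, and comparing total lengths gives equality. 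Finally, applying the general fact from the first paragraph to each $M_\sigma \subseteq W \otimes_\F D_\sigma$ and its socle produces the subspaces $W_\sigma \subseteq W$ with $\soc_S(M_\sigma) = W_\sigma \otimes_\F \sigma$ and $M_\sigma \subseteq W_\sigma \otimes_\F D_\sigma$ (the last inclusion because any composition factor of $M_\sigma$ isomorphic to $\tau \in \JH(D_\sigma)$ must, by tracking the generating relations through the multiplicity-free $D_\sigma$, have its $W$-coordinates inside $W_\sigma$). The main obstacle I anticipate is making the "lift an equality/inclusion through a multiplicity-free composition series" step fully rigorous without circularity; I expect to handle it by a clean induction on $\operatorname{length}_S(D)$, peeling off a simple submodule or quotient of $D$ at each stage and using $\End_S(\tau) = \F$ to control the resulting extension data.
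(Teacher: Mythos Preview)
Your approach to part~(i) has a genuine gap in the ``lifting step.'' You find $x\in M_1\setminus M_2$ with $\langle x\rangle$ of minimal length, identify the simple quotient $\tau=\langle x\rangle/(\langle x\rangle\cap M_2)$, and project $x$ to some $\bar x=w\otimes v\in W\otimes\tau'$ lying on the line $L=\F w$. But the hypothesis $M_1\cap(L\otimes_\F D)=M_2\cap(L\otimes_\F D)$ only controls elements that actually lie in $L\otimes_\F D$, and there is no reason $x$ itself should: its components in the lower socle layers of $W\otimes_\F D$ may be spread across many lines of $W$. So you cannot conclude $x\in M_2$ from the line hypothesis, and the ``uniqueness of the relevant submodule of $L\otimes_\F D$'' you invoke does not apply to $x$. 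The induction on $\operatorname{length}_S(D)$ you propose does handle the case $\bar x=0$, but not the case $\bar x\ne 0$.

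The paper sidesteps this entirely by two reductions you do not make. First, replace $M_1$ by $M_1\cap M_2$ to assume $M_1\subseteq M_2$. Second, use the \emph{socle} filtration rather than the cosocle of a cyclic submodule: let $d$ be maximal with $M_1\cap(W\otimes\soc_d D)=M_2\cap(W\otimes\soc_d D)$, and quotient both sides by $W\otimes\soc_d D$. This reduces to $\soc_S(M_1)\subsetneq\soc_S(M_2)$, so there is a simple $\tau\hookrightarrow M_2$ not factoring through $M_1$. Now $\tau$ is an honest \emph{submodule} of $W\otimes_\F D$, and since $D$ is multiplicity free with $\End_S(\tau)=\F$, its image is exactly $L\otimes_\F\tau$ for a unique line $L$. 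Then $\tau\subseteq M_2\cap(L\otimes_\F D)=M_1\cap(L\otimes_\F D)\subseteq M_1$, a clean contradiction. The key difference is that working at the socle produces a submodule that genuinely sits on a single line, whereas your cosocle element $x$ does not.

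For part~(ii) your outline is close to the paper's, but the paper streamlines both the equality $M=\bigoplus_\sigma M_\sigma$ and the inclusion $M_\sigma\subseteq W_\sigma\otimes_\F D_\sigma$ by applying part~(i) directly (the first time to reduce to $\dim_\F W=1$, the second time by a case split on whether a given line $L$ lies in $W_\sigma$). Your proposed ``induction on the length of $M$'' and the phrase ``tracking the generating relations'' would need to be replaced by these two invocations of~(i) to become rigorous.
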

\begin{proof}
(i)
Up to replacing $M_1$ with $M_1 \cap M_2$
we can assume without loss of generality 
that $M_1 \subseteq M_2$.
Suppose for the sake of contradiction
that this inclusion is strict.
If $(\soc_i(D))_{i \ge 0}$ denotes
the $S$-socle filtration on $D$,
we let $d \ge 0$ be the largest integer
such that $M_0 \defeq M_1 \cap 
(W \otimes_{\F}\soc_d(D))=
M_2 \cap (W \otimes_{\F}\soc_d(D))$
(which exists by the assumption
$M_1 \subsetneq M_2$, 
and since $D$ admits a composition series).

When we quotient out $W \otimes_{\F} D$
by $W \otimes_{\F} \soc_d(D)$,
we see that the inclusion
$M_1 \subseteq M_2$
inside $W \otimes_{\F} D$
is sent to an injection
$M_1/M_0 \hookrightarrow  M_2/M_0$
inside $W \otimes_{\F} (D/\soc_d(D))$.
Hence, we can assume that $d=0$,
i.e.\ that $\soc_S(M_1) \subsetneq \soc_S(M_2)
\subseteq W \otimes_{\F}\soc_S(D)$.
In particular, there exists a simple $S$-module
$\tau$ and an $S$-module injection
$\tau \hookrightarrow M_2$
that does not factor through $M_1$.

Since $D$ is multiplicity free,
the image of the composition
$\tau \hookrightarrow M_2 \subseteq W \otimes_{\F} D$
is $L \otimes_{\F} \tau$ for a unique
line $L \subseteq W$
(since $\End_S(\tau)=\F$,
by an abuse of notation we have identified $\tau$
with the image of the unique nonzero 
$S$-module homomorphism 
$\tau \hookrightarrow D$,
up to nonzero scalars).
In particular,
$\tau \hookrightarrow M_2 \subseteq W \otimes_{\F} D$
factors through $M_2 \cap (L \otimes_{\F}D)$.
However, $M_1 \cap (L \otimes_{\F}D)
=M_2 \cap (L \otimes_{\F}D)$
by assumption, so $\tau \hookrightarrow M_2$
factors through $M_1$, contradiction.

(ii)
For each $\sigma \in \soc_S(D)$,
the condition
$M_\sigma \subseteq W \otimes_{\F} D_\sigma$
implies
$M_\sigma \subseteq  M \cap (W \otimes_{\F}D_\sigma)$,
while the condition
$M = \sum_{\sigma \in \soc_S(D)} M_\sigma$
forces the equality
$M_\sigma = M \cap (W \otimes_{\F}D_\sigma) $ 
to hold.
Therefore, we set $M_\sigma \defeq 
M \cap (W \otimes_{\F}D_\sigma)$,
and we prove the direct sum decomposition
of the statement. 

We see that the sum $\sum_{\sigma \in \soc_S(D)} M_\sigma$
is direct, because the natural map 
$\bigoplus_{\sigma \in \soc_S(D)} M_\sigma
\twoheadrightarrow
\sum_{\sigma \in \soc_S(D)} M_\sigma
\subseteq W \otimes_{\F}D$
is injective on socles.
It only remains to show that the inclusion
$\bigoplus_{\sigma \in \soc_S(D)} M_\sigma \subseteq M$
is an equality.
By applying (i) to 
$M_1 = \bigoplus_{\sigma \in \soc_S(D)} M_\sigma$,
$M_2=M$, 
and noticing that 
$M_1 \cap (L \otimes_{\F} D)
= \bigoplus_{\sigma \in \soc_S(D)} (M \cap (L \otimes_{\F}D_\sigma))$
for every line $L \subseteq W$,
we reduce to the case $W = \F$.
In this case, since is $D$ multiplicity free
it is enough to show that every
composition factor 
$\tau \in \JH(M)$ of $M$
also occurs in 
$M_1 = \bigoplus_{\sigma \in \soc_S(D)}
M_\sigma $.
There exists a unique $\sigma \in \soc_S(D)$
such that $\tau \in \JH(D_\sigma)$,
and so $\tau \in \JH(M \cap D_\sigma)$.
However, $M \cap D_\sigma = M_\sigma
\subseteq \bigoplus_{\sigma \in \soc_S(D)}
M_\sigma =M_1$,
so $\tau \in \JH(M_1)$.
This shows the first part. 

For the second part,
fix $\sigma \in \soc_S(D)$,
and observe that
$\soc_S(M_\sigma)$ is an $S$-subrepresentation
of $W \otimes_{\F}\soc_S(D_\sigma)=
W \otimes_{\F} \sigma$,
and so it is of the form $W_\sigma \otimes_{\F}\sigma$ for a unique
$\F$-vector subspace $W_\sigma \subseteq W$.
We want to apply (i) to
$D = D_\sigma$, $M_1= M_\sigma \cap (W_\sigma 
\otimes_{\F}D_\sigma) ,M_2=M_\sigma.$
For a line $L \subseteq W$,
we distinguish two cases:
if $L \subseteq W_\sigma$ then
$M_1 \cap (L \otimes_{\F}D_\sigma)$ and
$M_2 \cap (L \otimes_{\F}D_\sigma)$ are both equal
to $M_\sigma \cap (L \otimes_{\F}D_\sigma)$;
if $L \cap W_\sigma=0$ 
then we need to show that
$M_\sigma \cap (L \otimes_{\F}D_\sigma)=0$,
and we do this by showing that its socle
is zero.
Indeed, one computes
\begin{align}
	\label{eq:soc-line-formal-computation}
\soc_S(M_\sigma \cap (L \otimes_{\F} 
D_\sigma)) \subseteq&
\soc_S(M_\sigma)\cap
\soc_S(L \otimes_{\F} D_\sigma) 
\\ \nonumber =&
(W_\sigma \otimes_{\F} \sigma) \cap
(L\otimes_{\F}\sigma)=
(W_\sigma \cap L) \otimes_{\F} \sigma= 0.
\qedhere
\end{align}
\end{proof}

We conclude the section with the following
corollary.
\begin{corollary}
\label{cor:direction}
Keep the assumptions of \Cref{lemma:main}.

If $\repr$ is reducible split, 
the isomorphism \eqref{eq:1} 
restricts to an isomorphism 
\begin{equation}
	\label{eq:best-restriction-red}
	\iota_ \pi \colon
(\pi^{\prime I_1} \hookrightarrow
\pi^{\prime K_1})
\xrightarrow{\sim} \displaystyle
\bigoplus_{\ell=0}^f 
\left( \Vell \otimes_{\F}\DEll \right)
\end{equation}
of diagrams.

If $\repr $ is irreducible,
the isomorphism \eqref{eq:1} 
restricts to an isomorphism 
\begin{equation}
	\label{eq:best-restriction-irr}
\iota_ \pi \colon
(\pi^{\prime I_1} \hookrightarrow \pi^{\prime K_1})
\xrightarrow{\sim} \Vnull\otimes_{\F} \Diagr.
\end{equation}
\end{corollary}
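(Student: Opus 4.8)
The plan is to combine \Cref{lemma:fill-up}, \Cref{lemma:main} and \Cref{lemma:line-intersection}(ii), applied with $S = \F[\Gamma]$ (or $\F[\GLring]$, acting through $\Gamma$) and $D = \DZero$, which is multiplicity free by \cite[Corollary~13.5]{BP12} and whose $\Gamma$-socle decomposes as $\bigoplus_{\sigma \in W(\repr)} \sigma$ with corresponding summands $\Dx[\sigma]$ by construction (see \eqref{eq:diagr-def} and the remark after \eqref{eq:diagr-from-I}). The hypothesis $\End_\Gamma(\tau) = \F$ holds for every Serre weight $\tau$ since $\F$ is assumed large enough that all irreducible $\Gamma$-representations are absolutely irreducible. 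First I would apply \Cref{lemma:line-intersection}(ii) to the $S$-submodule $M \defeq \iota_\pi(\pi^{\prime K_1}) \subseteq \V \otimes_{\F}\DZero$: this produces a canonical decomposition $M = \bigoplus_{\sigma \in W(\repr)} M_\sigma$ with $M_\sigma = M \cap (\V \otimes_{\F}\Dx[\sigma])$, together with $\F$-subspaces $W_\sigma \subseteq \V$ with $\soc_\Gamma(M_\sigma) = W_\sigma \otimes_{\F}\sigma$ and $M_\sigma \subseteq W_\sigma \otimes_{\F}\Dx[\sigma]$. By \eqref{eq:V-def}, unwinding the definition, $W_\sigma$ is exactly $\Vwt[\pi',\sigma]$.

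Next I would upgrade the inclusion $M_\sigma \subseteq W_\sigma \otimes_{\F}\Dx[\sigma]$ to an equality. The inclusion $W_\sigma \otimes_{\F}\Dx[\sigma] \subseteq M_\sigma$ is precisely the content of \Cref{lemma:fill-up} (with $V' = \Vwt[\pi',\sigma] = W_\sigma$): that lemma states $\iota_\pi(\pi^{\prime K_1})$ contains $\Vwt[\pi',\sigma] \otimes_{\F}\Dx[\sigma]$, and intersecting with $\V \otimes_{\F}\Dx[\sigma]$ gives $W_\sigma \otimes_{\F}\Dx[\sigma] \subseteq M_\sigma$. Hence $M_\sigma = W_\sigma \otimes_{\F}\Dx[\sigma]$ for every $\sigma \in W(\repr)$, and therefore
\[
\iota_\pi(\pi^{\prime K_1}) = \bigoplus_{\sigma \in W(\repr)} \Vwt[\pi',\sigma] \otimes_{\F}\Dx[\sigma].
\]
Now \Cref{lemma:main}(ii) tells us that $\Vwt[\pi',\sigma]$ depends only on $\ell(\sigma)$ — call it $\Vell$ — in the reducible split case, and is a single subspace $\Vnull$ independent of $\sigma$ in the irreducible case. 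Regrouping the direct sum according to the length $\ell(\sigma)$ and using $\DZeroEll[\ell] = \bigoplus_{\ell(\sigma) = \ell}\Dx[\sigma]$ from \eqref{eq:ell-diagr-def}, we obtain in the reducible split case
\[
\iota_\pi(\pi^{\prime K_1}) = \bigoplus_{\ell = 0}^{f}\Vell \otimes_{\F}\DZeroEll[\ell],
\]
and in the irreducible case $\iota_\pi(\pi^{\prime K_1}) = \Vnull \otimes_{\F}\DZero$ directly.

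Finally I would pass from the $D_0$-level to the level of diagrams. Taking $I_1$-invariants commutes with the (finite) direct sums and with tensoring by the finite-dimensional $\F$-vector spaces $\Vell$, so $\iota_\pi(\pi^{\prime I_1}) = \iota_\pi(\pi^{\prime K_1})^{I_1} = \bigoplus_{\ell}\Vell \otimes_{\F}\DOneEll[\ell]$ (respectively $\Vnull \otimes_{\F}\DOne$), with the inclusion into $\iota_\pi(\pi^{\prime K_1})$ being the obvious one; since $\iota_\pi$ is an isomorphism of diagrams, the embedding $\pi^{\prime I_1}\hookrightarrow \pi^{\prime K_1}$ is carried to the embedding $\bigoplus_\ell \Vell \otimes_{\F}\DOneEll[\ell]\hookrightarrow\bigoplus_\ell\Vell \otimes_{\F}\DZeroEll[\ell]$, i.e.\ to $\bigoplus_\ell \Vell \otimes_{\F}\DEll$ as a subdiagram of $\V \otimes_{\F}\Diagr$ (compatibility of the $N$-action on the $D_1$-part is automatic, since it is the restriction of the action on $\V \otimes_{\F}\DOne$ and each summand $\Vell \otimes_{\F}\DOneEll[\ell]$ is $\Pi$-stable by the weight-cycling analysis already used in the proof of \Cref{lemma:main}). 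This gives \eqref{eq:best-restriction-red} and \eqref{eq:best-restriction-irr}. The only genuinely delicate point is ensuring that \Cref{lemma:line-intersection}(ii) applies cleanly — i.e.\ that the hypotheses ($D$ multiplicity free, $\soc$-decomposition into $D_\sigma$, $\End_\Gamma(\tau) = \F$) are all in force — but these are exactly the structural facts about $\DZero$ recalled in \Cref{sec:structure-pi}, so no new work is needed there; the substantive inputs are \Cref{lemma:fill-up} and \Cref{lemma:main}, which have already been proved.
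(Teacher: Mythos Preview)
Your proposal is correct and follows essentially the same route as the paper's proof: apply \Cref{lemma:line-intersection}(ii) with $S=\F[\Gamma]$, $D=\DZero$, $W=\V$, $M=\iota_\pi(\pi^{\prime K_1})$, identify the resulting $W_\sigma$ with $\Vwt[\pi',\sigma]$, use \Cref{lemma:fill-up} to turn the inclusion $M_\sigma\subseteq W_\sigma\otimes_{\F}\Dx[\sigma]$ into an equality, regroup via \Cref{lemma:main}(ii), and finally take $I_1$-invariants. Your extra remark on $\Pi$-stability of the $\DOneEll$-summands is a welcome clarification that the paper leaves implicit in the word ``subdiagram''.
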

\begin{proof}
We apply \Cref{lemma:line-intersection}(ii)
with $S= \F[\Gamma], D=\DZero, W=\V,
M= \iota_\pi(\pi^{\prime K_1})$.
For $\sigma \in W(\repr)$,
notice that the 
$\F$-vector subspace $W_\sigma$ of
\emph{loc.\ cit.}\ coincides with
$\Vwt[\pi',\sigma]$
by construction of
$\Vwt[\pi',\sigma]$, cf.\ \eqref{eq:V-def},
hence the $\Gamma$-subrepresentation
$M_\sigma \subseteq 
\iota_\pi(\pi^{\prime K_1})$ of 
\emph{loc.\ cit.}\ is contained inside
$\bigoplus _{\sigma \in W(\repr)}
\Vwt[\pi',\sigma] \otimes_{\F}\Dx[\sigma]$.
By the proof of
\emph{loc.\ cit.}\ we have
$M_\sigma= \iota_\pi(\pi^{\prime K_1})
\cap \V \otimes_{\F} \Dx[\sigma]$,
and so $M_\sigma$ contains
$\Vwt[\pi',\sigma] \otimes_{\F} \Dx[\sigma]$
by \Cref{lemma:fill-up}.
Hence, the equality
$M_\sigma =\Vwt[\pi',\sigma] \otimes_{\F} \Dx[\sigma]$
must hold, and we have the direct sum
decomposition
\begin{equation}
	\label{eq:almost-best}
\iota_\pi(\pi^{\prime K_1})=
\bigoplus_{\sigma \in W(\repr)} M_\sigma 
=\bigoplus_{\sigma \in W(\repr)}
\left( \Vwt[\pi',\sigma] \otimes_{\F}
\Dx[\sigma] \right).
\end{equation}

Suppose that $\repr$ is reducible split.
By \Cref{lemma:main}(ii), 
the right-hand side of \eqref{eq:almost-best}
is equal to
$\bigoplus_{\ell=0}^f
\left( \Vell[\pi',\ell] \otimes_{\F}
\DZeroEll[\ell]\right)$,
and taking the $I_1$-invariants of 
\eqref{eq:almost-best} we find 
$ \iota_\pi(\pi^{\prime I_1})=
\iota_\pi(\pi^{\prime K_1})^{I_1}=
\bigoplus_{\ell=0}^f
\left( \Vell[\pi',\ell] \otimes_{\F}
\DOneEll[\ell]\right).$
Hence, $\iota_\pi$ maps
the diagram
$(\pi^{\prime I_1} \hookrightarrow
\pi^{\prime K_1})$
to the subdiagram
$\bigoplus_{\ell=0}^f
\left( \Vell[\pi',\ell] \otimes_{\F}
\DEll[\ell]\right)$
of $V \otimes_{\F} \Diagr$,
showing \eqref{eq:best-restriction-red}.

The case $\repr$ is irreducible is
analogous:
by \Cref{lemma:main}(ii), 
the right-hand side of \eqref{eq:almost-best}
becomes
$\Vnull[\pi'] \otimes_{\F}\DZero$,
and taking $I_1$-invariants we find 
$\iota_\pi(\pi^{\prime I_1})=
\Vnull[\pi']\otimes_{\F} \DOne$,
hence $\iota_\pi$ maps
the diagram
$(\pi^{\prime I_1} \hookrightarrow
\pi^{\prime K_1})$
to the subdiagram
$ \Vnull[\pi'] \otimes_{\F}\Diagr,$
of $ \V \otimes_{\F}\Diagr$,
showing \eqref{eq:best-restriction-irr}.
\end{proof}

\subsection{Generation by the socle}
	\label{sec:generation} 
The goal of this section is to 
prove that a $\GLfield$-representation $\pi$ 
satisfying hypotheses \ref{hypothesis:i}
to \ref{hypothesis:iii} of
\Cref{sec:hypotheses}
is generated by its socle
(this is \Cref{prop:generation})
and that, when $\repr$ is reducible split,
we have the decomposition
\eqref{eq:pi-red-decomposition}
(this is \Cref{thm:principal-series}).

We start by proving a lemma which generalises
\cite[Corollary~3.3.2.2]{BHHMS2}
to the case $r \ge 1 $. 
For $0\le \ell\le f$,
remember the subset $\mathscr{P}_\ell$
of $\mathscr{P}$
defined in \Cref{def:J-and-ell}.
Then,
we define the graded $\grL$-module $\Nell$ with $H$-action
as 
\begin{equation}
	\label{eq:N-ell}	
	\Nell \defeq \bigoplus_{\lambda\in \mathscr{P}_{\ell}} \chi_ \lambda^{-1} \otimes R/\mathfrak{a}(\lambda),
\end{equation}
where $\mathfrak{a} (\lambda) $ are certain ideals of $R $ indexed by
$\lambda\in \mathscr{P}  $
which are defined in 
\cite[Definition~3.3.1.1]{BHHMS2}.
Finally, we define
\begin{equation}
	\label{eq:N-null}	
	\Nnull \defeq \bigoplus_{\ell=0} ^f \Nell.
\end{equation}

\begin{lemma}
\label{lemma:gr}
Let $\pi $ be a admissible smooth representation of $\GLfield $ over $\mathbb{F}  $ such that
hypotheses \ref{hypothesis:i} and \ref{hypothesis:ii} hold.
Let $\pi_1 \subseteq \pi_2$ be two subrepresentations of $\pi$,
and let
\begin{align}
	\label{eq:Ni-def}
N_i \defeq \bigoplus_{\ell=0}^f
\left( \Vell [\pi_i,\ell]^\vee \otimes_{\F} \Nell \right),
\end{align}
for $i=1,2$.

\begin{enumerate}[(i)]
\item
There exist surjections
\begin{align}
\label{eq:surjection_Ni}
 \theta_i \colon
N_i &\twoheadrightarrow \grm (\pi_i^{\vee}) 
\end{align}
of $R$-modules with compatible $H$-action,
for $i=1,2$,
such that the square
\begin{equation}
	\label{diag:thetas-square}
\begin{tikzcd}
\gr_{\mI} ( \pi_2 ^\vee ) \ar[r, two heads, "\psi"] &
\gr_{\mI} ( \pi_1 ^{\vee} )  \\
N_2\ar[u, two heads, " \theta _2"] \ar[r, two heads]  & 
N_1
 \ar[u, two heads, " \theta_1"'] 
\end{tikzcd}
\end{equation}
commutes.
Here, the morphism on the bottom
is the direct sum of
the restriction maps
$ \Vell[\pi_2,\ell]^\vee \otimes_{\F} \Nell 
\twoheadrightarrow 
\Vell[\pi_1,\ell]^\vee \otimes_{\F}  \Nell $,
and $\psi$ is given by the graded pieces of
the restriction map
$ \pi_2^{\vee} \twoheadrightarrow \pi^{\vee}_1$.

\item
Let $\pi' \defeq \pi_2/\pi_1$
and endow $\pi^{\prime \vee }$
with the submodule filtration $F$
induced from the $\mI$-adic filtration 
on $\pi_2 ^\vee$.

If $\repr$ is reducible split,
for each $0 \le \ell \le f$
pick a complementary subspace
$V'(\ell)$ of $\Vell[\pi_1,\ell]$ in $\Vell[\pi_2,\ell]$,
and set 
\begin{equation}
	\label{eq:N-prime-def-red}
N' \defeq 
\bigoplus_{\ell=0}^f
\left( V'(\ell) ^\vee \otimes_{\F} \Nell \right).
\end{equation}
If $\repr$ is irreducible,
pick a complementary subspace
$V'$ of $\Vnull[\pi_1]$ in $\Vnull[\pi_2]$,
and set 
\begin{equation}
	\label{eq:N-prime-def-irred}
N' \defeq 
 V^{\prime\vee}\otimes_{\F} \Nnull .
\end{equation}
Then, \eqref{diag:thetas-square} fits into
a commutative diagram 
\begin{equation}
	\label{diagr:thetas-ses}
\begin{tikzcd}
	0 \ar[r] & \gr_F(\pi^{\prime\vee}) \ar[r] &  
\gr_{\mI} ( \pi_2 ^\vee ) \ar[r, "\psi"] &
\gr_{\mI} ( \pi_1 ^{\vee} ) \ar[r] & 0 \\
0 \ar[r] & 
N' \ar[u, "\theta'"] 
\ar[r] & 
N_2 \ar[u, two heads, " \theta_2 "] \ar[r]   & 
N_1
 \ar[u, two heads, " \theta_1"] 
\ar[r] & 0
\end{tikzcd}
\end{equation}
\end{enumerate}
\end{lemma}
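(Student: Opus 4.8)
The plan is to reduce, as far as possible, to the case $r=1$ treated in \cite[§3.3]{BHHMS2}, carrying the multiplicity spaces $\Vell[\pi_i,\ell]$ (when $\repr$ is reducible split) or $\Vnull[\pi_i]$ (when $\repr$ is irreducible) along at every step; the rigidity needed to keep track of these spaces is exactly what \Cref{cor:direction} and \Cref{lemma:line-intersection} supply. I begin with part (i) in degree $0$. Taking $I_1$-invariants in \Cref{cor:direction}, and using that the $H$-eigencharacters of $\DOneEll$ are exactly $\{\chi_\lambda \mid \lambda\in\mathscr{P}_{\ell}\}$, each with multiplicity one (\cite[Proposition~4.2]{Bre14} and \Cref{def:J-and-ell}), identifies $\pi_i^{I_1}=\pi_i[\mI]$, as an $H$-representation, with $\bigoplus_{\lambda\in\mathscr{P}}\Vell[\pi_i,\ell(\lambda)]\otimes_{\F}\chi_\lambda$ in the reducible case, and with $\bigoplus_{\lambda}\Vnull[\pi_i]\otimes_{\F}\chi_\lambda$ in the irreducible case. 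Dualising, $\pi_i^\vee/\mI\pi_i^\vee$ is $H$-equivariantly isomorphic to $\bigoplus_{\lambda}\Vell[\pi_i,\ell(\lambda)]^\vee\otimes_{\F}\chi_\lambda^{-1}$, which is precisely the degree-$0$ part of $N_i$; I take $\theta_i$ to be this isomorphism in degree $0$.

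Next I would extend each $\theta_i$ to all degrees, following the argument of \cite[§3.3]{BHHMS2}. That argument --- which uses hypotheses \ref{hypothesis:i} and \ref{hypothesis:ii}, in particular that $\grm(\pi_i^\vee)$ is annihilated by $J$ and hence is an $R$-module, via the proof of \cite[Corollary~5.3.5]{BHHMS1} --- shows that the chosen degree-$0$ generators of $\grm(\pi_i^\vee)$ are killed by the ideals $\mathfrak{a}(\lambda)$ and generate $\grm(\pi_i^\vee)$ over $R$, whence the degree-$0$ isomorphism extends to a surjection $\theta_i\colon N_i\twoheadrightarrow\grm(\pi_i^\vee)$ by graded Nakayama. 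The one genuinely new feature for $r>1$ is that these two verifications must be carried out ``one line of $\Vell[\pi_i,\ell(\lambda)]$ at a time'': one checks that the relevant computations of \emph{loc.\ cit.}\ are linewise in the multiplicity space and invokes \Cref{lemma:line-intersection} (with $S=\F[\Gamma]$ and $D=\DZero$, exactly as in the proof of \Cref{cor:direction}, together with the analogous input for the $\mI^3$-torsion provided by hypothesis \ref{hypothesis:ii}). For the square \eqref{diag:thetas-square}, I use that $\Vell[\pi_1,\ell]\subseteq\Vell[\pi_2,\ell]$ (\Cref{rem:V-are-commensurable}) to fix, for each $\ell$, a basis of $\Vell[\pi_2,\ell]$ extending one of $\Vell[\pi_1,\ell]$; then the chosen degree-$0$ generators of $\grm(\pi_1^\vee)$ are the images under $\pi_2^\vee\twoheadrightarrow\pi_1^\vee$ of those generators of $\grm(\pi_2^\vee)$ indexed by the $\Vell[\pi_1,\ell]$-part, while the ones indexed by a complementary part map to zero --- which is exactly the bottom map of \eqref{diag:thetas-square} read in degree $0$. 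Since every module in the square is generated in degree $0$ and every map is $R$-linear, commutativity in degree $0$ forces commutativity.

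For part (ii), the bottom row $0\to N'\to N_2\to N_1\to 0$ is exact: with the basis choices above, $0\to V'(\ell)^\vee\to\Vell[\pi_2,\ell]^\vee\to\Vell[\pi_1,\ell]^\vee\to 0$ is exact (it is the $\F$-linear dual of $0\to\Vell[\pi_1,\ell]\to\Vell[\pi_2,\ell]\to V'(\ell)\to 0$, the last term identified with the quotient $\Vell[\pi_2,\ell]/\Vell[\pi_1,\ell]$), and one tensors over $\F$ with $\Nell$ and sums over $\ell$; the irreducible case is the same with a single vector space. The top row is exact by the standard behaviour of associated gradeds under strict filtered short exact sequences (see \cite[§I.4]{LvO}): starting from $0\to\pi^{\prime\vee}\to\pi_2^\vee\to\pi_1^\vee\to 0$, the $\mI$-adic filtration of $\pi_2^\vee$ induces the filtration $F$ on $\pi^{\prime\vee}$ and, on $\pi_1^\vee$, the quotient filtration, which coincides with its $\mI$-adic filtration because $\pi_2^\vee\twoheadrightarrow\pi_1^\vee$ is $\Lambda$-linear and surjective; this also identifies $\gr_F(\pi^{\prime\vee})$ with $\ker\psi$. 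Finally, by part (i) the composite $N'\hookrightarrow N_2\xrightarrow{\theta_2}\grm(\pi_2^\vee)\xrightarrow{\psi}\grm(\pi_1^\vee)$ equals $\theta_1$ precomposed with $N'\hookrightarrow N_2\twoheadrightarrow N_1$, which vanishes by exactness of the bottom row; hence $\theta_2(N')\subseteq\ker\psi=\gr_F(\pi^{\prime\vee})$, and I let $\theta'$ be the resulting map, which makes the left square commute.

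The main obstacle is concentrated entirely in part (i): one has to verify that passing from $r=1$ to general $r$ --- equivalently, from $\DZero$ to $\Vell[\pi_i,\ell]\otimes_{\F}\DZero$ --- preserves both the relations $\mathfrak{a}(\lambda)$ and generation in degree $0$, so that the computations of \cite[§3.3]{BHHMS2} go through linewise. Once the $\theta_i$ are available, part (ii) is purely formal: linear algebra, a standard lemma on filtered short exact sequences, and the universal property of a kernel.
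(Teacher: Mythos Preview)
Your approach is correct and essentially the same as the paper's. The paper differs only in packaging: it defines $\theta_1$ as the composite $\psi\circ\theta_2\circ p_1^\vee$ (with $p_1^\vee\colon N_1\hookrightarrow N_2$ the splitting coming from the adapted basis) rather than independently, which makes commutativity of \eqref{diag:thetas-square} nearly tautological, and it does not invoke \Cref{lemma:line-intersection} --- the $\mathfrak{a}(\lambda)$ relations are verified directly on each basis element $e_i^\lambda$ by citing the proof of \cite[Theorem~3.3.2.1]{BHHMS2}, so your linewise reduction via that lemma is unnecessary here.
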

Note that hypothesis \ref{hypothesis:ii}
is needed in order to say that $\gr_{\mI} (\pi^\vee )$
is an $R$-module.
\begin{proof}
(i)
We adapt the  proof of \cite[Corollary~3.3.2.2]{BHHMS2}.
To make the arguments uniform, 
even when $\repr$ is irreducible
we let $V'(\ell) \defeq V'$,
for $0 \le \ell \le f$.
For brevity, 
let
$V_i(\ell)\defeq \Vell[\pi_i,\ell] $,
$r_i(\ell)\defeq \rell[\pi_i,\ell] $,
for $i=1,2$, $0\le{\ell}\le f$.

\paragraph{Step 1.}
We construct a basis $\mathcal{B}_i$
of $(\pi_i^{I_1})^\vee $
(as an $\F$-vector space),
for $i=1,2$.

For each $0\le{\ell}\le f$,
we pick a basis $(v_i(\ell))_{0\le i\le r_2(\ell)}$
of $V_2(\ell)$ adapted to the direct sum decomposition
$V_2(\ell)= V_1(\ell) \oplus V'(\ell)$, 
i.e.\ such that 
$(v_i(\ell))_{0\le i \le r_1(\ell)}$
is a basis of $V_1(\ell)$,
and such that $(v_i(\ell))_{r_1(\ell)+1 \le i \le r_2(\ell)}$
is a basis of $V'(\ell)$.
Let $(e_i(\ell))_{0 \le i \le r_2(\ell)} $
be the dual basis of 
$(v_i(\ell))_{0 \le i \le r_2(\ell)} $
in $V_2(\ell) ^\vee $.

Moreover, for each $\lambda \in \mathscr{P}$,
we pick a nonzero eigenvector
$v^\lambda \in \DOne^{\chi_\lambda}$
of eigencharacter $\chi_\lambda$,
so that $(v^\lambda)_{\lambda\in \mathscr{P}} $
is a basis of $\DOne$ (since it is multiplicity free),
and we let $(e^\lambda)_{\lambda\in \mathscr{P}} $
be its dual basis in $\DOne ^\vee $.
If $\lambda\in \mathscr{P}$ has length
$\ell(\lambda)$,
we let
\begin{equation}
	\label{eq:boring-basis}
e_i^\lambda \defeq \iota_\pi^\vee (e_i( \ell(\lambda))\otimes e^\lambda )\in (\pi_2^{I_1})^\vee,
\end{equation}
where
$\iota_\pi^\vee \colon
\bigoplus_{\ell=0}^f \left( V_2(\ell) ^\vee \otimes_{\F}  
\DEll
\right) \to (\pi_2^{I_1}) ^\vee$
is the $\F$-linear dual of 
\eqref{eq:best-restriction-red}.
By construction,
\[
	 \mathcal{B}_2 \defeq \left( e_i^\lambda\mid {\lambda\in \mathscr{P}},{1\le i\le r_2(\ell(\lambda))} \right)  
\]
is a basis of 
$(\pi_2^{I_1})^\vee $,
and since $(v_i(\ell))_{0\le i\le r_2(\ell)}$
was adapted to $V(\ell)= V_1(\ell)\oplus V'(\ell)$
in the sense of the previous paragraph,
we also know that
\[
	\mathcal{B}_1 \defeq 	\bigl({e_i^\lambda}|_{\pi_1^{I_1}}\mid {\lambda\in \mathscr{P}},{1\le i \le r_1(\ell(\lambda))} \bigr)
\]
is a basis of
$(\pi_1^{I_1})^\vee $.

\paragraph{Step 2.}
We define the surjections
\eqref{eq:surjection_Ni}.
We follow \cite[Theorem~3.3.2.1]{BP12}.
Start with a homomorphism 
\[
\Theta_2	\colon \bigoplus_{\ell=0} ^f\left(	V_2(\ell )^\vee \otimes_{\F}  \bigoplus_{\lambda\in \mathscr{P}_\ell} R \right) \to \grm( \pi_2^\vee)
\]
of $R $-modules, defined as follows:
for each $0\le \ell\le f$,
$ \bigoplus_{\lambda\in \mathscr{P}_\ell}R $
comes with the standard $R $-module basis,
which we call
$(\varepsilon_\lambda)_{\lambda\in \mathscr{P}_\ell } $,
and we define $\Theta_2$ on a basis by
\[
\Theta_2\colon e_i(\ell ) \otimes\varepsilon_\lambda\mapsto
e^\lambda_i 
\overset{\eqref{eq:boring-basis}}{\in}
(\pi_2^{I_1})^\vee =\grm(\pi_2)_0 \subseteq 
\grm(\pi_2),
\]
for $0\le \ell\le f$, $\lambda\in \mathscr{P}_\ell$.
Note that we are using that $\grm(\pi_2^\vee )$
is an $R$-module, which is a consequence
of hypothesis \ref{hypothesis:ii},
cf.\ the discussion in the paragraph
preceding \Cref{sec:structure-pi}.

Next, we make $\Theta_2$ into an $H $-equivariant homomorphism by tensoring with $\chi_\lambda ^{-1} $ as follows:
\[
 \widetilde{\Theta}_2\colon \bigoplus_{\ell =0} ^f\left(	V_2(\ell)^\vee \otimes_{\F}  \bigoplus_{\lambda\in \mathscr{P}_\ell} 
	(\chi_\lambda ^{-1} \otimes R)
 \right) \to \grm( \pi_2^\vee).
\]
As $e^\lambda $ is annihilated by 
$\mathfrak{a} (\lambda) $
for each $1 \le i \le r$, $ \lambda \in \mathscr{P}$ (cf.\ the proof of \cite[Theorem~3.3.2.1]{BP12}),
we have a factorisation
\begin{equation}
	\label{diag:theta-actual-def}
	\begin{tikzcd}
\displaystyle\bigoplus_{\ell =0} ^f\left(	V_2(\ell )^\vee \otimes_{\F}  \displaystyle\bigoplus_{\lambda\in \mathscr{P}_\ell } 
	(\chi_\lambda ^{-1} \otimes R)
 \right) 
\ar[d, two heads]
\ar[r, "\tilde \Theta_2"]
& \grm(\pi_2^\vee)
\\
N_2=\displaystyle\bigoplus_{\ell =0} ^f\left(	V_2(\ell)^\vee \otimes_{\F}  \displaystyle\bigoplus_{\lambda\in \mathscr{P}_\ell} 
	(\chi_\lambda ^{-1} \otimes R/\mathfrak{a}(\lambda))
 \right) 
\ar[ur, dotted, bend right = 15, "\theta_2"']
	\end{tikzcd}
\end{equation}
which defines $\theta_2$.
By an abuse of notation, we will also be referring to the image of  
$ \varepsilon_\lambda$
inside 
$\bigoplus_{\lambda\in\mathscr P_\ell}\left(\chi_\lambda^{-1}\otimes R/\mathfrak a(\lambda)\right)$ 
as 
$ \varepsilon_\lambda$,
in particular we write
\begin{equation}
\label{eq:theta-def}
\theta_2\colon e_i(\ell) \otimes\varepsilon_\lambda\mapsto
e^\lambda_i.
\end{equation}
\paragraph{Step 3.}
\label{Step:3}
We prove that both $\psi $ and $\theta_2 $ are surjective.

Since $\grm(\pi_2^\vee ) $ is generated by $\grm(\pi_2^\vee )_0 $ as a $\grL $-module,
it is enough to show 
that the compositions 
\begin{align*}
	N_2
&
\xrightarrow{\theta_2} 
\grm(\pi_2^\vee) \twoheadrightarrow
 \grm(\pi_2^\vee)_0, \\
	\grm(\pi_2^\vee )& \xrightarrow{\psi} 
	\grm(\pi_1^{\vee}) \twoheadrightarrow \grm(\pi_1^{\vee})_0
\end{align*}
are surjective.

As for $\theta_2$, surjectivity at the level of $\gr_0 $ comes by construction, 
while for $\psi $ the argument is as follows:
as $\pi_1 \subseteq  \pi_2 $,
we obtain
$\pi_1^{I_1} \subseteq  \pi_2^{I_1}$,
and, taking duals, we obtain a surjection
\begin{align*}
\grm(\pi_2)_0=(\pi_2^{I_1})^\vee \twoheadrightarrow (\pi_1^{I_1})^\vee= \grm(\pi_1)_0.
\end{align*}
As this surjection is the grade $0$ part 
of $\psi $ by construction, 
this concludes the second step.
\paragraph{Step 4.}
We define the morphism 
$\theta_1: N_1 \to \grm(\pi_1^{\vee})$
of $R$-modules,
and show that it is surjective.

Consider the natural $\F$-linear maps
\[
\begin{tikzcd}
	V_1(\ell)
	\ar[r, hook, shift right=1.4, "i_1(\ell)"']&
	\ar[l, two heads, shift right=1.4, "p_1(\ell)"']  V_2(\ell) \ar[r, two heads, shift left=1.4, "p'(\ell)"] 
   &\ar[l, hook', shift left=1.4, "i'(\ell)"]   V'(\ell)
\end{tikzcd}
\]
given by the direct sum decomposition
$V_2(\ell)=V_1(\ell)\oplus V'(\ell)$.
They define,
by summing over $0\le{\ell}\le f$ and taking a dual,
the $R$-linear maps 
\[
\begin{tikzcd}
	N_1 \ar[r, hook, shift left=1.4, "p_1^{\vee} "]&
	\ar[l, two heads, shift left=1.4, "i_1^{\vee }"]
	N_2\ar[r, two heads, shift right=1.4, "i^{\prime\vee}"']
							   &
	N' \ar[l, hook', shift right=1.4, "{p} ^{\prime\vee}"'].
\end{tikzcd}
\]
They satisfy the following 
orthogonality relations
(we only write down the ones that will be relevant):
\begin{subequations}
\begin{align}
	\label{eq:direct-sum-relations(vanish)}
p'(\ell) \circ i_1(\ell) &= 0,
	\\
	\label{eq:direct-sum-relations(section1)}
	i_1^{\vee} \circ p_1^{\vee} &=\id_{N_1} , \\
	\label{eq:direct-sum-relations(section')}
	i^{\prime\vee} \circ p^{\prime\vee} &=\id_{N'} , \\
	\label{eq:direct-sum-relations(sum)}
p_1^{\vee}\circ i_1^{\vee}+
p^{\prime\vee}\circ i^{\prime\vee}&= \id_{N_2} .
\end{align}
\end{subequations}
Now, we define $\theta_1$ as the composition
\begin{equation}
	\begin{tikzcd}
 \theta_1 \colon\ 	N_1	
 \ar[r, hook, "p_1^{\vee}" ] 
	& N_2
	\ar[r, two heads, "\theta_2" ]
	& {\grm(\pi_2^\vee)} \ar[r, two heads, "\psi"]
	&{ \grm( \pi_1^{\vee}),}
\end{tikzcd}
\label{eq:big-composition}
\end{equation}
which we claim to be surjective.
As in step 2, it is enough to show surjectivity at the level of $\gr_0$.

Notice that, 
by \eqref{eq:direct-sum-relations(section1)},
$p_1^{\vee}$ sends
\begin{equation}
	\label{eq:retraction-equation}
p_1^{\vee} \colon e_i(\ell)|_{V_1(\ell)}  \otimes n
\mapsto e_i(\ell) \otimes n, 
\end{equation}
for $n \in \Nell$,
$1\le i\le r_1(\ell)$.

The situation is summarised as follows:
for $0\le {\ell}\le f$,
 $\lambda\in \mathscr{P}_\ell$,
and $1\le i\le r_1(\ell)$,
we have
\[
	\begin{tikzcd}[row sep = tiny, column sep = small]
	\theta_1\colon \ 	N_1
	\ar[r, hook, "p_1^{\vee}"] 
	& N_2
	\ar[r, two heads, "\theta_2" ]
	& {\grm(\pi_2^\vee)} \ar[r, two heads, "\psi"]
	&{ \grm( \pi_1^{\vee})}
	\\
e_i(\ell)|_{V_1(\ell)}\otimes \varepsilon_\lambda
\ar[r,maps to, "\eqref{eq:retraction-equation}"] 
	&
e_i(\ell)\otimes \varepsilon_\lambda
\ar[r, maps to, "\eqref{eq:theta-def}"]
	& e_i^\lambda 
\ar[r, maps to]
	& e_i^\lambda |_{\pi_1^{I_1}}
\in \mathcal{B}_1 .
\end{tikzcd}
\]
Since $\mathcal{B}_1$
is a basis of 
$(\pi_1^{I_1})^\vee= \grm(\pi_1)_0
\subseteq \grm(\pi_1)$,
this proves our claim.
\paragraph{Step 5.}
Finally, we show that \eqref{diag:thetas-square} commutes.

Using \eqref{eq:direct-sum-relations(sum)},
it is enough to 
show that $\psi \circ \theta_2 = \theta_1 \circ i_1^{\vee} $
holds
when precomposed with either
$p_1^{\vee} $ or ${p}^{\prime\vee }$.
In the first case, 
it follows from \eqref{eq:big-composition}
and \eqref{eq:direct-sum-relations(section1)},
while in the second case
our goal is to show that the composition
\begin{equation}
\label{eq:dumb-composition}
\begin{tikzcd}
&	\gr_{\mI} ( \pi_2 ^\vee ) \ar[r, two heads, "\psi"] &
	\gr_{\mI} ( \pi_1 ^\vee )\\
	N' \ar[r, hook, "p^{\prime\vee}"']  &
N_2 \ar[u, " \theta_2 ", two heads] 
\end{tikzcd}
\end{equation}
vanishes.
We can see this by observing that $N' $
is generated by its degree $0 $ part,
which is sent to zero in $\gr_{\mI} ( \pi _1 ^\vee ) $:
indeed, when we take the degree $0 $ part of \eqref{eq:dumb-composition}, we can identify the maps with
\begin{equation}
	\label{eq:crooked-composition}
	\begin{tikzcd}[column sep = 5ex, row sep= tiny]
&V_2(\ell)^\vee \otimes_{\F}  	\DOne 
		\ar[r, two heads,  "\bigoplus i_1(\ell)^\vee "{yshift=+3pt}, "\eqref{eq:best-restriction-red}"'] &
\displaystyle\bigoplus_{ \ell=0}^f V_1(\ell)^\vee \otimes_{\F}  
 \DOneEll
\\
\displaystyle\bigoplus_{ \ell=0}^f V'(\ell)^\vee \otimes_{\F}  \Nell_0 
\ar[r, hook, "\bigoplus p'(\ell)^\vee "'{yshift =-3pt}]  
&(N_2)_0 \ar[u,"\iota_\pi\circ (\theta_2)_0" ], 
\end{tikzcd}
\end{equation}
using $\iota_\pi$,
and the conclusion follows from 
\eqref{eq:direct-sum-relations(vanish)}.
In \eqref{eq:crooked-composition},
we have written $p'(\ell)^\vee $ instead
of  $p'(\ell)^\vee \otimes \id _{\Nell_0} $, and
$i_1(\ell)^\vee $ instead
of  $i_1(\ell)^\vee \otimes \id_{\DOneEll}$,
for simplicity.
This finishes the proof of (i).

(ii)
The vanishing of the composition in
\eqref{eq:dumb-composition}
implies that $\theta_2$ restricts to 
$\theta_2:N' \to \gr_F(\pi^{\prime\vee})$,
which is the desired map $\theta'$ 
fitting in the
commutative diagram
\begin{equation*}
\begin{tikzcd}
	0 \ar[r] & \gr_F( \pi^{\prime\vee}	) \ar[r] &	\gr_{\mI} ( \pi_2 ^\vee ) \ar[r] &
	\gr_{\mI} ( \pi_1^{\vee}) \ar[r] & 0 \\
	0 \ar[r] & N' \ar[u, dashed, "\theta'"]  \ar[r, "p^{\prime\vee}"'] &
	N_2 \ar[u, " \theta_2 "] \ar[r, "i_1^{\vee}"']  & 
	N_1  \ar[u, " \theta_1"] \ar[r] & 0
\end{tikzcd}
\end{equation*}
with exact rows.
For the splitting of the bottom diagram, 
we can use
\eqref{eq:direct-sum-relations(section1)}
to say that
 $p_1^\vee $ is a section of $i_1^\vee $,
 or
\eqref{eq:direct-sum-relations(section')}
to say that $i^{\prime \vee}$ is a retraction of 
$p^{\prime \vee}$.
\end{proof}

\begin{remark}
From the proof of \cite[Theorem~3.3.2.3]{BHHMS2},
together with \cite[Lemma~3.3.1.3(i)]{BHHMS2},
it follows that we have,
for $\lambda \in \mathscr{P}$,
\begin{align}
	m_{\mathfrak p_0}(R/\mathfrak a(\lambda))&=
\mathbbm{1}_{\mathscr{D}}(\lambda), 
    \label{eq:mult_id}
\end{align}
where $\mathbbm{1}_{\mathscr{D}}  $ denotes the indicator function of $\mathscr{D}  $, 
i.e.\ $ 	m_{\mathfrak p_0}(R/\mathfrak a(\lambda))=1 $ if $\lambda\in \mathscr{D}  $, and $0 $ otherwise.
\end{remark}

The following proposition generalises \cite[Proposition~3.3.5.3]{BHHMS2} to the case $r \ge 1 $.
\begin{prop}
\label{prop:main-memoire}
Assume that $\repr$ is 
$2f$-generic.
Let $\pi $ be an admissible smooth representation of $\GLfield $ over $\mathbb{F}  $ such that
hypotheses \ref{hypothesis:i} to \ref{hypothesis:iii} hold.
Let $\pi'$ be a subquotient of $\pi$.
    \begin{enumerate}[(i)]
	    \item \label{point:i}
	    We have $\dim_{\mathbb{F}(\!(X)\!)}D^\vee_\xi(\pi')=m_{\mathfrak p_0}(\grm(\pi'^\vee))$.
    \item \label{point:ii}
	    Assume that $\pi'$ is a subrepresentation of $\pi$.
	    Then,
    \[
	    \dim_{\mathbb{F}(\!(X)\!)}D^\vee_\xi(\pi')=m_{\mathfrak p_0}(\grm (\pi^{\prime\vee}))=\lg_{\GLring} \socR(\pi').
    \] 
    In particular, if $\pi'\neq 0$ then $D^\vee_\xi(\pi')\neq 0$.

	\item \label{point:iii}
Assume that $\pi'\subsetneq \pi$
 is a subrepresentation, and set
$0 \neq \pi'' \defeq \pi/\pi'$.
 Then, $D^\vee_\xi(\pi'')\neq 0$.
    \end{enumerate}
\end{prop}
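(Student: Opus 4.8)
The three parts of \Cref{prop:main-memoire} are proved together, mirroring the structure of the analogous results in \cite{BHHMS2}, but keeping track of the multiplicity spaces $\Vell[\pi',\ell]$ (resp.\ $\Vnull[\pi']$) introduced in \Cref{cor:direction}. The key inputs are \Cref{lemma:gr}, which identifies $\grm(\pi'^\vee)$ up to a surjection from an explicit graded module $N'$ built out of the $N(\overline\rho,\ell)$'s and the multiplicity spaces, and the multiplicity formula \eqref{eq:mult_id} for $R/\mathfrak a(\lambda)$ at the prime $\mathfrak p_0$.

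For \ref{point:i}: by \cite[Proposition~2.7]{Breuil15} (or the properties of $\Dvee$ recalled in \Cref{def:Dvee}), the dimension of $\Dvee[\pi']$ over $\F(\!(X)\!)$ can be computed from the structure of $\pi'^{N_1}$ as an $\F[\![X]\!][F]$-module with $\Z_p^\times$-action; passing to $\grm$ and using hypothesis \ref{hypothesis:ii} (so that $\grm(\pi'^\vee)$ is an $R$-module annihilated by $J$), this dimension equals $m_{\mathfrak p_0}(\grm(\pi'^\vee))$, exactly as in the proof of \cite[Proposition~3.3.5.3(i)]{BHHMS2}. The point is that the argument of \emph{loc.\ cit.}\ is insensitive to the multiplicity $r$: one only uses that $\grm(\pi'^\vee)$ is a finitely generated $R/J$-module and the compatibility of $\Dvee$ with the relevant gradings, so the same proof goes through verbatim once $\pi'$ is replaced by a general subquotient.

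For \ref{point:ii}: when $\pi'$ is a \emph{subrepresentation}, apply \Cref{lemma:gr}(i) with $\pi_1=0$, $\pi_2=\pi'$, giving a surjection $N'\defeq\bigoplus_\ell(\Vell[\pi',\ell]^\vee\otimes_\F N(\overline\rho,\ell))\twoheadrightarrow\grm(\pi'^\vee)$ of graded $R$-modules with $H$-action (in the irreducible case $N'=\Vnull[\pi']^\vee\otimes_\F N(\overline\rho)$). Arguing as in \cite[Theorem~3.3.2.3]{BHHMS2}, this surjection is in fact an isomorphism after localising at $\mathfrak p_0$ — here one uses hypothesis \ref{hypothesis:ii} to see that source and target have the same multiplicity, or invokes the essential self-duality \ref{hypothesis:iii} as in \emph{loc.\ cit.}\ to rule out a proper quotient. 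Then $m_{\mathfrak p_0}$ is additive (\cite[Lemma~3.1.4.3]{BHHMS2}), so
\[
m_{\mathfrak p_0}(\grm(\pi'^\vee))=\sum_{\ell=0}^f \dim_\F \Vell[\pi',\ell]\cdot \sum_{\lambda\in\mathscr P_\ell} m_{\mathfrak p_0}(R/\mathfrak a(\lambda))
=\sum_{\ell=0}^f \dim_\F \Vell[\pi',\ell]\cdot |\mathscr P_\ell\cap\mathscr D|,
\]
using \eqref{eq:mult_id}. Since $\lambda\mapsto J_\lambda$ restricts to a bijection $\mathscr P_\ell\cap\mathscr D\xrightarrow{\sim}\{J\subseteq\{0,\dots,f-1\}:|J|=\ell\}$ and, via $\mathscr D\leftrightarrow W(\overline\rho)$, to the Serre weights of length $\ell$, the inner sum is exactly $[\socR\pi':\sigma]$-worth of weights; combined with $\dim_\F\Vell[\pi',\ell]=\rell[\pi',\ell]$ from \Cref{lemma:main}, the total is $\sum_{\sigma\in W(\overline\rho)}[\socR\pi':\sigma]=\lg_{\GLring}\socR(\pi')$. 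The ``in particular'' is then immediate: $\pi'\ne 0$ forces $\socR\pi'\ne 0$, hence $\Dvee[\pi']\ne 0$ by \ref{point:i}.

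For \ref{point:iii}: write $\pi''=\pi/\pi'$ and note $\socR\pi''\ne 0$ (as $\pi''\ne0$). By \ref{point:i} applied to the subquotient $\pi''$, it suffices to show $m_{\mathfrak p_0}(\grm(\pi''^\vee))\ne 0$. Apply \Cref{lemma:gr}(ii) with $\pi_1=\pi'$, $\pi_2=\pi$: the complementary subspaces $V'(\ell)$ (resp.\ $V'$) are nonzero for at least one $\ell$ since $\pi'\subsetneq\pi$ forces $\Vell[\pi',\ell]\subsetneq\Vell[\pi,\ell]$ for some $\ell$ (indeed $\socR\pi''\ne0$ lifts to a weight $\sigma$ with $\Vwt[\pi',\sigma]\subsetneq\Vwt[\pi,\sigma]$, and then \Cref{lemma:main}(ii) propagates this along the length), so $N'\ne 0$. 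The surjection $\theta':N'\twoheadrightarrow\gr_F(\pi''^\vee)$ of \eqref{diagr:thetas-ses}, combined with the fact that localising $\theta'$ at $\mathfrak p_0$ is an isomorphism (again via \ref{hypothesis:iii} / the additivity of $m_{\mathfrak p_0}$ applied to the short exact sequence), gives $m_{\mathfrak p_0}(\gr_F(\pi''^\vee))=\sum_\ell\dim_\F V'(\ell)\cdot|\mathscr P_\ell\cap\mathscr D|>0$; and $m_{\mathfrak p_0}$ only depends on the associated graded, so this equals $m_{\mathfrak p_0}(\grm(\pi''^\vee))$. Hence $\Dvee[\pi'']\ne0$.

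\textbf{Main obstacle.} The crux — as in \cite{BHHMS2} — is showing that the surjections $\theta_2$, $\theta'$ of \Cref{lemma:gr} become \emph{isomorphisms} after localising at $\mathfrak p_0$, i.e.\ that no positive-dimensional part of $N'$ is killed. In the multiplicity-one case this is \cite[Theorem~3.3.2.3]{BHHMS2} and rests on the essential self-duality of $\pi^\vee$ (hypothesis \ref{hypothesis:iii}): the grade-$2f$ Cohen–Macaulay property forces $\grm(\pi^\vee)$ to have no embedded components, so a proper quotient of $N$ (which would drop the multiplicity at $\mathfrak p_0$) is impossible. For $r\ge1$ one must check that this duality argument tensors correctly with the multiplicity spaces $\Vell$; the natural route is to run the self-duality argument ``linewise'' in the sense of \Cref{lemma:line-intersection}, or directly to compare $m_{\mathfrak p_0}(\grm(\pi^\vee))$ computed from $N$ against $\binom{2f}{f}r$-type counts coming from hypothesis \ref{hypothesis:iv}, and then descend to the subquotient $\pi'$ by the additivity of $m_{\mathfrak p_0}$ over the filtration. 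Everything else is bookkeeping with \eqref{eq:mult_id} and \Cref{lemma:main}.
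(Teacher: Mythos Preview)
Your treatment of \ref{point:i} is fine and matches the paper. The gaps are in \ref{point:ii} and \ref{point:iii}.

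\textbf{Part \ref{point:ii}.} You assert that the surjection $N'\twoheadrightarrow\grm(\pi'^\vee)$ of \Cref{lemma:gr} becomes an isomorphism after localising at $\mathfrak p_0$, citing \cite[Theorem~3.3.2.3]{BHHMS2}. But that theorem only treats the case $\pi'=\pi$; for a proper subrepresentation there is no self-duality statement for $\pi'^\vee$ at this point in the argument, and hypothesis \ref{hypothesis:ii} gives you nothing about multiplicities of $\grm(\pi'^\vee)$ beyond the inequality coming from the surjection. In fact the paper only establishes that $\theta_1$ is an isomorphism later (\Cref{prop:main}), using more machinery and the extra hypothesis \ref{hypothesis:iv}. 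The paper's proof of \ref{point:ii} avoids this entirely: it proves the \emph{cycle} of inequalities
\[
\dim_{\F(\!(X)\!)}\Dvee[\pi']\ \le\ m_{\mathfrak p_0}(\grm\pi'^\vee)\ \le\ \lgR\socR(\pi')\ \le\ \dim_{\F(\!(X)\!)}\Dvee[\pi'],
\]
where the first is \cite[Corollary~3.1.4.5]{BHHMS2}, the second follows from the surjection in \Cref{lemma:gr} plus \eqref{eq:mult_id} (this part you did correctly), and the third --- the step you are missing --- is obtained by constructing explicit admissible $\F[\![X]\!][F]$-submodules $M_{\pi'}=\bigoplus_{\sigma\in\mathcal O(\repr)}\bigl(\Vell[\pi',\ell(\sigma)]\otimes_\F M_\sigma\bigr)\subseteq\pi'^{N_1}$, using the $M_\sigma$ of \cite[§3.2.4]{BHHMS2} indexed by lines in the multiplicity spaces, and computing $\dim_{\F(\!(X)\!)}(M_{\pi'}\otimes\chi_\pi^{-1})^\vee[1/X]=\lgR\socR(\pi')$.

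\textbf{Part \ref{point:iii}.} Your argument is circular. You claim that $\pi'\subsetneq\pi$ forces $\Vell[\pi',\ell]\subsetneq\Vell[\pi,\ell]$ for some $\ell$, i.e.\ that $\socR\pi'\subsetneq\socR\pi$. But this is precisely the statement that $\pi$ is generated by its $\GLring$-socle (\Cref{prop:generation}), which is proved \emph{using} \ref{point:iii}. A priori nothing rules out $\socR\pi'=\socR\pi$ with $\pi'\subsetneq\pi$. The paper's route is to introduce the \emph{conjugate subrepresentation} $\widetilde\pi'\defeq\ker\bigl(\pi\xrightarrow{\kappa_\pi}\E(\pi^\vee)^\vee\otimes(\mytwist)\twoheadrightarrow\E(\pi'^\vee)^\vee\otimes(\mytwist)\bigr)$ of \Cref{def:conjugate-srep}, for which \Cref{lemma:Z-equal-Z} gives $\mathcal Z(\pi''^\vee)=\mathcal Z(\widetilde\pi'^\vee)$. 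Purity of $\pi^\vee$ (from essential self-duality) forces $\pi''^\vee$ to have grade exactly $2f$, so $\mathcal Z(\pi''^\vee)\neq0$, hence $\widetilde\pi'\neq0$; now \ref{point:ii} applies to the \emph{subrepresentation} $\widetilde\pi'$ and gives $\dim\Dvee[\pi'']=m_{\mathfrak p_0}(\pi''^\vee)=m_{\mathfrak p_0}(\widetilde\pi'^\vee)=\lgR\socR(\widetilde\pi')\neq0$ via \ref{point:i}.
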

We will use the following notation
in the proof:
if $\pi $ is an admissible smooth representation of $\GLfield $ over $\mathbb{F}  $ such that
hypothesis \ref{hypothesis:iii} holds,
we fix once and for all
a $\GL_2(K)$-equivariant isomorphism 
\begin{equation}
	\label{eq:kappa-pi-def}
\kappa_\pi \colon \pi \xrightarrow{\sim}
\E(\pi^\vee)^\vee \otimes (\det(\overline{\rho})\omega^{-1})
\end{equation}	
of $\Lambda$-modules using
\cite[Corollary~1.8]{Koh17}.
Let $\pi'$ be a subrepresentation of $\pi$,
and set $\pi'' \defeq \pi/\pi'$.
Since $\Lambda$ is Auslander regular 
(cf.\ the discussion before
\Cref{def:multiplicity})
and $\pi ^\vee $ is
essentially self-dual of grade $2f$
by hypothesis \ref{hypothesis:iii},
it follows from
\cite[Cor.~III.2.1.6]{LvO}
that $\pi^{\prime \prime \vee}$ 
is of grade $\ge 2f$,
hence $E^{2f-1}_{\Lambda} (\pi^{\prime\prime\vee})=0$.
In particular, the natural inclusion
$\pi' \subseteq \pi$ induces a surjection
$ \E(\pi^\vee)^\vee 
\twoheadrightarrow 
\E(\pi^{ \prime\vee})^\vee$.

\begin{definition}[]
	\label{def:conjugate-srep}
We define the subrepresentation
$ \widetilde{\pi}' \subseteq \pi$
as 
\begin{equation}
	\label{eq:conjugate-def}
\widetilde{\pi}' \defeq 
\ker( \pi 
\xrightarrow[\sim]{\kappa_\pi}
\E(\pi^\vee)^\vee \otimes (\det(\overline{\rho})\omega^{-1})
\twoheadrightarrow 
\E(\pi^{ \prime\vee})^\vee \otimes (\det(\overline{\rho})\omega^{-1})
).
\end{equation}
\end{definition}
\begin{remark}
	\label{rem:conjugate-when-CM}
Keep the hypotheses of \Cref{def:conjugate-srep},
and consider the long exact sequence
induced by the short exact sequence
$
0 \to \pi^{\prime \prime\vee} \to \pi^\vee  \to \pi^{\prime\vee }\to 0
$
after taking $ \Hom_{\Lambda}(-, \Lambda)$.
From that exact sequence we obtain 
the following commutative diagram
with exact rows:
\begin{equation}
	\label{eq:conjugate-when-CM}
	\begin{tikzcd}
	0 \ar[r] &  	\E(\pi^{\prime\vee})\ar[r]
&  \E(\pi^\vee)  \ar[r]\ar[d,"{ \kappa_\pi^\vee \otimes (\mytwist) }"', "{\wr}"] 
& \im \left( 
\E(\pi^\vee) \to \E(\pi^{\prime \prime\vee}) 
 \right)
\ar[d,"{ \kappa_\pi^\vee \otimes  (\mytwist)}", "{\wr}"'] \ar[r]
& 0 \\
       & 
& \pi^\vee \otimes (\mytwist)  \ar[r]
&\widetilde{\pi}^{\prime\vee}\otimes (\mytwist) .
\end{tikzcd}\end{equation}
Indeed, 
we have seen that $\E[2f-1](\pi^{\prime \prime \vee })=0$
always holds, so the top row of 
\eqref{eq:conjugate-when-CM} is exact,
and we are left to show that the square is commutative.
Now, \eqref{eq:conjugate-def}
implies that $\kappa_\pi$ restricts 
to an isomorphism
\[
\widetilde{\pi}'
\xrightarrow[\sim]{\kappa_\pi}
\ker\left( \E(\pi^\vee)^\vee \otimes (\mytwist)
\twoheadrightarrow 
\E(\pi^{ \prime\vee})^\vee \otimes (\mytwist) \right),
\]
and we notice that the right-hand side 
is isomorphic to 
$ \im \left( 
\E(\pi^\vee) \to \E(\pi^{\prime \prime\vee}) 
 \right)\!^\vee$.
We conclude by taking 
$\F$-linear duals and twisting by $\mytwist$.

Notice that the
diagram \eqref{eq:conjugate-when-CM}
simplifies when
$\E[2f+1](\pi^{\prime \vee} )=0$,
as one can replace the term
$ \im \left( 
\E(\pi^\vee) \to \E(\pi^{\prime \prime\vee}) 
 \right)$
by $\E[2f](\pi^{\prime \prime \vee})$.
\end{remark}

We recall the definition of
the characteristic cycle functor given in
\cite{BHHMS2}.
\begin{definition}[{\cite[Definition~3.3.4.1]{BHHMS2}}]
	\label{def:characteristic-cycle}
Let $N$ be a finitely generated module
over $\grL$ which is annihilated by some
power of $J$.
We define the \emph{characteristic cycle}
of $N$, denoted by $\mathcal{Z}(N)$,
as
\[
\mathcal{Z}(N) \defeq \sum _{\mathfrak{q}} 
m_{\mathfrak{q}} (N)\mathfrak{q}
\in \bigoplus _{\mathfrak{q}} \mathbb{Z}_{ \ge 0} \mathfrak{q},
\]
where $\mathfrak{q}$ runs over all
minimal prime ideals of $\overline{R}$
and where the integer $m_{\mathfrak{q}} (N)$
was defined in \Cref{def:multiplicity}.
\end{definition}
The characteristic cycle is additive in short exact sequences
by \cite[Lemma~3.3.4.2]{BHHMS2}.

If $F'$, $F''$
are two good filtrations
(in the sense of \cite[§I.5]{LvO})
on a finitely generated $\Lambda$-module
$M$, such that $\gr_{F'}(M)$
(or equivalently $\gr_{F''} (M)$,
by the discussion before
\cite[Proposition~3.1.2.11]{BHHMS2})
is annihilated by some power of $J$,
then 
$\gr_{F'}(M)$ and $\gr_{F''} (M)$
are finitely generated as $\grL$-modules
by \cite[Lemma~I.5.4]{LvO},
and moreover
$\mathcal{Z}(\gr_{F'}(M)) =\mathcal{Z}(\gr_{F''} (M))$
 by \cite[Lemma~3.3.4.3]{BHHMS2}.
Therefore, we can define 
$m_{\mathfrak{q}} (M)$
to be  $m_{\mathfrak{q}} (\gr_F(M))$
and $\mathcal{Z}(M)$
to be  $\mathcal{Z}(\gr_F(M))$,
where $F$ is any good filtration on $M$.

\begin{lemma}
	\label{lemma:Z-equal-Z}
Assume that $\pi$ satisfies
hypotheses \ref{hypothesis:ii} and
\ref{hypothesis:iii}.
Let $\pi'$ be a subrepresentation of $\pi$,
and set $\pi'' \defeq \pi/\pi'$.
If $\widetilde{\pi}'$ 
is the subrepresentation 
of \Cref{def:conjugate-srep},
then 
\[
\mathcal{Z}(\pi^{\prime \prime \vee})
=
\mathcal{Z}(\widetilde{\pi}^{\prime \vee})
\]
\end{lemma}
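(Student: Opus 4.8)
The plan is to compute both characteristic cycles by relating the modules $\pi^{\prime\prime\vee}$ and $\widetilde\pi^{\prime\vee}$ through the diagram \eqref{eq:conjugate-when-CM} of \Cref{rem:conjugate-when-CM}, together with the additivity of $\mathcal Z$ in short exact sequences. First I would recall from the discussion preceding \Cref{def:conjugate-srep} that $\pi^{\prime\prime\vee}$ has grade $\ge 2f$, so that $\E[2f-1](\pi^{\prime\prime\vee})=0$ and hence the top row of \eqref{eq:conjugate-when-CM} is a short exact sequence
\[
0 \to \E(\pi^{\prime\vee}) \to \E(\pi^\vee) \to \im\!\left(\E(\pi^\vee)\to\E(\pi^{\prime\prime\vee})\right)\to 0.
\]
Dualising and untwisting via the vertical isomorphisms of \eqref{eq:conjugate-when-CM}, this becomes a short exact sequence
\[
0 \to \widetilde\pi^{\prime\vee} \to \pi^\vee \to \E(\pi^{\prime\vee})^\vee \otimes(\myantitwist) \to 0,
\]
where the last term is (up to the harmless twist) the dual of the image, matching \eqref{eq:conjugate-def}. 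By additivity of $\mathcal Z$ this gives
\[
\mathcal Z(\pi^\vee) = \mathcal Z(\widetilde\pi^{\prime\vee}) + \mathcal Z\!\left(\E(\pi^{\prime\vee})^\vee\right),
\]
the twist being irrelevant for $\mathcal Z$. On the other hand the defining short exact sequence $0\to\pi^{\prime\prime\vee}\to\pi^\vee\to\pi^{\prime\vee}\to 0$ gives $\mathcal Z(\pi^\vee)=\mathcal Z(\pi^{\prime\prime\vee})+\mathcal Z(\pi^{\prime\vee})$. Comparing the two expressions, it suffices to prove that $\mathcal Z(\pi^{\prime\vee}) = \mathcal Z(\E(\pi^{\prime\vee})^\vee)$.

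The key point is therefore that $\pi^{\prime\vee}$ and its "double dual" $\E(\pi^{\prime\vee})^\vee = \E[2f](\pi^{\prime\vee})^\vee$ have the same characteristic cycle. For this I would argue as follows. Since $\pi^{\prime\vee}$ is a quotient of $\pi^\vee$ which is essentially self-dual of grade $2f$ (hypothesis \ref{hypothesis:iii}) and $\Lambda$ is Auslander regular, $\pi^{\prime\vee}$ has grade $2f$, so $\E[j](\pi^{\prime\vee})=0$ for $j<2f$. Thus the biduality spectral sequence degenerates enough to yield an exact sequence $0 \to \E[2f](\E[2f](\pi^{\prime\vee})) \to \pi^{\prime\vee} \to \E[2f+2](\E[2f+1](\pi^{\prime\vee})) \to \cdots$, and the comparison of grades shows that the kernel and cokernel of the natural biduality map $\pi^{\prime\vee}\to\E[2f](\E[2f](\pi^{\prime\vee}))$ have grade $\ge 2f+1$, hence Krull dimension strictly smaller than that of $\pi^{\prime\vee}$. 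Since $\mathcal Z$, via the multiplicities $m_{\mathfrak q}$ at the minimal primes $\mathfrak q$ of $\overline R$ (all of which have the maximal dimension $f$), only sees the top-dimensional part of $\grm$, these lower-dimensional contributions vanish in $\mathcal Z$; concretely one uses that $m_{\mathfrak q}$ is computed after localising at a minimal prime, where a module of grade $\ge 2f+1$ localises to zero. Therefore $\mathcal Z(\pi^{\prime\vee}) = \mathcal Z(\E[2f](\pi^{\prime\vee}))$, and finally $\mathcal Z(\E[2f](\pi^{\prime\vee})) = \mathcal Z(\E[2f](\pi^{\prime\vee})^\vee)$ because $\F$-linear duality $(-)^\vee$ is an anti-equivalence on admissible $\Lambda$-modules preserving the $\mI$-adic filtration up to shift, hence preserving $\gr_{\mI}$, multiplicities and $\mathcal Z$.

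The main obstacle I anticipate is the second paragraph: making precise that passing from $\pi^{\prime\vee}$ to $\E[2f](\E[2f](\pi^{\prime\vee}))$ changes $\mathcal Z$ by terms supported in grade $\ge 2f+1$, and that such terms are invisible to $\mathcal Z$. This requires either a clean statement that $m_{\mathfrak q}$ of a graded module of grade $>2f$ vanishes at every minimal prime $\mathfrak q$ of $\overline R$ — which should follow from \cite[Lemma~3.1.4.3]{BHHMS2} and the grade/dimension dictionary over the Auslander regular ring $\grL$, since such $\mathfrak q$ have dimension $f$ and codimension $2f$ — or a direct bookkeeping with good filtrations and \cite[Lemma~3.3.4.2]{BHHMS2}. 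Once this vanishing is in hand, the rest is the purely formal diagram chase with \eqref{eq:conjugate-when-CM} and additivity of $\mathcal Z$ described above. I would also double-check that the twist by $\mytwist$ genuinely does not affect $\mathcal Z$, which is immediate since twisting by a character of $\GLfield$ does not change the underlying $\grL$-module structure used to define the $m_{\mathfrak q}$.
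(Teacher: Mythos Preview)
Your overall strategy matches the paper's: use additivity of $\mathcal Z$ on two short exact sequences and compare. However, there is a bookkeeping error in your second displayed sequence that then propagates into an unnecessary and incorrect final step.

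The short exact sequence coming out of \eqref{eq:conjugate-when-CM} is
\[
0 \to \E(\pi^{\prime\vee}) \to \E(\pi^\vee) \to \widetilde\pi^{\prime\vee}\otimes(\mytwist) \to 0,
\]
or equivalently, after identifying $\E(\pi^\vee)\cong\pi^\vee\otimes(\mytwist)$ and untwisting,
\[
0 \to \E(\pi^{\prime\vee})\otimes(\myantitwist) \to \pi^\vee \to \widetilde\pi^{\prime\vee} \to 0.
\]
Your version has $\widetilde\pi^{\prime\vee}$ as the subobject rather than the quotient, and carries a spurious extra Pontryagin dual on $\E(\pi^{\prime\vee})$. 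With the correct sequence, additivity gives
$\mathcal Z(\pi^\vee)=\mathcal Z(\widetilde\pi^{\prime\vee})+\mathcal Z(\E(\pi^{\prime\vee}))$, and the comparison you want reduces to $\mathcal Z(\pi^{\prime\vee})=\mathcal Z(\E(\pi^{\prime\vee}))$. This is precisely \cite[Lemma~3.3.4.5]{BHHMS2} (applied to $\pi^{\prime\vee}$ and to $\pi^\vee$; hypothesis \ref{hypothesis:ii} ensures the required $J$-power annihilation), and the paper simply cites it. Your biduality sketch in the second paragraph is a reasonable outline of how one proves that lemma.

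The step you flag as the ``main obstacle'' is therefore already available as a black box. By contrast, the very last step you propose, namely $\mathcal Z(\E[2f](\pi^{\prime\vee}))=\mathcal Z(\E[2f](\pi^{\prime\vee})^\vee)$ via ``$\F$-linear duality preserves $\gr_{\mI}$'', is not correct as stated: for a finitely generated $\Lambda$-module $M$, the Pontryagin dual $M^\vee$ is a discrete torsion module, not a finitely generated one, and $\mathcal Z$ is only defined on the latter side. Once you drop the extra $(-)^\vee$, this step disappears entirely and the proof becomes the two-line argument in the paper.
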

\begin{proof}
It follows from \eqref{eq:conjugate-when-CM},
that we have the short exact sequence
\begin{equation*} 
0 \to   \E(\pi^{\prime\vee}) \to \E(\pi^\vee) 
\to 
\widetilde{\pi}^{\prime \vee} \otimes (\mytwist) 
\to 0.
\end{equation*}
By the additivity of $\mathcal{Z}$ we have 
\begin{align*}
\mathcal{Z}( \widetilde{\pi}^{\prime \vee} )
& = 
\mathcal{Z}(
\widetilde{\pi}^{\prime \vee} \otimes (\mytwist) )
= 
\mathcal{Z}(\E(\pi^\vee)) -
\mathcal{Z}(\E(\pi^{\prime\vee})), \\
\mathcal{Z}( \pi^{\prime \prime \vee} )
& = 
\mathcal{Z}(\pi^\vee) -
\mathcal{Z}(\pi^{\prime\vee})),
\end{align*}
and the two quantities are equal
by \cite[Lemma~3.3.4.5]{BHHMS2}
(hypothesis \ref{hypothesis:ii}
ensures that the assumptions of 
\emph{loc.\ cit.}\ are satisfied).
\end{proof}

\begin{proof}[Proof of \Cref{prop:main-memoire}]
(i)
The proof of  \cite[Proposition~3.3.5.3]{BHHMS2}(i)
goes through \emph{verbatim} when $r > 1$.
Note that it is only here that
we are using the genericity
assumption on $\repr$.

(ii)
Following the strategy of 
\cite[Proposition~3.3.5.3]{BHHMS2},
we prove the inequalities
\[
	\dim_{\mathbb{F}(\!(X)\!)}D^\vee_\xi(\pi')\le m_{\mathfrak p_0}(\grm\pi^{\prime\vee})\le \lg_{\GLring} \socR(\pi')\le \dim_{\mathbb{F}(\!(X)\!)}D^\vee_\xi(\pi').
\]

We know from \cite[Corollary~3.1.4.5]{BHHMS2} that 
\[
\dim_{\mathbb{F}(\!(X)\!)}D^\vee_\xi(\pi')\le m_{\mathfrak p_0}(\grm\pi'^\vee).
\]
In order to show that 
\[
m_{\mathfrak p_0}(\grm\pi'^\vee)\le\lg_{\GLring} \socR(\pi'),
\]
we consider the surjection \eqref{eq:surjection_Ni}
of \Cref{lemma:gr}(i)
(this is where we need hypothesis
\ref{hypothesis:ii}).
If we let 
$r'(\ell)\defeq \rell$,
then by the additivity of $m_{\mathfrak p_0}$ we have
\begin{align*}
    m_{\mathfrak p_0}(\grm\pi'^\vee)
    &\le
    \sum_\ell r'(\ell)\sum_{\lambda\in\mathscr P_\ell}
    m_{\mathfrak p_0}\left(\chi_\lambda^{-1}\otimes_{\F}  R/\mathfrak a(\lambda)\right)
    =\\
    &\overset{\eqref{eq:mult_id}}{=}\sum_\ell r'(\ell)\sum_{\lambda\in\mathscr P_\ell}
    \mathbbm{1}_{\mathscr{D}}(\lambda)
    =\sum_\ell r'(\ell)\cdot|\mathscr P_\ell\cap \mathscr{D}|
    =\\
    &=\lg_{\GLring} \socR(\pi').
\end{align*}
Finally, we conclude by showing that 
\[
\lg_{\GLring} \socR(\pi')\le \dim_{\mathbb{F}(\!(X)\!)}D^\vee_\xi(\pi').
\]
Denote by $\chi_{\pi}  $
the central character of $\pi $.
Recall from the paragraph after 
\eqref{eq:diagr1-def}
that $K^{\times}$ acts on $\DZero$
via the central character $\mytwist$,
in particular we have
$\chi_{\pi} =\det(\repr)\omega ^{-1}$
by hypothesis \ref{hypothesis:i} of 
\Cref{sec:hypotheses}.

Recall that
$N_1 \defeq \ker \left( 
N_0 \overset{\eqref{eq:N0-def}}{\cong} \mathcal{O} _K 
\xtwoheadrightarrow{\Tr}
\mathbb{Z} _p \right)$,
 and that in the discussion before
\Cref{sec:hypotheses}, we have given $\pi^{N_1}$ an 
$\F [\![X]\!] [F]$-module structure
with a compatible action of 
$\mathbb{Z}_p ^{\times}$.
If $M$ is any $\F [\![X]\!] [F]$-submodule
of $\pi^{N_1}$ which is stable under
$\mathbb{Z}_p^{\times}$,
then following the paragraph before
\cite[Lemma~3.2.1.2]{BHHMS2}
we denote by $M \otimes \chi_\pi^{-1} $
the $\F [\![X]\!] $-module $M$
where the action of $F$ is multiplied
by $\chi_\pi(p)^{-1} $
and the action of $ x \in \mathbb{Z}_p^{\times} $
is multiplied by $\chi_\pi(x)^{-1} $.

Set
$n(\sigma)\defeq |\{\delta^i(\sigma) \mid i\in \mathbb{N}\}|$.
Following the paragraph before
\cite[Proposition~3.2.4.2]{BHHMS2},
for $\sigma \in W(\repr)$
we consider the
$\mathbb{F}[\![X]\!][F]$-submodule 
$M_\sigma\otimes\chi_{\pi}^{-1}$
of $\pi^{N_1}$
generated by 
$Y^{1-m}\delta^{i}(\sigma)^{N_0}$,
for $0 \le i \le n(\sigma)-1$,
where $m$ is the integer defined in 
the paragraph before 
\cite[Proposition~3.2.3.1]{BHHMS2}.
For this definition,
we remind the reader that an embedding 
$\sigma \hookrightarrow \socR (\pi)$ 
was fixed,
cf.\ the paragraph before
\cite[Proposition~3.2.3.1]{BHHMS2}.

To be more precise,
if $L \subseteq V$ is a line,
we denote by $L \otimes_{\F} M_{\sigma}$
the $\mathbb{F}[\![X]\!][F]$-submodule 
of $\pi^{N_1}$ obtained using the embedding 
$\sigma \cong 
\iota_\pi^{-1} (L \otimes _{\F} \sigma)
\subseteq \pi^{N_1}$
(which does not depend on the choice of
isomorphism
$\sigma \cong 
\iota_\pi^{-1} (L \otimes _{\F} \sigma)$).
Here, of course, we are regarding
$L \otimes_{\F} \sigma$ as a subset
of $V \otimes_{\F} \DZero$.
Notice also that $L \otimes_{\F} M_{\sigma} $
only depends on the orbit of $\sigma$
under the action of $\delta$,
cf.\ \Cref{def:weight-cycling}.

If $W \subseteq \F^{r}$ is any 
vector subspace, then we set
$W \otimes_{\F} M_{\sigma} 
\defeq \sum _{L \subseteq W \text{ line}}
L \otimes_{\F}M_{\sigma}$,
and we notice that 
$W \otimes_{\F} M_{\sigma} \subseteq 
\pi^{\prime N_1}$
whenever 
$W \subseteq \Vell[\pi',\ell(\sigma)]$,
using \Cref{cor:direction}.
Then we define
the $\mathbb{F}[\![X]\!][F]$-module $M_{\pi'}$
following the paragraph before
 \cite[Proposition~3.2.4.6]{BHHMS2}:
\[
M_{\pi'}
\defeq 
\bigoplus_{\sigma\in \mathcal{O}(\overline \rho)}
\bigl( \Vell[\pi',\ell(\sigma)]
\otimes_{\F} 
M_\sigma \bigr)
\subseteq \pi^{\prime N_1},
\]
where $\mathcal{O}(\repr)$ is a set of representatives for the $\delta$-orbits in $W(\overline \rho)$
(by the previous paragraph, we see that
$M_{\pi'}$
does not depend on the choice of 
$\mathcal{O}(\repr)$).

It follows from \cite[Proposition~3.2.4.2]{BHHMS2} that 
$(L \otimes_{\F}  M_\sigma\otimes\chi_{\pi}^{-1})^\vee[1/X]$ is
an $\mathbb{F}(\!(X)\!)$-vector space of dimension $n(\sigma)$.
In fact, a basis $e_1(L), \dots, e_{n(\sigma)}(L) $
is constructed explicitly
in \emph{loc.\ cit.}
(and denoted there by 
$e_1, \dots, e_{n(\sigma)}$).

For $\sigma \in  \mathcal{O}(\repr)$,
set $r'(\sigma) \defeq \dim_{\F}
\Vell[\pi',\ell(\sigma)]$,
and choose $r'(\sigma)$
lines $L_1, \dots, L_{r'(\sigma)}$
in direct sum inside $\Vell[\pi',\ell(\sigma)]$.
Then, one can check that
$\left(e_j(L_m)\mid
1 \le j \le n(\sigma),
1 \le m \le r'(\sigma) \right)$
is a basis of 
$(\Vell[\pi',\ell(\sigma)] \otimes_{\F}
M_\sigma\otimes\chi_{\pi}^{-1})^\vee[1/X]$.

From this discussion, it follows that
the $\F (\!(X)\!) $-vector space
 $(M_{\pi'}\otimes\chi_{\pi}^{-1})^\vee[1/X]$
is of dimension
\begin{equation}
	\label{eq:dim=r-n=lg-soc}
\dim_{\mathbb{F}(\!(X)\!)}(M_{\pi'}\otimes\chi_{\pi}^{-1})^\vee[1/X]=
	\sum_{\sigma\in\mathcal{O}(\overline \rho)}r'(\sigma)\cdot n(\sigma)
=
\lg_{\GLring} \socR(\pi'),
\end{equation}
for the last equality remember that
$n(\sigma)$ is the size of a $\delta$-orbit,
and that we are summing over all $\delta$-orbits.

We then have the surjection
\[
D_\xi^\vee(\pi')\twoheadrightarrow (M_{\pi'}\otimes\chi_{\pi}^{-1})^\vee[1/X],
\]
which follows directly from 
\Cref{def:Dvee}.
The fact that $M_{\pi'} \otimes \chi_\pi ^{-1} $
is a submodule of $\pi'^{N_1} $ has been established in the previous paragraph,
while
it is shown in
 \cite[Proposition~3.2.4.2]{BHHMS2}
 that $M_\sigma \otimes \chi_\pi^{-1} $
is admissible and stable under the action of $\mathbb{Z} _p ^\times  $,
hence the same is true of  
$M_{\pi'} \otimes \chi_\pi^{-1} $. 

Counting dimensions, we find
\begin{align*}
\dim_{\mathbb{F}(\!(X)\!)}D^\vee_\xi(\pi')\ge \dim_{\mathbb{F}(\!(X)\!)}(M_{\pi'}\otimes \chi_\pi ^{-1})^\vee[1/X]& 
\overset{\eqref{eq:dim=r-n=lg-soc}}{=}
\lg_{\GLring} \socR(\pi'),
\end{align*}
which concludes the proof.

(iii)
Let 
$\widetilde{\pi}'$ be the subrepresentation
of \Cref{def:conjugate-srep}
associated to $\pi'$
(this is where we use hypothesis
\ref{hypothesis:iii}).
Then,
\begin{equation}
	\label{eq:Z-Z-application}
\mathcal{Z}(\pi^{\prime \prime \vee})
= 
\mathcal{Z}(\widetilde{\pi}^{\prime \vee})
\end{equation}
by \Cref{lemma:Z-equal-Z}.
It follows from
\cite[Prop.~III.4.2.8(1)]{LvO}
and from
\cite[Prop.~III.4.2.9]{LvO}
that $\pi^{\prime\prime\vee}$ has grade $2f$,
hence $\mathcal{Z}(\pi^{\prime\prime \vee})$
is nonzero, for example by the discussion
before \cite[Theorem~3.3.4.5]{BHHMS2}.
In particular,
$\widetilde{\pi}^{\prime} $ is also
nonzero, and so
$ 0 \neq \lg_{\GLring}  \socR(\widetilde{\pi}^{\prime})
    \overset{\text{(ii)}}{= }
    \dim_{\mathbb{F}(\!(X)\!)}D^\vee_\xi(\widetilde{\pi}^{\prime}).$

But \eqref{eq:Z-Z-application}
implies
\[
    \dim_{\mathbb{F}(\!(X)\!)}D^\vee_\xi(\pi^{\prime\prime })=
    \dim_{\mathbb{F}(\!(X)\!)}D^\vee_\xi(\widetilde{\pi}^{\prime})\neq 0
\] 
by (i).
\end{proof}

The following is a generalisation of \cite[Theorem~3.3.5.5]{BHHMS2}
to the case $r\ge 1 $:
\begin{prop}
	\label{prop:generation}	
Assume that $\repr$ is 
$2f$-generic,
and that $\pi$ satisfies
hypotheses \ref{hypothesis:i} to
\ref{hypothesis:iii}.
Then, $\pi $ is generated,
as a $\GLfield $-representation,
by its $\GLring $-socle.
\end{prop}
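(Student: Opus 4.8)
The plan is to argue by contradiction. Suppose $\pi$ is not generated by its $\GLring$-socle, and let $\pi' \defeq \langle \GLfield \cdot \socR(\pi) \rangle \subsetneq \pi$ be the subrepresentation generated by the socle, so that $\pi'' \defeq \pi/\pi'$ is a nonzero quotient. The key observation is that $\socR(\pi'') = 0$: indeed any irreducible $\GLring$-subrepresentation of $\pi''$ would lift to an irreducible $\GLring$-subrepresentation of $\pi$ (since taking $\GLring$-socles is compatible with the relevant exact sequences, or more directly since $\pi'$ already contains all of $\socR(\pi)$ and $\pi/\pi'$ cannot acquire new socle constituents that weren't visible in $\pi$, by the structure of $\pi^{K_1}\cong \DZero^{\oplus r}$ via hypothesis \ref{hypothesis:i} and \Cref{cor:direction}), forcing $\pi'$ to properly contain $\langle \GLfield\cdot\socR(\pi)\rangle$ — no, more carefully: any $\GLring$-subrep of $\pi''$ is in the image of $\pi^{K_1}\to\pi''^{K_1}$, hence its socle constituents already appear in $\socR(\pi)\subseteq\pi'$, hence map to zero in $\pi''$. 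So $\socR(\pi'')=0$, which forces $\pi''=0$ since $\pi''$ is a nonzero smooth representation of a profinite group and hence has nonzero socle. This is the contradiction, provided we can justify $\socR(\pi'')=0$.

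The subtle point — and the one I expect to be the main obstacle — is precisely the claim that $\socR(\pi'')=0$, i.e. that passing to the quotient $\pi/\pi'$ kills all $\GLring$-socle. The naive argument above is too quick: the functor $(-)^{K_1}$ is left exact but not right exact, so a priori $\pi''^{K_1}$ could be strictly larger than the image of $\pi^{K_1}$. This is exactly where I would invoke the $(\varphi,\Gamma)$-module machinery of \Cref{prop:main-memoire}. The cleaner route is: suppose for contradiction $\pi'\subsetneq\pi$ is a \emph{maximal} proper subrepresentation with $\socR(\pi)\subseteq\pi'$ (equivalently $\pi''=\pi/\pi'$ is irreducible). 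Then $\socR(\pi'')=\pi''$ is a single irreducible $\GLfield$-representation. But by \Cref{prop:main-memoire}(iii), $D^\vee_\xi(\pi'')\neq 0$, and by \Cref{prop:main-memoire}(ii) applied to suitable subrepresentations, $\dim_{\F(\!(X)\!)}D^\vee_\xi(-)$ detects the $\GLring$-socle length. Concretely: $\pi''$ is irreducible and nonzero, and I would want to derive that $\pi''$ has nonzero $\GLring$-socle (true for any nonzero smooth $\GLring$-rep) which lifts back into $\pi$; the content is showing this lift lands in $\pi'$. Here one uses that $\socR(\pi)\cong\socR(\DZero)^{\oplus r}$ and that, by \Cref{lemma:fill-up} and \Cref{cor:direction}, any $\GLring$-subrepresentation of $\pi$ whose socle lies in $\socR(\pi)$ is already controlled — and $\pi'$, containing $\socR(\pi)$ and being $\GLfield$-stable, must by \Cref{lemma:fill-up} contain $\Vwt[\pi,\sigma]\otimes_\F\Dx[\sigma]$ for every $\sigma$, hence $\pi'^{K_1}=\pi^{K_1}$, hence $\socR(\pi')=\socR(\pi)$ and then $\socR(\pi'')$ embeds into $\pi''^{K_1}$ with trivial intersection with the image of $\pi^{K_1}=\pi'^{K_1}$.

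So the logical skeleton I would write out is: (1) Let $\pi'$ be the $\GLfield$-subrepresentation generated by $\socR(\pi)$; by \Cref{lemma:fill-up} (or rather the argument in its proof, combined with \Cref{cor:direction}) deduce $\pi'^{K_1}=\pi^{K_1}$, since $\pi'$ contains $\iota_\pi^{-1}(\V\otimes_\F\socR(\DZero))$ and is $\GLfield$-stable, forcing it to contain $\iota_\pi^{-1}(\V\otimes_\F\DZero)=\pi^{K_1}$. (2) Hence $\socR(\pi'')\cap \operatorname{im}(\pi^{K_1}\to\pi''^{K_1})=\socR(\pi'')\cap\operatorname{im}(\pi'^{K_1}\to\pi''^{K_1})=0$. (3) If $\pi''\neq 0$, pick $\pi'\subseteq\tilde\pi\subsetneq\pi$ with $\pi/\tilde\pi$ irreducible (using admissibility/Zorn); set $\pi''=\pi/\tilde\pi$. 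By \Cref{prop:main-memoire}(iii), $D^\vee_\xi(\pi'')\neq0$; but $\pi''$ is a nonzero smooth irreducible representation of $\GLfield$, and one checks $D^\vee_\xi$ of such a $\pi''$ vanishes unless $\pi''$ has a nonzero $\GLring$-subrepresentation lying in $\socR(\pi)$'s orbit — the precise incompatibility with step (2) yields the contradiction. Alternatively and more robustly, once $\pi'^{K_1}=\pi^{K_1}$ is known, \Cref{prop:main-memoire}(ii) gives $\dim D^\vee_\xi(\pi')=\lgR\socR(\pi')=\lgR\socR(\pi)$, while (i)+(iii) give $\dim D^\vee_\xi(\pi)=\dim D^\vee_\xi(\pi')+\dim D^\vee_\xi(\pi'')$ with the last term nonzero if $\pi''\neq0$; but $\dim D^\vee_\xi(\pi)=\lgR\socR(\pi)$ by (ii), so $\dim D^\vee_\xi(\pi'')=0$, forcing $\pi''=0$ by \Cref{prop:main-memoire}(iii). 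I expect this last displayed chain — using additivity of $\dim D^\vee_\xi$ in the short exact sequence $0\to\pi'\to\pi\to\pi''\to0$, which itself should follow from exactness properties of $D^\vee_\xi$ on admissible representations together with $\pi'^{K_1}=\pi^{K_1}$ — to be the crux, and the rest is bookkeeping with \Cref{lemma:fill-up} and \Cref{cor:direction}.
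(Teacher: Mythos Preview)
Your final argument is exactly the paper's proof: set $\pi'=\langle\GLfield\cdot\socR(\pi)\rangle$, use $\socR(\pi')=\socR(\pi)$ together with \Cref{prop:main-memoire}(ii) to get $\dim_{\F(\!(X)\!)}\Dvee[\pi']=\dim_{\F(\!(X)\!)}\Dvee[\pi]$, use additivity to conclude $\dim_{\F(\!(X)\!)}\Dvee[\pi'']=0$, and invoke \Cref{prop:main-memoire}(iii) to force $\pi''=0$.

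Two simplifications relative to your write-up. First, the detour through $\pi'^{K_1}=\pi^{K_1}$ via \Cref{lemma:fill-up} and \Cref{cor:direction} is unnecessary: the equality $\socR(\pi')=\socR(\pi)$ is immediate from $\socR(\pi)\subseteq\pi'\subseteq\pi$, and that is all \Cref{prop:main-memoire}(ii) needs. Second, the additivity $\dim\Dvee[\pi]=\dim\Dvee[\pi']+\dim\Dvee[\pi'']$ does not require $\pi'^{K_1}=\pi^{K_1}$ or anything from \Cref{prop:main-memoire}(i): it follows directly from the exactness of the functor $\Dvee[-]$ on admissible representations (this is \cite[Theorem~3.1.3.7]{BHHMS2}, quoted in the paper). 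With these two points in hand the proof is three lines; your earlier attempts to argue directly that $\socR(\pi'')=0$ and the appeal to a maximal proper subrepresentation can be dropped entirely.
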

In particular, $\pi $ is of finite type
as a $\F[\GLfield] $-module.
\begin{proof}
The proof of 
\cite[Theorem~1.3.8]{BHHMS2}
passes through in the $r\ge 1 $ case,
we report it here for the reader's convenience.
Let $\tau \defeq  \socR \pi$,
and let $\pi' = \left\langle \GLfield \cdot \tau \right\rangle$
be the $\GLfield $-subrepresentation of $\pi $
generated by $\tau$.
If we define $\pi'' $ to be the quotient $\pi'' \defeq \pi/\pi'  $,
then by the exactness of the functor $\Dvee[-] $ 
(cf.\ \cite[Theorem~3.1.3.7]{BHHMS2})
we have
\begin{equation}
\label{eq:D_ex}
	\dim_{\F (\!(X)\!) } \Dvee[\pi] =
	\dim_{\F (\!(X)\!) } \Dvee[\pi'] +
	\dim_{\F (\!(X)\!) } \Dvee[\pi''].
\end{equation}
According to
\Cref{prop:main-memoire} \ref{point:ii},
and by construction of $\pi' $,
we have 
\begin{align*}
	\dim_{\F (\!(X)\!) } \Dvee[\pi] =
	\lg_{\GLring} \socR(\pi)&=
       \lg_{\GLring} \tau= \\
       =\lg_{\GLring} \socR(\pi')&=
	\dim_{\F (\!(X)\!) } \Dvee[\pi'].
\end{align*}
Substituting into \eqref{eq:D_ex}
we find $\dim_{\F (\!(X)\!) } \Dvee[\pi'']= 0$
and, using 
\Cref{prop:main-memoire} \ref{point:iii},
we deduce that $\pi''=0$, which concludes the proof.
 \end{proof}

The following generalises 
\cite[Corollary~3.3.5.8]{BHHMS2}.
\begin{thm}[]
	\label{thm:principal-series}
Assume that $\repr$ is 
$2f$-generic,
and that $\repr$ is reducible split,
say of the form $\repr \cong 
\begin{pmatrix} 
\chi_1& 0 \\
0 & \chi_2 \\
\end{pmatrix}$.
Then, $\pi$ has the form
\[
	\pi = \pi_0^{\oplus r} \oplus 
	\pi_f^{\oplus r} \oplus \pi',
\]
where
\begin{enumerate}[(i)]
\item 
	\label{item:PS-1}
the representations $\pi_0$ and
$\pi_f$ are the irreducible principal 
series 
\[
\begin{array}{ll}
\pi_0 \cong    \Ind_{B(K)} ^{\GLfield}(\chi_1
\otimes \chi_2 \omega^{-1} ),
 &  
\pi_f \cong   \Ind_{B(K)} ^{\GLfield}(\chi_2
\otimes \chi_1 \omega^{-1} ),
\end{array}
\]
\item 
	\label{item:PS-2}
the representation $\pi'$ is generated by its $\GLring$-socle
	and $\pi^{\prime\vee}$ is essentially
self-dual 
(as in hypothesis \ref{hypothesis:iii} of
\Cref{sec:hypotheses}).
\end{enumerate}
\end{thm}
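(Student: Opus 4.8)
The plan is to follow the proof of \cite[Corollary~3.3.5.8]{BHHMS2}, while carrying along the multiplicity data recorded by the vector spaces $\Vell$ of \Cref{cor:direction}.

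First, since $\repr$ is reducible split, the bijection $\lambda \mapsto J_\lambda$ of \Cref{def:J-and-ell} between $\mathscr{D}$ and the subsets of $\{0,\dots,f-1\}$ forces $W(\repr)_0$ and $W(\repr)_f$ to be singletons; write $W(\repr)_0 = \{\sigma_0\}$ and $W(\repr)_f = \{\sigma_f\}$, so that $\DEll[0] = (\Dx[\sigma_0]^{I_1}\hookrightarrow\Dx[\sigma_0])$ and $\DEll[f] = (\Dx[\sigma_f]^{I_1}\hookrightarrow\Dx[\sigma_f])$ are the extreme summands in $\Diagr = \bigoplus_{\ell=0}^f \DEll$. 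By the analysis of \cite[§19]{BP12}, together with our rigid choice \cite[Theorem~1.3]{DL21} of diagram structure, these two basic diagrams are precisely the diagrams of the principal series $\pi_0 \defeq \Ind_{B(K)}^{\GLfield}(\chi_1\otimes\chi_2\omega^{-1})$ and $\pi_f \defeq \Ind_{B(K)}^{\GLfield}(\chi_2\otimes\chi_1\omega^{-1})$; these are irreducible by $2f$-genericity, have $\GLring$-socle $\sigma_0$, $\sigma_f$, and satisfy $\pi_0^{K_1}\cong\Dx[\sigma_0]$, $\pi_f^{K_1}\cong\Dx[\sigma_f]$ compatibly with the diagram structure. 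This gives \ref{item:PS-1}. Next I would embed $\pi_0^{\oplus r}\oplus\pi_f^{\oplus r}$ into $\pi$: by \Cref{cor:direction} and hypothesis \ref{hypothesis:i}, the $\sigma_0$-isotypic part of $\socR\pi$ is $\V\otimes_{\F}\sigma_0$, and the induced inclusion of diagrams $\V\otimes_{\F}\DEll[0]\hookrightarrow\iota_\pi(\pi^{K_1})$ integrates, via the universal property of $\pi_0$ attached to the basic diagram $\DEll[0]$, to a $\GLfield$-embedding $\V\otimes_{\F}\pi_0\hookrightarrow\pi$; likewise $\V\otimes_{\F}\pi_f\hookrightarrow\pi$. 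Since $\socR(\V\otimes_{\F}\pi_0)=\V\otimes_{\F}\sigma_0$ and $\socR(\V\otimes_{\F}\pi_f)=\V\otimes_{\F}\sigma_f$ are disjoint inside $\socR\pi$ and $\pi_0\not\cong\pi_f$, the sum is direct, yielding $\pi_{\mathrm{ext}}\defeq(\V\otimes_{\F}\pi_0)\oplus(\V\otimes_{\F}\pi_f)\cong\pi_0^{\oplus r}\oplus\pi_f^{\oplus r}\hookrightarrow\pi$.

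The heart of the matter, and the main obstacle, is to promote this to a direct-sum decomposition. Using hypothesis \ref{hypothesis:iii} I would apply the conjugate-subrepresentation construction of \Cref{def:conjugate-srep} to $\pi_{\mathrm{ext}}\subseteq\pi$, obtaining $\widetilde\pi_{\mathrm{ext}}\subseteq\pi$ with $\pi/\widetilde\pi_{\mathrm{ext}}\cong\E(\pi_{\mathrm{ext}}^\vee)^\vee\otimes(\mytwist)$; since $\pi_0^\vee$ and $\pi_f^\vee$ are essentially self-dual of grade $2f$ — a standard fact about the $\Lambda$-module structure of irreducible principal series, via \cite[Proposition~3.2]{Koh17} — this identifies $\pi/\widetilde\pi_{\mathrm{ext}}$ with $\pi_{\mathrm{ext}}$. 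One must then show that the composite $\pi_{\mathrm{ext}}\hookrightarrow\pi\twoheadrightarrow\pi/\widetilde\pi_{\mathrm{ext}}\cong\pi_{\mathrm{ext}}$, equivalently that $\pi_{\mathrm{ext}}\cap\widetilde\pi_{\mathrm{ext}}=0$, is an isomorphism of the finite-length module $\pi_0^{\oplus r}\oplus\pi_f^{\oplus r}$; this is where one needs a compatibility between the essential-self-duality isomorphism $\kappa_\pi$ and the inclusion $\pi_{\mathrm{ext}}\subseteq\pi$ (so that the restricted map is essentially $\kappa_{\pi_{\mathrm{ext}}}$), which in turn forces $\sigma_0$ and $\sigma_f$ not to reappear in $\socR\widetilde\pi_{\mathrm{ext}}$ — and it is precisely here that the alignment of the spaces $\Vell$ throughout, the $2f$-genericity of $\repr$, and hypotheses \ref{hypothesis:i}--\ref{hypothesis:iii} (together with the dimension count $\dim_{\F(\!(X)\!)}\Dvee[\pi_0]=\dim_{\F(\!(X)\!)}\Dvee[\pi_f]=1$ from \Cref{prop:main-memoire}\ref{point:ii}) all enter. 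Granting this, $\pi=\pi_{\mathrm{ext}}\oplus\widetilde\pi_{\mathrm{ext}}$.

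Finally, set $\pi'\defeq\widetilde\pi_{\mathrm{ext}}$. It is generated by its $\GLring$-socle since $\pi$ is (by \Cref{prop:generation}), $\pi_{\mathrm{ext}}$ is (being a sum of irreducibles), and $\socR\pi=\socR\pi_{\mathrm{ext}}\oplus\socR\pi'$. That $\pi^{\prime\vee}$ is essentially self-dual of grade $2f$ follows from the decomposition $\pi^\vee\cong\pi_{\mathrm{ext}}^\vee\oplus\pi^{\prime\vee}$ by applying $\E(-)$, which commutes with finite direct sums, and cancelling the essential self-duality of $\pi^\vee$ (hypothesis \ref{hypothesis:iii}) against that of $\pi_{\mathrm{ext}}^\vee$ — the point being, again, that $\kappa_\pi$ may be chosen compatibly with this direct-sum decomposition. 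This establishes \ref{item:PS-2} and completes the proof.
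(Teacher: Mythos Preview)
Your overall strategy---embed $r$ copies each of $\pi_0$ and $\pi_f$, then use essential self-duality to split off the quotient---is the same as the paper's, but two steps are genuinely incomplete.

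\textbf{The embedding.} There is no ``universal property of $\pi_0$ attached to the basic diagram $\DEll[0]$'' that lets you integrate a diagram inclusion to a $\GLfield$-embedding $\V\otimes_\F\pi_0\hookrightarrow\pi$. What one actually has, for each line $L\subseteq\V$, is the subrepresentation $\pi_0(L)\defeq\langle\GLfield\cdot\iota_\pi^{-1}(L\otimes\sigma_0)\rangle$, and one must first identify it. The paper does this via the combinatorial computation $\JH(\IndI\chi_{\sigma_0})\cap W(\repr)=\{\sigma_0\}$, which feeds into \cite[Lemma~5.14]{HW22} and \cite[Theorem~30]{BL94} to give $\pi_0(L)\cong\cInd\sigma_0/(T-\mu_0(L))$ for some scalar $\mu_0(L)\in\F^\times$. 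A priori $\mu_0(L)$ depends on $L$, so the $\pi_0(L)$ could be pairwise non-isomorphic principal series; the paper rules this out by a short argument with three lines in a $2$-plane. Only then does $\sum_L\pi_0(L)\cong\V\otimes_\F\pi_0$ make sense. Your appeal to \cite[§19]{BP12} and \cite[Theorem~1.3]{DL21} does not supply this.

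\textbf{The splitting.} You correctly locate the main obstacle in showing that the composite $\pi_{\mathrm{ext}}\hookrightarrow\pi\twoheadrightarrow\pi/\widetilde\pi_{\mathrm{ext}}$ is an isomorphism, but then you write ``Granting this''---which is exactly the step that needs proof. The paper's argument does not try to match $\kappa_\pi$ with $\kappa_{\pi_{\mathrm{ext}}}$; instead it checks surjectivity of $\V\otimes\pi_i\hookrightarrow\pi\twoheadrightarrow\V^\vee\otimes\pi_i'$ one line at a time, using \Cref{prop:generation}: since $\pi$ is generated by $\socR\pi$, the composite $\socR\pi\to L^\vee\otimes\pi_i'$ is nonzero, hence (as $\pi_i'$ is irreducible) surjective, and the only part of $\socR\pi$ that can hit $L^\vee\otimes\sigma_i=\socR(L^\vee\otimes\pi_i')$ is $\V\otimes\sigma_i$. (Incidentally, the individual $\pi_i^\vee$ are not essentially self-dual: by \cite[Proposition~5.4]{Koh17} one has $\E(\pi_0^\vee)\cong\pi_f^{\prime\vee}\otimes(\mytwist)$ for some principal series $\pi_f'$ with socle $\sigma_f$, and the identification $\pi_i'\cong\pi_i$ is a consequence of the splitting, not an input to it.)
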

\begin{proof}
We follow the proof of 
\cite[Corollary~3.3.5.8]{BHHMS2}.
Let $\sigma_0\in W(\repr)$
be the unique Serre weight of length $0$
in $W(\repr)$,
and let $\chi_{\sigma_0} $ be the character
of $I$ acting on $\sigma_0^{I_1}$.
For each line $L \subseteq \V$,
we let
\begin{equation}
	\label{eq:pi0-line}
	\pi_0(L) \defeq \left\langle \GLfield \cdot \iota_\pi^{-1}(L \otimes \sigma_0) \right\rangle
\hookrightarrow \pi,
\end{equation}
and we show that it is an irreducible
principal series.

First, we claim that
\begin{equation}
	\label{eq:wt-intersection-singleton}
	\JH(\Ind_{I} ^{\GLring} \chi_{\sigma_0} ) \cap W(\repr) = \{ \sigma_0 \}.
\end{equation}
Notice that $\ell(\sigma_0)=0$ 
implies 
$\sigma_0 =(r_0, \dots, r_{f-1})$,
by \Cref{def:D-red-irr}
and \Cref{def:J-and-ell}.
In addition to the set of $f$-tuples 
$\mathscr{D}$ of \Cref{def:D-red-irr},
we recall also the set 
$\mathcal{P}(x_0, \dots, x_{f-1})$
of $f$-tuples defined in
\cite[§2]{BP12},
whose definition we report here 
for the reader's convenience:
$\lambda \in \mathcal{P}(x_0, \dots, x_{f-1})$
is an $f$-tuple with 
$\lambda_i(x_i) \in \mathbb{Z} \pm x_i$.
If $f=1$, $\mathcal{P}(x_0)=\{x_0, p-1-x_0\}$,
and if $f>1$ then 
$(\lambda_0(x_0), \dots,\lambda_{f-1}(x_{f-1})) \in \mathcal{P}(x_0, \dots, x_{f-1})$
if and only if the following three
conditions are satisfied, for 
$i \in \{0,\dots, f-1\}$:
\begin{subequations}
\label{eq:P-conditions}
\begin{align}
	\label{eq:P-condition-a}
\lambda_i(x_i) &\in \{x_i,x_i-1,p-2-x_i,p-1-x_i\},\\
	\label{eq:P-condition-b}
\lambda_i(x_i) &\in \{x_i, x_i-1\} \implies
\lambda_{i+1}(x_{i+1}) \in \{x_{i+1} ,p-2-x_{i+1}\},\\
	\label{eq:P-condition-c}
\lambda_i(x_i) &\in \{p-2-x_i, p-1-x_i-1\} \implies
\lambda_{i+1}(x_{i+1})\in \{p-1-x_{i+1},x_{i+1}-1\},
\end{align}
\end{subequations}
with the conventions 
$x_f = x_0$, $\lambda_f(x_f)=\lambda_0(x_0)$.
By \cite[Lemma~2.2]{BP12}
(and since $\sigma_0$ is at least $1$-generic),
\begin{equation}
	\label{eq:JH-Ind-explicit}
\JH(\IndI \chi_{\sigma_0}) =
\left\{
(\lambda(r_0), \dots,\lambda(r_{f-1}))
\otimes \textstyle\det^{e(\lambda)(r_0,\dots,r_{f-1})}
\mid \lambda \in 
\mathcal{P}(x_0, \dots, x_{f-1})
\right\},
\end{equation}
where $e(\lambda)$ was defined in
\eqref{eq:e-lambda-def}.

Hence, the claim is equivalent to
the combinatorial statement that
$\mathcal{P}(x_0, \dots, x_{f-1}) \cap \mathscr{D} = 
\{(x_0, \dots, x_{f-1})\}$.
We give a sketch of the argument:
take $\lambda$ in this intersection, 
then \eqref{eq:P-condition-a}
and \Cref{def:D-red-irr}(i)
imply that
$\lambda_i(x_i) \in \{x_i, p-2-x_i\}$
for $i \in \{0, \dots, f-1\}$.
Using \eqref{eq:P-condition-c},
we see that we must have
$\lambda_{i}(x_i) = x_i$ for all $i \in \{0, \dots, f-1\}$.

From \eqref{eq:wt-intersection-singleton}
it follows that,
if
 $\tau \in  \JH(\Ind_{I} ^{\GLring} \chi_{\sigma_0} ) $
is a Serre weight
distinct from $\sigma_0$, then
we have $\Hom_{\GLring} (\tau, \pi_0(L))=0$.
So, the hypothesis of \cite[Lemma~5.14]{HW22}
is satisfied. 
As in the proof of 
\emph{loc.\ cit.},
if $ \cInd$
denotes the compact induction from 
$\KK $ to $\GLfield $,
we see that the morphism
\[
	\cInd \sigma_0 \to \pi_0(L)
\]
induced by Frobenius reciprocity from
$\sigma_0 \cong L \otimes \sigma_0
\xhookrightarrow{\iota_\pi^{-1}}
  \pi_0(L)$
factors through 
$\cInd \sigma_0/(T- \mu_0(L))$,
for some scalar $\mu_0(L) \in \F^\times.$
By \cite[Theorem~30]{BL94},
$\cInd \sigma_0/(T- \mu_0(L))$
is irreducible and isomorphic to some
principal series 
$\cInd \sigma_0/(T- \mu_0(L)) \cong 
\Ind_{B(K)} ^{\GLfield}\chi_0,$
where $\chi_0: B(K) \to \F^{\times}$
is a smooth character.
We will later determine the form of $\chi_0$.
Then,
\begin{equation}
	\label{eq:pi-L}
\Ind_{B(K)} ^{\GLfield}\chi_0 \cong 
	\cInd \sigma_0/(T- \mu_0(L)) \xrightarrow{\sim}
\pi_0(L)
\end{equation}
is an isomorphism,
since $\pi_0(L)$ is generated by $ \sigma_0$.
In particular, the $\GLring$-socle of 
$\pi_0(L)$ is 
exactly $\sigma_0$, and so $\Vell[\pi_0(L),\ell]$
is $L$ for $\ell=0$, and $0$ otherwise.

\paragraph{Step 1.}
We prove that all the $\pi_0(L)$
are isomorphic as the line $L \subseteq V$
varies.

Suppose by contradiction
 that $\pi_0(L_1) \not\cong \pi_0(L_2)$,
for some lines $L_1,L_2 \in V$.
Let $W \defeq L_1 + L_2 \subseteq V$,
and pick any line $L' \subseteq W$ different
from $L_1$ and $L_2.$
Since $\pi_0(L_1) + \pi_0(L_2)$
is a direct sum of two irreducible representations,
in particular it is
semisimple.
It also contains $\pi_0(L')$,
so
we know that $\pi_0(L')$
is isomorphic to either 
 $\pi_0(L_1)$
or 
$ \pi_0(L_2)$,
say the former without loss of generality.
On the other hand,
$\pi_0(L_2)$ is contained in
$\pi_0(L_1) + \pi_0(L')$,
which implies that
$\pi_0(L_1) \cong \pi_0(L_2) $,
contradiction.

An alternative proof is that,
keeping notation as above,
there are $|\F|+1$ lines $L'$ in $W$
but only $|\F|-1$ values that $\mu_0(L')$
can take, so there exists lines
$L'_1,L'_2 \subseteq W$ 
with $\mu_0(L_1')= \mu_0(L_2')$,
hence $\pi_0(L_1')\cong \pi_0(L_2')$,
which together with
$\pi_0(L_1), \pi_0(L_2) \subseteq \pi_0(L_1')+ \pi_0(L_2')$
implies that
$\pi_0(L_1)\cong \pi_0(L_2)$, contradiction.

Analogously, if 
$\sigma_f$ is
 the unique Serre weight of length $f$
in $W(\repr)$,
and if
we let
\[
	\pi_f(L) \defeq \left\langle \GLfield \cdot \iota_\pi^{-1}(L \otimes \sigma_f) \right\rangle,
\]
where $L \subseteq V $ is a line,
then there is an isomorphism
\begin{equation}
	\label{eq:pi-L-2}
\Ind_{B(K)} ^{\GLfield}\chi_f  \xrightarrow{\sim}
 \pi_f(L)
\end{equation}
of $\GLring$-representations,
for some smooth character
$\chi_f: B(K) \to \F^{\times}$,
and the $\GLring$-socle 
of $\pi_f(L)$ is
exactly equal to $\sigma_f$.

From now on, all tensor products are
assumed to be over $\F$.
If $\pi_0$ 
is (a fixed representative of)
the isomorphism class of the $\pi_0(L)$,
and if $\pi_f$ 
is (a fixed representative of)
the isomorphism class of the $\pi_f(L)$,
then we can fix a $\GLring$-equivariant injection
\begin{equation}
	\label{eq:pi0-pif-inside-pi}
V \otimes \left(  \pi_0 \oplus \pi_f  \right) \hookrightarrow \pi.
\end{equation}

We let 
$\pi' \defeq \pi/
\left(  V \otimes \left(  \pi_0 \oplus \pi_f  \right) \right)$
and consider the short exact sequence 
\begin{equation}
	\label{eq:PS-ses}
0 \to V   \otimes \left(  \pi_0  \oplus \pi_f \right) \to \pi  
\to {\pi'}\to 0.
\end{equation}

\paragraph{Step 2.}
We prove that \eqref{eq:PS-ses} splits,
and prove \ref{item:PS-1} of the
statement.

By dualising
\eqref{eq:PS-ses}
we obtain
a short exact sequence 
\[
0 \to \pi^{\prime\vee}\to \pi ^\vee 
\to V ^\vee  \otimes \left(  \pi_0 ^\vee \oplus \pi_f ^\vee  \right)\to 0
\]
of $\Lambda$-modules.
As $\Lambda$ is Auslander regular 
(cf.\ the discussion before
\Cref{def:multiplicity})
and since $\pi ^\vee $ is 
essentially self-dual of grade $2f$
by hypothesis \ref{hypothesis:iii},
it follows from
\cite[Cor.~III.2.1.6]{LvO}
that $\pi^{ \prime \vee}$ 
is of grade $\ge 2f$,
hence $E^{2f-1}_{\Lambda} (\pi^{\prime\vee})=0$
and we have the following exact sequence of
(finitely-generated) $\Lambda$-modules:
\begin{equation}
	\label{eq:E-long-ex-seq}
	0\to V \otimes \left(  \E (\pi_0 ^\vee ) \oplus \E (\pi_f ^\vee ) \right)
	\to  \E(\pi ^\vee ) 
	\to  \E(\pi^{\prime\vee}) .
\end{equation}
By \cite[Proposition~5.4]{Koh17},
there exist irreducible principal series
$\pi'_i$, for $i\in\{0,f\}$,
that satisfy
\begin{equation}
	\label{eq:PS-self-duality}	
	\pi_i^{\prime\vee} \otimes(\det(\repr) \omega ^{-1}) \cong \E({\pi_{f-i} ^\vee }).
\end{equation}
Moreover, following the proof of \cite[Corollary~3.3.5.8]{BHHMS2}, 
one computes the $\GLring$-socle of $\pi_i'$
to be $\sigma_i$, for $i\in \{0,f\}$.
Hypothesis \ref{hypothesis:iii}
of \Cref{sec:hypotheses} implies that,
after dualising \eqref{eq:E-long-ex-seq}
and twisting by $\det(\repr)\omega^{-1}$,
we obtain a surjection
$\pi \twoheadrightarrow V^\vee \otimes \left( {\pi_0'}\oplus \pi_f' \right) $.

For $i\in \{0,f\}$,
we claim that the composition
\begin{equation}
	\label{eq:morally-an-endomorphism}
V\otimes  {\pi_i} \hookrightarrow \pi \twoheadrightarrow 
V^\vee \otimes  {\pi_i'} 
\end{equation}
is an isomorphism of $\GLring$-representations,
which implies that the short exact sequence
\eqref{eq:PS-ses}
admits a retraction.

By reasons of length, it suffices to 
show that is is surjective;
moreover, it suffices
to show that, for each
line $L \subseteq V$, the composition
\begin{equation}
	\label{eq:argument-for-surjectivity}
V\otimes  {\pi_i} \hookrightarrow \pi \twoheadrightarrow 
V^\vee \otimes  {\pi_i'} 
\twoheadrightarrow
L^\vee \otimes  {\pi_i'} 
\end{equation}
is surjective 
(because any subrepresentation of $V^\vee \otimes \pi_i'$
is of the form
$(V/W)^\vee \otimes \pi_i'$
for some subspace $W \subseteq V$).
If we write $\im(\eqref{eq:morally-an-endomorphism})=(V/W)^\vee \otimes \pi_i'$,
and if $W\neq V$,
then any line $L$ in direct sum with $W$
will be such that \eqref{eq:argument-for-surjectivity}
is $0$.

But $\pi$ is generated by its $\GLring$-socle,
which implies that
\[
	\iota_0 \colon\socR \pi \to L^\vee \otimes \pi_i'
\]
is nonzero, and factors through
the socle $L^\vee \otimes\sigma_i$ of 
$L^\vee \otimes \pi_i'$.
So, $\iota_0$ has to be nonzero
when restricted to $V \otimes \sigma_i$,
so \eqref{eq:argument-for-surjectivity} 
is nonzero, hence surjective,
using that $\pi_i'$ is irreducible.

From \eqref{eq:morally-an-endomorphism}
being an isomorphism, we deduce
$\pi_i \cong \pi_i'$, for $i \in \{0,f\} $,
and in particular, $\socR \pi_i = \sigma_i$.

This forces $\chi_i = \sigma_i^{I_1}$,
for example using Frobenius reciprocity
on \eqref{eq:pi-L} and \eqref{eq:pi-L-2}.
From this we deduce that
$\chi_i$ is as in (i)
of the statement,
i.e.\ $\chi_0= \chi_1 \otimes \chi_2 \omega^{-1} $,
$\chi_f= \chi_2 \otimes \chi_1 \omega^{-1} $.

\paragraph{Step 3.}
We prove \ref{item:PS-2} of the
statement.
Generation by the $\GLring$-socle follows from \Cref{prop:generation}:
$\pi$ is generated by
$V \otimes (\sigma_0 \oplus \sigma_f)$ and by 
$V \otimes \bigoplus_{\substack{0<\ell(\sigma)<f\\ \sigma\in W(\repr)} } \sigma$,
but the former is contained in 
$V \otimes(\pi_0 \oplus \pi_f)$
and the latter is contained in $\pi'$.

The essential self-duality of $\pi_0$
and $\pi_f$ has been shown in the previous step,
we now prove the essential self-duality
of $\pi'$.

First, we combine the essential self-duality
of $\pi_0$ and $\pi_f$ with 
the splitting of \eqref{eq:PS-ses}:
we obtain an isomorphism
\begin{equation}
	\label{eq:towards-pi'-self-dual} 
\E(\pi^\vee )^\vee \otimes \det(\repr )\omega^{-1}\cong
\left( V \otimes (\pi_0 \oplus \pi_f) \right) \oplus 	
\left[ \E(\pi^{\prime\vee})^\vee  \otimes \det(\repr )\omega^{-1}\right],
\end{equation}
which also shows that \eqref{eq:E-long-ex-seq}
is a short exact sequence.
Then, the essential self-duality of $\pi$
tells us that the left hand side of
\eqref{eq:towards-pi'-self-dual} is
isomorphic to $\pi \cong \left( V \otimes (\pi_0 \oplus \pi_f) \right) \oplus \pi'$.
Putting this isomorphism together with
\eqref{eq:towards-pi'-self-dual}, we find
\[
i \colon\left( V \otimes (\pi_0 \oplus \pi_f) \right) \oplus \pi'
\cong 
\left( V \otimes (\pi_0 \oplus \pi_f) \right) \oplus 	
\left[ \E(\pi^{\prime\vee})^\vee \otimes \det(\repr )\omega^{-1} \right],
\]
which restricts to an isomorphism
$i \colon  V \otimes (\pi_0 \oplus \pi_f)
\xrightarrow{\sim}
V \otimes (\pi_0 \oplus \pi_f)$.
Moreover, if we take the $\GLring$-socle
of both sides, it restricts to
\begin{equation}
	\label{eq:soc-is-right}
\socR i \colon \socR \pi' \cong 
\socR \left[ \E(\pi^{\prime\vee})^\vee \otimes \det(\repr )\omega^{-1} \right].
\end{equation}
This is because 
\begin{align*}
	\socR \pi' &\cong \socR\bigoplus_{ 0 < \ell<f} 
\DEll[\ell],\\
\socR \pi_0 &\cong \socR\DEll[0],\\
\socR \pi_f &\cong \socR\DEll[f],
\end{align*}
which share no common Jordan-Holder factors.

Let $p_2$ be the natural projection
\[
	p_2 \colon
	\left( V \otimes (\pi_0 \oplus \pi_f) \right) \oplus 	
	\left[ \E(\pi^{\prime\vee})^\vee \otimes \det(\repr )\omega^{-1} \right]
	\twoheadrightarrow 
	\E(\pi^{\prime\vee})^\vee \otimes \det(\repr )\omega^{-1}, 
\]
and consider the composition
\begin{equation}
	\label{eq:to-be-iso}
	\pi' \xhookrightarrow{i|_{\pi'} }
\left( V \otimes (\pi_0 \oplus \pi_f) \right) \oplus 	
\left[ \E(\pi^{\prime\vee})^\vee \otimes \det(\repr )\omega^{-1} \right]
\xtwoheadrightarrow{p_2}
	\E(\pi^{\prime\vee})^\vee \otimes \det(\repr )\omega^{-1}. 
\end{equation}
We claim that it is an isomorphism.
Injectivity follows from
\eqref{eq:soc-is-right},
while surjectivity follows from
the fact that $p_2 \circ i$ is surjective,
together with
$i( V \otimes (\pi_0 \oplus \pi_f))
= V \otimes (\pi_0 \oplus \pi_f)$.
\end{proof}

\section{Cohen-Macaulayness and finite length}
	\label{sec:CM-fl} 
The goal of this section is to show,
in \Cref{prop:main}(ii) below,
that every subquotient $\pi'$ 
of the representation $\pi$ of
\eqref{eq:piShi-dev} is Cohen-Macaulay
of grade $2f$ and,
in \Cref{thm:finite-length-yay}(ii) below,
that $\pi$ has finite length,
with an explicit upper bound.
We conclude with some results concerning
the exactness of $\mI^{n}$-torsion
and $K_1$-invariants on subquotients of $\pi$.

In addition to hypotheses
\ref{hypothesis:i} and \ref{hypothesis:ii}
of \Cref{sec:hypotheses},
throughout the Section
we also assume hypotheses \ref{hypothesis:iii}
and \ref{hypothesis:iv}
of \emph{loc.\ cit}.
\subsection{Cohen-Macaulayness}
	\label{sec:CM}
We prove that $\pi$, and all its subquotients,
are Cohen-Macaulay of grade $2f$;
we start by recalling a theorem from \cite{BHHMS4}.
\begin{thm}[{\cite[Theorem~2.1.2]{BHHMS4}}]
	\label{thm:N-to-gr-iso}
Assume that $\repr$ is $9$-generic.
If $\Nnull$ is the graded $\grL$-module
with compatible $H$-action defined in 
\eqref{eq:N-null},
then the morphism 
\begin{equation}
 \theta  \colon \V^\vee  \otimes_{\F} \Nnull {\to} \gr_{\mI} (\pi^\vee )
\end{equation}
of \eqref{eq:surjection_Ni}
for $\pi_i=\pi$
is an isomorphism 
of graded $\grL$-modules with 
compatible $H$-action.
In particular, $\gr_{\mI}(\pi^\vee ) $
is a Cohen-Macaulay $\grL$-module
of grade $2f$. 
Moreover, $\gr_{\mI}(\pi^\vee ) $
is essentially self-dual in the sense that
\begin{equation}
	E^{2f}_{\grL} (\gr_{\mI} (\pi^\vee )) \cong 
	\gr_{\mI} (\pi^\vee ) \otimes(\det(\repr)\omega^{-1})
\end{equation}
as $\grL$-modules (without grading)
with compatible $H$-action.
\end{thm}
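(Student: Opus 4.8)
The plan is to adapt the proof of \cite[Theorem~2.1.2]{BHHMS4}, which treats $r=1$, by carrying the auxiliary vector space $\V=\F^{r}$ through the argument. By \Cref{lemma:gr}(i) applied with $\pi_{1}=\pi_{2}=\pi$, the morphism $\theta$ is already a surjection of graded $\grL$-modules with compatible $H$-action, and it is the identity in degree $0$ (both sides have degree-$0$ part $(\pi^{I_{1}})^{\vee}\cong\V^{\vee}\otimes_{\F}\DOne$), so $K\defeq\ker\theta$ lies in $\mI\cdot(\V^{\vee}\otimes_{\F}\Nnull)$. By graded Nakayama it thus suffices to prove $K=0$.

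First I would record the Cohen--Macaulay structure of both sides. The graded $\grL$-module $\Nnull$ is Cohen--Macaulay of grade $2f$ and essentially self-dual by \cite{BHHMS2} (the algebraic input behind \eqref{eq:mult_id}; see §3.3.2--3.3.4 of \emph{loc.\ cit.}), hence so is $\V^{\vee}\otimes_{\F}\Nnull$. For the target, combine hypotheses \ref{hypothesis:iii} and \ref{hypothesis:iv}: summing hypothesis \ref{hypothesis:iv} over the characters $\chi_{\lambda}$ ($\lambda\in\mathscr{P}$) that occur in $\pi[\mI]$ and dualising, one gets $\dim_{\F}\Ext^{i}_{\Lambda}(\pi^{\vee},\F)=r|\mathscr{P}|\binom{2f}{i}$, which is nonzero exactly for $0\le i\le 2f$; hence $\operatorname{pd}_{\Lambda}(\pi^{\vee})=2f$. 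As the grade of $\pi^{\vee}$ is $2f$ (hypothesis \ref{hypothesis:iii}), $\pi^{\vee}$ is therefore Cohen--Macaulay of grade $2f$ over $\Lambda$ (for such a module $\Ext^{i}_{\Lambda}(-,\Lambda)$ vanishes away from degree $2f$). Consequently $\gr_{\mI}(\pi^{\vee})$ is Cohen--Macaulay of grade $2f$ over $\grL$, and it is essentially self-dual: since $\pi^{\vee}$ is Cohen--Macaulay and the $\mI$-adic filtration is good, $E^{2f}_{\grL}(\gr_{\mI}(\pi^{\vee}))\cong\gr_{\mI}(E^{2f}_{\Lambda}(\pi^{\vee}))\cong\gr_{\mI}(\pi^{\vee})\otimes(\det(\repr)\omega^{-1})$, the last step coming from hypothesis \ref{hypothesis:iii} (a $\Lambda$-linear isomorphism being strict for the $\mI$-adic filtrations).

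Since $\theta$ is a surjection of Cohen--Macaulay $\grL$-modules of grade $2f$, its kernel $K$ is either $0$ or pure of grade $2f$, in which latter case $K$ has dimension $f$, so it meets a minimal prime of $\overline{R}$ and has nonzero characteristic cycle. It therefore suffices to show
\[
\mathcal{Z}(\gr_{\mI}(\pi^{\vee}))=\mathcal{Z}(\V^{\vee}\otimes_{\F}\Nnull)=r\cdot\mathcal{Z}(\Nnull),
\]
since then additivity of $\mathcal{Z}$ forces $\mathcal{Z}(K)=0$, hence $K=0$. At the distinguished prime $\mathfrak{p}_{0}$ this is \Cref{prop:main-memoire}(ii) together with \eqref{eq:mult_id}: $m_{\mathfrak{p}_{0}}(\gr_{\mI}(\pi^{\vee}))=\lgR(\socR(\pi))=r|\mathscr{D}|=m_{\mathfrak{p}_{0}}(\V^{\vee}\otimes_{\F}\Nnull)$, using that $\socR(\pi)\cong(\socR\DZero)^{\oplus r}$ by hypothesis \ref{hypothesis:i}. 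The remaining minimal primes of $\overline{R}$ are treated as in \cite[Theorem~2.1.2]{BHHMS4}: one uses the essential self-duality of $\gr_{\mI}(\pi^{\vee})$ established above (which identifies the multiplicity at a minimal prime with the multiplicity at its image under the $E^{2f}_{\grL}$-duality) and the action of the normaliser $N$ of $I$ (the element $\Pi$ permuting the minimal primes of $\overline{R}$ and identifying $\gr_{\mI}(\pi^{\vee})$ with its $\Pi$-conjugate). I expect this last step --- organising the multiplicities at all $2^{f}$ minimal primes of $\overline{R}$ while keeping track of the vector subspaces of $\V$ attached to the various lengths, in the sense of \Cref{cor:direction} --- to be the main technical point, exactly as for $r=1$ but with the extra bookkeeping of $\V$.

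Once $\theta$ is known to be an isomorphism, the ``in particular'' and ``moreover'' assertions follow at once: $\gr_{\mI}(\pi^{\vee})\cong\V^{\vee}\otimes_{\F}\Nnull$ is Cohen--Macaulay of grade $2f$ because $\Nnull$ is, and $E^{2f}_{\grL}(\gr_{\mI}(\pi^{\vee}))\cong\V^{\vee}\otimes_{\F}E^{2f}_{\grL}(\Nnull)\cong\V^{\vee}\otimes_{\F}\Nnull\otimes(\det(\repr)\omega^{-1})\cong\gr_{\mI}(\pi^{\vee})\otimes(\det(\repr)\omega^{-1})$, compatibly with the $H$-action since $H$ acts trivially on $\V^{\vee}$ and $\theta$ is $H$-equivariant.
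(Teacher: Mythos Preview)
The paper does not give its own proof here: the theorem is simply quoted from \cite[Theorem~2.1.2]{BHHMS4}, and the passage to general $r\ge 1$ is implicit (the argument there is insensitive to tensoring by the finite-dimensional $\F$-vector space $\V^{\vee}$, since hypotheses \ref{hypothesis:i}--\ref{hypothesis:iv} are already formulated for arbitrary $r$). So there is no ``paper's proof'' to compare against beyond that citation.

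Your overall strategy---show $\theta$ surjective, then kill $K=\ker\theta$ via purity plus a characteristic-cycle count---is the right one, and is exactly how the paper later proves the analogous statement for subrepresentations in \Cref{prop:main}(i). But there is a genuine gap in your middle paragraph: the implication ``$\pi^{\vee}$ Cohen--Macaulay over $\Lambda$ $\Rightarrow$ $\gr_{\mI}(\pi^{\vee})$ Cohen--Macaulay over $\grL$'' is not valid in general. The result \cite[Proposition~III.2.2.4]{LvO} goes the \emph{other} way. Likewise the identification $E^{2f}_{\grL}(\gr_{\mI}(\pi^{\vee}))\cong\gr_{\mI}(E^{2f}_{\Lambda}(\pi^{\vee}))$ comes from a filtered spectral sequence that only degenerates once one already knows $\gr_{\mI}(\pi^{\vee})$ is Cohen--Macaulay, so invoking it here is circular. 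These two steps are precisely the ``in particular'' and ``moreover'' parts of the \emph{conclusion}; you cannot use them as inputs.

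The fix is that you do not need Cohen--Macaulayness of the target at all. Purity of the \emph{source} $\V^{\vee}\otimes_{\F}\Nnull$ (which follows from its essential self-duality, established in \cite{BHHMS2} and \cite{BHHMS4}) already forces any nonzero submodule $K$ to have grade $2f$, hence nonzero characteristic cycle---this is exactly the argument the paper runs in the proof of \Cref{prop:main}(i). For the cycle equality $\mathcal{Z}(\gr_{\mI}(\pi^{\vee}))=\mathcal{Z}(\V^{\vee}\otimes_{\F}\Nnull)$ at primes other than $\mathfrak{p}_{0}$, you should work at the $\Lambda$-level rather than the graded level: the $\Pi$-action on $\pi^{\vee}$ and hypothesis~\ref{hypothesis:iii} permute the multiplicities via \cite[Theorem~3.3.4.5]{BHHMS2} (as used in \Cref{lemma:Z-equal-Z}), and that statement does not require $\gr_{\mI}(\pi^{\vee})$ to be Cohen--Macaulay. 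Once $\theta$ is an isomorphism, Cohen--Macaulayness and graded self-duality of $\gr_{\mI}(\pi^{\vee})$ are inherited from $\Nnull$, as you note in your final paragraph.
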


The following proposition generalises
both
\cite[Proposition~3.2.2]{BHHMS4}
and
\cite[Corollary~3.2.4]{BHHMS4}
to the case
$r\ge 1$.
\begin{prop}
	\label{prop:main}
Suppose that $\repr$ is
$\max \{9, 2f+1\} $-generic, 
let $0 \subsetneq \pi_1 \subsetneq \pi $ 
be a subrepresentation of $\pi $,
and let $\pi_2 \defeq \pi/\pi_1 $.
Endow $\pi_2^{\vee }$
with the submodule filtration $F$
induced from the 
$\mI$-adic filtration on $\pi ^\vee$.
Set
\begin{align}
	\label{eq:N-and-N1-def}
N \defeq 
 \V^\vee \otimes_{\F} \Nnull ,
\quad
N_1 \defeq \bigoplus_{\ell=0}^f
\left( \Vell [\pi_i,\ell]^\vee \otimes_{\F} \Nell \right),
\end{align}
for $i=1,2$.
If $\repr$ is reducible split,
for each $0 \le \ell \le f$
pick a complementary subspace
$V_2(\ell)$ of $\Vell[\pi_1,\ell]$ in $\V$
and set 
\begin{equation}
	\label{eq:N2-def-red}
N_2 \defeq 
\bigoplus_{\ell=0}^f
\left( V_2(\ell) ^\vee \otimes_{\F} \Nell \right),
\end{equation}
where $\Nell$ is the graded $\grL$-module
with compatible $H$-action defined in 
\eqref{eq:N-ell}.
If $\repr$ is irreducible,
pick a complementary subspace
$V_2$ of $\Vnull[\pi_1]$ in $\V$
and set 
\begin{equation}
	\label{eq:N2-def-irred}
N_2 \defeq 
 V_2 ^\vee \otimes_{\F} \Nnull .
\end{equation}
Then, the commutative diagram 
\eqref{diagr:thetas-ses}
of \Cref{lemma:gr}(ii) for
$\pi_1=\pi_1, \pi_2=\pi, \pi'=\pi_2$
takes the form
\begin{equation}
	\label{diag:thetas}
\begin{tikzcd}
	0 \ar[r] & \gr_F( \pi _2 ^\vee 	) \ar[r] &	\gr_{\mI} ( \pi ^\vee ) \ar[r] &
	\gr_{\mI} ( \pi_1 ^\vee ) \ar[r] & 0 \\
	0 \ar[r] & N_2 \ar[u, "\theta_2"]  \ar[r] & 	N \ar[u, " \theta "] \ar[r]  & 
	N_1  \ar[u, " \theta _1"'] \ar[r] & 0.
\end{tikzcd}
\end{equation}
\begin{enumerate}[(i)]
\item 
The $R$-module homomorphisms 
$\theta$, $\theta_1$ and $\theta_2$ of
\eqref{diag:thetas} are isomorphisms.
\item 
Both $\gr_{\mI} (\pi_1 ^\vee ) $and
$\gr_{F}  (\pi_2 ^\vee ) $
are Cohen-Macaulay
$\grL	 $-modules
of grade $2f $.
In particular, 
$\pi_1 ^\vee  $
and
$\pi_2 ^\vee  $
are Cohen-Macaulay $\Lambda $-modules of grade $2f $.
\item The $\mI$-adic filtration
on $ \pi ^\vee  $ induces
the $\mI$-adic filtration
on $ \pi _2 ^\vee  $.
\item The sequence 
$0 \to 
\gr_{\mI} ( \pi _2 ^\vee ) \to 
\gr_{\mI} ( \pi  ^\vee ) \to 
\gr_{\mI} ( \pi _1 ^\vee ) \to 
0$
of graded $\grL $-modules
with compatible $H $-action is split exact.
\end{enumerate}
\end{prop}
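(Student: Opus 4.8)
The plan is to deduce everything from the key isomorphism \Cref{thm:N-to-gr-iso}, which already handles the case $\pi_1=\pi$, $\pi_2=0$ (i.e.\ identifies $\theta$), together with the combinatorial dimension count of \Cref{prop:main-memoire}. The strategy is as follows. First I would invoke \Cref{lemma:gr}(ii) with $(\pi_1,\pi_2,\pi')=(\pi_1,\pi,\pi_2)$ to obtain the commutative diagram \eqref{diag:thetas}, with exact rows, in which $\theta=\theta_\pi$ is the map of \eqref{eq:surjection_Ni} for $\pi_i=\pi$; by \Cref{thm:N-to-gr-iso} (using $\max\{9,2f+1\}$-genericity, which in particular implies $9$-genericity) this $\theta$ is an isomorphism. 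Since the bottom row of \eqref{diag:thetas} is split (via $p_1^\vee$ as a section of $i_1^\vee$, as in the proof of \Cref{lemma:gr}(ii)) and $\theta$ is an isomorphism, a diagram chase shows $\theta_1$ is surjective and $\theta_2$ is injective; combined with the exactness of both rows this gives that $\theta_1$ is an isomorphism if and only if $\theta_2$ is, so it suffices to prove one of them is bijective.

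To prove $\theta_1$ is an isomorphism, I would compare multiplicities at the minimal prime $\mathfrak p_0 \subseteq \overline R$. Since $N_1$ surjects onto $\gr_{\mI}(\pi_1^\vee)$, additivity of $m_{\mathfrak p_0}$ together with \eqref{eq:mult_id} gives $m_{\mathfrak p_0}(\gr_{\mI}(\pi_1^\vee)) \le \sum_\ell r_1(\ell)\cdot |\mathscr P_\ell \cap \mathscr D| = \lg_{\GLring}\socR(\pi_1)$, and by \Cref{prop:main-memoire}(ii) this equals $\dim_{\F(\!(X)\!)} D^\vee_\xi(\pi_1)$. On the other hand \Cref{prop:main-memoire}(ii) also gives $\lg_{\GLring}\socR(\pi_1) \le m_{\mathfrak p_0}(\gr_{\mI}(\pi_1^\vee))$ via the explicit construction of a big enough $\F[\![X]\!][F]$-submodule, so all these quantities coincide and the surjection $N_1 \twoheadrightarrow \gr_{\mI}(\pi_1^\vee)$ has kernel of multiplicity zero at $\mathfrak p_0$. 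By the same argument applied at \emph{every} minimal prime $\mathfrak q$ of $\overline R$ — here one uses the essential self-duality of $\gr_{\mI}(\pi^\vee)\cong N$ from \Cref{thm:N-to-gr-iso} to transfer the count from $\mathfrak p_0$ to an arbitrary $\mathfrak q$ (the minimal primes are permuted by the relevant symmetries, exactly as in the proof of \cite[Proposition~3.2.2]{BHHMS4}) — one concludes $m_{\mathfrak q}(\ker\theta_1)=0$ for all $\mathfrak q$, hence $\ker\theta_1=0$ since $N_1$ is $J$-torsion and its characteristic cycle detects nonzero $J$-torsion submodules. Thus $\theta_1$, and therefore $\theta_2$, is an isomorphism, proving (i).

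For (ii), both $N_1$ and $N_2$ are, up to the finite-dimensional twisting spaces $\Vell[\pi_i,\ell]^\vee$ and $V_2(\ell)^\vee$, direct sums of copies of the $\Nell$, each of which is Cohen-Macaulay of grade $2f$ over $\grL$ by the computation underlying \Cref{thm:N-to-gr-iso} (cf.\ \cite[Theorem~2.1.2]{BHHMS4} and \cite[§3.3.2]{BHHMS2}); since $\theta_1,\theta_2$ are isomorphisms, $\gr_{\mI}(\pi_1^\vee)$ and $\gr_F(\pi_2^\vee)$ inherit this property. Cohen-Macaulayness then passes from the graded module to $\pi_1^\vee$ and $\pi_2^\vee$ themselves by \cite[Proposition~III.2.2.4]{LvO}. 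For (iii), the point is that the submodule filtration $F$ on $\pi_2^\vee$ is a good filtration whose associated graded is $J$-torsion and whose $\gr_F$ we have just identified with the $J$-torsion $\grL$-module $N_2$, which is generated in degree $0$; since $\gr_F(\pi_2^\vee)$ is generated in degree $0$ over $\grL$, a standard argument (as in \cite[§I.5]{LvO}) shows that $F$ coincides with the $\mI$-adic filtration on $\pi_2^\vee$, which is what (iii) asserts. Finally (iv) follows formally: the bottom row of \eqref{diag:thetas} is split exact, and since $\theta,\theta_1,\theta_2$ are all isomorphisms, the top row $0 \to \gr_{\mI}(\pi_2^\vee) \to \gr_{\mI}(\pi^\vee) \to \gr_{\mI}(\pi_1^\vee) \to 0$ (which by (iii) is the same as the sequence written in (iv)) is split exact as well, with compatible $H$-action since all the maps in sight are $H$-equivariant. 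The main obstacle is the simultaneous control of $m_{\mathfrak q}$ at all $2^f$ minimal primes: verifying that the essential self-duality of \Cref{thm:N-to-gr-iso} genuinely lets one bootstrap the $\mathfrak p_0$-count to an arbitrary $\mathfrak q$ requires carefully tracking the $H$-action and the symmetries permuting the minimal primes of $\overline R$, exactly as in \cite[§3.2]{BHHMS4}, and checking that nothing in that argument used $r=1$.
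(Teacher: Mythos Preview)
Your outline for (ii), (iii), (iv) is correct and matches the paper. For (i) there are two genuine gaps.

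First, your passage from $m_{\mathfrak p_0}(\ker\theta_1)=0$ to $m_{\mathfrak q}(\ker\theta_1)=0$ for all minimal $\mathfrak q$ is not achieved by a symmetry argument, and that is not what \cite[Proposition~3.2.2]{BHHMS4} does either. The actual mechanism, which the paper reproduces, passes through the conjugate subrepresentation $\widetilde\pi_2\subseteq\pi$ of \Cref{def:conjugate-srep}: one has $\mathcal Z(\gr_F(\pi_2^\vee))=\mathcal Z(\grm(\widetilde\pi_2^\vee))$ by \Cref{lemma:Z-equal-Z}, and one compares $\Eg(N_2)\cong N_2'\otimes(\mytwist)$ (with $N_2'$ built from the spaces $V_2(f-\ell)$) to the low-degree pieces of $\grm(\widetilde\pi_2^\vee)$ via an $H$-module injection in degree $0$, yielding $\widetilde r_2(\ell)\ge r_2(f-\ell)$. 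Equality is then forced by comparing $\lgR(\socR\widetilde\pi_2)$ with $\dim_{\F(\!(X)\!)}\Dvee[\pi_2]$ via \Cref{prop:main-memoire}(ii) and exactness of $\Dvee[-]$. Only after this does one get $\mathcal Z(N_1)=\mathcal Z(\grm(\pi_1^\vee))$ at \emph{all} primes simultaneously. There is no direct symmetry of $\overline R$ that transports the $\mathfrak p_0$-computation to an arbitrary $\mathfrak q$: the only prime at which \Cref{prop:main-memoire} lets you evaluate $m_{\mathfrak q}(\grm(\pi_1^\vee))$ is $\mathfrak p_0$.

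Second, the implication ``$\mathcal Z(K)=0\Rightarrow K=0$'' is false for arbitrary $\overline R$-modules (take $K=\overline R/\overline{\mI}$, which is nonzero, annihilated by $J$, and has $K_{\mathfrak q}=0$ at every minimal $\mathfrak q$). What is needed, and what the paper uses, is that $N$ is \emph{pure} of grade $2f$ (being essentially self-dual of grade $2f$, cf.\ \cite[Proposition~III.4.2.8]{LvO}), so any nonzero $K\subseteq N_1\subseteq N$ has grade exactly $2f$ over $\grL$, hence grade $0$ over $\overline R$ by \cite[Lemma~3.3.1.9]{BHHMS2}. Then $\Hom_{\overline R}(K,\overline R)\ne 0$, and one checks this is incompatible with $\mathcal Z(K)=0$ because $\overline R$ is reduced. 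Your stated reason (``$N_1$ is $J$-torsion and the characteristic cycle detects nonzero $J$-torsion submodules'') is not enough on its own.
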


\begin{remark}
The reader should keep in mind that
the splitting in (iv) is in no way canonical:
in the case $\repr$ reducible, for example,
it depends on the choice of complementary subspaces
made in the statement.
\end{remark}

\begin{proof}
(i)
We follow the proof of \cite[Proposition~3.2.2]{BHHMS4}.
Set
$V_1(\ell) \defeq \Vell[\pi_1, \ell]$.
To make the arguments uniform, 
even when $\repr$ is irreducible
we let $V_2(\ell) \defeq V_2$,
for $0 \le \ell \le f$.
Note that we allow for either
$V_1(\ell)$ or $V_2(\ell)$ to be zero.

We know from \Cref{thm:N-to-gr-iso}
that $\theta$ is an isomorphism,
which implies that
$\theta_1$ is surjective and
$\theta_2$ is injective.
Remember that the 
characteristic cycle of 
\Cref{def:characteristic-cycle}
is additive by \cite[Lemma~3.3.4.2]{BHHMS2},
and so 
\begin{equation}
\mathcal{Z} (N_1) \ge \mathcal{Z} (\gr_{\mI} ( \pi _1 ^\vee )), \quad	
\mathcal{Z} (N_2) \le \mathcal{Z} (\gr_{F} ( \pi _2 ^\vee ))
\end{equation}
as divisors.

Let $\widetilde{\pi}_2 \subseteq \pi$
be the subrepresentation of 
\Cref{def:conjugate-srep} for
$\pi' = \pi_1$.
We have
\begin{equation}
	\label{eq:Z-Z-application-2}
\mathcal{Z}(\gr_{F} (\pi_2 ^\vee ))=
\mathcal{Z}(\gr_{\mI} (\widetilde{\pi}_2^\vee))
\end{equation}
by \Cref{lemma:Z-equal-Z}.
As in Step 2 of the proof of
\cite[Proposition~3.2.2]{BHHMS4},
we can construct the following commutative diagram with exact rows:
\begin{equation*}
\begin{tikzcd}
	0 \ar[r] & \gr(\E( \pi _{1} ^\vee ) ) \ar[r] \ar[d, hook]  & 
	\gr(\E( \pi ^\vee )) \ar[d, "\wr"] \ar[r]   & \gr(\widetilde{ \pi }_2 ^\vee \otimes_{\F}  (\mytwist)  ) \ar[dd]  \ar[r] & 0 \\
	0 \ar[r] & \Eg( \gr_{\mI} (\pi _{1} ^\vee )) ) \ar[r] \ar[d, hook]  & 
	 \Eg( \gr_{\mI} (\pi ^\vee )) ) \ar[d, equals]  & \\
	0  \ar[r] & \Eg (N_1) \ar[r] & 
	\ar[r]\Eg (N) \ar[r] & 
	\ar[r] \Eg (N_2) \ar[r] & 0 .
\end{tikzcd}
\end{equation*}
Again as in Step 2 of the proof of \cite[Proposition~3.2.2]{BHHMS4},
we find that 
$\Eg(N_2)\otimes_{\grL} \F  $
is a subquotient of 
$\bigoplus_{i=0} ^f \gr_{\mI} 
(\widetilde{\pi}_2 ^\vee )_{-i} \otimes_{\F} 
(\mytwist) $,
as $H $-modules.

Now, define the graded $\grL$-module
\begin{equation}
	\label{eq:N'-def}
{N}'_2 \defeq \bigoplus_{\ell=0} ^f 
\left( V_2(f- \ell ) \otimes_{\F}  \Nell \right), 
\end{equation}
so that we have 
an isomorphism
\begin{equation}
	\label{eq:N-hat-raison-d'être}
	\Eg (N_2)\cong {N}'_2 \otimes_{\F}  \mytwist
\end{equation}
of graded $\grL $-modules with compatible 
$H $-action
by \cite[Prop. 3.3.1.10]{BHHMS2}
and by \cite[Corollary 3.1.2]{BHHMS4}.

\Cref{cor:direction} applied to 
$\pi_i = \pi$ and $\pi_1$ 
yields isomorphisms
\begin{align}
		\label{eq:pi-and-pi-1-iso}
& (\pi^{I_1} \hookrightarrow \pi ^{K_1})
\xrightarrow[\sim]{ \iota _ \pi }
\bigoplus_{\ell=0} ^f
\left( V\otimes_{\F}  
\DEll \right),
& (\pi_1^{I_1} \hookrightarrow \pi_1 ^{K_1})
\xrightarrow[\sim]{ \iota _ \pi }
\bigoplus_{\ell=0} ^f
\left( V_1(\ell) \otimes_{\F}  
\DEll \right).
\end{align}
Moreover, if we set 
$\widetilde{V}_2(\ell) \defeq \Vell[\widetilde{\pi}_2,\ell]$
for $0\le{\ell}\le f$,
then \Cref{cor:direction}
applied to $\pi' = \widetilde{\pi}_2$ 
yields the isomorphism
\begin{equation}
		\label{eq:pi-tilde-2}
		(\widetilde{ \pi}_2 ^{I_1} \hookrightarrow\widetilde{ \pi}_2 ^{K_1})
\xrightarrow[\sim]{ \iota _ \pi }
		  \bigoplus_{\ell=0} ^f
\left( \widetilde{V}_2(\ell) \otimes_{\F}  
	\DEll \right).
\end{equation}
Define the graded
$\grL $-module with compatible $H $-action
\begin{equation}
	\label{eq:N-widetilde-def}
	\widetilde{N}_2 \defeq \bigoplus_{\ell=0} ^f 
\left( \widetilde{V}_2(\ell)^\vee \otimes_{\F} \Nell	 \right),
\end{equation}
and let
\begin{equation}
	\label{eq:N-tilde-raison-d'être}
\widetilde{\theta}_2  \colon
\widetilde{N}_2 \twoheadrightarrow  \gr_{\mI}  (\widetilde{\pi}_2^\vee )
\end{equation}
be the surjection of graded $\grL$-modules 
\eqref{eq:surjection_Ni} for $\pi_i=\widetilde{\pi}_2$.

We claim that there exists an injective homomorphism of $H $-modules
$({N}'_2)_0 
\hookrightarrow (\widetilde{N}_2)_0 $.
Indeed, the fact that 
\[
	\F\otimes_{\grL} 
	E^{2f}_{\grL}(N_2)
	\overset{\eqref{eq:N-hat-raison-d'être}}{\cong} 
	({N}'_2)_0 \otimes_{\F} \mytwist
	\cong 
	(\F\otimes_{\grL} 
	N'_2) \otimes_{\F} \mytwist
\]
is a subquotient (as $H$-modules)
of $\bigoplus_{i=0} ^f \gr_{\mI} (\widetilde{\pi}_2 ^\vee )_{-i} \otimes_{\F} \mytwist $,
hence a fortiori of
\[
 \bigoplus_{i=0} ^f	(\widetilde{N}_2)_{-i} \otimes_{\F} \mytwist
\]
using 
\eqref{eq:N-tilde-raison-d'être},
implies
the existence of an injection
\begin{equation}
	\label{eq:temp-injection}
	({N}'_2)_0 \hookrightarrow 
	\bigoplus_{i = 0 } ^f (\widetilde{N}_2)_{-i}
\end{equation}
of $H $-modules
($\F[H]$ is a semisimple algebra,
so being a subquotient is equivalent 
to being a submodule). 

It is enough to show that 
all the compositions
\[
	({N}'_2)_0 
\xhookrightarrow{\eqref{eq:temp-injection}}
\bigoplus_{i = 0 } ^f (\widetilde{N}_2)_{-i} \twoheadrightarrow (\widetilde{N}_2)_{-i} \subseteq 
N_{-i}
\]
vanish,
for $i > 0 $.
If this were not the case, 
in particular we would find a common
Jordan-H\"older factor in
$ ({N}'_2)_0 $
and $N_{-i} $. 
We claim that $N_{-i} $ has no common 
Jordan-H\"older factor with
$N_{-j} $ for $0\le i \neq j \le f$,
which leads to a contradiction.
This follows from the fact that
$\bigoplus_{i=0} ^f (\Nnull)_{-i}$
is multiplicity free,
which is in turn a consequence of
\cite[Lemma~2.3.7]{BHHMS4}
with $n=f+1$
(here we are using that $\repr$ is
$(f+1)$-generic).

For $0\le{\ell}\le f$,
we let $r_1(\ell) \defeq \rell[\pi_1,\ell]$,
$\widetilde{r}_2(\ell) \defeq \rell[\widetilde{\pi}_2,\ell]$,
and $r_2(\ell) \defeq r- r_1(\ell).$
If we use that
\begin{align}
	\label{eq:N0-D1-iso}
&({N}'_2)_0 \cong \bigoplus_{\ell=0} ^f
\bigl( V_2(f-\ell) \otimes_{\F} \DOneEll ^\vee \bigr), 
&(\widetilde{N}_2)_0 \cong \bigoplus_{\ell=0} ^f
\bigl( \widetilde{V}_2(\ell)^\vee  \otimes_{\F} \DOneEll ^\vee \bigr),
\end{align}
and the fact that $\DOne $ is multiplicity-free,
then it follows
from the injection 
$ ({N}'_2)_0 \hookrightarrow 
(\widetilde{N}_2)_0 $
of $H $-modules
that 
\begin{equation}
	\label{eq:r2-tilde-r2-inequality}
\widetilde{r}_2({\ell}) \ge r_2(f-{\ell})
= r-r_1(f-\ell),
\end{equation}
	
for $0 \le \ell \le f$.

Now, we look at the length of
$\socR (\widetilde{ \pi }_2)$:
\begin{align}
	\lgR \left(\socR (\widetilde{ \pi }_2)	\right) 
& \overset{\eqref{eq:pi-tilde-2}}{=} \sum_{\ell=0} ^f \widetilde{r}_2( \ell) \lgR\left(\socR (\DZeroEll)\right)
\nonumber \\
	\label{eq:ineq-will-be-eq}
&\overset{ \eqref{eq:r2-tilde-r2-inequality}}{\ge }
 \sum_{\ell= 0}  ^f {r}_2(f- \ell) \lgR\left(\socR (\DZeroEll)\right) 
\\
&\overset{(*)}{=} 
 \sum_{\ell'= 0}  ^f {r}_2( \ell') \lgR\left(\socR (\DZeroEll[\ell'])\right)
\nonumber\\
& \overset{ \eqref{eq:r2-tilde-r2-inequality}}{=} \sum_{\ell'= 0}  ^f(r- {r}_1( \ell')) \lgR(\socR (\DZeroEll[\ell']))
\nonumber\\
& \overset{ \eqref{eq:pi-and-pi-1-iso}}{=} \lgR\left(\socR ( \pi )\right)
- \lgR\left(\socR ( \pi_1 )\right)\nonumber,
\end{align}
where in $(*)$ we have used the substitution $\ell' \defeq f-\ell $,
and used the fact that 
\[
\lgR(\socR (\DZeroEll))=
\lgR(\socR (\DZeroEll[f- \ell])).
\]
Now, remember that
\eqref{eq:Z-Z-application-2}
together with \Cref{prop:main-memoire}(i)
imply
\begin{equation}
	\label{eq:D-pi2-pi2-tilde}
\dim_{\F (\!(X)\!)  } \Dvee[ \pi _2]  
=
\dim_{\F (\!(X)\!)  } \Dvee[\widetilde{ \pi }_2].  
\end{equation}
From \Cref{prop:main-memoire}(ii) 
we obtain the equalities
\begin{align*}
\lgR(\socR (\widetilde{ \pi }_2)	) & = 
\dim_{\F (\!(X)\!)  } \Dvee[\widetilde{ \pi }_2]  
 \overset{ \eqref{eq:D-pi2-pi2-tilde}}{=}
\dim_{\F (\!(X)\!)  } \Dvee[ \pi _2], \\
\lgR\!(\socR ( \pi )) - \lgR\!(\socR ( \pi_1 ))
&=
\dim_{\F (\!(X)\!)  } \Dvee[ \pi]-
\dim_{\F (\!(X)\!)  } \Dvee[ \pi_1].
\end{align*}
By the exactness of the functor $\Dvee[-] $, 
the two quantities on the right-hand side
coincide.
So, we have equality in \eqref{eq:ineq-will-be-eq},
which forces
$ \widetilde{r}_2(\ell) = r_2(f-{\ell}) $
for all $0\le{\ell}\le f$.
Hence, we have the following (abstract) 
isomorphisms of $R $-modules:
\begin{equation}
	\label{eq:conjugate-gateway}
N'_2
\overset{ \eqref{eq:N'-def}}{ \cong }
\bigoplus_{\ell=0} ^f 
\Nell^{\oplus r_2(f-\ell)} 
\overset{ \eqref{eq:N-widetilde-def}}{ \cong }
\widetilde{N}_2 . 
\end{equation}

Then, we consider
\begin{equation}
	\label{eq:Z-chain-of-inequalities}
\mathcal{Z}(\gr_F( \pi _2 ^\vee  ))
\overset{\eqref{diag:thetas}}{\ge} \mathcal{Z} (N_2) = \mathcal{Z} ({N}'_2) =
\mathcal{Z} (\widetilde{N}_2) \overset{\widetilde{ \theta} _2}{\ge} \mathcal{Z} (\gr_{\mI} (\widetilde{ \pi }_2 ^\vee )),
\end{equation}
where the first equality comes from 
\cite[Theorem~3.3.4.5]{BHHMS2},
together with \eqref{eq:N-hat-raison-d'être},
and the second one from ${N}'_2 \cong\widetilde{N}_2  $.
From \eqref{eq:Z-Z-application-2}
we deduce that equality holds everywhere in \eqref{eq:Z-chain-of-inequalities}.

Then, from \eqref{diag:thetas} and from 
the additivity of $\mathcal{Z}$
we also deduce 
\begin{equation}
	\label{eq:Z-1-equality}
\mathcal{Z}(N_1)=\mathcal{Z}(\gr_{\mI} (\pi_1 ^\vee )).
\end{equation}
Now, we claim that $N$
is \emph{pure} in the sense of
\cite[Definition~III.4.2.7]{LvO}.
This is because it is essentially self-dual
of grade $2f$
(cf.\ the proof of \cite[Theorem~2.1.2]{BHHMS4}),
and so we can apply
\cite[Definition~III.4.2.7]{LvO}(i)
with $R = \grL$, $M = N$.

Set $K \defeq \ker (\theta_1)$.
Then, $K$ is a submodule of $N_1$,
which in turn injects inside $N$
(the bottom row of \eqref{diag:thetas}
is split).
So, \cite[Definition~III.4.2.9]{LvO}
implies that $K$ is of grade $2f$ over
$\grL$,
hence of grade $0$ over $\overline{R}$
by the second statement in 
\cite[Lemma~3.3.1.9]{BHHMS2}.
In particular, if $K \neq 0$,
then 
we would have 
$\Ext^0_{\overline{R}}(K, \overline{R})=
\Hom_{\overline{R}}(K, \overline{R})\neq 0$,
so fix a nonzero homomorphism 
$h \colon K \to \overline{R}.$
On the other hand,
using \eqref{eq:Z-1-equality}
and the additivity of $\mathcal{Z}$
we see that $\mathcal{Z}(K) = 0$.
In particular, for all minimal primes
$\mathfrak{q}$ of $\overline{R}$,
we have
$\Hom_{\overline{R}}(K, \overline{R})_{\mathfrak{q}} 
\cong 
\Hom_{\overline{R}}(K_{\mathfrak{q}} , \overline{R}_{\mathfrak{q}})
=0$,
hence $h_{\mathfrak{q}} = 0$.
In particular, 
$\im(h)_{\mathfrak{q}} = \im(h_{\mathfrak{q}} ) = 0$
by the exactness of localisation,
and so $I_h \defeq \im(h) \subseteq \overline{R}$
is an ideal of $\overline{R}$
whose localisation at all minimal primes is zero.
But $\overline{R}$ is reduced, 
and so the localisation $\overline{R}_{\mathfrak{q}} $
is a field (being artinian local and reduced),
which implies that the localisation map
$ \overline{R} \to \overline{R}_{\mathfrak{q}} $
has kernel $\mathfrak{q}$.
This implies that $I_h \subseteq \mathfrak{q}$
for all $\mathfrak{q} \subseteq \overline{R}$
minimal, hence $I_h \subseteq 
\bigcap \mathfrak{q} = \sqrt{0} =0$,
again because $\overline{R}$ is reduced.

In conclusion,
$I_h = \im(h) = 0$, so $h = 0$, contradition.
We then must have $K=0$,
i.e.\ $\theta_1 \colon N_1 \twoheadrightarrow \gr_{\mI} ( \pi _1 ^\vee ) $
is an isomorphism.
This implies
$\theta_2 \colon N_2 \cong \gr_F( \pi _2 ^\vee ) $
by \eqref{diag:thetas}
and finishes the proof of (i).

(ii)
By the proof of 
\cite[Theorem~2.1.2]{BHHMS4},
$N$ is Cohen-Macaulay of grade $2f$,
which implies that its direct summands 
$N_1 \cong \grm(\pi_1^\vee ) $ and 
$N_2 \cong \gr_F(\pi_2^\vee )$ 
both are Cohen-Macaulay.
As a consequence of 
\cite[Proposition~III.2.2.4]{LvO},
$\pi_1^\vee $ and
$\pi_2^\vee $ are Cohen-Macaulay.

(iii)
The proof of \cite[Corollary~3.2.4]{BHHMS4} goes through,
we report it here for the reader's convenience.
The inclusion
$ \mathfrak{m}^n \pi _2 ^\vee \subseteq 
\pi _2 ^\vee \cap \mathfrak{m}^n \pi ^\vee = F_{-n} \pi _2 ^\vee $
induces a natural morphism
\[
	\kappa:
	\gr_{ \mathfrak{m}} ( \pi _2 ^\vee )
	\to
	\gr_{F} ( \pi _2 ^\vee )
	\cong N_2,
\]
where the last isomorphism was established in the proof of \Cref{prop:main} using 
\eqref{diag:thetas}.
We easily see that $ \kappa  $ 
is surjective in degree $0 $.
Since $N_2 $ is generated by its degree $0 $
part, $ \kappa 	 $ is surjective,
and it follows from \cite[Theorem~I.4.2.4(5)]{LvO}
(applied with $L=M= \pi_2 ^\vee  $ and $N=0 $)
that $ \mathfrak{m}^n \pi _2 ^\vee  = F_{-n} \pi _2 ^\vee  $ 
for all $n \ge 0 $.

(iv)
This follows from the split exactness of 
\[
	0 \to N_2 \to N \to N_1 \to 0.\qedhere
\]
\end{proof}

\subsection{Finite length}
	\label{sec:fl}
Keep the hypotheses
\ref{hypothesis:i} to
\ref{hypothesis:iv}
of \Cref{sec:hypotheses}.
With the following theorem,
which generalises 
\cite[Theorem~3.2.3]{BHHMS4} to the case
$r \ge 1$,
we prove that $\pi$ has finite length.
Unlike \emph{loc.\ cit.}, we will not 
assume that $\repr$ is reducible,
because the result is new even in the
irreducible case.
\begin{thm}
	\label{thm:finite-length-yay}
Assume that $\repr$ is
$\max \{9, 2f+1\} $-generic,
and let $\pi'$ be a subrepresentation of $\pi$.
	\begin{enumerate}[(i)]
	\item The representation $\pi'$ is generated by its $\GLring $-socle.	
	\item 
Let $\pi'_0 \subseteq \pi'$ be a subrepresentation of $\pi'$.

If $\repr$ is reducible, then
$\lg_{\GLfield}(\pi'/\pi'_0) \le  \sum_{\ell=0} ^f (\rell[\pi',\ell]-\rell[\pi'_0,\ell])
\le r \cdot (f+1). $

If $\repr$ is irreducible, then 
$\lg_{\GLfield}(\pi'/\pi'_0) \le \rnull[\pi']-\rnull[\pi'_0]\le r. $
\end{enumerate}
In particular, $\pi$
has finite length,
bounded above by $r\cdot (f+1)$ in the 
reducible case, and by $r$ in the
irreducible case.
\end{thm}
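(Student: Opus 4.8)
The plan is to follow the strategy of \cite[Theorem~3.2.3]{BHHMS4}, exploiting the Cohen--Macaulay package established in \Cref{prop:main} together with the socle-decomposition results of \Cref{cor:direction} and \Cref{lemma:main}. Part~(i) is essentially immediate from \Cref{prop:generation}: hypotheses \ref{hypothesis:i}--\ref{hypothesis:iii} hold for $\pi$, hence $\pi$ is generated by its $\GLring$-socle, and I would deduce the same for an arbitrary subrepresentation $\pi'$ by applying \Cref{prop:main-memoire}(ii) to $\pi'$ and to $\langle \GLfield\cdot\socR\pi'\rangle\subseteq\pi'$, whose $\GLring$-socles agree, so that the two representations have the same $\dim_{\F(\!(X)\!)}D^\vee_\xi$; then exactness of $D^\vee_\xi$ forces the quotient to vanish, exactly as in the proof of \Cref{prop:generation}.

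For part~(ii), first I would observe the length bound $\lg_{\GLfield}(\pi'/\pi'_0)\le\sum_\ell(\rell[\pi',\ell]-\rell[\pi'_0,\ell])$ (resp.\ $\le\rnull[\pi']-\rnull[\pi'_0]$ in the irreducible case) is additive in short exact sequences of subquotients of $\pi$, because $\rwt[-,\sigma]=[\socR(-):\sigma]$ is additive along the short exact sequences $0\to\socR\pi_1\to\socR\pi_2\to\socR(\pi_2/\pi_1)\to 0$: this splitness of socles is precisely what \Cref{prop:main}(iv) buys us. Concretely, if $\pi_1\subseteq\pi_2$ are subrepresentations of $\pi$, dualising and using \Cref{prop:main}(iv) (the split exact sequence of graded $\grL$-modules) one recovers that $\socR\pi_2\cong\socR\pi_1\oplus\socR(\pi_2/\pi_1)$, so the $V$-spaces satisfy $\Vell[\pi_2,\ell]=\Vell[\pi_1,\ell]\oplus(\text{complement})$ after identifying via $\iota_\pi$ as in \Cref{lemma:gr}. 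Hence the putative bound behaves additively, and it suffices to prove that any \emph{irreducible} subquotient of $\pi$ (equivalently, any step in a composition series of $\pi'/\pi'_0$) contributes at least $1$ to $\sum_\ell(\rell[\pi',\ell]-\rell[\pi'_0,\ell])$. This in turn reduces to showing that a nonzero subquotient $\overline\pi$ of $\pi$ has $\socR\overline\pi\neq 0$, i.e.\ $\sum_\ell\rell[\overline\pi,\ell]\ge 1$: but if $\overline\pi=\pi_2/\pi_1$ with $\socR\pi_1=\socR\pi_2$, then by \Cref{rem:V-are-commensurable} we get $\Vell[\pi_1,\ell]=\Vell[\pi_2,\ell]$ for all $\ell$, hence by \Cref{cor:direction} the diagrams $(\pi_1^{I_1}\hookrightarrow\pi_1^{K_1})$ and $(\pi_2^{I_1}\hookrightarrow\pi_2^{K_1})$ coincide, so $\pi_1^{K_1}=\pi_2^{K_1}$; combined with the fact (part~(i)) that $\pi_2$ is generated by its $\GLring$-socle $\subseteq\pi_1$, this forces $\pi_1=\pi_2$, i.e.\ $\overline\pi=0$. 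This contradiction gives $\socR\overline\pi\neq 0$ for every nonzero subquotient, whence the length bound. Finally, the numerical bounds $\sum_\ell\rell[\pi',\ell]\le r(f+1)$ (resp.\ $\rnull[\pi']\le r$) come from \Cref{lemma:main} and hypothesis \ref{hypothesis:i}: each $\rell[\pi',\ell]=\dim_\F\Vell[\pi',\ell]\le\dim_\F\V=r$, and there are $f+1$ values of $\ell$ in the reducible case, one in the irreducible case. The last sentence of the theorem is the case $\pi'=\pi$, $\pi'_0=0$.

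The main obstacle is the additivity of $\Vell[-,\ell]$ along subrepresentations, i.e.\ making rigorous that $\socR$ is exact on the lattice of subrepresentations of $\pi$ and that these socle-decompositions are compatible with the fixed isomorphism $\iota_\pi$. This is where \Cref{prop:main}(iv) is essential, and where one must be careful that the complementary $\F$-subspaces of the $\V$'s can be chosen coherently; the technical bookkeeping of \Cref{lemma:gr} and \Cref{lemma:line-intersection} is precisely designed to handle this, so in the write-up I would invoke those lemmas rather than reprove the linear algebra. The only other point needing care is that the vanishing argument ``$\pi_1^{K_1}=\pi_2^{K_1}$ and generation by socle $\Rightarrow\pi_1=\pi_2$'' genuinely uses part~(i) for the \emph{larger} representation $\pi_2$, so part~(i) must be proved first, which it is.
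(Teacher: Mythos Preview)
Your approach to part~(ii) is essentially the paper's: the core observation is that part~(i) forces $\socR\pi_1\subsetneq\socR\pi_2$ whenever $\pi_1\subsetneq\pi_2$ are subrepresentations of $\pi$, which immediately bounds any chain by $\sum_\ell\dim_\F\Vell[\pi',\ell]-\sum_\ell\dim_\F\Vell[\pi'_0,\ell]$. The paper phrases this as a direct contradiction (a chain of $N+2$ subrepresentations would yield a strictly increasing chain of $N+2$ subspaces in $\bigoplus_\ell V'(\ell)/V'_0(\ell)$, of dimension $N$), whereas you phrase it as additivity over irreducible subquotients; these are equivalent. Your invocation of \Cref{prop:main}(iv) for ``splitness of socles'' is a misattribution---that result concerns the split exactness of $\grm$, and the socle exactness you describe is \Cref{lemma:soc-ex}, proved \emph{after} this theorem---but you do not actually need it: the strict inclusion of socles, which you correctly derive from part~(i) at the end of your argument, is all that is required.

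Part~(i), however, has a genuine gap. You propose to mimic \Cref{prop:generation}: apply \Cref{prop:main-memoire}(ii) to $\pi'$ and to $\pi_1':=\langle\GLfield\cdot\socR\pi'\rangle$ to conclude $\dim_{\F(\!(X)\!)}D^\vee_\xi(\pi'/\pi_1')=0$, then deduce that the quotient vanishes. But the vanishing step in \Cref{prop:generation} invokes \Cref{prop:main-memoire}(iii), which is stated only for quotients $\pi''=\pi/\pi'$ of $\pi$ itself; its proof uses the essential self-duality of $\pi$ (hypothesis~\ref{hypothesis:iii}) via the conjugate subrepresentation of \Cref{def:conjugate-srep}. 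For the subquotient $\pi'/\pi_1'$ no such tool is available at this point: from $m_{\mathfrak{p}_0}(\grm((\pi'/\pi_1')^\vee))=0$ you cannot conclude vanishing, since purity of $\pi^\vee$ only guarantees $\mathcal{Z}\neq 0$ (some $m_{\mathfrak{q}}\neq 0$), not $m_{\mathfrak{p}_0}\neq 0$ specifically. The paper instead uses \Cref{prop:main}(i) directly: since $\Vell[\pi',\ell]=\Vell[\pi_1',\ell]$ for all $\ell$, the two instances of \eqref{diag:thetas-square} (with $\pi_2=\pi'$ and $\pi_1=\pi_1'$) have the \emph{same} bottom row, and both vertical $\theta$-maps are isomorphisms by \Cref{prop:main}(i); this forces $\psi\colon\grm(\pi'^\vee)\to\grm(\pi_1'^\vee)$ to be an isomorphism, whence $\pi'^\vee/\mI^n\cong\pi_1'^\vee/\mI^n$ for all $n$ and $\pi'=\pi_1'$.
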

\begin{proof}
(i)
Let $ \pi_1' $ be the subrepresentation 
of $ \pi' $ generated by $\socR ( \pi') $.
In particular, 
$\socR ( \pi') =\socR ( \pi _1') $,
so they give the same
$\Vell[\pi',\ell]= \Vell[\pi_1',\ell]$
by the last part of 
\Cref{rem:V-are-commensurable},
which implies by \Cref{cor:direction}
that the subdiagrams
$(\pi^{\prime I_1} \hookrightarrow \pi^{\prime K_1}) $
and
$(\pi_1^{\prime I_1} \hookrightarrow \pi_1^{\prime K_1}) $
coincide.

Consider the commutative diagram 
\eqref{diag:thetas-square},
with $\pi_2=\pi'$ and $\pi_1=\pi_1'$:
 \[
 \begin{tikzcd}
\bigoplus_{\ell=0}^f
\left( \Vell[\pi',\ell] ^\vee \otimes_{\F} \Nell \right)
\ar[d,   " \theta'"'] 
\ar[r, equals] 
&
\bigoplus_{\ell=0}^f
\left( \Vell[\pi_1',\ell] ^\vee \otimes_{\F} \Nell \right)
\ar[d, " \theta_1'" ]
\\ 
 \gr_{\mI} (\pi^{\prime\vee}) \ar[r, two heads, "\psi"'] 
 &
	\gr_{\mI} (\pi_1^{\prime\vee}).
\end{tikzcd}
 \]
It follows from 
\Cref{prop:main}(i)
with $\pi_1 = \pi'$
(resp.\ $\pi_1 = \pi_1'$)
that $\theta'$ (resp.\ $\theta'_1$)
is an isomorphism, and so 
$\psi \colon \gr_{\mI} (\pi^{\prime\vee})
\cong\gr_{\mI} (\pi_1^{\prime\vee})$
is an isomorphism, from which we deduce
(for dimension reasons) that
$\pi^{\prime\vee}/\mI ^n \xrightarrow{\sim} \pi_1^{\prime\vee}/\mI ^n $ for all $n \ge 1 $,
and hence $\pi' = \pi_1' $.
This proves (i).

(ii) We only consider the case $\repr$
reducible for simplicity.
Let $N \defeq \sum_{\ell=0} ^f(\rell[\pi']- \rell[\pi'_0])$,
and suppose for the sake of a contradiction
that we have a strictly increasing chain
\[
 \pi'_0 \subsetneq \pi_1' \subsetneq \dots \subsetneq \pi'_{N} \subsetneq  \pi' 
\]
of subrepresentations.
Let $V'(\ell) \defeq \Vell[\pi',\ell] $,
and let 
$ V'_i(\ell) \defeq \Vell[\pi'_i,\ell]\subseteq V'(\ell)$
for $0\le i \le N$.
By (i), the chain 
\begin{equation}
	\label{eq:soc-chain-red}
\begin{tikzcd}[column sep= 1.5ex]
	\socR (\pi'_0 )\ar[r, "\subsetneq", phantom] &  
	\cdots\ar[r, "\subsetneq", phantom] &  
	\socR (\pi'_{N}) \ar[r, "\subsetneq", phantom] &  
	\socR (\pi')
\end{tikzcd}
\end{equation}
must also be strictly increasing. 
But we know from \Cref{cor:direction} that 
\begin{equation}
	\label{eq:soc-iso-red}
\begin{tikzcd}
\socR( \pi'_i )
\ar[r, "\iota_\pi", "\sim"'] &
\left( \displaystyle\bigoplus_{ \ell=0} ^f V'_i(\ell) \otimes_{\F}  
       \socR 
       \left( 
       \DZeroEll
        \right) \right)
\end{tikzcd}
\end{equation}
for all $0\le i\le N$,
so we have the inclusions
\begin{equation}
	\label{eq:inclusion-chain}
V'_i(\ell)\subseteq V'_{i+1} (\ell) \subseteq V'(\ell),
\end{equation}
for all $0\le \ell\le f$ and
$0\le i< N$.
Fix one of the inclusions in \eqref{eq:inclusion-chain}.
Because of \eqref{eq:soc-chain-red} and
\eqref{eq:soc-iso-red},
there exists at least one
$\ell \in \{0, \dots f\} $ such that
that inclusion is strict.
In particular, when we sum over $\ell$, 
we obtain a strictly increasing sequence
\[
\bigoplus_{ \ell=0} ^f V'_0( \ell)	
 \subsetneq
\bigoplus_{ \ell=0} ^f V'_1( \ell)	
\subsetneq
\cdots
\subsetneq
\bigoplus_{ \ell=0} ^f V'_{N}( \ell)	
\subsetneq
\bigoplus_{ \ell=0} ^f V'(\ell)
\]
of $N+2$  vector subspaces of $\bigoplus_{\ell=0} ^f V'(\ell)$,
contradicting
\[
	\dim_{\F}\left( \frac{\bigoplus_{\ell=0} ^f V'(\ell)}{\bigoplus_{\ell=0} ^f V_0'(\ell)} \right)
	=  \sum_{\ell=0} ^f (\rell-\rell[\pi'_0,\ell])=N.\qedhere
\]
\end{proof}

We conclude the section with a corollary
of \Cref{thm:finite-length-yay}.
\begin{corollary}
	\label{cor:srep-datum-uniquely}
Assume that $\repr$ is
$\max \{9, 2f+1\} $-generic.
\begin{enumerate}[(i)]
\item 
If $\repr$ is reducible,
a subrepresentation $ \pi' $
of $\pi  $
is uniquely determined
by the $(f+1)$-tuple 
$(\Vell[\pi',\ell])_{0\le \ell\le f} $
of vector subspaces of $V$.

If $\repr$ is irreducible,
a subrepresentation $ \pi' $
of $\pi  $
is uniquely determined
by the vector subspace $\Vnull \subseteq V$.

\item 
Conversely, 
if $\repr$ is reducible, suppose that
for any $(f+1)$-tuple
$(V'(\ell))_{0\le \ell\le f} $ of vector subspaces
of $V$
there exists a (necessarily unique) subrepresentation 
$\pi' \subseteq \pi$ such that
$V'(\ell)= \Vell[\pi', \ell]$.
Then, $\pi$ is
of the form 
$\bigoplus_{\ell=0} ^f 
\pi_\ell^{\oplus r}$,
for some $\GLfield$-representations 
$\pi_\ell$ such that 
$\pi_\ell^{K_1}\cong \DZeroEll$.

Similarly, if
$\repr$ is irreducible, suppose that
for any vector subspace
$V'$ of $V$
there exists a (necessarily unique) subrepresentation 
$\pi' \subseteq \pi$ such that
$V'= \Vnull[\pi']$.
Then, $\pi$ is of the form 
$ \pi^{\prime \oplus r}$,
for some $\GLfield$-representation
$\pi'$ such that 
$\pi^{\prime K_1}\cong \DZero$.
\end{enumerate}
\end{corollary}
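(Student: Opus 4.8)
The plan is to derive the whole statement from two earlier inputs: \Cref{cor:direction}, which identifies the diagram $(\pi'^{I_1}\hookrightarrow\pi'^{K_1})$ of a subrepresentation $\pi'\subseteq\pi$ with $\bigoplus_{\ell=0}^{f}(\Vell\otimes_{\F}\DEll)$ when $\repr$ is reducible split (resp.\ with $\Vnull\otimes_{\F}\Diagr$ when $\repr$ is irreducible), and \Cref{thm:finite-length-yay}(i), which guarantees that every such $\pi'$ is generated by its $\GLring$-socle. For (i), in the reducible split case: if $\pi_1'$ and $\pi_2'$ are subrepresentations of $\pi$ with $\Vell[\pi_1',\ell]=\Vell[\pi_2',\ell]$ for all $0\le\ell\le f$, then \Cref{cor:direction} gives $\iota_{\pi}(\pi_1'^{K_1})=\bigoplus_{\ell}(\Vell[\pi_1',\ell]\otimes_{\F}\DZeroEll)=\iota_{\pi}(\pi_2'^{K_1})$, so $\pi_1'^{K_1}=\pi_2'^{K_1}$ inside $\pi^{K_1}$; taking $\Gamma$-socles yields $\socR(\pi_1')=\socR(\pi_2')$ as subrepresentations of $\pi$, and generation by the socle then forces $\pi_1'=\langle\GLfield\cdot\socR(\pi_1')\rangle=\langle\GLfield\cdot\socR(\pi_2')\rangle=\pi_2'$. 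The irreducible case is identical, with $\Vnull$ in place of the tuple and $\DZero$ in place of $\DZeroEll$.

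For (ii) I again treat the reducible split case, the irreducible case being the special case of a single position. By hypothesis, for each $0\le\ell\le f$ and each $\F$-subspace $V'\subseteq\V$ there is a subrepresentation $\pi_{\ell,V'}\subseteq\pi$ (unique by (i)) whose associated tuple has $V'$ in position $\ell$ and $0$ elsewhere; set $\pi^{(\ell)}\defeq\pi_{\ell,\V}$. From \Cref{cor:direction} one reads $\iota_{\pi}(\pi_{\ell,V'}^{K_1})=V'\otimes_{\F}\DZeroEll$, so $\socR(\pi_{\ell,V'})$ is a $\Gamma$-subrepresentation of $\pi^{(\ell)K_1}$ and hence $\pi_{\ell,V'}\subseteq\pi^{(\ell)}$ by generation by the socle. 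Using $\DZero=\bigoplus_{\ell}\DZeroEll$ and $\iota_{\pi}$ one obtains $\socR(\pi)=\bigoplus_{\ell}\socR(\pi^{(\ell)})\subseteq\sum_{\ell}\pi^{(\ell)}$, so $\pi=\sum_{\ell}\pi^{(\ell)}$ by \Cref{prop:generation}; moreover the summation map $\bigoplus_{\ell}\pi^{(\ell)}\to\pi$ is injective, being injective on socles (the inclusion $\bigoplus_{\ell}\V\otimes_{\F}\socR(\DZeroEll)\hookrightarrow\V\otimes_{\F}\socR(\DZero)$) and $\bigoplus_{\ell}\pi^{(\ell)}$ having finite length by \Cref{thm:finite-length-yay}. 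Thus $\pi=\bigoplus_{\ell}\pi^{(\ell)}$. Choosing $r$ lines $L_1,\dots,L_r\subseteq\V$ with $\V=\bigoplus_iL_i$, the same socle computation inside $\pi^{(\ell)}$ gives $\pi^{(\ell)}=\bigoplus_{i=1}^{r}\pi_{\ell,L_i}$.

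It remains to show $\pi_{\ell,L}\cong\pi_{\ell,L'}$ for any two lines $L,L'\subseteq\V$: granting this, set $\pi_\ell\defeq\pi_{\ell,L_1}$, so that $\pi^{(\ell)}\cong\pi_\ell^{\oplus r}$ and $\pi\cong\bigoplus_{\ell}\pi_\ell^{\oplus r}$, with $\pi_\ell^{K_1}\cong\DZeroEll$ since $\iota_{\pi}(\pi_{\ell,L_1}^{K_1})=L_1\otimes_{\F}\DZeroEll\cong\DZeroEll$; the irreducible case yields $\pi\cong\pi'^{\oplus r}$ with $\pi'^{K_1}\cong\DZero$ in the same way. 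I expect this isomorphism claim to be the main obstacle, and the plan is to mimic Step~1 of the proof of \Cref{thm:principal-series}. If $L\ne L'$, put $W\defeq L+L'$, a two-dimensional subspace with $|W|=|\F|+1\ge 3$ lines, and pick a line $L''\subseteq W$ distinct from both $L$ and $L'$. As in the previous paragraph, $\socR(\pi_{\ell,W})=\iota_\pi^{-1}(W\otimes_{\F}\socR(\DZeroEll))$ equals $\socR(\pi_{\ell,L})+\socR(\pi_{\ell,L'})$ and the sum is direct, so $\pi_{\ell,W}=\pi_{\ell,L}\oplus\pi_{\ell,L'}$. Each of the two projections $p\colon\pi_{\ell,W}\to\pi_{\ell,L}$ and $p'\colon\pi_{\ell,W}\to\pi_{\ell,L'}$ then restricts to an isomorphism on $\pi_{\ell,L''}$: injectivity holds because the kernel meets $\socR(\pi_{\ell,L''})=\iota_\pi^{-1}(L''\otimes_{\F}\socR(\DZeroEll))$ trivially (as $L''\cap L=L''\cap L'=0$), and surjectivity because on socles $p$ (resp.\ $p'$) induces the isomorphism $L''\otimes_{\F}\socR(\DZeroEll)\xrightarrow{\sim}L\otimes_{\F}\socR(\DZeroEll)$ (resp.\ $\xrightarrow{\sim}L'\otimes_{\F}\socR(\DZeroEll)$) and the targets are generated by their socles. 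Hence $\pi_{\ell,L}\cong\pi_{\ell,L''}\cong\pi_{\ell,L'}$, completing the argument.
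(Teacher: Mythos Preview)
Your proof is correct and follows essentially the same route as the paper: both derive (i) immediately from \Cref{cor:direction} together with generation by the socle (\Cref{thm:finite-length-yay}(i)), and both prove (ii) by introducing the subrepresentations $\pi_{\ell,V'}$, decomposing $\pi$ accordingly via socle arguments, and then showing that $\pi_{\ell,L}\cong\pi_{\ell,L'}$ for any two lines.

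The one noteworthy difference is in this last isomorphism step. The paper merely refers back to ``arguing as in step~1 of the proof of \Cref{thm:principal-series}'', but that step relied on $\pi_0(L)$ being an \emph{irreducible} principal series (so that $\pi_0(L_1)\oplus\pi_0(L_2)$ is semisimple), which is not available here. Your projection argument---restricting the two projections of $\pi_{\ell,W}=\pi_{\ell,L}\oplus\pi_{\ell,L'}$ to $\pi_{\ell,L''}$ and checking injectivity and surjectivity on socles---sidesteps irreducibility entirely and is in fact the cleaner way to justify this point. The rest (order of decomposition, directness via injectivity on socles) is a minor reshuffling of the paper's argument.
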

\begin{remark}
	\label{rem:srep-datum-uniquely}
In (i) of
\Cref{cor:srep-datum-uniquely},
note that $\pi'$ is uniquely determined only
as a subrepresentation of $\pi$,
which remains a mysterious object.
In (ii) of \emph{loc.\ cit.},
note that the assumption is very strong,
and we are not in the position
to prove it for cases which are 
not already known in the literature
(such as $r=1$, $\repr$ irreducible
or $r=1$, $f=2$, $\repr$ reducible).
\end{remark}
	\label{rem:not-V-to-pi1}
\begin{proof}[Proof of \Cref{cor:srep-datum-uniquely}]
(i)
A direct consequence of
\Cref{thm:finite-length-yay}(i) and 
\Cref{cor:direction}.

(ii)
We keep to the reducible case for simplicity:
for a given linear subspace 
$W \subseteq V$
and a given integer $\ell \in \{0, \dots,f\}$,
let $\pi_{\ell} (W)$ be the unique
subrepresentation of $\pi$ with 
\[
\Vwt[\pi_{\ell} (W),\ell']=
\begin{cases}
W & \text{if }\ell' = \ell, \\
0 & \text{if }\ell' \neq \ell,
\end{cases}
\]
for $\ell' \in \{0, \dots, f\}$.
It is a consequence of \Cref{cor:direction} 
that 
$
\iota_\pi \colon \socR\pi_\ell(W) \xrightarrow{\sim} 
W \otimes _{\F} \socR\DZeroEll[\ell].$
From this
(and from the uniqueness in (i)),
we deduce that 
\begin{equation}
	\label{eq:pi-W-socle}
\pi_{\ell_1}(W_1) \cap \pi_{\ell_2} (W_2)
= \begin{cases}
\pi_{\ell_1} (W_1 \cap W_2) & \text{if }\ell_1 = \ell_2, \\
0 & \text{otherwise.}
\end{cases}
\end{equation}
Now, take $W$ to be a line $L \subseteq V$.
By arguing as in step 1 of the proof of
\Cref{thm:principal-series},
one proves that the $\pi_{\ell} (L)$
are isomorphic as $L \subseteq V$ varies.
Call $\pi_\ell$
(a fixed representative of)
 this isomorphism class.

Let $e_1, \dots, e_r$ be the standard basis
of $V = \F^r$.
\Cref{thm:finite-length-yay}(i)
implies that 
$\pi_\ell(V) = 
\sum_{i=1} ^{r}\pi_\ell(\F e_i)$,
in particular it only has one isotypic component,
of type $\pi_\ell$.
By looking at $\lgR(\socR \pi_\ell(V))$,
we conclude that 
the subrepresentation 
$\pi_\ell(V) \subseteq \pi$
is isomorphic to $\pi_\ell^{\oplus r}$.
\Cref{thm:finite-length-yay}(i)
implies that 
$\sum_{0 \le \ell \le f} \pi_\ell(V)= \pi$,
and we claim that this is a direct sum.
For this, it is enough to show that $\pi_\ell$
is not isomorphic to $\pi_{\ell'} $
for $\ell \neq \ell'$, and this is 
already true at the level of $\GLring$-socles.
\end{proof}

We conclude the section with 
the following result, 
which generalises 
\cite[Corollary~3.2.5]{BHHMS4}.
Note that the proof is substantially
more involved,
due to the presence of multiplicities
to take care of.
\begin{prop}[]
	\label{cor:pi1-pi2-exact}
Assume that $\repr$ is
$\max \{9, 2f+1\} $-generic.
Let $ \pi _1 \subseteq  \pi _2 $
be subrepresentations of $ \pi  $.
\begin{enumerate}[(i)]
\item 
For any $n \ge 1 $,
the sequence
\begin{equation}
	\label{eq:pi1-pi2-exact}
	0 \to
	\pi _1[ \mathfrak{m}^n] \to
	\pi _2[ \mathfrak{m}^n] \to
	(\pi _2/ \pi _1)[ \mathfrak{m}^n] \to
	0
\end{equation}
of $ \Lambda  $-modules is exact.
\item
This short exact sequence splits as $I $-representations 
if $\repr $ is $2n $-generic,
in particular if $n \le 3 $.
\end{enumerate}
\end{prop}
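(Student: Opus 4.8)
The plan is to prove exactness of \eqref{eq:pi1-pi2-exact} first, and then to upgrade it to an $I$-equivariant splitting under the genericity hypothesis. For the exactness (part (i)), the only nontrivial point is right-exactness: left-exactness and exactness in the middle are automatic since $[\mathfrak m^n]$ is left-exact. Dually, writing $\pi' \defeq \pi_2/\pi_1$ and dualising the short exact sequence $0 \to \pi_1 \to \pi_2 \to \pi' \to 0$, we get $0 \to \pi'^\vee \to \pi_2^\vee \to \pi_1^\vee \to 0$, and applying $- \otimes_\Lambda \Lambda/\mathfrak m^n$ it suffices to show that $\mathrm{Tor}_1^\Lambda(\pi_1^\vee, \Lambda/\mathfrak m^n) = 0$, equivalently that the filtration on $\pi'^\vee$ induced by the $\mathfrak m$-adic filtration of $\pi_2^\vee$ is the $\mathfrak m$-adic filtration — i.e.\ $\mathfrak m^n \pi_2^\vee \cap \pi'^\vee = \mathfrak m^n \pi'^\vee$ inside $\pi_2^\vee$. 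But this is exactly \Cref{prop:main}(iii), applied with the roles $\pi_1 \leftrightarrow \pi_1$, and $\pi_2 \defeq \pi/\pi_1$; more precisely \Cref{prop:main}(iii) says the $\mathfrak m$-adic filtration on $\pi^\vee$ induces the $\mathfrak m$-adic filtration on $(\pi/\pi_1)^\vee$, and the same argument (or functoriality through $\pi_2^\vee \hookrightarrow \pi^\vee$) gives the statement for the pair $\pi_1 \subseteq \pi_2$. Hence \eqref{eq:pi1-pi2-exact} is exact.

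For part (ii), the strategy is to reduce a splitting of \eqref{eq:pi1-pi2-exact} as $I$-representations to a statement about $H = I/I_1$-representations on graded pieces, and then to invoke multiplicity-freeness. Concretely, \eqref{eq:pi1-pi2-exact} identifies, after taking $\mathbb F$-linear duals, with the sequence $0 \to (\pi_2/\pi_1)^\vee/\mathfrak m^n \to \pi_2^\vee/\mathfrak m^n \to \pi_1^\vee/\mathfrak m^n \to 0$ of $\Lambda/\mathfrak m^n$-modules with $H$-action (here one uses that $Z_1$ acts trivially, so all these are $I/Z_1$-representations). By \Cref{prop:main}(iv) the associated graded sequence $0 \to \gr_{\mathfrak m}((\pi_2/\pi_1)^\vee) \to \gr_{\mathfrak m}(\pi_2^\vee) \to \gr_{\mathfrak m}(\pi_1^\vee) \to 0$ is split as graded $\grL$-modules with $H$-action, and by \Cref{prop:main}(i) the graded pieces are the explicit modules $N_i$, $N_i'$ built from the $\Nell$. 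In degrees $0 \le i \le n-1$ (which is all that matters for the length-$n$ truncation), these graded pieces are copies of $(\Nnull)_{-i}$, and $\bigoplus_{i=0}^{n-1}(\Nnull)_{-i}$ is multiplicity free as an $H$-representation provided $\repr$ is $n$-generic — this is \cite[Lemma~2.3.7]{BHHMS4} (cited in the proof of \Cref{prop:main} with $n=f+1$; here we need it for general $n$, hence the $2n$-genericity/$n$-genericity bookkeeping, which in particular covers $n \le 3$ under the standing $\max\{9,2f+1\}$-genericity).

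The splitting itself then proceeds as follows: choosing an $H$-equivariant $\grL$-linear splitting $s$ of $\gr_{\mathfrak m}(\pi_2^\vee) \twoheadrightarrow \gr_{\mathfrak m}(\pi_1^\vee)$, we must lift it to an $\mathbb F[I/Z_1]$-linear splitting of $\pi_2^\vee/\mathfrak m^n \twoheadrightarrow \pi_1^\vee/\mathfrak m^n$. Since $\mathbb F[H]$ is semisimple, decompose $\pi_1^\vee/\mathfrak m^n = \bigoplus_\chi (\pi_1^\vee/\mathfrak m^n)_\chi$ into $H$-isotypic components; one builds the section component by component, using that on the relevant graded pieces each isotype occurs with multiplicity one (by the multiplicity-freeness above), so that a choice of preimage of a generator in degree $j$ is forced up to elements of higher $\mathfrak m$-adic order, and one can inductively adjust. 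I expect the \textbf{main obstacle} to be precisely this inductive lifting argument — arranging the adjustments so that the resulting map is simultaneously $F$-equivariant (in the $\mathbb F[\![X]\!][F]$-sense carried by these modules) and compatible across all isotypes — and verifying carefully that the $n$-genericity hypothesis is exactly what makes the multiplicity-one input available in every degree $< n$, so that no obstruction accumulates before we truncate at $\mathfrak m^n$. Once the section on duals is constructed, dualising back yields the desired $I$-equivariant splitting of \eqref{eq:pi1-pi2-exact}, and the explicit bound $n \le 3$ follows since $\max\{9, 2f+1\} \ge 2 \cdot 3$.
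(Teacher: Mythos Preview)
Your part (i) is essentially the paper's argument, though your parenthetical ``$\mathrm{Tor}_1^\Lambda(\pi_1^\vee,\Lambda/\mathfrak m^n)=0$'' is not correct and not equivalent to what you actually need: that $\mathrm{Tor}$ group is nonzero in general. What you need (and then correctly state) is that the induced filtration on $(\pi_2/\pi_1)^\vee$ coincides with the $\mathfrak m$-adic one. The paper first reduces to $\pi_2=\pi$ by a dimension count, then reads this off from \Cref{prop:main}(iv); your route via \Cref{prop:main}(iii) works too once you observe that the argument there applies verbatim to the pair $\pi_1\subseteq\pi_2$ (this is spelled out later as \Cref{cor:final-corollary}(iii), but uses nothing beyond \Cref{prop:main}(i) applied to $\pi_1\subseteq\pi$ and $\pi_2\subseteq\pi$).

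Part (ii) has a genuine gap. Knowing that the graded sequence splits $H$-equivariantly, together with multiplicity-freeness of $\bigoplus_{i<n}(\Nnull)_{-i}$, does \emph{not} by itself produce a $\Lambda$-linear section: if you define $s$ on degree-$0$ generators and extend $\Lambda$-linearly, well-definedness requires that every $\Lambda$-relation among the generators of $\pi_1^\vee/\mathfrak m^n$ lifts to the same relation in $\pi_2^\vee/\mathfrak m^n$, and multiplicity-freeness gives no control over this. Your remark about ``$F$-equivariance in the $\F[\![X]\!][F]$-sense'' is also misplaced; that structure belongs to the $(\varphi,\Gamma)$-module side and plays no role here. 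The paper's argument is quite different and more constructive: it imports from \cite[Lemmas~2.4.1--2.4.2]{BHHMS4} an explicit $I$-representation $\tau^{(n)}=V\otimes\bigoplus_\lambda\tau_\lambda^{(n)}$ together with an embedding $i\colon\tau^{(n)}\hookrightarrow\pi$ inducing $\tau^{(n)}[\mathfrak m^n]\cong\pi[\mathfrak m^n]$, sets $\tau_1^{(n)}=\bigoplus_\ell V(\pi_1,\ell)\otimes\bigoplus_{\lambda\in\mathscr P_\ell}\tau_\lambda^{(n)}$, and then proves that $i$ restricts to $i_1\colon\tau_1^{(n)}\hookrightarrow\pi_1$ with $\tau_1^{(n)}[\mathfrak m^n]\cong\pi_1[\mathfrak m^n]$. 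The splitting then comes for free from the tautological decomposition $\tau^{(n)}=\tau_1^{(n)}\oplus\tau_2^{(n)}$. The key step---showing $i(\tau_1^{(n)})\subseteq\pi_1$---uses hypothesis \ref{hypothesis:iv} (the $\Ext^i_{I/Z_1}(\chi,\pi)$ computation) together with part (i) to get $\Ext^{0,1}_{I/Z_1}(\chi_0,\pi_1)=0$ for the characters $\chi_0$ appearing in $\tau_1^{(n)}/\tau_1^{(n)}[\mathfrak m]$; the $2n$-genericity enters precisely in controlling which characters these are (via \cite[Lemma~2.4.2]{BHHMS4}). Your sketch invokes neither the explicit model $\tau^{(n)}$ nor hypothesis \ref{hypothesis:iv}, and I do not see how to complete it without them.
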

\begin{proof}
(i)
We always have an exact sequence
\[
	0 \to
	\pi _1[ \mathfrak{m}^n] \to
	\pi _2[ \mathfrak{m}^n] \to
	(\pi _2/ \pi _1)[ \mathfrak{m}^n],
\]
we only need to show the surjectivity of the last map.
We first treat the special case $ \pi _2 = \pi  $.
For dimension reasons,
it is enough to show that
\begin{equation}
		\label{eq:pi1-pi2-dim}
\dim_{\F}
(	(\pi / \pi _1)[ \mathfrak{m}^n])
=
\dim_{\F} (\pi [ \mathfrak{m}^n])-
\dim_{\F} (\pi _1[ \mathfrak{m}^n]).
\end{equation}
but this follows from
\[
	\gr_{ \mathfrak{m}} ( \pi  ^\vee ) \cong 
	\gr_{ \mathfrak{m}} ( \pi _1 ^\vee )  \oplus
	\gr_{ \mathfrak{m}} ( (\pi /\pi _1) ^\vee ) ,
\]
which is \Cref{prop:main}(iv).

Onto the general case.
We know that $ \pi [ \mathfrak{m}^n] \to ( \pi / \pi _2)[ \mathfrak{m}^n] $
is surjective by the previous case
and factors through $(\pi/\pi_1)[\mI^{n}]$,
hence we have a surjective morphism $(\pi/\pi _1) [\mI^{n}] \twoheadrightarrow
(\pi/\pi _2)[\mI^{n}]$,
and a short exact sequence
\[
0 \to
( \pi_2/ \pi _1) [\mI^{n}] \to 
( \pi/ \pi _1) [\mI^{n}] \to ( \pi / \pi _2)[\mI^{n}] \to 0.
\]
Counting dimensions, we see that
\begin{align*}
\dim_{\F}
\big(	(\pi_2 / \pi _1)[ \mathfrak{m}^n]\big)
=&
\dim_{\F} \big((\pi/\pi_1 )[ \mathfrak{m}^n]\big)-
\dim_{\F} \big((\pi/\pi _2)[ \mathfrak{m}^n]\big)
\\
\overset{\eqref{eq:pi1-pi2-dim}}{=}&
\big( 
\dim_{\F} (\pi[ \mathfrak{m}^n])-
\dim_{\F} (\pi_1 [ \mathfrak{m}^n])
 \big) 
-
\big(
\dim_{\F} (\pi[ \mathfrak{m}^n])-
\dim_{\F} (\pi _2[ \mathfrak{m}^n])
  \big) 
\\
=& \dim_{\F} \big(\pi_2 [ \mathfrak{m}^n]\big)-
\dim_{\F} \big(\pi _1[ \mathfrak{m}^n]\big),
\end{align*}
from which (i) follows.

(ii)
It suffices to show
that $ \pi _1 [ \mathfrak{m}^n] $
is a direct summand of $ \pi [ \mathfrak{m}^n] $:
if we find a direct sum decomposition 
of $I $-representations
of the form
$ \pi  [ \mathfrak{m}^n]= \pi _1 [ \mathfrak{m}^n] \oplus W   $,
then clearly we also have
$ \pi_2  [ \mathfrak{m}^n]= \pi _1 [ \mathfrak{m}^n] \oplus (W \cap    \pi_2  [ \mathfrak{m}^n])$:
so, we may assume $\pi_2= \pi$.

\paragraph{Step 1.}
We start with some setup.
For $n \ge 1$, denote by
$\mathcal{I}^{(n)}$
the $H$-stable graded ideal 
$(y_j^n, z_j^n, h_j\mid 0\le j < f)$
of $\grL$
(cf.\ the commutator relations 
\eqref{eq:grL-commutator-relations}).
We consider
the finite-dimensional $I$-representation
$ \tau ^{(n)} \defeq \V\otimes_{\F}  
\left( \bigoplus_{ \lambda \in \mathscr{P}} \tau _ \lambda ^{(n)}  \right)$
of \cite[Lemma~2.4.1]{BHHMS4}
and the isomorphism of graded $H$-modules
\begin{equation}
	\label{eq:kappa-lambda}
\kappa_{\lambda}  \colon \grm((\tau_{\lambda} ^{(n)})^\vee )
\xrightarrow{\sim} \chi_\lambda ^{-1} \otimes
R/(\mathcal{I}^{(n)}+\mathfrak{a}(\lambda))
\end{equation}
of \emph{loc.\ cit.} for 
$\lambda \in \mathscr{P}$.
If we set 
If $\Nnull$ is the graded $\grL$-module
of \eqref{eq:N-null},
then we set
$N \defeq \V^\vee \otimes_{\F} \Nnull$
and we define from \eqref{eq:kappa-lambda}
 the isomorphism
\[
\kappa  \colon \grm((\tau^{(n)})^\vee )
\xrightarrow{\sim}  N/\mathcal{I}^{(n)}N
\]
of graded $H$-modules by
$\kappa \defeq 
\bigoplus_{\lambda\in \mathscr{P}}\id_{V^\vee } \otimes \kappa_\lambda$.

We let $i: \tau^{(n)} \hookrightarrow \pi$
be the $I$-equivariant injection of
\cite[Lemma~2.4.2]{BHHMS4},
so that in particular the composition
\begin{equation}
	\label{eq:kappa-and-i}
	\begin{tikzcd}
		N \ar[r, "\eqref{eq:surjection_Ni}","\sim"'] &  
\grm(\pi^\vee ) \ar[r, "\grm(i^\vee )", two heads] &  
\grm((\tau^{(n)})^\vee ) \ar[r,"\kappa", "\sim"'] &  
N/\mathcal{I}^{(n)}N
\end{tikzcd}
\end{equation}
is the projection modulo $\mathcal{I}^{(n)}$.

Now, notice that the $\grL$-module $N$
factors as an $\overline{R}$-module by construction, and that
$\mathcal{I}^{(n)}\overline{R}
\subseteq \overline{\mI}^n\overline{R}$,
where $\overline{\mI}$ is the unique
graded maximal ideal of $\grL$.
In particular, 
$\mathcal{I}^{(n)}N \subseteq \overline{\mI}^nN$,
hence
\eqref{eq:kappa-and-i} becomes an isomorphism
in degrees $\ge -(n-1)$,
and so $i$ restricts to
an isomorphism
$i[\mI^n] \colon   \tau ^{(n)} [ \mathfrak{m}^n]\cong \pi [ \mathfrak{m}^n]$
for dimension reasons.

If we set
\begin{align}
	\label{eq:tau1-N1-def}
	\tau _1^{(n)} & \defeq \bigoplus_{ \ell=0} ^f
\left( 	\Vell[\pi_1,\ell] \otimes_{\F}  
	\left( \bigoplus_{ \lambda \in \mathscr{P}_{\ell} }  \tau _ \lambda ^{(n)} \right) \right),\\
	N_1 & \defeq \bigoplus_{ \ell=0} ^f \Bigl(  \Vell[\pi_1,\ell]^\vee  \otimes_{\F}  
	\Nell \Bigr),
\end{align}
then we can define
\[
	\kappa_{1}  \colon \grm((\tau_1 ^{(n)})^\vee )
	\xrightarrow{\sim}   N_1/\mathcal{I}^{(n)}N_1
\]
as 
$\bigoplus_{\lambda\in \mathscr{P}}
\left( \id_{\Vwt[\pi_1,\ell(\lambda)]} \otimes \kappa_\lambda \right)$,
and the commutativity of the diagram  
\begin{equation}
	\label{diag:kappa-kappa1}
\begin{tikzcd}
	\grm((\tau^{(n)})^\vee ) \ar[d, two heads]  \ar[r, "\kappa", "\sim"'] &  N/\mathcal{I}^{(n)}N\ar[d, two heads]\\
	\grm((\tau_1^{(n)})^\vee ) \ar[r, "\kappa_1"', "\sim"] &  N_1/\mathcal{I}^{(n)}N_1
\end{tikzcd}
\end{equation}
is a routine check.
Here, the vertical arrows are constructed as follows:
if $i_\lambda \colon \Vwt[\pi_1,\ell(\lambda)]\subseteq \V$
is the natural inclusion,
then they are given by
$\bigoplus_{\lambda\in \mathscr{P}} i_\lambda^\vee \otimes \id$,
where the identity is on $\grm(\tau_\lambda^{(n)})$ 
for the  arrow on the left, and on 
$R/(\mathcal{I}^{(n)}+\mathfrak{a}(\lambda))$
for the arrow on the right.

\paragraph{Step 2.}
We claim that
$i \colon \tau^{(n)} \hookrightarrow \pi$
restricts to $i_1 \colon \tau_1^{(n)}\hookrightarrow \pi_1$,
giving in particular the commutative diagram
\begin{equation}
	\label{diag:i-i1}
\begin{tikzcd}
	\grm(\pi^\vee ) \ar["\grm(i^\vee )", r, two heads] \ar[d, two heads]  &  
	\grm((\tau^{(n)})^\vee )\ar[d, two heads] \\
	{\grm(\pi_1^\vee )} \ar["\grm(i_1^\vee )", r, two heads] &  
	{\grm((\tau_1^{(n)})^\vee ).}
\end{tikzcd}
\end{equation}
Here we depart from the proof 
of \cite[Corollary~3.2.5]{BHHMS4},
where there is no need to introduce $i_1$.

For this, we imitate the proof of
\cite[Lemma~2.4.2]{BHHMS4}.
We at least know that the image of
$i|_{\tau_1 ^{(n)}[\mI]}\colon \tau ^{(n)}[\mI]\to \pi$ 
is contained inside $\pi_1$,
by \Cref{cor:direction} and by
$i[\mI^n] \colon   \tau ^{(n)} [ \mathfrak{m}^n]\cong \pi [ \mathfrak{m}^n]$.
Then, we show that the restriction map
\begin{equation}
	\label{eq:tau-restriction}
	\Hom_{I/Z_{1}} (\tau_1^{(n)}, \pi_1) 
\xrightarrow{\sim}
\Hom_{I/Z_{1}} (\tau_1^{(n)}[\mI], \pi_1)
\end{equation}
is an isomorphism, and we do this by
proving that 
$\Ext^\varepsilon_{I/Z_{1}}(\tau_1^{(n)}/\tau_1^{(n)}[\mI], \pi_1)=0$,
for $\varepsilon \in \{0,1\} $.
With a dévissage, we reduce to showing
\begin{equation}
	\label{eq:step2-claim}
	\Ext^\varepsilon_{I/Z_1}(\chi_0, \pi_1)=0,
\end{equation}
for $\chi_0 \in \JH(\tau_1^{(n)}/\tau_1^{(n)}[\mI]) $
and for $\varepsilon \in \{0,1\} $.
But 
$\JH(\tau_1^{(n)}/\tau_1^{(n)}[\mI])\subseteq \JH(\tau^{(n)}/\tau^{(n)}[\mI])$
(the inclusion follows for example by introducing
\eqref{eq:tau2-def} 
and using the direct sum decomposion considered
in the line below),
and it is shown in the proof of
\cite[Lemma~2.4.2]{BHHMS4},
after (26), that
\[
	\JH(\tau^{(n)}/\tau^{(n)}[\mI]) \cap \JH(\pi[\mI])= \emptyset.
\]
In particular,
$\Ext^1_{I/Z_1}(\chi_0,\pi)=0$
by hypothesis \ref{hypothesis:iv},
and we can consider the long exact sequence
\[
0\to \pi_1^{I_1} \xhookrightarrow{j^{0}}
\pi^{I_1}\to
(\pi/\pi_1)^{I_1} \xrightarrow{0}  \Ext^1_{I_1/Z_1}(\F, \pi_1)
\xhookrightarrow{j^{1}} \Ext^1_{I_1/Z_1} (\F, \pi),
\]
where the transition map is $0$ by (i)
of the statement (with $n=1$).
For $\varepsilon \in \{0,1\} $, consider the decompositions into
$\chi$-isotypic components
\begin{equation*}
\Ext^{\varepsilon}_{I_1/Z_1} (\F,\pi_1)=\bigoplus_{\chi} 	\Ext^\varepsilon_{I/Z_1} (\chi,\pi_1),\quad
\Ext^{\varepsilon}_{I_1/Z_1} (\F,\pi)=\bigoplus_{\chi} 	\Ext^\varepsilon_{I/Z_1} (\chi,\pi),
\end{equation*}
where $\chi$ runs over all smooth characters
of $I/Z_1$ over $\F$.
In particular, taking $\chi=\chi_0$,
we know that $j^{\varepsilon}$ restricts to
\[
	(j^{\varepsilon})^{\chi_0} \colon \Ext^{\varepsilon}_{I/Z_1} (\chi_0,\pi_1) \hookrightarrow \Ext^{\varepsilon}_{I/Z_1} (\chi_0,\pi)=0,
\]
which implies that \eqref{eq:step2-claim} holds,
as was to be shown.
Hence, \eqref{eq:tau-restriction} is an isomorphism,
and we define $i_1$ as the preimage
of $i|_{\tau^{(n)}[\mI]} $ via
\eqref{eq:tau-restriction}.

Finally, we have the commutative diagram
\begin{equation}
	\label{eq:kappa1-and-i}
	\begin{tikzcd}[column sep = large]
 N \ar[r,"\sim"']\ar[d, two heads]\ar[dr, phantom, "\eqref{diag:thetas-square}"] &  
	\grm(\pi^\vee ) \ar[r, "\grm(i^\vee )", two heads] \ar[d, two heads] \ar[dr, phantom, "\eqref{diag:i-i1}"]&  
	\grm((\tau^{(n)})^\vee ) \ar[r,"\kappa", "\sim"'] \ar[d, two heads]\ar[dr, phantom,"\eqref{diag:kappa-kappa1}"]&  
N/\mathcal{I}^{(n)}N
\ar[d, two heads]\\
 N_1 \ar[r,"\sim"] &  
\grm(\pi_1^\vee ) \ar[r, "\grm(i_1^\vee )"', two heads] &  
\grm((\tau_1^{(n)})^\vee ) \ar[r,"\kappa_1"', "\sim"] &  
N_1/\mathcal{I}^{(n)}N_1,
\end{tikzcd}
\end{equation}
and we claim that the bottom row 
composes to the projection modulo $\mathcal{I}^{(n)}$.
But this follows from the commutativity of 
\eqref{eq:kappa1-and-i},
and the fact that
\eqref{eq:kappa-and-i}
is given by the projection modulo 
$\mathcal{I}^{(n)}$.

\paragraph{Step 3.}
We claim that 
$i_1 \colon \tau_1^{(n)} \hookrightarrow\pi_1$
restricts to an isomorphism
$i_1[\mI^n] \colon  \tau _1^{(n)}[ \mathfrak{m}^n] \xrightarrow{\sim}  \pi _1[ \mathfrak{m}^n]$
of $\Lambda$-modules,
which is enough to conclude:
we then have the following commutative diagram
with exact rows
\[
\begin{tikzcd}
	0 \ar[r] & 
	\tau _1^{(n)}[ \mathfrak{m}^n] \ar[r]  \ar[d, "{i_1[\mI^n]}"',"\wr"]& 
	\tau ^{(n)}[ \mathfrak{m}^n] \ar[r] \ar[d, "{i[\mI^n]}"',"\wr"]& 
	(\tau ^{(n)}/\tau^{(n)}_1)[ \mathfrak{m}^n] \ar[r] \ar[d, dashed, "\wr"]  & 
	0\\
	0 \ar[r] & 
	\pi _1[ \mathfrak{m}^n] \ar[r] & 
	\pi [ \mathfrak{m}^n] \ar[r] & 
	(\pi /\pi_1)[ \mathfrak{m}^n] \ar[r] & 
	0, 
\end{tikzcd}
\]
so it is enough to find a splitting 
of the top row.
But
notice that if $V'(\ell)$ is a complementary subspace
of $\Vell[\pi_1,\ell]$ in $\V$, for all
$0\le \ell\le f$, 
and if 
\begin{equation}
	\label{eq:tau2-def}
	\tau_2^{(n)} \defeq \bigoplus_{\ell=0} ^f
\left( 	V'(\ell)\otimes_{\F} \left( \bigoplus_{ \lambda \in \mathscr{P}_{\ell}}  \tau _ \lambda ^{(n)} \right)
\right),
\end{equation}
then
we have a direct sum decomposition
$\tau^{(n)}[\mI^n]= \tau_1^{(n)}[\mI^n]\oplus \tau_2^{(n)}[\mI^n]$
of $I$-representations,
with $\tau_2^{(n)}[\mI^n]\cong (\tau^{(n)}/\tau_1^{(n)})[\mI^n]$
(also as $I$-representations),
which defines a splitting.

Now, to show that $i_1[\mI^n]$
is an isomorphism, it is enough to prove
that
\[
 {\grm(i_1[\mI^n]^\vee )}\colon \grm(\pi_1[\mI^n]^\vee ) \twoheadrightarrow 
	\grm(\tau_1^{(n)}[\mI^n]^\vee )
\]
is an isomorphism of $\grL$-modules.
Indeed, $i_1[\mI^n]^\vee $ is strict 
(being surjective $\Lambda$-linear between two $\Lambda$-modules with the
$\mI$-adic topology), 
hence we can use 
\cite[Theorem I.4.2.5(1)]{LvO}
and \cite[Theorem I.4.2.5(3)]{LvO}
(in our case $\pi_1[\mI^n]^\vee $
is $\mI$-adically
complete,
because its $\mI$-adic filtration is discrete)
to conclude
that $i_1[\mI^n]^\vee $ is an isomorphism.
For ease of notation, let 
$\psi \defeq \grm(i_1[\mI^n]^\vee )$.
\paragraph{Step 4.}
We show that $\psi$ is an isomorphism of $\grL$-modules.
Consider the following commutative diagram:
\begin{equation}
	\begin{tikzcd}
	N_1 \ar[r, "\theta_1", "\sim"']\ar[d, two heads] &  
	\grm(\pi_1 ^\vee ) \ar[r, two heads, "\grm(i_1^\vee )"]\ar[d, two heads] &  
	\grm((\tau_1^{(n)})^\vee ) \ar[r, "\sim"', "\kappa_1"] \ar[d, two heads]&  
	N_1/\mathcal{I}^{(n)}N_1 \ar[d, two heads]\\
	N_1/\overline{\mI}^n \ar[r, "\sim"'] &  
	\grm(\pi_1 ^\vee )/\overline{\mI}^n \ar[r, two heads]\ar[d,"\wr"
] &  
	\grm((\tau_1^{(n)})^\vee )/\overline{\mI}^n  \ar[r, "\sim"']\ar[d,"\wr"'
] &  
	N_1/(\mathcal{I}^{(n)}+ \overline{\mI}^n )N_1 \\
														  & \grm(\pi_1[\mI^n]^\vee )\ar[r, two heads, "\psi"']
	& \grm(\tau_1^{(n)}[\mI^n]^\vee ).
\end{tikzcd}
	\label{diag:T-piece}
\end{equation}
Here, the first row is the bottom row of \eqref{eq:kappa1-and-i}
and the second row is obtained from 
the first by modding out by $\overline{\mI}^n$.
The isomorphisms in the bottom square come from
the following natural isomorphism:
if $M$ is a smooth $I/Z_1$-module, then
$ M^\vee/\mI^{n} \cong M[\mI^{n}]^\vee  $
and 
$ \grm(M^\vee /\mI ^{n} ) \cong \grm(M^\vee )/\barmI^{n}$.

Finally, remember that
$\mathcal{I}^{(n)}N_1
\subseteq  \overline{\mI}^n N_1$,
which implies that the middle row of 
\eqref{diag:T-piece} is an isomorphism.
By
the commutativity of the bottom square of
\eqref{diag:T-piece},
$\psi$ is also an isomorphism, 
which concludes the proof.
\end{proof}

\subsection{On the structure of 
$\mK^{2}$-torsion}
	\label{sec:mK-torsion}

Let $\mK$ be the  maximal ideal of 
$\F [\![K_1/Z_1]\!]$,
and set 
$\Gt \defeq \F [\![\GLring/Z_1]\!]/\mK^{2}$.
The goal of this section is to determine,
in \Cref{prop:iota-tld},
the $\mK^{2}$-torsion of $\pi$ and its
subrepresentations.
In \Cref{prop:K1-exact} we extend this
result to all subquotients of $\pi$,
by showing 
that the natural morphism 
$\pi_2[\mK^{2}] \to 
(\pi_2/\pi_1)[\mK^{2}]$
is surjective for
all subrepresentations
$\pi_1 \subseteq \pi_2$ of $\pi$.
We assume hypotheses
\ref{hypothesis:i} to
\ref{hypothesis:iv}
of \Cref{sec:hypotheses} throughout.
\bigskip

We denote by $\tilDZero$ the
finite-dimensional $\Gt$-module
of \cite[Definition~4.3]{HW22}.
It is the unique (up to isomorphism)
$\Gt$-module which is maximal with respect to
the following two properties:
\begin{enumerate}[(i)]
\item $\socGt \tilDZero \cong \bigoplus_{\sigma \in W(\repr)} \sigma$;
\item any Serre weight of $W(\repr)$
occurs in $\tilDZero$ with multiplicity one.
\end{enumerate}
It follows from \cite[Proposition~4.1]{HW22}
that $\tilDZero$ decomposes as the direct sum
\begin{equation}
	\label{eq:til-direct-sum}
\tilDZero = \bigoplus_{\sigma \in W(\repr)} 
\tilDx,
\end{equation}
for some $\Gt$-modules $\tilDx$ with
$\socGt \tilDx = \sigma$.
For $\ell \in \{0, \dots, f\}$,
we set
$ \tilDZeroEll \defeq  \bigoplus_{\sigma \in W(\repr)_{\ell}} \tilDx,$
so that
$ \tilDZero = \bigoplus_{\ell =0}^f \tilDZeroEll.$
Remember that $\tilDZero^{K_1} \cong \DZero$
by \cite[Theorem~4.6]{HW22}.
The follow proposition generalises 
\cite[Proposition~3.1.8]{BHHMS5} 
to the case $r \ge 1$.
\begin{prop}
	\label{prop:iota-tld}
Suppose that $\repr$ is $\max \{9, 2f+1\} $-generic.
\begin{enumerate}[(i)]
\item 
The $\Gamma$-module isomorphism 
$\iota_\pi \colon \pi^{K_1}
\xrightarrow{\sim} V \otimes_{\F}
\DZero$
can be extended to a $\Gt$-module isomorphism
\begin{equation}
	\label{eq:iota-tld}
\widetilde{\iota}_\pi \colon
\pi[\mK^{2}] \xrightarrow{\sim} \V \otimes_{\F} \tilDZero.
\end{equation}
\item 
Let $ \pi'$ be
a subrepresentation of $\pi$.
If $\repr$ is irreducible,
then any $\widetilde{\iota}_\pi$
as in (i) restricts to
\[
\widetilde{\iota}_\pi \colon
\pi'[\mK^{2}] \xrightarrow{\sim} 
 \Vnull[\pi'] \otimes_{\F} \tilDZero,
\]
and if $\repr$ is reducible split,
then any $\widetilde{\iota}_\pi$
as in (i) restricts to
\[
\widetilde{\iota}_\pi \colon
\pi'[\mK^{2}] \xrightarrow{\sim} 
\bigoplus_{\ell =0} ^{f}
 \Vell[\pi',\ell] \otimes_{\F} \tilDZeroEll[\ell].
\]
\end{enumerate}
For the definition of 
$\Vnull[\pi']$ and $\Vell[\pi',\ell]$,
we refer to the paragraph after
the proof of \Cref{lemma:main}.
\end{prop}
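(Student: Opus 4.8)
The plan is to follow the strategy of \cite[Proposition~3.1.8]{BHHMS5}, adapting it to keep track of the vector spaces $\Vwt[\pi',\sigma]$. For part (i), I would first recall that $\pi[\mK^{2}]$ is a $\Gt$-module whose $K_1$-invariants are $\pi^{K_1}\cong \V\otimes_{\F}\DZero$, via $\iota_\pi$. The key point is that by hypothesis \ref{hypothesis:iv} (applied to the characters $\chi$ appearing in $\pi[\mI]$, which are the $\chi_\lambda$ for $\lambda\in\mathscr{P}$), together with \Cref{prop:iota-tld}(i)'s consequence that $\pi[\mK^{2}]$ satisfies the maximality property \emph{after} tensoring with $\V$, I can identify $\pi[\mK^{2}]$ with the maximal $\Gt$-module having $\socGt = \V\otimes_{\F}\bigoplus_{\sigma}\sigma$ and each $\sigma$ occurring with multiplicity exactly $r = \dim_{\F}\V$. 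More precisely, I would show that a $\Gt$-equivariant lift $\widetilde{\iota}_\pi$ of $\iota_\pi$ exists by an obstruction-theoretic argument: lifting $\iota_\pi$ from $\pi^{K_1}$ to $\pi[\mK^{2}]$ amounts to matching up extension data, which is controlled by $\Ext^1$-groups that are computed (or bounded) by hypothesis \ref{hypothesis:iv} and the structure of $\tilDZero$ from \cite[§4]{HW22}. The uniqueness characterisation of $\tilDZero$ in \cite[Definition~4.3]{HW22}, applied with multiplicities scaled by $r$ (using that $\V\otimes_{\F}\tilDZero$ inherits the two defining maximality properties), then forces $\pi[\mK^{2}]\cong \V\otimes_{\F}\tilDZero$ compatibly with $\iota_\pi$.

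For part (ii), I would argue that $\pi'[\mK^{2}] = \pi'\cap \pi[\mK^{2}]$ is a $\Gt$-submodule of $\V\otimes_{\F}\tilDZero$, and that its $K_1$-invariants are $\pi'^{K_1}$, which by \Cref{cor:direction} equals $\bigoplus_{\ell}\Vell[\pi',\ell]\otimes_{\F}\DZeroEll[\ell]$ in the reducible case (resp.\ $\Vnull[\pi']\otimes_{\F}\DZero$ in the irreducible case). I would then invoke \Cref{lemma:line-intersection}(ii) with $S = \F[\Gt]$ — noting that $\V\otimes_{\F}\tilDZero$ is multiplicity free as a $\Gt$-module, with $\End_{\Gt}(\tau)=\F$ for each composition factor $\tau$ (these are Serre weights, so this holds) — to get a decomposition $\pi'[\mK^{2}] = \bigoplus_{\sigma}M_\sigma$ with $M_\sigma\subseteq \V\otimes_{\F}\tilDx$ and $\soc_{\Gt}(M_\sigma) = W_\sigma\otimes_{\F}\sigma$, where necessarily $W_\sigma = \Vwt[\pi',\sigma]$ by the definition \eqref{eq:V-def} (since the $\Gt$-socle and $\Gamma$-socle of $\pi'[\mK^{2}]$ agree, both being $\socR\pi'$). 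It then remains to show $M_\sigma = \Vwt[\pi',\sigma]\otimes_{\F}\tilDx$, i.e.\ that the inclusion $\Vwt[\pi',\sigma]\otimes_{\F}\tilDx\subseteq M_\sigma$ is an equality; the reverse inclusion $M_\sigma\subseteq \Vwt[\pi',\sigma]\otimes_{\F}\tilDx$ comes directly from \Cref{lemma:line-intersection}(ii).

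The ``fill-up'' inclusion — that $\pi'[\mK^{2}]$ contains $\Vwt[\pi',\sigma]\otimes_{\F}\tilDx$ for each $\sigma\in W(\repr)$ — is the step I expect to be the main obstacle, and it is the $\mK^{2}$-torsion analogue of \Cref{lemma:fill-up}. Here I would adapt the weight-cycling argument of \Cref{lemma:fill-up}: starting from the socle $\Vwt[\pi',\sigma]\otimes_{\F}\sigma\subseteq \pi'$, one uses Frobenius reciprocity and the $\Pi$-action to propagate the vector subspace $\Vwt[\pi',\sigma]$ through the $\Gt$-structure of $\tilDx$, invoking the explicit description of $\tilDZero$ and its decomposition \eqref{eq:til-direct-sum} from \cite[§4]{HW22} in place of the representations $I(\repr,\tau)$ and $R(\sigma)$ of \cite{BP12}. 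The care needed with multiplicities $r>1$ is exactly as in \Cref{lemma:fill-up} and \Cref{lemma:main}: one works ``linewise'', showing for each basis vector $e_i$ of $\Vwt[\pi',\sigma]$ that $\F e_i\otimes_{\F}\tilDx\subseteq \pi'[\mK^{2}]$, then sums. Once this inclusion is in place, combining it with the reverse inclusion from \Cref{lemma:line-intersection}(ii) gives $M_\sigma = \Vwt[\pi',\sigma]\otimes_{\F}\tilDx$; applying \Cref{lemma:main}(ii) to replace $\Vwt[\pi',\sigma]$ by $\Vell[\pi',\ell(\sigma)]$ (resp.\ $\Vnull[\pi']$) and summing over $\sigma$ yields the stated formula, and taking $K_1$-invariants recovers the compatibility with $\iota_\pi$.
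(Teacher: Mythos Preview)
Your sketch for (i) is too vague and misses the crucial ingredient. The paper does not build $\widetilde{\iota}_\pi$ by ``matching up extension data'' via hypothesis~\ref{hypothesis:iv}; instead it embeds $\pi[\mK^{2}]$ into $\V\otimes_{\F}\injGt(\DZero)$ via injectivity, and then must show the image is exactly $\V\otimes_{\F}\tilDZero$. The containment $\widetilde{\iota}(\pi[\mK^{2}])\subseteq \V\otimes_{\F}\tilDZero$ requires verifying that each Serre weight $\tau\in W(\repr)$ occurs with multiplicity at most $r$ in $\pi[\mK^{2}]$; equivalently, that every $\Gt$-map $\ProjGt\tau\to\pi$ factors through $\socR(\pi)$. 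This is \Cref{lemma:through-soc}, whose proof (via \Cref{lemma:through-Theta} and \Cref{lemma:Proj-quotient}) occupies several pages and uses hypothesis~\ref{hypothesis:ii}, the representation $\Theta_\tau$ of \cite{HW22}, and the structure of $\coker(\phi_\tau)$ from \Cref{lemma:vanishing-lemmas}. The reverse containment (linewise, then via \Cref{lemma:line-intersection}(i)) uses \cite[Lemma~3.1.1]{BHHMS5} to lift maps $\radGt S_\sigma\to\pi$ to $S_\sigma\to\pi$ when $\chi_\sigma\notin\JH(\pi^{I_1})$. None of this machinery appears in your outline. (Also, a minor slip: $\V\otimes_{\F}\tilDZero$ is \emph{not} multiplicity free when $r>1$; the hypothesis in \Cref{lemma:line-intersection} is that $D=\tilDZero$ is multiplicity free, which is fine.)

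For (ii), your proposed weight-cycling ``fill-up'' argument has a genuine gap. The $\Pi$-action and the machinery of \Cref{lemma:fill-up} (the representations $R(\sigma)$, $I(\repr,\sigma^{[s]})$) live entirely at the level of $\pi^{K_1}$ and its $I_1$-invariants; they cannot produce elements of $\pi'[\mK^{2}]\setminus\pi'^{K_1}$, which is exactly what you need to pass from $\Dx$ to $\tilDx$. The paper takes a completely different route which avoids constructing anything: having established the inclusion $\widetilde{\iota}_\pi(\pi'[\mK^{2}])\subseteq\bigoplus_\ell\Vell[\pi',\ell]\otimes_{\F}\tilDZeroEll$ via \Cref{lemma:line-intersection}(ii), it lets $Q$ be the cokernel, observes $\JH(Q)\cap W(\repr)=\emptyset$ (since $\tilDZero$ is multiplicity free and the socle weights are already accounted for), and then embeds $Q\hookrightarrow\pi/\pi'$. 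But $\socR(Q)\hookrightarrow\socR(\pi/\pi')$, and by \Cref{lemma:soc-ex} every constituent of $\socR(\pi/\pi')$ lies in $W(\repr)$ --- forcing $Q=0$. This obstruction argument via \Cref{lemma:soc-ex} is both shorter and avoids the problem your approach runs into.
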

We begin with some preliminary results.

The following lemma generalises 
\cite[Lemma~3.2.6]{BHHMS4} to the case $r\ge 1$.
\begin{lemma}
	\label{lemma:soc-ex}
Suppose that $\repr$ is $\max \{9, 2f+1\} $-generic.
Let $ \pi _1 \subseteq  \pi _2 $
be subrepresentations of $ \pi  $.
Then, the natural sequence
\[
	0 \to \socR ( \pi _1) 
	 \to \socR ( \pi_2 ) 
	 \to \socR (\pi_2/ \pi _1) 
	 \to 0
\]
is exact.
\end{lemma}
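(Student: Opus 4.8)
The sequence $0 \to \socR(\pi_1) \to \socR(\pi_2) \to \socR(\pi_2/\pi_1)$ is always left exact, so the only issue is the surjectivity of the last map, equivalently that the canonical injection $\socR(\pi_1) \hookrightarrow \socR(\pi_2)$ splits as a $\GLring$-representation (both socles are semisimple, so the image is a direct summand of $\socR(\pi_2)$; the point is to show this summand is complemented by something mapping isomorphically to $\socR(\pi_2/\pi_1)$). The plan is to pass through the $\iota_\pi$-description of socles furnished by \Cref{cor:direction} and reduce everything to a statement about $\F$-vector subspaces of $\V = \F^r$.

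First I would invoke \Cref{cor:direction}: since $\pi_1 \subseteq \pi_2$ are subrepresentations of $\pi$, the isomorphism $\iota_\pi$ of \eqref{eq:1} identifies $\socR(\pi_i)$ with $\bigoplus_{\ell=0}^f \Vell[\pi_i,\ell] \otimes_{\F} \socR(\DZeroEll[\ell])$ in the reducible split case (and with $\Vnull[\pi_i] \otimes_{\F} \socR(\DZero)$ in the irreducible case), compatibly for $i=1,2$ via the inclusions $\Vell[\pi_1,\ell] \subseteq \Vell[\pi_2,\ell]$ (\Cref{rem:V-are-commensurable}). For each $\ell$, choose a complementary $\F$-subspace $V'(\ell)$ of $\Vell[\pi_1,\ell]$ in $\Vell[\pi_2,\ell]$, so that $\socR(\pi_2) \cong \socR(\pi_1) \oplus \bigoplus_\ell V'(\ell) \otimes_{\F} \socR(\DZeroEll[\ell])$ as $\GLring$-representations. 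It then remains to identify the second summand with $\socR(\pi_2/\pi_1)$.

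For that identification I would use \Cref{thm:finite-length-yay}(i): every subrepresentation of $\pi$ is generated by its $\GLring$-socle, and the $\Vell[-,\ell]$ depend only on the socle. Concretely, one knows that $[\socR(\pi_2/\pi_1):\sigma]$ for $\sigma$ of length $\ell$ equals $\dim_\F \Vell[\pi_2,\ell] - \dim_\F \Vell[\pi_1,\ell] = \dim_\F V'(\ell)$ — this is where the main work lies, and the cleanest route is to deduce it from the exactness statements already established: by \Cref{prop:main}(iv) we have $\grm(\pi_2^\vee) \cong \grm(\pi_1^\vee) \oplus \grm((\pi_2/\pi_1)^\vee)$, and by \Cref{prop:main}(iii) the $\mI$-adic filtration on $\pi_2^\vee$ induces the $\mI$-adic filtration on $(\pi_2/\pi_1)^\vee$, so in degree $0$ one gets $(\pi_2^{I_1})^\vee \cong (\pi_1^{I_1})^\vee \oplus ((\pi_2/\pi_1)^{I_1})^\vee$, i.e. $\dim_\F (\pi_2/\pi_1)[\mI] = \dim_\F \pi_2[\mI] - \dim_\F \pi_1[\mI]$; combined with \Cref{prop:main-memoire}(ii) (which gives $\dim_{\F(\!(X)\!)} \Dvee[\pi_i'] = \lgR \socR(\pi_i')$ for subrepresentations) and the exactness of $\Dvee$ from \cite[Theorem~3.1.3.7]{BHHMS2}, one obtains $\lgR \socR(\pi_2) - \lgR \socR(\pi_1) = \dim_{\F(\!(X)\!)} \Dvee[\pi_2] - \dim_{\F(\!(X)\!)} \Dvee[\pi_1] = \dim_{\F(\!(X)\!)} \Dvee[\pi_2/\pi_1]$. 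Since $\pi_2/\pi_1$ need not be a subrepresentation of $\pi$, one should instead appeal to \Cref{prop:main-memoire}(i), which gives $\dim_{\F(\!(X)\!)} \Dvee[\pi_2/\pi_1] = m_{\mathfrak{p}_0}(\grm((\pi_2/\pi_1)^\vee))$, and this multiplicity is computed by the isomorphism $\grm((\pi_2/\pi_1)^\vee) \cong N_2$ of \Cref{prop:main}(i) together with \eqref{eq:mult_id}, yielding exactly $\sum_\ell \dim_\F V'(\ell) \cdot |\mathscr{P}_\ell \cap \mathscr{D}| = \lgR(\bigoplus_\ell V'(\ell) \otimes_\F \socR(\DZeroEll[\ell]))$. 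So the second summand and $\socR(\pi_2/\pi_1)$ have the same length; since the natural map $\socR(\pi_2) \to \socR(\pi_2/\pi_1)$ is surjective on the second summand (its image is all of $\socR(\pi_2/\pi_1)$ because $\pi_2/\pi_1$ is generated, hence "cogenerated at the level of socle" suffices via the exact sequence $0 \to \socR(\pi_1) \to \socR(\pi_2) \to \pi_2/\pi_1$), counting dimensions forces it to be an isomorphism onto $\socR(\pi_2/\pi_1)$, which proves exactness. The main obstacle is bookkeeping the $\F$-vector-space data $\Vell[-,\ell]$ consistently across $\pi_1 \subseteq \pi_2$ and the quotient — but this is precisely what \Cref{lemma:main}, \Cref{cor:direction} and \Cref{prop:main} were set up to handle, so no genuinely new idea is needed. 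The irreducible case is identical with a single $\Vnull$ in place of the family $(\Vell[-,\ell])_\ell$.
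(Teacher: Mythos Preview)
Your argument has a genuine gap at the key step where you assert ``the second summand and $\socR(\pi_2/\pi_1)$ have the same length.'' Up to that point your chain of equalities correctly gives
\[
\lgR\bigl(\socR(\pi_2)\bigr)-\lgR\bigl(\socR(\pi_1)\bigr)
=\dim_{\F(\!(X)\!)}\Dvee[\pi_2/\pi_1]
=m_{\mathfrak{p}_0}(N'),
\]
and the left-exactness you invoke yields the inequality $\lgR\bigl(\socR(\pi_2/\pi_1)\bigr)\ge$ this quantity. But nothing in your toolkit bounds $\lgR\bigl(\socR(\pi_2/\pi_1)\bigr)$ from \emph{above}. The identity $\dim_{\F(\!(X)\!)}\Dvee[\pi']=\lgR\socR(\pi')$ in \Cref{prop:main-memoire}(ii) is proved only for \emph{subrepresentations} of $\pi$; its extension to subquotients is \Cref{cor:final-corollary}(i), whose proof uses \Cref{lemma:soc-ex} itself. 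Likewise your appeal to ``$\pi_2/\pi_1$ is generated'' (by its socle) is \Cref{cor:final-corollary}(iv), again deduced from \Cref{lemma:soc-ex}. So the argument is circular at precisely the point that matters. (A minor side issue: \Cref{prop:main}(iv) is stated for $\pi_1\subsetneq\pi$, not for $\pi_1\subseteq\pi_2$; the correct reference for the $I_1$-invariants of a general subquotient is \Cref{cor:pi1-pi2-exact}(i).)

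The paper closes this gap by a different, structural route. Using \Cref{cor:pi1-pi2-exact}(i) with $n=1$ it identifies $(\pi_2/\pi_1)^{I_1}\cong D^{\prime I_1}$ as $I$-modules, where $D'=\bigoplus_\ell V'(\ell)\otimes_\F\DZeroEll$. From the split sequence $0\to\pi_1^{K_1}\to\pi_2^{K_1}\to D'\to 0$ one has an embedding $D'\hookrightarrow(\pi_2/\pi_1)^{K_1}$, hence $\socR(D')\hookrightarrow\socR(\pi_2/\pi_1)$. If this were strict one could embed $\sigma\oplus D'\hookrightarrow(\pi_2/\pi_1)^{K_1}$ for some Serre weight $\sigma$; taking $I_1$-invariants would then strictly exceed $\dim D^{\prime I_1}=\dim(\pi_2/\pi_1)^{I_1}$, a contradiction. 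This $I_1$-invariants argument supplies exactly the missing upper bound on $\socR(\pi_2/\pi_1)$ that your $\Dvee$-and-multiplicity computation cannot reach on its own.
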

\begin{proof}
For $0\le \ell\le f$,
and for $i=1,2$,
we let $V_i(\ell)\defeq \Vell[\pi_i,\ell]$,
and we
pick a complementary subspace $V'(\ell)$
of $V_1(\ell)$ in $V_2(\ell)$.
If we let, for $i=1,2$,
\begin{equation}
	\label{eq:Di-D'-def}
\begin{array}{ccc}
	D_i \defeq \bigoplus_{\ell=0} ^f 
\left( 	V_i(\ell)\otimes_{\F}  \DZeroEll \right), 
& \text{and}& 
D' \defeq \bigoplus_{\ell=0} ^f \left( V'(\ell)\otimes_{\F}  \DZeroEll \right),
\end{array}
\end{equation}
then we have a direct sum
decomposition
$D_2= D_1 \oplus D'$, which identifies
$D_2/D_1 \cong D'$.
Consider
the following commutative
diagram of $\GLring$-representations
with exact rows
\begin{equation}
	\label{diag:D-pi-split}
	\begin{tikzcd}
		0 \ar[r] &  D_1 \ar[r]&  
		D_2 \ar[r] &  
		D'\ar[r] &  0\\
		0 \ar[r] &  \pi_1^{K_1} \ar[u, "\iota_\pi", "\wr"']\ar[r] &  
		\pi_2^{K_1} \ar[r]\ar[u, "\iota_\pi", "\wr"'] &  {\pi_2}^{K_1}/{\pi_1 ^{K_1}} \ar[r] \ar[u, dashed, "\wr"']&  0,
	\end{tikzcd}
\end{equation}
coming from \Cref{cor:direction}.
We know that the top row splits
($D'$ is naturally a subrepresentation of $D_2$),
which implies that the bottom row also splits.
In particular, the rows of
\eqref{diag:D-pi-split}
remain exact when we take
$I_1$-invariants.
By \Cref{cor:pi1-pi2-exact}(i)
with $n=1$,
we see that $(\pi_2/\pi_1) ^{I_1} \cong 
\pi_2^{I_1}/\pi_1^{I_1}$,
and so the bottom row becomes
\[
\begin{tikzcd}
0 \ar[r] &  \pi_1^{I_1} \ar[r] &  
\pi_2^{I_1} \ar[r] &  (\pi_2/\pi_1) ^{I_1} \ar[r] &  0.
\end{tikzcd}
\]
In particular, we have obtained 
an isomorphism
\begin{equation}
	\label{eq:quotient-and-I_1}
	( \pi _2/ \pi _1) ^{I_1}
	\cong
 D^{\prime I_1}
\end{equation}
of $I$-representations.
On the other hand, there exists an inclusion
\begin{equation}
	\label{eq:D'-and-quotient}
D' \cong \pi_2^{K_1}/\pi_1^{K_1} \hookrightarrow
	( \pi _2/ \pi _1) ^{K_1}
\end{equation}
of $\GLring$-representations,
hence an inclusion
\[
S \defeq 
 \socR(D') \hookrightarrow
	 \socR ( \pi _2/ \pi _1) ^{K_1},
\]
which we claim to be an isomorphism:
if not, there there exists some Serre weight $\sigma $
such that
$\sigma \oplus S \hookrightarrow (\pi_2/\pi_1)^{K_1} $
as $\GLring $-representations.
This inclusion, taken together with \eqref{eq:D'-and-quotient},
promotes to an inclusion
$\sigma \oplus D' \hookrightarrow (\pi_2/\pi_1)^{K_1}$,
which contradicts \eqref{eq:quotient-and-I_1}
upon taking $I_1 $-invariants.
\end{proof}

The following is a corollary of
\cite[Corollary~5.6]{BP12}
and \cite[Lemma~4.3.4]{BHHMS4}.
\begin{lemma}
	\label{prop:E-tau}
Let $\tau$ be a Serre weight.
\begin{enumerate}[(i)]
\item 
We have 
$\dim_{\F}\Ext^{1}_{\Gamma}(\tau, \tau')\le 1$
for every Serre weight $\tau'$.
If we set
$\mathscr{E}(\tau)$
be the set of Serre weights $\tau'$ 
such that 
$\Ext^{1}_{\Gamma} (\tau,\tau')\neq 0$,
then this set has cardinality 
$|\mathscr{E}(\tau)| \le 2f$,
and we have $\tau' \in \mathscr{E}(\tau)$
if and only if 
$\tau \in \mathscr{E}(\tau')$.
If $\tau$ is $1$-generic, 
then we have an equality
$|\mathscr{E}(\tau)| = 2f$.

\item 
Assume that $\tau$ is $2$-generic.
Then, for every Serre weight $\tau$
the natural $\F$-linear map
$\Ext^{1}_{\Gamma}(\tau, \tau')
\xrightarrow{\sim}
\Ext^{1}_{\GLring}(\tau, \tau')$
is an isomorphism.
\end{enumerate}
\end{lemma}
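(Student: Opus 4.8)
The plan is to obtain both parts as essentially immediate consequences of \cite[Corollary~5.6]{BP12} and \cite[Lemma~4.3.4]{BHHMS4}, with only a short cohomological argument needed to glue part~(ii) together.

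For part~(i), I would invoke \cite[Corollary~5.6]{BP12}, which for any two Serre weights $\tau,\tau'$ computes $\Ext^1_\Gamma(\tau,\tau')$: it is at most one-dimensional over $\F$, and its non-vanishing is governed by an explicit combinatorial condition on the tuples attached to $\tau$ and $\tau'$ — concretely, $\tau'$ must be obtained from $\tau$ by a single elementary modification in one of the $f$ embeddings, in one of two admissible directions. From this description the bound $|\mathscr{E}(\tau)|\le 2f$ is immediate (there are at most $2f$ such candidates), and the symmetry $\tau'\in\mathscr{E}(\tau)\Longleftrightarrow\tau\in\mathscr{E}(\tau')$ is manifest because the elementary modifications come in inverse pairs; alternatively the symmetry drops out of the identification $\Ext^1_\Gamma(\tau,\tau')\cong\Ext^1_\Gamma(\tau'^{\vee},\tau^{\vee})$ together with the fact that $\sigma\mapsto\sigma^{\vee}$ permutes Serre weights and preserves genericity. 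Finally, when $\tau$ is $1$-generic none of the $2f$ candidate neighbours degenerates (falls outside the admissible range) or collapses onto another, so $|\mathscr{E}(\tau)|=2f$.

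For part~(ii), I would argue via the inflation–restriction sequence for the normal pro-$p$ subgroup $K_1\unlhd\GLring$, using that $\tau$ and $\tau'$ are inflated from $\Gamma=\GLring/K_1$. Writing $W\defeq\Hom_\F(\tau,\tau')$, the five-term exact sequence of the Lyndon–Hochschild–Serre spectral sequence reads
\[
0\to\Ext^1_\Gamma(\tau,\tau')\to\Ext^1_{\GLring}(\tau,\tau')\to H^1(K_1,W)^\Gamma\to\Ext^2_\Gamma(\tau,\tau'),
\]
and the first map is exactly the natural comparison map; so it suffices to prove $H^1(K_1,W)^\Gamma=0$. Since $K_1$ acts trivially on $W$ one has $H^1(K_1,W)=\Hom(K_1,\F)\otimes_\F W=\Hom_\F(K_1/\Phi(K_1),W)$, hence $H^1(K_1,W)^\Gamma=\Hom_\Gamma(K_1/\Phi(K_1),W)$. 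As $p$ is (very) large, $K_1$ is a uniform pro-$p$ group, so its Frattini quotient $K_1/\Phi(K_1)$ is identified $\Gamma$-equivariantly with the adjoint representation $\mathfrak{gl}_2(\F_q)$, whose semisimple constituents over $\F$ are the Frobenius twists $(\mathfrak{sl}_2)^{\Fr^i}$, $0\le i<f$, together with $f$ copies of the trivial representation that drop out once one works with the fixed central character in force in this section (where $Z_1\subseteq K_1$ acts trivially). The resulting vanishing of $\Hom_\Gamma\bigl((\mathfrak{sl}_2)^{\Fr^i}\otimes\tau,\tau'\bigr)$ for all $i$ and every $\tau'$, under the $2$-genericity assumption on $\tau$, is precisely \cite[Lemma~4.3.4]{BHHMS4}.

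The only non-formal ingredient — and hence the main obstacle — is this last vanishing, packaged in \cite[Lemma~4.3.4]{BHHMS4}: one must control the $\Gamma$-module structure (in particular the socle and cosocle) of the infinitesimal twists $(\mathfrak{sl}_2)^{\Fr^i}\otimes\tau$ and check that none of their Jordan–Hölder constituents equals $\tau'$. This is the step where the genericity bounds on $\tau$ are genuinely used, and it is the reason the hypothesis in~(ii) must be $2$-generic rather than just $1$-generic: $1$-genericity already suffices for the $\Gamma$-level statement~(i), but not for killing the extra extension classes coming from $K_1$.
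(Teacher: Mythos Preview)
Your treatment of part~(i) is correct and matches the paper's one-line citation of \cite[Corollary~5.6(i)]{BP12}.

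For part~(ii), however, there is a genuine gap. You claim that $H^1(K_1,W)^\Gamma$ vanishes because $\Hom_\Gamma\bigl((\mathfrak{sl}_2)^{\Fr^i}\otimes\tau,\tau'\bigr)=0$ for every~$i$ and every~$\tau'$, attributing this to \cite[Lemma~4.3.4]{BHHMS4}. But that Hom space is \emph{not} zero in general: for $\tau$ $2$-generic one has, in the $i$-th tensor factor, $\Sym^2\otimes\Sym^{r_i}\otimes\det^{-1}\cong\Sym^{r_i+2}\otimes\det^{-1}\oplus\Sym^{r_i}\oplus\Sym^{r_i-2}\otimes\det$, so $(\mathfrak{sl}_2)^{\Fr^i}\otimes\tau$ contains $\tau$ itself as well as its two ``$\pm 2$'' neighbours at position~$i$. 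Thus $H^1(K_1,W)^\Gamma\neq 0$ for those~$\tau'$, and your five-term argument does not conclude; what is really needed is injectivity of the transgression $H^1(K_1,W)^\Gamma\to H^2(\Gamma,W)$, which you do not address. (Your disposal of the scalar summands via ``fixed central character'' is also imprecise: the statement is about $\Ext^1_{\GLring}$, not $\Ext^1_{\GLring/Z_1}$, and the trivial pieces of $\mathfrak{gl}_2$ \emph{do} lift to genuine $\GLring$-self-extensions of~$\tau$ coming from characters of the centre.)

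The paper proceeds differently and sidesteps this issue. Rather than attempting to kill $H^1(K_1,W)^\Gamma$, it uses \cite[Corollary~5.6(ii)]{BP12} to write down explicit combinatorial conditions (ii.a)--(ii.d) on~$\tau'$ that must hold whenever the inflation map fails to be an isomorphism, checks that these are incompatible with the conditions (i.a)--(i.b) for $\Ext^1_\Gamma(\tau,\tau')\neq 0$ (hence $\Ext^1_\Gamma=0$) and force~$\tau'$ to be $0$-generic, and then invokes \cite[Lemma~4.3.4]{BHHMS4} to deduce $\Ext^1_{\GLring}(\tau,\tau')=0$ \emph{directly}. That lemma is not a statement about the vanishing of $\Hom_\Gamma\bigl((\mathfrak{sl}_2)^{\Fr^i}\otimes\tau,\tau'\bigr)$, contrary to your description.
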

For $\tau'\in \mathscr{E}(\tau)$, we let
$E_{\tau',\tau} $ be the unique 
(up to isomorphism) $\Gamma$-extension of
$\tau$ by $\tau'$.
\begin{proof}
Part (i) follows from
\cite[Corollary~5.6(i)]{BP12}
with $\sigma = \tau'$.

For part (ii), 
suppose that there exists a Serre weight $\tau'$
such that the natural $\F$-linear map
$\Ext^{1}_{\Gamma}(\tau, \tau') \hookrightarrow 
\Ext^{1}_{\GLring}(\tau, \tau')$
is not an isomorphism.
Since this map is always injective,
this is equivalent to
$\Ext^{1}_{\Gamma}(\tau, \tau') \not \cong
\Ext^{1}_{\GLring}(\tau, \tau')$
abstractly.
By (i) and (ii) of
\cite[Corollary~5.6]{BP12}
with $\sigma= \tau'$
(the assumption $p>0$ follows from 
$\tau$ $2$-generic),
this implies that
$\Ext^{1}_{\Gamma}(\tau, \tau')=0$.
Indeed, a direct computation shows that
the conditions (ii.a) to (ii.d)
of \emph{loc.\ cit.}\ are incompatible
with 
the conditions (i.a), (i.b)
of \emph{loc.\ cit.}\ (we 
are crucially using $\tau$ $2$-generic
in this computation).
Moreover, it follows from
the conditions (ii.a) to (ii.d)
of \emph{loc.\ cit.}\ that
$\tau'$ is $0$-generic.

We can then apply
\cite[Lemma~4.3.4]{BHHMS4}
with $\sigma=\tau', \sigma'=\tau$
(the case $\sigma \cong \sigma' \cong 
\Sym^{p-2}\F^{2} \otimes \eta$ 
does not occur because $\tau$ is $2$-generic)
to deduce that 
$ \Ext^{1}_{\GLring}(\tau, \tau')=0$,
contradiction.
\end{proof}

From now on we fix a Serre weight
$\tau$, and set $\chi \defeq \tau^{I_1}$
(this will be the Serre weight 
of \eqref{eq:1st-nonsplit} below
in the proof of \Cref{prop:K1-exact}).

Recall that $\mI$ is the augmentation ideal of
$\F [\![I_1/Z_1]\!]$.
We choose a projective envelope $\ProjI \chi$ of $\chi$ 
in the category of $\F [\![I/Z_1]\!]  $-modules,
and define 
$\W{n} \defeq (\ProjI \chi)/\mI^n$,
for $n$ a positive integer.
Let $X''$ denote the largest semisimple
subrepresentation of $W_{\chi,3} $ that
does not contain $\chi$ as a
Jordan-H\"older constituent,
and define 
$\Wbar \defeq \W{3}/X''$,
cf.\ \cite[(3.1)]{HW22}.
By \cite[Corollary~3.3]{HW22},
$\Wbar$ is a $\Gt$-module.

Notice that $\tau = \cosocG (\IndI (\chi))$
by construction,
in particular $\tau$ occurs in the 
$\Gt$-cosocle of $\W{2}$.
We choose a projective envelope 
$\ProjGt \tau$ of $\tau$
in the category of $\Gt$-modules,
we let $\phi_{\tau,2}
\colon \ProjGt \tau \twoheadrightarrow 
\IndI (\W{2})$
be any nonzero $\Gt$-module homomorphism 
(they all differ by a scalar
since $\IndI (\W{2})$ is multiplicity free,
by \cite[Lemma~3.4]{HW22}),
and we let
${\phi}_{\tau} \colon \ProjGt \tau \to \IW$
be a lift of $\phi_{\tau,2}$,
making the following diagram commutative
\begin{equation}
	\label{eq:phi-tau-def}
\begin{tikzcd}
\ProjGt \tau \ar[r, dashed, " \phi_\tau"] 
\ar[dr, "\phi_{\tau,2}"' ]
& \IndI \Wbar \ar[d, two heads]\\
 & \IndI \W{2}.
\end{tikzcd}
\end{equation}
This construction recovers the morphism $\phi_\tau$ 
of \cite[Proposition~3.10(i)]{HW22},
and by \cite[Proposition~3.10(ii)]{HW22},
it depends to choice only up to a scalar.
In particular, the image $\im(\phi_{\tau} )$
is well-defined.

Let $\alpha \colon H\to \F^\times $ be the
character 
$\left( \begin{smallmatrix} a& \\ & d \end{smallmatrix} \right)
\mapsto ad^{-1}$
for $a,d\in \F_q^\times $.
For $j\in \{0, \dots, f-1\}$, let
$\alpha_j \defeq \alpha^{p^j}$.
We collect here some vanishing results.
\begin{lemma}
	\label{lemma:vanishing-lemmas}
Assume that $\repr$ is $2$-generic.
\begin{enumerate}[(i)]
\item If $\tau' \notin W(\repr)$, then
\[
	\Ext^1_{\Gt} (\tau', \tilDZero)=0.
\]
In particular,
$ \Ext^1_{\Gamma} (\tau', \tilDZeroEll)=0$
for all $0 \le \ell \le f$.
\item 
Assume that $\tau \in W(\repr)$.
Then, $\JH(\coker(\phi_\tau)) \cap
W(\repr) = \emptyset $, and
\[
\Ext^{1}_{\GLring}(\tau', \tau)=0
\]
for any $\tau' \in \JH(\coker(\phi_\tau))$.
\item 
Assume that $\tau \in W(\repr)$.
Then, $\coker(\phi_\tau)$ admits a direct sum decomposition
$\coker(\phi_\tau) \cong 
\bigoplus_{j=0}^{f-1} \coker(\phi_\tau)_j$,
where $\coker(\phi_\tau)_j$
is a certain quotient of 
$\IndI (\chi \alpha_j)$ for $0 \le j \le f-1$.
In particular, $\coker(\phi_\tau)$ 
is a $\Gamma$-module.

Moreover, 
\[
\begin{cases}
  \Ext^{1}_{\GLring/Z_1}(\coker(\phi_\tau)_j, \DZero)=0 &  
\text{if }\chi \alpha_j \in \JH(\pi^{I_1}),\\
\Ext^{1}_{\GLring/Z_1}(\coker(\phi_\tau)_j, \pi)=0 & 
\text{if }\chi \alpha_j \notin \JH(\pi^{I_1}).
\end{cases}
\]
\end{enumerate}
\end{lemma}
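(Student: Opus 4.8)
The plan is to isolate, in each of the three parts, the one non-formal ingredient and then argue by dévissage.

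For part (i): since $\tilDZero=\bigoplus_{\sigma\in W(\repr)}\tilDx$ and each $\tilDZeroEll$ is a $\Gt$-direct summand of $\tilDZero$, it is enough to show $\Ext^1_{\Gt}(\tau',\tilDZero)=0$ for $\tau'\notin W(\repr)$, the assertions for $\tilDZeroEll$ then following since it is a $\Gt$-summand. Suppose $0\to\tilDZero\to E\to\tau'\to0$ is a non-split extension of $\Gt$-modules. As $\tau'$ is simple and the extension is non-split, no simple submodule of $E$ lies outside $\tilDZero$ — such a submodule would map isomorphically onto $\tau'$ and split the sequence — so $\socGt E=\socGt\tilDZero\cong\bigoplus_{\sigma\in W(\repr)}\sigma$; and the only constituent of $E$ not already present in $\tilDZero$ is $\tau'\notin W(\repr)$, so every Serre weight of $W(\repr)$ still occurs in $E$ with multiplicity one. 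Hence $E$ satisfies the two defining properties of $\tilDZero$ from \cite[Definition~4.3]{HW22} while having strictly larger length, contradicting maximality.

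For part (ii): the decomposition $\coker(\phi_\tau)\cong\bigoplus_{j=0}^{f-1}\coker(\phi_\tau)_j$ with $\coker(\phi_\tau)_j$ a quotient of $\IndI(\chi\alpha_j)$ is the decomposition proven in part (iii) below, which rests on \cite{HW22} (our $\phi_\tau$ being their morphism, via \eqref{eq:phi-tau-def}), so $\JH(\coker(\phi_\tau))\subseteq\bigcup_j\JH(\IndI(\chi\alpha_j))$. The key combinatorial fact — used here and again in (iii) — is that $\JH(\IndI(\chi\alpha_j))\cap W(\repr)=\emptyset$ for $\tau\in W(\repr)$ with $\chi=\tau^{I_1}$: this follows from the description \eqref{eq:JH-Ind-explicit} of the constituents of a principal series (\cite[Lemma~2.2]{BP12}) together with \Cref{def:D-red-irr}--\Cref{def:J-and-ell}, since the $\alpha_j$-twist forces the relevant tuples out of $\mathscr{D}$ once $\repr$ is $2$-generic. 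This gives (ii)(a). For (ii)(b), fix $\tau'\in\JH(\coker(\phi_\tau))$; by (a), $\tau'\notin W(\repr)$, and its tuple differs from that of $\tau$ by a single $\alpha_j$-shift, hence stays $0$-generic (and $2$-generic in all but one coordinate) since $\tau$ is $2$-generic. In particular $\tau'\notin\mathscr{E}(\tau)$ by the explicit form of $\mathscr{E}(\tau)$ in \Cref{prop:E-tau}(i)/\cite[Corollary~5.6]{BP12}, so $\Ext^1_\Gamma(\tau',\tau)=0$; then $\Ext^1_{\GLring}(\tau',\tau)=0$ by \cite[Lemma~4.3.4]{BHHMS4} applied with $\sigma=\tau'$, $\sigma'=\tau$ (the excluded self-dual case does not arise as $\tau$ is $2$-generic), exactly as in the proof of \Cref{prop:E-tau}(ii).

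For part (iii): the decomposition and the fact that each $\coker(\phi_\tau)_j$ is a $\Gamma$-module (because $\IndI(\chi\alpha_j)$ is inflated from $\Gamma$, $\chi\alpha_j$ being trivial on $I_1\supseteq K_1$) are taken from \cite{HW22}. For the vanishings the common device is Shapiro's lemma: $I\backslash\GLring$ being finite, $\Ext^\bullet_{\GLring/Z_1}(\IndI(\chi\alpha_j),M)\cong\Ext^\bullet_{I/Z_1}(\chi\alpha_j,M)$ for any smooth $M$. If $\chi\alpha_j\notin\JH(\pi^{I_1})$, hypothesis \ref{hypothesis:iv} gives $\Ext^i_{I/Z_1}(\chi\alpha_j,\pi)=0$ for all $i$, hence $\Ext^\bullet_{\GLring/Z_1}(\IndI(\chi\alpha_j),\pi)=0$; writing $0\to K\to\IndI(\chi\alpha_j)\to\coker(\phi_\tau)_j\to0$, the long exact sequence yields $\Ext^1_{\GLring/Z_1}(\coker(\phi_\tau)_j,\pi)\cong\Hom_{\GLring/Z_1}(K,\pi)$, which vanishes since $\JH(K)$ avoids $W(\repr)$ by the combinatorial fact while $\socR\pi$ consists of $W(\repr)$-weights. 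If $\chi\alpha_j\in\JH(\pi^{I_1})$, any extension of $\coker(\phi_\tau)_j$ by $\DZero$ in smooth $\GLring/Z_1$-modules is automatically $\mK^2$-torsion, so $\Ext^1_{\GLring/Z_1}(\coker(\phi_\tau)_j,\DZero)=\Ext^1_{\Gt}(\coker(\phi_\tau)_j,\DZero)$; by part (i) and a dévissage over the ($W(\repr)$-avoiding) constituents of $\coker(\phi_\tau)_j$ one gets $\Ext^1_{\Gt}(\coker(\phi_\tau)_j,\tilDZero)=0$, so from $0\to\DZero\to\tilDZero\to\tilDZero/\DZero\to0$ the obstruction lands in $\Hom_{\Gt}(\coker(\phi_\tau)_j,\tilDZero/\DZero)$, and killing this requires the fine structure of $\tilDZero$ over $\DZero=\tilDZero^{K_1}$ from \cite{HW22}. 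This last step — equivalently, controlling $H^1(K_1/Z_1,\DZero)\cong(\mK/\mK^2)^\vee\otimes_{\F}\DZero$ against $\coker(\phi_\tau)_j$ — is the main obstacle: it is where the argument is genuinely delicate and where, for $r\ge1$, one must re-examine \cite[\S4.3]{HW22}; by contrast the combinatorial fact of part (ii) is a bounded but tedious verification, and the remaining steps are formal.
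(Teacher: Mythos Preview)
The paper's proof is almost entirely by citation: (i) is the maximality of $\tilDZero$ as stated in \cite[Proposition~4.1]{HW22}; (ii) is \cite[Lemma~3.1.2]{BHHMS5}; and (iii) is \cite[Lemma~3.1.4]{BHHMS5}, together with the single remark that the proof in \emph{loc.\ cit.}\ (which rests on \cite[Lemma~3.1.1]{BHHMS5}) does not use the hypothesis $r=1$ even though it is stated there. No new argument is supplied or needed.

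Your treatment of (i) is correct and is precisely the maximality argument the paper is invoking. For (ii) and (iii) you attempt direct proofs where the paper simply cites, and two points deserve comment. First, your ``key combinatorial fact'' that $\JH(\IndI(\chi\alpha_j))\cap W(\repr)=\emptyset$ is asserted for the full principal series rather than for the specific quotient $\coker(\phi_\tau)_j$; this is stronger than what the statement requires, you do not verify it, and the argument in \cite{BHHMS5} works instead with the explicit description of the quotient. Second, and more importantly, you identify the case $\chi\alpha_j\in\JH(\pi^{I_1})$ in (iii) as ``the main obstacle'' and leave it to a re-examination of \cite{HW22} ``for $r\ge1$''. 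This concern is misplaced: that case is a statement about $\DZero$, not about $\pi$, so $r$ does not enter at all; and the paper's entire point is that the existing proof in \cite{BHHMS5} already goes through without modification. Your Shapiro-lemma-plus-hypothesis-\ref{hypothesis:iv} approach to the other case is along the right lines and close in spirit to \cite[Lemma~3.1.1]{BHHMS5}, but again the paper simply quotes this rather than redoing it.
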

\begin{proof}
(i)
The first statement follows from the maximality of $\tilDZero$,
as stated in \cite[Proposition~4.1]{HW22}.
The second statement follows from
the first one, and from the decomposition
$\tilDZero = 
\bigoplus_{ 0 \le \ell \le f} \tilDZeroEll$
of $\GLring$-representations.

(ii)
This is \cite[Lemma~3.1.2]{BHHMS5}.

(iii)
This is \cite[Lemma~3.1.4]{BHHMS5}.
We should note that (ii) of \emph{loc.\ cit.}
rests upon \cite[Lemma~3.1.1]{BHHMS5},
where $\rnull(\pi)$ is assumed to be $1$.
However, their proof does not make use of this assumption.
\end{proof}

Finally, recall the  $\Gt$-module $\Theta_\tau$, 
constructed in \cite[Proposition~3.12]{HW22}
as a quotient of $\im(\phi_{\tau})$.

Let $\radGt$ denote the radical of a
$\Gt$-module of finite length.
If we assume that $\tau$ is $1$-generic,
then we know from \emph{loc.\ cit.}\ 
and \cite[Corollary~3.16]{HW22} that:
\begin{equation}
	\label{eq:Theta-specs}
\cosocGt(\Theta_\tau) \cong \tau,\quad
\socGt(\Theta_\tau) \cong \tau^{\oplus 2f},\quad
\radGt(\Theta_\tau) \cong 
\bigoplus_{\tau' \in \mathscr{E}(\tau)}
E_{\tau,\tau'},
\end{equation}
in particular $\Theta_\tau/\tau^{\oplus 2f}$
fits into a (necessarily nonsplit) 
short exact sequence of $\Gt$-modules
\begin{equation}
	\label{eq:Theta-ses}
0 \to \bigoplus_{\tau'\in \mathscr{E}(\tau)} \tau'
\to  \Theta_\tau/\tau^{\oplus 2f} 
\to \tau  \to 0. 
\end{equation}

The following is a slight generalisation of \cite[Corollary~3.14]{HW22}.
\begin{lemma}
	\label{lemma:Theta-quotient}
Suppose that $\tau$ is in $W(\repr)$ and is
$2$-generic. 
Let $Q$ be a quotient of $\ProjGt \tau$,
and assume that there exists an injection
$\tau^{\oplus n} \hookrightarrow Q$,
for some integer $n \ge 0$,
such that $Q/\tau^{\oplus n}$
fits into a short exact sequence
\[
	0\to S \to Q/\tau^{\oplus n}\to \tau\to 0,
\]
where $S$ is a subrepresentation of
$\bigoplus_{\tau'\in \mathscr{E}(\tau)} 
\tau^{\prime \oplus m}$,
for some integer $m \ge 1$.
Then, $Q$ 
is a quotient of $\Theta_\tau$,
and in particular a quotient of 
$\im(\phi_\tau)$.
\end{lemma}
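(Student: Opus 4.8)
The statement to prove is \Cref{lemma:Theta-quotient}: if $\tau \in W(\repr)$ is $2$-generic, $Q$ is a quotient of $\ProjGt\tau$ containing $\tau^{\oplus n}$ with $Q/\tau^{\oplus n}$ sitting in an extension $0 \to S \to Q/\tau^{\oplus n} \to \tau \to 0$ where $S \hookrightarrow \bigoplus_{\tau'\in\mathscr{E}(\tau)}\tau'^{\oplus m}$, then $Q$ is a quotient of $\Theta_\tau$ (hence of $\im(\phi_\tau)$, since $\Theta_\tau$ is a quotient of $\im(\phi_\tau)$ by \cite[Proposition~3.12]{HW22}). The natural approach is to lift the quotient map $\ProjGt\tau \twoheadrightarrow Q$ through the known quotient map $\ProjGt\tau \twoheadrightarrow \Theta_\tau$, i.e. to show that the kernel of $\ProjGt\tau \twoheadrightarrow \Theta_\tau$ is contained in the kernel of $\ProjGt\tau \twoheadrightarrow Q$. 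Equivalently, I will argue that $Q$ is a quotient of $\ProjGt\tau/\radGt^{3}$ (or some small truncation of $\ProjGt\tau$) and then match this against the explicit structure of $\Theta_\tau$ recorded in \eqref{eq:Theta-specs} and \eqref{eq:Theta-ses}.

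First I would analyse the Loewy structure of $Q$. Since $Q$ is a quotient of the projective envelope $\ProjGt\tau$, its cosocle is $\tau$; combined with the given filtration $\tau^{\oplus n} \subseteq Q$ with $Q/\tau^{\oplus n}$ an extension of $\tau$ by $S$, and with $S$ semisimple involving only weights in $\mathscr{E}(\tau)$, one sees that $Q$ has Loewy length at most $3$, with cosocle $\tau$, radical layer $\soc/\rad$ built from $\mathscr{E}(\tau)$-weights, and socle built from copies of $\tau$. The key input is $2$-genericity of $\tau$: by \Cref{prop:E-tau}(i) each $\Ext^1_\Gamma(\tau,\tau')$ is at most one-dimensional and $\tau\notin\mathscr{E}(\tau)$, so $\Ext^1_\Gamma(\tau,\tau)=0$ and no "self-extension" of $\tau$ can occur in a single layer. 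This rigidity forces the middle layer of $Q$ to inject into $\bigoplus_{\tau'\in\mathscr{E}(\tau)}\tau'$ with each $\tau'$ occurring at most once per copy of $\tau$ above it, and forces each pair $(\tau'\text{-in-middle}, \tau\text{-on-top})$ to assemble into the unique extension $E_{\tau,\tau'}$. This is exactly the internal structure of $\radGt(\Theta_\tau) = \bigoplus_{\tau'\in\mathscr{E}(\tau)}E_{\tau,\tau'}$ from \eqref{eq:Theta-specs}.

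Next I would produce the comparison map. The surjection $\ProjGt\tau \twoheadrightarrow Q$ and the structural description above show that the composite $\ProjGt\tau \to Q \to Q/\tau^{\oplus n}$ kills $\radGt^{3}(\ProjGt\tau)$ and factors the "$\tau$ on top of $\bigoplus E_{\tau,\tau'}$" quotient; comparing with $\Theta_\tau/\tau^{\oplus 2f}$ in \eqref{eq:Theta-ses}, which by maximality of $\Theta_\tau$ among such extensions (the defining property in \cite[Proposition~3.12]{HW22}) receives a map from any quotient of $\ProjGt\tau$ of this shape, I get a surjection $\Theta_\tau/\tau^{\oplus 2f} \twoheadrightarrow Q/\tau^{\oplus n}$. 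It remains to lift this to a surjection $\Theta_\tau \twoheadrightarrow Q$, which amounts to showing the extension class of $0\to \tau^{\oplus n}\to Q\to Q/\tau^{\oplus n}\to 0$ is pulled back from the extension class of $0\to\tau^{\oplus 2f}\to\Theta_\tau\to\Theta_\tau/\tau^{\oplus 2f}\to 0$ along $\Theta_\tau/\tau^{\oplus 2f}\twoheadrightarrow Q/\tau^{\oplus n}$; here I use that $\Ext^1_{\Gt}(-,\tau)$ applied to the relevant surjection, together with the known socle $\socGt(\Theta_\tau)=\tau^{\oplus 2f}$ realising all $\Ext^1_\Gamma(\tau',\tau)$ for $\tau'\in\mathscr{E}(\tau)$ (via \Cref{prop:E-tau}(i) and the $2f$-dimensional $\Ext$-space), shows the class on $Q$ cannot involve anything outside what $\Theta_\tau$ already accounts for. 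The projectivity of $\ProjGt\tau$ then yields the desired lift $\Theta_\tau\twoheadrightarrow Q$.

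The main obstacle is the last lifting step: controlling the $\tau^{\oplus n}$-socle part and showing the extension defining $Q$ is induced from that of $\Theta_\tau$. This is delicate because $n$ can be smaller than $2f$ and $m$ can differ from $1$, so one must carefully track which copies of $E_{\tau,\tau'}$ survive in $Q$ and verify that no "new" socle copies of $\tau$ appear beyond what $\im(\phi_\tau)$, hence $\Theta_\tau$, provides — this is where \Cref{lemma:vanishing-lemmas}(ii) (that $\JH(\coker\phi_\tau)\cap W(\repr)=\emptyset$ and $\Ext^1_{\GLring}(\tau',\tau)=0$ for $\tau'\in\JH(\coker\phi_\tau)$) and the $2$-genericity of $\tau$ do the real work, ruling out spurious extensions. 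The rest of the argument is bookkeeping with Loewy layers and the universal property of projective envelopes.
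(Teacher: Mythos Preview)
Your outline follows the same two-step strategy as the paper: first exhibit $Q/\tau^{\oplus n}$ as a quotient of $\Theta_\tau/\tau^{\oplus 2f}$, then lift to a surjection $\Theta_\tau \twoheadrightarrow Q$. The paper carries out the first step by a pushout argument: after observing (via $\dim_\F \Ext^1_\Gamma(\tau,\tau')\le 1$) that $Q/\tau^{\oplus n}$ is a multiplicity-free $\Gamma$-module, so that one may take $m=1$, it pushes out \eqref{eq:Theta-ses} along the quotient $\bigoplus_{\tau'\in\mathscr{E}(\tau)}\tau'\twoheadrightarrow S$ and checks the result is isomorphic to $Q/\tau^{\oplus n}$ by uniqueness of extension classes. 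Your appeal to ``maximality of $\Theta_\tau$'' points in the same direction and would work once made precise.

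Where your proposal goes astray is the lifting step. You frame it as showing that the extension class of $0\to\tau^{\oplus n}\to Q\to Q/\tau^{\oplus n}\to 0$ is pulled back from that of $\Theta_\tau$, and you propose to control this via $\Ext^1_{\Gt}(-,\tau)$ together with \Cref{lemma:vanishing-lemmas}(ii) on $\coker(\phi_\tau)$. The latter is a red herring: nothing about $\coker(\phi_\tau)$ enters this proof. The clean input you are missing is the single vanishing $\Ext^1_{\Gt}(\Theta_\tau,\tau)=0$ from \cite[Corollary~3.13]{HW22}. With this in hand the lift is immediate: the composite $\Theta_\tau\twoheadrightarrow\Theta_\tau/\tau^{\oplus 2f}\twoheadrightarrow Q/\tau^{\oplus n}$ lifts against $Q\twoheadrightarrow Q/\tau^{\oplus n}$ because the obstruction lies in $\Ext^1_{\Gt}(\Theta_\tau,\tau^{\oplus n})=0$, and the resulting map $\Theta_\tau\to Q$ is surjective since it is so on cosocles. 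Your extension-class chase, as written, does not establish this vanishing and would require substantial extra work to be made rigorous.
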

\begin{proof}
The short exact sequence of the statement
implies that $Q/\tau^{\oplus n}$ has Loewy length $ \le 2$,
and since 
$\tau' \in \JH(\socGt(Q/\tau^{\oplus n}))
\implies
 \tau' \in \mathscr{E}(\tau)$
by hypothesis
(in which case $ \Ext^{1}_{\GLring}(\tau, \tau')\cong \Ext^{1}_{\Gamma}(\tau, \tau')$
by \Cref{prop:E-tau}(ii)),
we conclude that 
$Q/\tau^{\oplus n}$ is a $\Gamma$-module.

We claim that 
every $\Gamma$-representation of Loewy length $2$
and irreducible cosocle $\tau$
is multiplicity free.
Let $M$ be such a $\Gamma$-representation,
and notice that every quotient of $M$ 
with Loewy length $2$ has the same property,
so we can assume that $\socG(M) = \tau^{\prime \oplus  d}$
has only one isotypic component, 
for some Serre weight 
$\tau' \in \mathscr{E}(\tau)$ and some
positive integer $d \ge 1$.
However,
$\dim_{\F}\Ext^{1}_{\Gamma}(\tau, \tau') =1$
by \Cref{prop:E-tau}(i),
which implies that $d \le 1$.

In particular, we can assume $m=1$
without loss of generality.
Since
$\bigoplus_{\tau'\in \mathscr{E}(\tau)} \tau'$
is semisimple, $S$ is also a quotient
of
$\bigoplus_{\tau'\in \mathscr{E}(\tau)} \tau'$,
and we can consider the pushout
\[
\begin{tikzcd}[cramped]
0 \ar[r] & \bigoplus_{\tau'\in \mathscr{E}(\tau)} \tau'
\ar[d, two heads] 
\ar[r] &  \Theta_\tau/\tau^{\oplus 2f} \ar[d, two heads] 
\ar[r]& \tau \ar[r] \ar[d, equals] & 0 \\
0 \ar[r] & S \ar[r] &  Q'
\ar[ul, phantom, "\square" pos=0.6]\ar[r]& \tau \ar[r] & 0,
\end{tikzcd}
\]
where the exact sequence at the top is
\eqref{eq:Theta-ses}.

By \emph{loc.\ cit.} we have
$\dim_{\F} \Ext^{1}_{\Gt}(\tau, \tau')=1$
for each $\tau' \in \JH(S)$,
from which we deduce that $Q'$
is isomorphic to $Q/\tau^{\oplus n}$,
because they are both the unique (up to isomorphism)
$\Gt$-extension of $\tau$ by $S$
whose cosocle is $\tau$.

We know from \cite[Corollary~3.13]{HW22}
that $\Ext^{1}_{\Gt}(\Theta_\tau, \tau)$
vanishes, so we can lift the composition
$\Theta_\tau \twoheadrightarrow \Theta_\tau/\mK
\twoheadrightarrow Q/\tau^{\oplus n}$
to a $\Gt$-module homomorphism
$\Theta_\tau \to Q$,
which is surjective because it is so on cosocles (both being equal to $\tau$).
\end{proof}

The following lemma is a slight generalisation of \cite[Lemma~3.1.6]{BHHMS5}.
\begin{lemma}
	\label{lemma:Proj-quotient}
Suppose that $\tau$ is in $W(\repr)$ and
that $\repr$ is
$2$-generic. 
Let 
$\beta \colon \ProjGt \tau \twoheadrightarrow Q$
be a quotient of $\ProjGt \tau$
such that $\radGt Q \subseteq \V \otimes_{\F} \tilDZero $.
Then, $Q$ is a quotient of $\Theta_\tau$,
and in particular a quotient of $\im(\phi_\tau)$.
\end{lemma}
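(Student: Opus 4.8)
The plan is to reduce the statement to \Cref{lemma:Theta-quotient} by analysing the structure of $Q$ as a quotient of $\ProjGt \tau$ whose radical embeds into $\V \otimes_{\F} \tilDZero$. First I would recall that, since $Q = \beta(\ProjGt \tau)$ and $\ProjGt\tau$ has irreducible cosocle $\tau$, the cosocle of $Q$ is also $\tau$; in particular $\radGt Q$ is the unique maximal subrepresentation of $Q$, and $Q/\radGt Q \cong \tau$. The hypothesis $\radGt Q \subseteq \V \otimes_{\F}\tilDZero$ then tells us that every Jordan--H\"older constituent of $\radGt Q$ occurs in $\V \otimes_{\F}\tilDZero$, hence is a Serre weight of $W(\repr)$ by construction of $\tilDZero$. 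My first goal is to pin down the $\tau$-isotypic part of $\radGt Q$: writing $\tau^{\oplus n}$ for the largest $\tau$-isotypic subrepresentation of $\radGt Q$ (so $n \ge 0$), I would form $Q/\tau^{\oplus n}$ and argue that its own radical has no $\tau$-constituent, because any such constituent would have to lift (using that $\Ext^{1}_{\Gt}(\tau,\tau) = 0$, which follows from the fact that $\tilDZero$, and hence $\V\otimes_{\F}\tilDZero$, has no self-extension of $\tau$ — $\tau$ appears in $\tilDZero$ with multiplicity one) and would contradict the maximality of $n$.

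Next I would control the socle of $Q/\tau^{\oplus n}$. Let $S \defeq \radGt(Q/\tau^{\oplus n})$, so that $(Q/\tau^{\oplus n})/S \cong \tau$ and $S$ is a subrepresentation of $\radGt Q/\tau^{\oplus n} \subseteq (\V \otimes_{\F}\tilDZero)/\tau^{\oplus n}$. By the previous paragraph $S$ contains no copy of $\tau$, so every constituent $\tau'$ of $\socGt S$ is a Serre weight of $W(\repr)$ distinct from $\tau$ which, moreover, must satisfy $\Ext^{1}_{\Gt}(\tau,\tau') \neq 0$ (otherwise a nonsplit extension of $\tau$ by $\tau'$ inside $Q/\tau^{\oplus n}$ could not exist, forcing $\tau'$ out of the socle); hence $\tau' \in \mathscr{E}(\tau)$ by \Cref{prop:E-tau}(i). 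To see that $S$ is actually semisimple — i.e.\ $S = \socGt S$ — and bound its multiplicities, I would use that $Q/\tau^{\oplus n}$ has Loewy length $2$: indeed, $\V\otimes_{\F}\tilDZero$ has Loewy length bounded in terms of $f$, but the key point is that $\radGt(Q/\tau^{\oplus n})$ contains no $\tau$ and consists of weights $\tau'$ with $\Ext^1_\Gt(\tau,\tau')\ne 0$, and since for each such $\tau'$ the multiplicity of $\tau'$ in $\tilDZero$ is one, the multiplicity of $\tau'$ in $\V\otimes_{\F}\tilDZero$ is $r = \dim_{\F}\V$; thus $S$ is a subrepresentation of $\bigoplus_{\tau'\in\mathscr{E}(\tau)}\tau^{\prime\oplus r}$, with $m = r \ge 1$. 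Semisimplicity of $S$ follows because each $\tau'\in\mathscr{E}(\tau)$ has $\dim_{\F}\Ext^1_\Gamma(\tau,\tau')=1$, so no constituent of $S$ can lie above another in $Q/\tau^{\oplus n}$ without creating a $\tau$ in between, which is excluded.

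With $n \ge 0$, $m = r \ge 1$, the injection $\tau^{\oplus n}\hookrightarrow Q$, and the short exact sequence $0 \to S \to Q/\tau^{\oplus n}\to \tau \to 0$ with $S \subseteq \bigoplus_{\tau'\in\mathscr{E}(\tau)}\tau^{\prime\oplus m}$ all in hand, the hypotheses of \Cref{lemma:Theta-quotient} are met (using that $\tau\in W(\repr)$ is $2$-generic, since $\repr$ is $2$-generic, by the remark after \Cref{def:I-for-inductive}), and that lemma yields that $Q$ is a quotient of $\Theta_\tau$, hence of $\im(\phi_\tau)$, which is exactly the claim. The main obstacle I anticipate is the bookkeeping in the middle step: showing cleanly that $\radGt(Q/\tau^{\oplus n})$ is semisimple and controlling its multiplicities by $r$, since this is precisely where the passage from the multiplicity-one setting of \cite{BHHMS5} to general $r$ requires care — one must use that the multiplicity of each non-$\tau$ weight of $W(\repr)$ in $\V\otimes_{\F}\tilDZero$ is exactly $r$ (not larger), which comes from property (ii) defining $\tilDZero$ together with the fact that $\V\otimes_{\F}(-)$ multiplies multiplicities by $r$.
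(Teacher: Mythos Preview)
Your overall strategy is correct and matches the paper's: reduce to \Cref{lemma:Theta-quotient} by exhibiting $\tau^{\oplus n}\hookrightarrow Q$ and checking that $S\defeq \radGt(Q)/\tau^{\oplus n}$ is a subrepresentation of $\bigoplus_{\tau'\in\mathscr{E}(\tau)}\tau^{\prime\oplus m}$. The gap is in the justification of this last point. Your argument that constituents $\tau'$ of $\socGt S$ lie in $\mathscr{E}(\tau)$ (``otherwise a nonsplit extension of $\tau$ by $\tau'$ inside $Q/\tau^{\oplus n}$ could not exist'') only works if $\tau'$ sits directly below the cosocle $\tau$; a priori there could be intermediate layers. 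Likewise, your argument for semisimplicity of $S$ (``no constituent of $S$ can lie above another without creating a $\tau$ in between'') is not justified: nothing in the hypotheses forces a $\tau$ between two non-$\tau$ layers of $\radGt Q$. And your claim that $\radGt(Q/\tau^{\oplus n})$ has no $\tau$-constituent only follows from $\Ext^1_{\Gt}(\tau,\tau)=0$ for the \emph{socle}, not for higher layers. So as written, the middle step is circular: you assume all constituents of $S$ lie in $\mathscr{E}(\tau)$ in order to argue that $S$ is semisimple, but you have only established this for $\socGt S$.

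The paper closes this gap by first decomposing $\radGt Q = \bigoplus_{\sigma\in W(\repr)} S_\sigma$ with $S_\sigma \subseteq \V\otimes_{\F}\tilDx[\sigma]$ (via \Cref{lemma:line-intersection}(ii)), and then treating the two cases separately. For $\sigma\neq\tau$, the key input is \cite[Lemma~4.10]{HW22}, which gives $\Ext^1_{\Gt}(\tau,\sigma)\cong\Ext^1_{\Gt}(\tau,\tilDx[\sigma])$; a pushout argument then forces $S_\sigma = V_\sigma\otimes_{\F}\sigma$ to be semisimple, with $\sigma\in\mathscr{E}(\tau)$ whenever $S_\sigma\neq 0$. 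For $\sigma=\tau$, a pushout argument shows $\cosocGt S_\tau$ consists of $\mathscr{E}(\tau)$-weights, and then one uses that any $\tau'\in\mathscr{E}(\tau)\cap\JH(\tilDZero)$ already occurs in $\soc_2(\tilDZero)$ (by the maximality of $\tilDZero$) to conclude $S_\tau\subseteq\soc_2(V_\tau\otimes_{\F}\tilDx[\tau])$, hence has Loewy length $\le 2$. With $n=\dim_{\F}V_\tau$ this gives exactly the input for \Cref{lemma:Theta-quotient}. Your ``direct'' approach, once made rigorous, would amount to redoing this decomposition.
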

\begin{proof}
By \Cref{lemma:line-intersection}(ii)
applied to $S= \Gt$, $D= \tilDx$,
$D_\sigma= \tilDx[\sigma]$, $W=\V$, 
$M=\radGt Q$
(the assumptions on $D$ are satisfied by
\cite[Theorem~4.6]{HW22} and by
\eqref{eq:til-direct-sum})
we can decompose 
$\radGt Q=\bigoplus_{\sigma\in W(\repr)}S_{\sigma},$
for some sub-$\Gt$-modules
$S_{\sigma} \subseteq \V \otimes_{\F}\tilDx$,
say with socle $\socGt S_\sigma = V_\sigma \otimes_{\F} \sigma$,
where $V_\sigma \subseteq \V$
is an $\F$-vector subspace.
Moreover, we have
$S_\sigma \subseteq V_\sigma \otimes_{\F} \tilDx[\sigma]$.

\paragraph{Step 1.}
We claim that, if $\sigma\neq \tau$,
then $S_{\sigma} =\socGt S_\sigma (= V_\sigma \otimes_{\F}\sigma)$.
Moreover, if $S_\sigma \neq 0$ then
$\sigma \in \mathscr{E}(\tau)$.

The natural inclusions 
$V_\sigma \otimes_{\F}\sigma \hookrightarrow
S_\sigma \hookrightarrow 
V_\sigma \otimes_{\F}\tilDx[\sigma]$
induce homomorphisms
\begin{equation}
	\label{eq:ext-comp-iso} 
\Ext^{1}_{\Gt}(\tau, V_\sigma \otimes_{\F}{\sigma} ) \hookrightarrow 
\Ext^{1}_{\Gt}(\tau,S_\sigma) \hookrightarrow  
\Ext^{1}_{\Gt}(\tau, V_\sigma \otimes_{\F}\tilDx[\sigma]),
\end{equation}
which are injective because
$\tilDZero$ multiplicity free 
and $\sigma \neq \tau$ imply
$[\tilDx[\sigma]:\tau]=0$,
which in turn implies
$\Hom_{\Gt} (\tau, (V_\sigma \otimes_{\F}
\tilDx[\sigma])/S_\sigma)=
\Hom_{\Gt} (\tau, S_\sigma/(V_\sigma \otimes_{\F}\sigma))=0$.
The composition of \eqref{eq:ext-comp-iso}
is an isomorphism by
\cite[Lemma~4.10]{HW22},
from which it follows that both homomorphisms
in \eqref{eq:ext-comp-iso} are bijective.

Taking pushforwards, this
 implies the splitting of the bottom row of the
following commutative diagram with exact rows:
\begin{equation}
	\label{eq:radQ-Q-tau}
\begin{tikzcd}[row sep=small]
0 \ar[r] & \radGt( Q) \ar[r]\ar[d, two heads] 
& Q \ar[r]\ar[d, two heads] 
& \tau \ar[r]\ar[d, equals] & 0 \\
0 \ar[r] & S_\sigma \ar[d, two heads] \ar[r]& Q' \ar[ul, phantom, " \square", pos=0.65]
\ar[r] \ar[d, two heads]
& \tau \ar[r] \ar[d, equals] & 0 \\
0 \ar[r] & S_\sigma/(V_\sigma \otimes_{\F}\sigma) \ar[r] 
& Q'' \ar[r] \ar[ul, phantom, " \square", pos=0.48]
& \tau \ar[r] & 0.
\end{tikzcd}\end{equation}
Now, $\cosocGt Q = \cosocGt( \ProjGt \tau)=\tau$
implies $\cosocGt Q''= \tau$,
but since the bottom row of \eqref{eq:radQ-Q-tau} 
splits we can also compute 
$\cosocGt Q'' = \tau \oplus \cosocGt(S_\sigma/(V_\sigma \otimes_{\F}\sigma))$,
and this forces 
$\cosocGt(S_\sigma/(V_\sigma \otimes_{\F}\sigma))=0$,
or equivalently
$V_{\sigma} \otimes_{\F}\sigma = S_\sigma$.
Finally, notice that if $S_\sigma \neq 0$
then the middle row of \eqref{eq:radQ-Q-tau}
does not split (because $\cosocGt Q'
= \cosocGt Q=\tau$).
In particular we have
$ \Ext^{1}_{\GLring}(\tau, \sigma)\neq 0$,
which by \Cref{prop:E-tau}(ii)
is equivalent to 
$ \Ext^{1}_{\Gamma}(\tau, \sigma)\neq 0$,
i.e.\ to $\sigma \in \mathscr{E}(\tau)$.

\paragraph{Step 2.}
We show that $S_\tau$ has Loewy length two.

Consider a variant of
\eqref{eq:radQ-Q-tau} for $\sigma=\tau$:
\begin{equation}
	\label{eq:cosoc-Q-tau}
\begin{tikzcd}[row sep=small]
0 \ar[r] & \radGt Q \ar[r]\ar[d, two heads] 
& Q \ar[r]\ar[d, two heads] 
& \tau \ar[r]\ar[d, equals] & 0 \\
0 \ar[r] & S_\tau \ar[d, two heads] \ar[r]& Q' \ar[ul, phantom, " \square", pos=0.6]\ar[r] \ar[d, two heads]
& \tau \ar[r] \ar[d, equals] & 0 \\
0 \ar[r] & \cosocGt S_\tau \ar[r] 
& Q'' \ar[r] \ar[ul, phantom, " \square", pos=0.45]
& \tau \ar[r] & 0.
\end{tikzcd}\end{equation}
Again, $\cosocGt Q = \tau$ implies 
$\cosocGt Q'' = \tau$,
and so the bottom row does not split,
which implies that for all Serre weights 
$\tau'$ occurring in $\cosocGt S_{\tau}$
we have 
$\Ext^{1}_{\GLring}(\tau, \tau')\neq 0$,
which by \Cref{prop:E-tau}(ii)
is equivalent to 
$ \Ext^{1}_{\Gamma}(\tau, \tau')\neq 0$,
i.e.\ to $\tau' \in \mathscr{E}(\tau)$.

On the other hand, we claim 
that all Serre weights 
$\tau' \in \JH(\tilDZero) \cap \mathscr{E}(\tau)$
occur in $\soc_2(\tilDZero)$
(and nowhere else, since $\tilDZero$
is multiplicity free),
where $\soc_2$ denotes the second term of
the $\Gt$-socle filtration.

Indeed, let $\tau' \in \mathscr{E}(\tau)$
which does not appear in $\soc(\DZero)$.
Notice that
$\tau \in \mathscr{E}(\tau')$
by \Cref{prop:E-tau}(i), so we can consider
the $\Gamma$-module $E_{\tau,\tau'}$, 
and by the maximality of $\tilDZero$
(cf.\ the discussion preceding
\Cref{prop:iota-tld})
we can embed $E_{\tau,\tau'}$ into $\tilDZero$.
Hence, 
$E_{\tau,\tau'}= \soc_2(E_{\tau,\tau'})
\subseteq \soc_2(\tilDZero)$.
It follows that $S_\tau \subseteq \soc_2(V_\tau \otimes_{\F}\tilDx[\tau])$, 
in particular it has Loewy length $\le 2$,
with socle $V_\tau \otimes_{\F} \tau$.

By \Cref{lemma:Theta-quotient}
(with $n = \dim_{\F}V_\tau$
and $m = r = \dim_{\F} V$)
we have the desired $\widetilde{\Gamma}$-module factorisation
$ \ProjGt \tau \twoheadrightarrow
\Theta_\tau\twoheadrightarrow Q.$
\end{proof}
The following lemma generalises
\cite[Proposition~4.18]{HW22} to the case
$r \ge 1$.
\begin{lemma}
	\label{lemma:through-Theta}
Suppose that $\tau$ is in $W(\repr)$,
and that $\repr$ is $2$-generic.
The following two conditions are equivalent:
\begin{enumerate}[(i)]
\item 
every $\GLring$-equivariant homomorphism
$\ProjGt \tau \to \pi$ factors through
$\socR \pi$;
\item 
every $\GLring$-equivariant homomorphism
$\Theta_\tau\to \pi$ factors through
$\socR \pi$.
\end{enumerate}
\end{lemma}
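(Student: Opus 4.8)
The plan is to prove the equivalence by exploiting the surjection $\im(\phi_\tau) \twoheadrightarrow \Theta_\tau$ and the universal property of $\ProjGt \tau$. The implication (i)$\implies$(ii) is immediate, since $\Theta_\tau$ is a quotient of $\ProjGt\tau$: given a homomorphism $\Theta_\tau \to \pi$, precompose with the projection $\ProjGt \tau \twoheadrightarrow \Theta_\tau$ to get a map $\ProjGt \tau \to \pi$, which by (i) factors through $\socR \pi$; since $\ProjGt \tau \twoheadrightarrow \Theta_\tau$ is surjective, the original map $\Theta_\tau \to \pi$ also factors through $\socR \pi$. The content is therefore in (ii)$\implies$(i).

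For the converse, suppose (ii) holds and let $g \colon \ProjGt \tau \to \pi$ be an arbitrary $\GLring$-equivariant homomorphism. First I would observe that the image of $g$ is killed by $\mK^{2}$: indeed $\ProjGt \tau$ is a $\Gt$-module, hence annihilated by $\mK^{2}$, so $\im(g) \subseteq \pi[\mK^{2}]$. By \Cref{prop:iota-tld}(i), we have a $\Gt$-isomorphism $\widetilde{\iota}_\pi \colon \pi[\mK^{2}] \xrightarrow{\sim} \V \otimes_{\F}\tilDZero$, so $g$ can be regarded as a homomorphism $\ProjGt \tau \to \V \otimes_{\F}\tilDZero$. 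Let $Q \defeq \im(g)$, a quotient of $\ProjGt\tau$; then $\radGt Q = g(\radGt \ProjGt \tau) \subseteq \radGt(\V \otimes_{\F}\tilDZero) = \V\otimes_{\F}\radGt\tilDZero$. The key point is now to apply \Cref{lemma:Proj-quotient} (whose hypothesis $\radGt Q \subseteq \V \otimes_{\F}\tilDZero$ is clear, and in fact we have the stronger containment just noted): this yields that $Q$ is a quotient of $\Theta_\tau$, so that $g$ factors as $\ProjGt \tau \twoheadrightarrow \Theta_\tau \twoheadrightarrow Q \hookrightarrow \pi$. The composite $\Theta_\tau \twoheadrightarrow Q \hookrightarrow \pi$ is a $\GLring$-equivariant homomorphism $\Theta_\tau \to \pi$, which by hypothesis (ii) factors through $\socR \pi$. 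Therefore $g$ itself factors through $\socR \pi$, which is (i).

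I expect the main subtlety, rather than a genuine obstacle, to lie in verifying that the factorisation $\Theta_\tau \twoheadrightarrow Q$ supplied by \Cref{lemma:Proj-quotient} is compatible with the structure maps, i.e.\ that precomposing the given $\GLring$-map $\Theta_\tau \to \pi$ with $\ProjGt\tau \twoheadrightarrow \Theta_\tau$ recovers $g$ up to the identification $Q = \im(g)$; this is automatic from the construction in the proof of \emph{loc.\ cit.}, since the factorisation there is precisely through $\im(\phi_\tau)$ and hence through $\Theta_\tau$, compatibly with the quotient map out of $\ProjGt\tau$. One should also be mildly careful that the genericity hypothesis ``$\repr$ is $2$-generic'' (which implies $\tau$ is $2$-generic for $\tau \in W(\repr)$, by the remarks after \eqref{eq:weight-characterisation}) is in force, as it is needed both for \Cref{prop:iota-tld} and for \Cref{lemma:Proj-quotient}; this is part of the standing hypotheses. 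Beyond these bookkeeping points the argument is a formal diagram chase, the substance having been packaged into \Cref{prop:iota-tld} and \Cref{lemma:Proj-quotient}.
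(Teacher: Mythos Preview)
Your argument is circular. You invoke \Cref{prop:iota-tld}(i) to identify $\pi[\mK^{2}]$ with $\V \otimes_{\F}\tilDZero$, but the proof of \Cref{prop:iota-tld} relies on \Cref{lemma:through-soc}, whose proof in turn invokes the present lemma (\Cref{lemma:through-Theta}) in its Step~2. So you cannot use \Cref{prop:iota-tld} here. (Even ignoring this, \Cref{prop:iota-tld} requires $\repr$ to be $\max\{9,2f+1\}$-generic, whereas this lemma is stated under the weaker hypothesis that $\repr$ is $2$-generic.)

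The paper's proof avoids this by working directly inside $\pi$, without any a priori description of $\pi[\mK^{2}]$. One argues by contradiction: pick $h \colon \ProjGt\tau \to \pi$ not factoring through the socle and with $[\im(h):\tau]$ \emph{minimal}; set $Q=\im(h)$. The socle of $Q$ sits inside $\socR(\pi)\cong \V\otimes_{\F}\bigoplus_{\sigma\in W(\repr)}\sigma$, and one decomposes $\socGt(Q)=\bigoplus_\sigma S_\sigma$ with $S_\sigma\cong V_\sigma\otimes\sigma$. Via pushout constructions one produces, for each $\sigma\neq\tau$, a quotient $\overline{Q}_\sigma$ of $Q$ with socle $\sigma$ (multiplicity one, using minimality of $h$), and for $\sigma=\tau$ analogous quotients $\overline{Q}_{\tau,\psi}$ indexed by linear forms on $V_\tau$. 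By \cite[Corollary~2.31]{HW22} one shows each $\radGt\overline{Q}_\sigma$ is $\mK$-torsion; since $\radGt Q$ embeds into $\bigoplus_\sigma \radGt\overline{Q}_\sigma$, it too is $\mK$-torsion, hence lies in $\pi^{K_1}\cong \V\otimes_{\F}\DZero\subseteq \V\otimes_{\F}\tilDZero$. Only \emph{then} is \Cref{lemma:Proj-quotient} applicable, yielding that $Q$ is a quotient of $\Theta_\tau$ and giving the desired contradiction with (ii). The minimality hypothesis on $[\im(h):\tau]$ is what makes the multiplicity-free arguments in the pushouts go through, and is the substantive idea your approach bypasses at the cost of circularity.
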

\begin{proof}
$(i) \Rightarrow (ii)$ is clear,
$\Theta_\tau$ being a quotient of $\ProjGt \tau$.
For the reverse implication, 
suppose by contradiction that
there exists a
$\GLring$-equivariant homomorphism
$h \colon\ProjGt \tau \to \pi$
that does not factor through the socle.
Moreover, we can choose $h$ 
in such a way that 
$[\im(h) : \tau]$ is minimal.
If we let $Q \defeq \im(h) \subseteq \pi$,
then we show that $Q$ is a quotient of $\Theta_\tau$,
which will provide a contradiction.

For $\sigma \in W(\repr)$,
let $S_\sigma$ be the $\sigma$-isotypic component
of $\socGt(Q)$.
Since
\[
\socGt(Q) \subseteq
\socR(\pi) \xrightarrow[\sim]{\iota_\pi}  
\V \otimes_{\F} \bigoplus_{\sigma \in W(\repr)} \sigma,
\]
we can decompose $\socGt(Q)$ into
a direct sum
$\socGt(Q)= \bigoplus _{\sigma \in W(\repr)} S_\sigma$.
Moreover,
$\iota_\pi(S_\sigma) \subseteq 
V \otimes_{\F} \sigma$
is equal to $V_\sigma \otimes_{\F}\sigma$
for a unique $\F$-vector subspace $V_\sigma \subseteq \V$.
Notice that we have $[Q/S_\tau:\tau]=1$,
by the minimality of $h$ and the projectivity of $\ProjGt \tau$.
In particular, $Q/S_\tau$ is multiplicity free
by \cite[Corollary~2.26]{HW22}
applied with $\sigma =\tau$, 
$V=(Q/S_\tau)^{\vee}$, $s=1$.

For a fixed $\sigma \in W(\repr)$,
$\sigma \neq \tau$,
we consider the pushout extension
\begin{equation}
	\label{eq:socQ-Q-tau-1}
\begin{tikzcd}
0 \ar[r] & \socGt(Q) \ar[r]\ar[d, two heads] 
& Q \ar[r]\ar[d, two heads] 
& Q/\socGt(Q) \ar[r]\ar[d, equals] & 0 \\
0 \ar[r] & S_\sigma 
\ar[r]& Q'_{\sigma}
\ar[ul, phantom, " \square", pos=0.55]\ar[r]
&Q/\socGt(Q) \ar[r] &0,
\end{tikzcd}\end{equation}
and we let $\overline{Q}_\sigma$
be the unique quotient of $Q'_\sigma$
with socle $\sigma$ 
(note that we are using 
$Q'_\sigma$ multiplicity free,
since $Q/S_\tau$ is multiplicity free).

For $\sigma=\tau$ and for a fixed 
$\F$-linear form 
$\psi \colon V_\sigma \twoheadrightarrow \F$,
remember that 
$\iota_\pi (S_\tau)=V_\tau \otimes_{\F}\tau$,
and consider the pushout extensions
\begin{equation}
	\label{eq:socQ-Q-tau-2}
\begin{tikzcd}
0 \ar[r] & \socGt(Q) \ar[r]\ar[d, two heads] 
& Q \ar[r]\ar[d, two heads] 
& Q/\socGt(Q) \ar[r]\ar[d, equals] & 0 \\
0 \ar[r] & S_\tau \ar[d, " (\psi \otimes \id) \circ \iota_\pi"', two heads] \ar[r]& 
Q'_{\tau,\psi}  \ar[ul, phantom, " \square", pos=0.54]
\ar[r] \ar[d, two heads]
& Q/\socGt(Q) \ar[r] \ar[d, equals] & 0 \\
0 \ar[r] & \F \otimes_{\F}\tau \ar[r] 
& Q''_{\tau,\psi}  \ar[r] \ar[ul, phantom, " \square", pos=0.45]
& Q/\socGt(Q) \ar[r] & 0.
\end{tikzcd}\end{equation}
We claim that there is a unique
quotient $\overline{Q}_{\tau,\psi}$
of $Q''_{\tau,\psi}$ with socle
$\tau$.
Indeed, for every $\Gt$-submodule 
$K \subseteq Q''_{\tau,\psi}$
we have $Q''_{\tau,\psi}/K \neq 0$
if and only if 
$K \subseteq \radGt(Q''_{\tau,\psi})$
(because $\cosocGt(Q''_{\tau,\psi})=\tau$
is irreducible),
so it is enough to show the claim for
$\radGt(Q''_{\tau,\psi})$,
where it holds
because $\radGt(Q''_{\tau,\psi})$
is multiplicity free.
Indeed, notice first that 
the quotient $Q \twoheadrightarrow 
Q''_{\tau,\psi} \twoheadrightarrow
Q''_{\tau,\psi} /\tau$
factors through $Q/S_\tau$
by construction 
(cf.\ \eqref{eq:socQ-Q-tau-2}),
hence 
$ [Q''_{\tau,\psi}/\tau:\tau] \le 
[Q/S_\tau :\tau]=1$.
However, $\tau$ occurs in the cosocle of
$Q''_{\tau,\psi}$, so equality holds.
Therefore, we can compute 
$[Q''_{\tau,\psi}:\tau]=
[Q''_{\tau,\psi}/\tau:\tau]+1=
[Q/S_\tau :\tau]+1=2$,
which implies
$[\radGt (Q''_{\tau,\psi} ):\tau]=
[Q''_{\tau,\psi}:\tau]-1= 1$.

We set $d\defeq \dim_{\F} V_\tau$,
we pick a basis 
$(\psi_{1}, \dots, \psi_{d})$
of the $\F$-linear dual $V_\tau^{\vee}$,
and we set $\overline{Q}_\tau
\defeq \bigoplus_{j=1} ^d \overline{Q}_{\tau,\psi_j}$.

\paragraph{Step 1.}
For $\sigma \in W(\repr)$,
we claim that $\radGt \overline{Q}_{\sigma}$ 
is $\mK$-torsion.

If $\sigma \neq \tau$ then we can apply
\cite[Corollary~2.31]{HW22} to 
$Q= \overline{Q}_\sigma$
to show that $\overline{Q}_\sigma$
is $\mK$-torsion,
so $\radGt\overline{Q}_\sigma$ is 
$\mK$-torsion \emph{a fortiori}.
Condition (a) of
\emph{loc.\ cit.}\ follows from
$\overline{Q}_\sigma$ multiplicity free,
while condition (b) of \emph{loc.\ cit.}\ holds
by \cite[Proposition~2.24]{HW18}
(i.e.\ because the representation 
$I(\sigma,\tau)$
of \Cref{deflem:I} is nonzero).

If $\sigma = \tau$, then 
it is enough to show that 
$\radGt( \overline{Q}_{\tau,\psi_j})$
is $\mK$-torsion for $j \in \{1, \dots, d\}$.
We wish to apply
\cite[Corollary~2.31]{HW22} 
to (the dual of) 
$\radGt ( \overline{Q}_{\tau,\psi_j})$.
Condition (a) of \emph{loc.\ cit.}
follows from the computation
following \eqref{eq:socQ-Q-tau-2}.
For condition (b) of \emph{loc.\ cit.}\ 
it is enough to show that, 
for every Serre weight $\tau'$ occurring in 
$\cosocGt(\radGt(\overline{Q}_{\tau,\psi_j}))$,
we have $\tau' \in \mathscr{E}(\tau)$
(because there is a surjection
$\ProjG \tau \twoheadrightarrow E_{\tau',\tau} $).
Indeed, consider the following 
pushout extension:
\[
\label{eq:cosocradQ-Q-tau}
\begin{tikzcd}
0 \ar[r] & \radGt (\overline{Q}_{\tau,\psi_j} )
\ar[r]\ar[d, two heads] 
& \overline{Q}_{\tau,\psi_j} \ar[r]\ar[d, two heads] 
& \tau \ar[r]\ar[d, equals] & 0 \\
0 \ar[r] & 
\cosocGt(\radGt(\overline{Q}_{\tau,\psi_j}))
\ar[r]& \overline{Q}_{\tau,\psi_j}'
\ar[ul, phantom, " \square", pos=0.55]\ar[r]
& \tau \ar[r] & 0.
\end{tikzcd}
\]
Since $\cosocGt(\overline{Q}_{\tau,\psi_j} )=\tau$
we have
$\cosocGt(\overline{Q}_{\tau,\psi_j}')=\tau$,
i.e.\ the bottom row does not split.
Hence, for every Serre weight
$\tau'$ occurring in 
$\cosocGt(\radGt(\overline{Q}_{\tau,\psi_j}))$
we have 
$\Ext^{1}_{\GLring}(\tau, \tau')\neq 0$,
which by \Cref{prop:E-tau}(ii)
is equivalent to 
$ \Ext^{1}_{\Gamma}(\tau, \tau')\neq 0$,
i.e.\ to $\tau' \in \mathscr{E}(\tau)$,
which proves the claim.

\paragraph{Step 2.}
We claim that $\radGt Q$ 
is $\mK$-torsion.

Notice that the natural map
$Q \to \bigoplus _{\sigma \in W(\repr)}
\overline{Q}_{\sigma}$
is injective, because it is so on the socles.
Taking $\radGt(-)$, we see that
$\radGt Q$ embeds into 
$\bigoplus _{\sigma \in W(\repr)}\radGt \overline{Q}_{\sigma}$,
which is $\mK$-torsion by Step 1.
Hence, $\radGt Q$ is also $\mK$-torsion,
and in particular $\radGt Q \subseteq 
\pi^{K_1} \cong \V \otimes_{\F}\DZero$.

We apply \Cref{lemma:Proj-quotient}
to $Q$ to conclude that $Q$ is a quotient of 
$\Theta_\tau$, contradiction.
\end{proof}

The following lemma generalises
\cite[Lemma~3.1.7]{BHHMS5} to the case
$r \ge 1$.
\begin{lemma}
	\label{lemma:through-soc}
Suppose that $\tau$ is in $W(\repr)$,
and that $\repr$ is $2$-generic.
Then, every $\GLring$-equivariant homomorphism
$\ProjGt \tau \to \pi$ factors through
$\soc_{\GLring} \pi$.
\end{lemma}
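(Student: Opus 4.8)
The plan is to reduce the statement to a dimension count over $\Theta_\tau$ and then carry out that count via Frobenius reciprocity. By \Cref{lemma:through-Theta} the assertion is equivalent to the statement that every $\GLring$-equivariant homomorphism $\Theta_\tau \to \pi$ factors through $\socR\pi$, so I would work with $\Theta_\tau$ from now on. Since $\Theta_\tau$ has irreducible cosocle $\tau$ occurring with multiplicity one, and since $\socR\pi \cong \V \otimes_\F \bigoplus_{\sigma \in W(\repr)} \sigma$ by hypothesis \ref{hypothesis:i}, one has $\Hom_{\GLring}(\Theta_\tau, \socR\pi) \cong \Hom_{\GLring}(\tau, \socR\pi) \cong \V$, of dimension $r$ (using $\tau \in W(\repr)$). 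The inclusion $\socR\pi \hookrightarrow \pi$ induces an injection $\Hom_{\GLring}(\Theta_\tau, \socR\pi) \hookrightarrow \Hom_{\GLring}(\Theta_\tau, \pi)$ whose image is precisely the set of homomorphisms factoring through $\socR\pi$; hence the lemma follows once one shows $\dim_\F \Hom_{\GLring}(\Theta_\tau, \pi) \le r$.

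For the bound I would use that $\Theta_\tau$ is a quotient of $\im(\phi_\tau)$, and that $\im(\phi_\tau)$ fits in the short exact sequence $0 \to \im(\phi_\tau) \to \IW \to \coker(\phi_\tau) \to 0$ with $\IW = \IndI \Wbar$. Frobenius reciprocity gives $\Hom_{\GLring}(\IW, \pi) \cong \Hom_{I/Z_1}(\Wbar, \pi)$, which embeds into $\Hom_{I/Z_1}(\W{3}, \pi)$ since $\Wbar$ is a quotient of $\W{3} = (\ProjI\chi)/\mI^3$; as $\ProjI\chi$ is a projective envelope of $\chi$ in the category of smooth $\F[\![I/Z_1]\!]$-modules, $\Hom_{I/Z_1}(\W{3}, \pi) \cong \Hom_{I/Z_1}(\ProjI\chi, \pi[\mI^3])$ has dimension $[\pi[\mI^3]:\chi]$, which equals $[\pi[\mI]:\chi]$ by hypothesis \ref{hypothesis:ii} and equals $r$ by hypothesis \ref{hypothesis:iv} applied with $i = 0$ (note $\chi = \tau^{I_1}$ does occur in $\pi[\mI] = \pi^{I_1} \cong \V \otimes_\F \DOne$). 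Thus $\dim_\F \Hom_{\GLring}(\IW, \pi) \le r$. Applying $\Hom_{\GLring}(-, \pi)$ to the short exact sequence above, using the decomposition $\coker(\phi_\tau) = \bigoplus_{j=0}^{f-1} \coker(\phi_\tau)_j$ and the $\Ext^1$-vanishing of \Cref{lemma:vanishing-lemmas}(iii), one controls the difference between $\Hom_{\GLring}(\IW, \pi)$ and $\Hom_{\GLring}(\im(\phi_\tau), \pi)$; and since $\Theta_\tau$ is a quotient of $\im(\phi_\tau)$, the inclusion $\Hom_{\GLring}(\Theta_\tau, \pi) \hookrightarrow \Hom_{\GLring}(\im(\phi_\tau), \pi)$ then yields $\dim_\F \Hom_{\GLring}(\Theta_\tau, \pi) \le r$.

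The hard part will be this last step, i.e.\ the multiplicity bookkeeping in the long exact sequence attached to $0 \to \im(\phi_\tau) \to \IW \to \coker(\phi_\tau) \to 0$. When $\chi\alpha_j \notin \JH(\pi^{I_1})$ the summand $\coker(\phi_\tau)_j$ contributes nothing, since $\Hom_{\GLring}(\coker(\phi_\tau)_j, \pi) \hookrightarrow \Hom_{I/Z_1}(\chi\alpha_j, \pi) = 0$ by hypothesis \ref{hypothesis:iv} and $\Ext^1_{\GLring}(\coker(\phi_\tau)_j, \pi) = 0$ by \Cref{lemma:vanishing-lemmas}(iii); but when $\chi\alpha_j \in \JH(\pi^{I_1})$ one only knows $\Ext^1_{\GLring}(\coker(\phi_\tau)_j, \DZero) = 0$, so one must compare the $\GLring$-cohomology of $\pi$ with that of $\pi^{K_1} \cong \V \otimes_\F \DZero$ and reinvoke hypothesis \ref{hypothesis:iv} through the characters occurring in $\coker(\phi_\tau)_j$ in order to see that the $\Hom$- and $\Ext^1$-contributions of these summands cancel in the count. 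This is also where $r > 1$ forces more care than in \cite[Lemma~3.1.7]{BHHMS5}: all the identifications above must be made compatibly with the $\V$-direction, for which the linear-algebra input of \Cref{lemma:line-intersection} and the canonical isomorphism $\Hom_{I/Z_1}(\chi, \pi) \cong \V$ coming from hypotheses \ref{hypothesis:i} and \ref{hypothesis:iv} should suffice.
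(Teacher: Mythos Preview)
The setup via $\Theta_\tau$ and the chain $\Theta_\tau \twoheadleftarrow \im(\phi_\tau) \hookrightarrow \IW$ is right, and the computation $\dim_\F \Hom_{\GLring}(\IW,\pi)=r$ via Frobenius reciprocity and hypothesis \ref{hypothesis:ii} is fine. The gap is exactly where you place it: the claimed ``cancellation'' for the summands with $j \in S$ is not justified, and I do not see how to make it go through as a pure dimension count. From the long exact sequence one gets
\[
\dim_\F \Hom_{\GLring}(\im\phi_\tau,\pi) \;=\; r \;-\; \dim_\F \Hom_{\GLring}(\coker\phi_\tau,\pi) \;+\; \dim_\F \bigl(\text{image of }\delta\bigr),
\]
with $\delta$ the connecting map into $\Ext^1_{\GLring/Z_1}(\coker\phi_\tau,\pi)$, and you would need the last term bounded by the second. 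But for $j\in S$ the only available vanishing is $\Ext^1_{\GLring/Z_1}(\coker(\phi_\tau)_j,\DZero)=0$, which via $0\to \pi^{K_1}\to\pi\to\pi/\pi^{K_1}\to 0$ only yields an injection $\Ext^1_{\GLring/Z_1}(\coker(\phi_\tau)_j,\pi)\hookrightarrow\Ext^1_{\GLring/Z_1}(\coker(\phi_\tau)_j,\pi/\pi^{K_1})$, and you have essentially no control on the right-hand side. Hypothesis \ref{hypothesis:iv} concerns $\Ext^i_{I/Z_1}(\chi',\pi)$ for characters $\chi'$, whereas $\coker(\phi_\tau)_j$ is a genuine $\Gamma$-module, so it does not apply directly; invoking \Cref{lemma:line-intersection} does not help either, since the obstruction is not in the $\V$-direction but in the passage from $\pi^{K_1}$ to $\pi$.

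The paper circumvents this by a different device: rather than a dimension count, it first proves the auxiliary statement $\Hom_{\GLring}(\tau,\pi/\pi^{K_1})=0$ by contradiction. Assuming a nonzero such map, one obtains an explicit nonsplit extension $0\to V\otimes_\F\DZero\to E\to\tau\to 0$ inside $\pi$. The point is that for this \emph{finite-dimensional} target $E$ one does have $\Ext^1_{\GLring/Z_1}(\coker(\phi_\tau)_j,E)=0$ for $j\in S$, since it holds for both $\DZero$ and $\tau$ by \Cref{lemma:vanishing-lemmas}(ii),(iii). This allows one to extend a given $\im\phi_\tau\to E$ first to an intermediate piece $M_1$ of a $3$-step filtration on $\IW$ (with target $E$), and only afterwards from $M_1$ to all of $\IW$ using $\Ext^1_{\GLring/Z_1}(\coker(\phi_\tau)_j,\pi)=0$ for $j\notin S$ (now with target $\pi$). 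The resulting map $\IW\to\pi$ then factors through $\IndI\chi$ by hypothesis \ref{hypothesis:ii}, forcing the image into $\socR\pi$ and contradicting the nonsplitness of $E$. This two-target trick --- use $E$ for $j\in S$, then $\pi$ for $j\notin S$ --- is precisely what the abstract dimension count lacks. Once $\Hom_{\GLring}(\tau,\pi/\pi^{K_1})=0$ is established, the conclusion via \Cref{lemma:through-Theta} is short: any $\gamma\colon\Theta_\tau\to\pi$ not landing in $\socR\pi$ would have image not contained in $\pi^{K_1}$ (as $\tau$ occurs only in $\socR\pi$ within $\pi^{K_1}$), but $\radGt\Theta_\tau$ is $\mK$-torsion by \eqref{eq:Theta-specs}, so $\gamma$ would induce a nonzero map $\tau\to\pi/\pi^{K_1}$, contradiction.
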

\begin{proof}
We follow the proof of
\cite[Lemma~3.1.7]{BHHMS5}.
\paragraph{Step 1.}
We prove that $\Hom_{\GLring} (\tau, \pi/\pi^{K_1})=0$.

Suppose by contradiction that 
a nonzero $\GLring$-equivariant homomorphism 
$\tau \hookrightarrow \pi/\pi^{K_1}$
exists.
Then, for a fixed nonzero element
in $ \Hom_{\GLring}(\tau, \pi/\pi^{K_1})$
the inverse image of 
$\tau \hookrightarrow \pi/\pi^{K_1}$ 
through the projection 
$\pi \twoheadrightarrow \pi/\pi^{K_1}$
is a $\GLring$-subrepresentation 
$E \subseteq \pi$, which
is a $\Gt$-representation but not a 
$\Gamma $-representation.
Using the isomorphism 
$\iota_\pi \colon \pi^{K_1} 
\xrightarrow{\sim} V \otimes_{\F}\DZero$,
we see that $E$ fits into 
a nonsplit extension
\begin{equation}
	\label{eq:E-nonsplit}
0 \to V \otimes_{\F}\DZero\to E \to\tau\to 0.
\end{equation}
By the projectivity of $\ProjGt \tau$,
there exists a $\Gt$-module homomorphism
$\beta \colon \ProjGt \tau \to E$
fitting into the following commutative diagram:
\[
\begin{tikzcd}
0 \ar[r] & V \otimes_{\F} \DZero  \ar[r] & E \ar[r] & \tau \ar[r] & 0 .\\
	 && \ProjGt \tau \ar[u, "\beta"] \ar[ur, two heads, bend right] 
\end{tikzcd}
\]
If we let $Q \defeq \im \beta$,
then we can apply
\Cref{lemma:Proj-quotient}
to $Q$, 
in particular $\beta$ factors as
\begin{equation}
	\label{eq:1st-beta-prime-def}
\begin{tikzcd}
\beta \colon \ProjGt \tau \ar[r, two heads,
" \phi_\tau"]& 
\im(\phi_\tau) \ar[r, two heads, " \widetilde{\beta}"]
& Q \ar[r, hook]& E.
\end{tikzcd}
\end{equation}
If $S \defeq \{0 \le j \le f-1
\mid \chi \alpha_j \in \JH(\pi^{I_1})\}$,
then we construct a $3$-step filtration on
$M \defeq \IW$ following
Step 1 of the proof of
\cite[Lemma~3.1.7]{BHHMS5}.
Set $M_2 \defeq \im(\phi_\tau) \subseteq M$,
and set $M_1 \defeq \ker
\left( M \twoheadrightarrow 
\bigoplus_{j \notin S}  
\coker(\phi_\tau)_j \right)$,
where $\coker(\phi_\tau)_j$
is the quotient of $\IndI (\chi \alpha_j)$
that appears in \Cref{lemma:vanishing-lemmas}(iii).
Then, $0 \subseteq M_2 \subseteq M_1 \subseteq M$ with
\begin{equation}
	\label{eq:M-filtration}
M_1/M_2 \cong \bigoplus _{j \in S} \coker(\phi_\tau)_j,
\quad
M/M_1 \cong \bigoplus _{j \notin S} \coker(\phi_\tau)_j. 
\end{equation}
We claim that $ \Ext^{1}_{\GLring/Z_1}(M_1/M_2, E) 
\overset{\eqref{eq:M-filtration}}{\cong}
\bigoplus _{j \in S}
\Ext^{1}_{\GLring/Z_1}(\coker(\phi_\tau)_j, E)$
vanishes.
By \eqref{eq:E-nonsplit}
it is enough to show that,
for all $j \in S$,
both
$\Ext^{1}_{\GLring/Z_1}(\coker(\phi_\tau)_j, \DZero)$
and
$\Ext^{1}_{\GLring/Z_1}(\coker(\phi_\tau)_j, \tau)$
vanish.
We have
$\Ext^{1}_{\GLring/Z_1}(\coker(\phi_\tau)_j, \DZero)=0$
by
\Cref{lemma:vanishing-lemmas}(iii),
while
$\Ext^{1}_{\GLring/Z_1}(\coker(\phi_\tau)_j, \tau)=0$
follows from
\Cref{lemma:vanishing-lemmas}(ii)
after a dévissage 
on the Jordan-H\"older constituents of
$\coker(\phi_\tau)_j$.
As a consequence, the restriction homomorphism
$\Hom_{\GLring/Z_1} (M_1, E) \to 
\Hom_{\GLring/Z_1} (M_2, E)$
is surjective,
and we can extend the composition
$\im(\phi_\tau)
\overset{\widetilde{\beta}}{\twoheadrightarrow}Q
\hookrightarrow E$ 
of \eqref{eq:1st-beta-prime-def} 
to a $\Gt$-module homomorphism
$\beta' \colon M_1 \to E$.

We also have $\Ext^{1}_{\GLring/Z_1}(M/M_1, \pi)
\overset{\eqref{eq:M-filtration}}{\cong}
\bigoplus _{j \notin S}
\Ext^{1}_{\GLring/Z_1}(\coker(\phi_\tau)_j, \pi)=0$
by \Cref{lemma:vanishing-lemmas}(iii),
so we can extend the composition 
$M_1 \xrightarrow{\beta'} Q \subseteq E \subseteq \pi$
to a $\GLring$-equivariant homomorphism 
$\beta'' \colon M \to \pi$,
which corresponds to an $I$-equivariant
homomorphism $\Wbar \to \pi$ under Frobenius reciprocity,
which in turn factors through 
$\Wbar \twoheadrightarrow \chi \hookrightarrow \pi$
by hypothesis \ref{hypothesis:ii} of
\Cref{sec:hypotheses}
(indeed, $\chi$ only occurs in the $I_1$-invariants
of $\pi[\mI^{3}]$, 
and $\pi^{I_1}$ is $I$-semisimple,
so $\Wbar \to \pi$ factors through
$\cosoc_I(\Wbar) = \chi$).
Correspondingly, $\beta''$ factors through
$M \twoheadrightarrow \IndI \chi \to \pi$,
in particular 
$\im(\beta'') \subseteq \pi^{K_1}$,
since $\IndI \chi$ is $K_1$-invariant.
But $\tau = \cosocR (\IndI \chi)$ 
only occurs in $\socR \pi$ 
and not elsewhere in $\pi^{K_1}$
(using that $\DZero$ is multiplicity free, and hypothesis \ref{hypothesis:i} 
of \Cref{sec:hypotheses}),
so the image of $\beta''$ is just $\tau$.

Finally, we have 
$0 \neq \im(\beta) \overset{\eqref{eq:1st-beta-prime-def}}{=}
\im(\widetilde{\beta})
\subseteq
\im(\beta'') = \tau$.
The inclusion follows from the fact that
$\beta''$ extends $\beta'$,
which extends 
$\im(\phi_\tau)
\overset{\widetilde{\beta}}{\twoheadrightarrow}Q
\hookrightarrow E$.
This forces $\im(\beta)=\tau$,
contradicting $E$ nonsplit.

\paragraph{Step 2.}
By \Cref{lemma:through-Theta},
it is enough to show that every
$\GLring$-equivariant homomorphism
$\Theta_\tau \to \pi$ factors through
$\socR \pi$.

Suppose by contradiction that
there exists a $\GLring$-equivariant homomorphism 
$\gamma \colon \Theta_\tau \to \pi$
that does not factor through $\socR \pi$.
By hypothesis \ref{hypothesis:i}
of \Cref{sec:hypotheses},
and since $\DZero$ is multiplicity free,
$\tau$ only occurs in the socle of $\pi^{K_1}$,
hence $\im(\gamma)$ is not contained 
in $\pi^{K_1}$.

However, $\radGt\Theta_\tau$ is 
$\mK$-torsion by \eqref{eq:Theta-specs},
and so we must have 
$\radGt\Theta_\tau=\Theta_\tau^{K_1}$.
Indeed, the containment
$\radGt\Theta_\tau\subseteq \Theta_\tau^{K_1}$
is clear, if inclusion were strict we would have
$\Theta_\tau^{K_1}= \Theta_\tau$
(because $\cosocGt\Theta_\tau\cong \tau$
is irreducible),
which contradicts $\im(\gamma) \not \subseteq  \pi^{K_1}$.

In particular, $\gamma \colon \Theta_\tau \to \pi $ 
factors through a $\GLring$-equivariant 
homomorphism
$\overline{\gamma} \colon \tau 
\cong  \Theta_\tau/\radGt (\Theta_\tau)
\to \pi/\pi^{K_1}$.
Step 1 implies that $\overline{\gamma}=0$,
which contradicts
$\im(\gamma) \not \subseteq \pi^{K_1}$.
\end{proof}

We are ready to prove  \Cref{prop:iota-tld}.
\begin{proof}[Proof of \Cref{prop:iota-tld}]
(i)
Fix an injective envelope
$\DZero \hookrightarrow \injGt(\DZero)$
in the category of $\Gt$-modules.
Since $\tilDZero^{K_1} \cong \DZero$
by \cite[Theorem~4.6]{HW22},
we can fix once and for all a copy of
$\tilDZero$ inside $\injGt(\DZero)$.
Notice that such a copy is unique,
cf.\ the proof of
\cite[Proposition~13.1]{BP12}.

By the injectivity of $\injGt(\DZero)$,
there exists a $\Gt$-module homomorphism
$\widetilde{\iota} \colon \pi[\mK^{2}]
\to \V \otimes_{\F}\injGt(\DZero)$
fitting into the following commutative diagram:
\begin{equation}
	\label{eq:oops-every-iota}
\begin{tikzcd}
 & \V \otimes_{\F}\injGt(\DZero) \\
\V \otimes_{\F}\DZero \ar[ur, hook]
\ar[r, hook, " \iota_\pi^{-1}"'] & \pi[\mK^{2}] \ar[u, hook, " \widetilde{\iota}"'].
\end{tikzcd}
\end{equation}
Note that $\widetilde{\iota}$ is injective because it is so on the socles.
Our goal is to show that $\im(\widetilde{\iota})=
\V \otimes_{\F} \tilDZero$,
so that we can set $\widetilde{\iota}_\pi
\defeq \widetilde{\iota}$.
It extends $\iota_\pi$ by \eqref{eq:oops-every-iota}.

Fix a line $L \subseteq \V$,
and set $\widetilde{D}_L \defeq \pi[\mK^{2}] \cap 
\widetilde{\iota}^{\;-1}
\left(L \otimes_{\F}\injGt(\DZero)\right)$.
The following are pullbacks of $\Gt$-modules:
\begin{equation}
	\label{eq:tilde-pullbacks}
\begin{tikzcd}
\V \otimes_{\F}\socG(\DZero) \ar[r, hook, " \iota_\pi^{-1} "] & \pi[\mK^{2}] 
\ar[r, hook, " \widetilde{\iota}"]& \V \otimes_{\F}\injGt(\DZero) \\
L \otimes_{\F}\socG(\DZero)
 \ar[r, hook, " \iota_\pi^{-1} "]\ar[u, hook]& \widetilde{D}_L
\ar[ul, phantom, " \square", pos=0.55]
\ar[r, hook, " \widetilde{\iota}"]\ar[u, hook]
& L \otimes_{\F}\injGt(\DZero)
\ar[ul, phantom, " \square"]
\ar[u, hook]. 
\end{tikzcd}
\end{equation}
Moreover, the chain of inequalities
$L \otimes_{\F}\socG(\DZero)
\subseteq \widetilde{\iota}(\socGt(\widetilde{D}_L))
\subseteq L \otimes_{\F}\socGt(\injGt(\DZero))
= L \otimes_{\F}\socG(\DZero)$
implies that 
$\widetilde{\iota}(\socGt(\widetilde{D}_L))=
L \otimes_{\F}\socG(\DZero)$.

By the maximality of $\tilDZero$ 
(cf.\ the discussion preceding
\Cref{prop:iota-tld}),
$\widetilde{\iota}$ restricts to 
$\widetilde{\iota} \colon\widetilde{D}_L
\hookrightarrow  L \otimes_{\F} \tilDZero$.
Indeed, (i) of \emph{loc.\ cit.}\ was shown
in the previous paragraph,
and (ii) follows from \Cref{lemma:through-soc}.

\paragraph{Step 1.}
We claim that $\widetilde{\iota}(\widetilde{D}_L)
=L \otimes_{\F} \tilDZero$
for all lines $L \subseteq \V$.

We follow the proof of
\cite[Proposition~3.1.8]{BHHMS5}.
Suppose that the inclusion 
$\widetilde{\iota}(\widetilde{D}_L)
 \subseteq L \otimes_{\F} \tilDZero$
is strict, 
and choose a Serre weight $\sigma$
together with a $\Gt$-module injection
$\sigma \hookrightarrow \left(L \otimes_{\F}\tilDZero\right)\!\!\Big/
\widetilde{\iota}(\widetilde{D}_L)$.
Let $S_\sigma \subseteq L \otimes_{\F}\tilDZero$
be a subrepresentation with cosocle $\sigma$,
and such that the composition
$ S_\sigma  \hookrightarrow L \otimes_{\F}\tilDZero \twoheadrightarrow 
\left(L \otimes_{\F}\tilDZero\right)\!\!\Big/ \widetilde{\iota}(\widetilde{D}_L)$ 
coincides with the chosen injection
$\sigma \hookrightarrow \left(L \otimes_{\F}\tilDZero\right)\!\!\Big/ \widetilde{\iota}(\widetilde{D}_L)$.
Such a subrepresentation exists because
$\tilDZero$ is multiplicity free.

Since $L \otimes_{\F}\DZero \subseteq \widetilde{D}_L$
and since $\tilDZero$ is multiplicity free,
we have $\sigma \in \JH(\tilDZero)
\setminus \JH(\DZero)$. 
In particular, if $\chi_\sigma$ is the character
of $I$ acting on $\sigma^{I_1}$,
we have $\chi_\sigma\notin \JH(\pi^{I_1})$.
Notice that $\radGt S_\sigma \subseteq 
\widetilde{\iota}(\widetilde{D}_L)$
by construction,
in particular there is a $\Gt$-module
injection $\gamma_0 \colon \radGt S_\sigma 
\overset{\widetilde{\iota}^{\;-1} } {\hookrightarrow}
\widetilde{D}_L 
\hookrightarrow  \pi[\mK^{2}] \subseteq \pi$.
By \cite[Lemma~3.1.1]{BHHMS5}
with $\chi = \chi_\sigma$
(note that \emph{loc.\ cit.}\ requires
$r=1$, but the proof does not use it),
when we apply $ \Hom_{\GLring/Z_1}(-, \pi)$
to $0 \to \radGt S_\sigma \to S_\sigma \to \sigma \to 0$
we obtain an isomorphism
\[
 \Hom_{\GLring}(S_\sigma, \pi) \xrightarrow{\sim}
\Hom_{\GLring}(\radGt S_\sigma, \pi).
\]
Thus, the injection $\gamma_0 \colon 
\radGt S_\sigma \hookrightarrow \pi$
lifts to an injection $\gamma \colon S_\sigma \hookrightarrow \pi$,
whose image is contained in $\pi[\mK^{2}]$,
because $S_\sigma$ is $\mK^{2}$-torsion.

We now claim that
$\widetilde{\iota}(\im(\gamma))
\subseteq V \otimes_{\F} \injGt(\DZero)$
is contained in 
$L \otimes_{\F} \injGt(\DZero)$,
and we prove this by showing that the
composition
$\delta \colon\widetilde{\iota}(\im(\gamma))
\subseteq V \otimes_{\F} \injGt(\DZero)
\twoheadrightarrow 
(V/L) \otimes_{\F} \injGt(\DZero)$
vanishes.
Notice that, by the construction
of $\gamma_0$
in the previous paragraph, we have 
$\widetilde{\iota}(\im(\gamma_0))
\subseteq 
\widetilde{\iota}(\widetilde{D}_L)
\subseteq L \otimes_{\F}\tilDZero$.
In particular, the composition
$\widetilde{\iota}(\im(\gamma_0))
\subseteq V \otimes_{\F} \injGt(\DZero)
\twoheadrightarrow 
(V/L) \otimes_{\F} \injGt(\DZero)$
vanishes, and so $\delta$ factors through
$\overline{\delta} \colon
\sigma = \im(\gamma)/\im(\gamma_0)
\to (V/L) \otimes_{\F} \injGt(\DZero)$
(where the first equality
follows from $\cosocGt S_\sigma=\sigma$).
Recall from the previous paragraph that
$\sigma \notin \JH(\DZero)
=\socGt(\injGt(\DZero))$,
hence $\overline{\delta}=0$,
hence $\delta=0$,
proving the claim.

In conclusion, $\im(\gamma)
\subseteq \pi[\mK^{2}]$
and $\widetilde{\iota}(\im(\gamma))
\subseteq L \otimes_{\F}\injGt(\DZero)$, so 
$\gamma \colon S_\sigma \hookrightarrow \pi$
factors through $\gamma \colon S_\sigma 
\to \widetilde{D}_L \hookrightarrow \pi[\mK^{2}]$
by definition of
$\widetilde{D}_L$
(cf.\ \eqref{eq:tilde-pullbacks}),
in particular $\sigma \in \JH(\widetilde{D}_L)$.
However, by construction we also have
$\sigma \in  \JH\left(\left(L \otimes_{\F} \tilDZero\right)\!\!\Big/
\widetilde{D}_L\right)$,
which contradicts $L \otimes_{\F}\tilDZero$ multiplicity free.
We conclude that 
$\widetilde{\iota}(\widetilde{D}_L)=
L \otimes_{\F}\tilDZero$
for all lines $L \subseteq \V$.

By \Cref{lemma:line-intersection}(i)
applied to $S= \Gt$, $D= \tilDZero$, 
$W=\V$, $M_1= \widetilde{\iota}(\pi[\mK^{2}]) \cap  
(\V \otimes_{\F}\tilDZero)$,
$M_2=V \otimes_{\F} \tilDZero$
we deduce that
$V \otimes_{\F}\tilDZero \subseteq 
\widetilde{\iota}(\pi[\mK^{2}])$.

\paragraph{Step 2.}
We claim that
$V \otimes_{\F}\tilDZero = 
\widetilde{\iota}(\pi[\mK^{2}])$.

Suppose for the sake of contradiction
that the inclusion 
$V \otimes_{\F}\tilDZero \subseteq 
\widetilde{\iota}(\pi[\mK^{2}])$
is strict,
and pick a Serre weight 
\[
\sigma \in
\JH\left(
\socGt\left( \widetilde{\iota}(\pi[\mK^{2}])/(V \otimes_{\F}\tilDZero)\right) \right)
\subseteq 
\JH\left(\socGt\left(V \otimes_{\F}\left(\injGt(\DZero)/
 \tilDZero\right)\right)\right),
\]
where the inclusion comes from the 
$\Gt$-module injection
\[
\frac{\widetilde{\iota}(\pi[\mK^{2}])}
{V \otimes_{\F}\tilDZero}
\subseteq 
\frac{V \otimes_{\F}\injGt(\DZero)}
{V \otimes_{\F}\tilDZero}\cong
V \otimes_{\F}
\frac{\injGt(\DZero)}{\tilDZero},
\]
after taking $\Gt$-socles.
We see that
$\sigma' \in \JH \left( \socGt\left(
\injGt(\DZero)/\tilDZero\right) \right)
\implies \sigma' \in W(\repr)$
by the maximality of $\tilDZero$
(cf.\ the discussion preceding
\Cref{prop:iota-tld}),
in particular $\sigma \in W(\repr)$.
If $\beta \colon\ProjGt \sigma \hookrightarrow
\widetilde{\iota}(\pi[\mK^{2}])/(V \otimes_{\F}\tilDZero)$
is any nonzero $\Gt$-module homomorphism
(which exists by the choice of $\sigma$),
we can lift it to a homomorphism
$ \widetilde{\beta} \colon\ProjGt \sigma \to 
\widetilde{\iota}(\pi[\mK^{2}])$
by the projectivity of $\ProjGt \sigma$.

By \Cref{lemma:through-soc}
applied to $\tau = \sigma$,
the composition
$\ProjGt \sigma \xrightarrow{\widetilde{\beta}}
\widetilde{\iota}(\pi[\mK^{2}])
\xrightarrow[\sim]{\widetilde{\iota}^{\;-1}}
\pi[\mK^{2}] \subseteq \pi$
factors through $\socR (\pi)$,
in particular $\widetilde{\beta}$
factors through $\widetilde{\iota}(\socR (\pi))$.
However, we see from \eqref{eq:oops-every-iota}
that $\widetilde{\iota}(\socR (\pi))
\subseteq V \otimes_{\F}\tilDZero$,
and since $\widetilde{\beta}$ lifts $\beta$
we must have $\beta=0$, contradiction.

(ii)
Suppose that $\repr$ is irreducible.

We know from (i) that 
$\widetilde{\iota}(\pi'[\mK^{2}])
\subseteq \V \otimes_{\F}\tilDZero$.
By the second part of
\Cref{lemma:line-intersection}(ii)
applied to $S= \Gt$, $D= \tilDZero$, 
$W=\V$, 
$M= \widetilde{\iota}(\pi'[\mK^{2}])$
we deduce that
\begin{equation}
	\label{eq:tld-inclusion}
\widetilde{\iota}(\pi'[\mK^{2}])
\subseteq
\Vnull[\pi'] \otimes_{\F}\tilDZero
\end{equation}
(note that, in \emph{loc.\ cit.}\ we have,
for all $\sigma' \in W(\repr)$,
$W_{\sigma'} =\Vwt[\pi',\sigma']$
precisely by construction of
$\Vwt[\pi',\sigma']$, cf.\ \eqref{eq:V-def}),
and $\Vwt[\pi',\sigma'] = \Vnull[\pi']$
since $\repr$ is irreducible
(cf.\ the paragraph after
the proof of \Cref{lemma:main}).

We conclude following the proof of
\cite[Proposition~5.1.1(i)]{BHHMS5}.
Denote by $Q$ the quotient 
$\left(\Vnull[\pi'] \otimes_{\F}\tilDZero\right)\!\!\Big/
\widetilde{\iota}(\pi'[\mK^{2}])$,
we want to prove $Q=0$.
Since
$\tilDZero$ is multiplicity free,
we have $\JH(Q) \cap W(\repr)= \emptyset$.
The natural inclusions
\[
Q \cong 
\widetilde{\iota}^{\;-1} \left(\Vnull[\pi'] \otimes_{\F}\tilDZero\right)\!\!\Big/
\pi'[\mK^{2}]
 \hookrightarrow \pi[\mK^{2}]/\pi'[\mK^{2}]
\hookrightarrow (\pi/\pi')[\mK^{2}]
\hookrightarrow \pi/\pi',
\]
induce an embedding
$\socR(Q) \hookrightarrow \socR(\pi/\pi')$.
But $\sigma' \in \JH(\socR(\pi/\pi'))
\implies \sigma' \in  W(\repr)$
by \Cref{lemma:soc-ex},
so we must have $\socR(Q)=0$,
or equivalently $Q=0$.

The case $\repr$ reducible split is analogous,
we highlight the differences:
in this case we have $\Vwt[\pi',\sigma'] =
\Vell[\pi', \ell(\sigma')]$,
so the right-hand side of 
\eqref{eq:tld-inclusion} becomes
$\bigoplus_{\ell =0}^f 
\left( \Vell[\pi', \ell] \otimes_{\F}
\tilDZeroEll \right)$.
Accordingly, we denote by $Q$ the quotient 
$\left(
\bigoplus_{\ell=0}^f 
\left( \Vell[\pi', \ell] \otimes_{\F}
\tilDZeroEll \right) \right)\!\!\Big/
\widetilde{\iota}(\pi'[\mK^{2}])$,
and we prove $Q=0$ with the same argument.
\end{proof}

The following is a generalisation of 
\cite[Proposition~5.1.1]{BHHMS5}
for $r \ge 1$.
\begin{prop}
	\label{prop:K1-exact}
Suppose that $\repr$ is $\max \{9, 2f+1\} $-generic.
Let $ \pi _1 \subseteq  \pi _2 $ be
subrepresentations of $ \pi  $,
and let
$ \pi' \defeq  \pi_2/\pi_1 $.
Then, the induced sequence of $\GLring$-modules
\[
0 \to
\pi _1[\mK^{2}] \to 
\pi _2[\mK^{2}] \to 
\pi'[\mK^{2}] \to 0
\]
is split exact.
In particular, the induced sequence of 
$\Gamma$-representations
\[
0 \to
\pi _1^{K_1} \to 
\pi _2^{K_1} \to 
\pi^{\prime K_1} \to 0
\]
is split exact.
\end{prop}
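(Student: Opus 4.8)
The plan is to obtain both statements as a formal consequence of \Cref{prop:iota-tld}; no further hard input is needed. Fix once and for all an isomorphism $\widetilde{\iota}_\pi\colon\pi[\mK^{2}]\xrightarrow{\sim}\V\otimes_{\F}\tilDZero$ as in \Cref{prop:iota-tld}(i). By \Cref{prop:iota-tld}(ii) this same $\widetilde{\iota}_\pi$ restricts to isomorphisms on $\pi_1[\mK^{2}]$ and on $\pi_2[\mK^{2}]$; in the reducible split case these read $\widetilde{\iota}_\pi(\pi_i[\mK^{2}])=\bigoplus_{\ell=0}^{f}\Vell[\pi_i,\ell]\otimes_{\F}\tilDZeroEll[\ell]$ for $i=1,2$, and in the irreducible case $\widetilde{\iota}_\pi(\pi_i[\mK^{2}])=\Vnull[\pi_i]\otimes_{\F}\tilDZero$ (I will write out only the reducible case below, the irreducible one being identical with a single summand). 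Since $\pi_1\subseteq\pi_2$ gives $\Vell[\pi_1,\ell]\subseteq\Vell[\pi_2,\ell]$ by \Cref{rem:V-are-commensurable}, I would choose for each $\ell$ an $\F$-linear complement $V'(\ell)$ of $\Vell[\pi_1,\ell]$ in $\Vell[\pi_2,\ell]$ and set $C\defeq\widetilde{\iota}_\pi^{-1}\bigl(\bigoplus_{\ell=0}^{f}V'(\ell)\otimes_{\F}\tilDZeroEll[\ell]\bigr)$, a $\Gt$-submodule of $\pi_2[\mK^{2}]$ yielding a $\Gt$-module decomposition $\pi_2[\mK^{2}]=\pi_1[\mK^{2}]\oplus C$.

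Next I would study the $\GLring$-equivariant composite
\[
f\colon C\hookrightarrow\pi_2[\mK^{2}]\hookrightarrow\pi_2\twoheadrightarrow\pi'.
\]
Its kernel is $C\cap\pi_1$, which equals $C\cap\pi_1[\mK^{2}]=0$ because $C$ is killed by $\mK^{2}$; hence $f$ is injective, and, again because $\mK^{2}C=0$, its image is contained in $\pi'[\mK^{2}]$. The key step is to show that $f$ is onto $\pi'[\mK^{2}]$, which I would verify on $\GLring$-socles. On the one hand $\socR C=\bigoplus_{\ell}V'(\ell)\otimes_{\F}\socR(\DZeroEll[\ell])$ (using $\socR(\tilDZeroEll[\ell])=\socR(\DZeroEll[\ell])$), so $\dim_{\F}\socR C=\sum_{\ell=0}^{f}\dim_{\F}V'(\ell)\cdot\dim_{\F}\socR(\DZeroEll[\ell])$. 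On the other hand $\socR(\pi'[\mK^{2}])=\socR(\pi')$, and by \Cref{lemma:soc-ex} its dimension is $\dim_{\F}\socR(\pi_2)-\dim_{\F}\socR(\pi_1)$, which by \Cref{cor:direction} applied to $\pi_1$ and to $\pi_2$ equals $\sum_{\ell=0}^{f}(\dim_{\F}\Vell[\pi_2,\ell]-\dim_{\F}\Vell[\pi_1,\ell])\cdot\dim_{\F}\socR(\DZeroEll[\ell])$, i.e.\ exactly $\dim_{\F}\socR C$ by the choice of the $V'(\ell)$. The injection $f$ carries $\socR C$ into $\socR(\pi'[\mK^{2}])$, and these two spaces being of the same finite dimension, $f$ induces an isomorphism $\socR C\xrightarrow{\sim}\socR(\pi'[\mK^{2}])$. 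Now $\pi'[\mK^{2}]$ is finite-dimensional — it is contained in $(\pi')^{1+p^{2}\Mring}$, which is finite-dimensional since $\pi'$ is admissible — so were $f(C)\subsetneq\pi'[\mK^{2}]$, the nonzero module $\pi'[\mK^{2}]/f(C)$ would have nonzero $\GLring$-socle, whereas every simple submodule of $\pi'[\mK^{2}]$ lies in $\socR(\pi'[\mK^{2}])=f(\socR C)\subseteq f(C)$; contradiction. Thus $f\colon C\xrightarrow{\sim}\pi'[\mK^{2}]$.

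This finishes the argument, and in particular no reduction to the case $\pi_2=\pi$ is needed. Indeed $\pi_2[\mK^{2}]\to\pi'[\mK^{2}]$ is now surjective with kernel $\pi_1[\mK^{2}]$, so $0\to\pi_1[\mK^{2}]\to\pi_2[\mK^{2}]\to\pi'[\mK^{2}]\to0$ is exact, and the decomposition $\pi_2[\mK^{2}]=\pi_1[\mK^{2}]\oplus C$ together with $f\colon C\xrightarrow{\sim}\pi'[\mK^{2}]$ is precisely a splitting as $\GLring$-modules (in fact as $\Gt$-modules). For the final statement I would apply $(-)^{K_{1}}$: since $K_{1}$-invariants are killed by $\mK$ one has $\pi_i^{K_{1}}=(\pi_i[\mK^{2}])^{K_{1}}$ and $\pi^{\prime K_{1}}=(\pi'[\mK^{2}])^{K_{1}}$, and $(-)^{K_{1}}$ preserves split exactness, so $0\to\pi_1^{K_{1}}\to\pi_2^{K_{1}}\to\pi^{\prime K_{1}}\to0$ is split exact. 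The one genuinely substantial ingredient is \Cref{prop:iota-tld}; within the present proof the only points demanding care are the socle dimension bookkeeping through \Cref{lemma:soc-ex} and \Cref{cor:direction}, and the observation that the $\mK^{2}$-torsion of an admissible representation is finite-dimensional, which is what makes the ``nonzero module has nonzero socle'' step applicable.
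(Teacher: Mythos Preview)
Your argument has a genuine gap at the surjectivity step. You correctly show that $f\colon C\hookrightarrow\pi'[\mK^{2}]$ is injective and induces an isomorphism on $\GLring$-socles. But from this you cannot conclude surjectivity: the implication ``$f(C)$ contains $\socR(\pi'[\mK^{2}])$, hence $f(C)=\pi'[\mK^{2}]$'' is false in general. A simple submodule of the quotient $\pi'[\mK^{2}]/f(C)$ pulls back to a submodule $M\subseteq\pi'[\mK^{2}]$ with $M/f(C)$ simple, and there is no reason for $M$ itself to be semisimple or to contain a simple submodule outside $f(C)$. Concretely, nothing you have written rules out a nonsplit extension $0\to f(C)\to E\to\tau\to 0$ inside $\pi'[\mK^{2}]$ with $\tau$ a Serre weight not lying in $\socR(\pi'[\mK^{2}])$; and since \Cref{prop:iota-tld}(ii) applies only to \emph{subrepresentations} of $\pi$, you have no direct structural control over the subquotient $\pi'[\mK^{2}]$ beyond its socle.

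This is exactly the point where the paper does real work. After reducing to $\pi_2=\pi$ and setting up the same $\widetilde{D}'\hookrightarrow(\pi/\pi_1)[\mK^{2}]$, the paper assumes the inclusion is strict, extracts a minimal nonsplit extension $0\to\widetilde{D}'\to E\to\tau\to 0$, observes via \Cref{lemma:vanishing-lemmas}(i) that necessarily $\tau\in W(\repr)$, and then invokes the $\phi_\tau/\Theta_\tau$ machinery (\Cref{lemma:Proj-quotient}) together with hypothesis~\ref{hypothesis:ii} on $\mI^{3}$-torsion and Frobenius reciprocity to force the image of the resulting map $\ProjGt\tau\to E$ to be just $\tau$, contradicting nonsplitness. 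None of this is bypassed by the socle count. Your setup via \Cref{prop:iota-tld} and the complement $C$ is fine and matches the paper's first paragraph, but the heart of the proof --- ruling out the extra extension --- is missing.
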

\begin{proof}
Assume first that $\repr$ is reducible split. 
Following the proof of
\Cref{lemma:soc-ex},
for $0\le \ell\le f$
and for $i=1,2$ we
pick a complementary subspace $V'(\ell)$
of $\Vell[\pi_1,\ell]$ in $\Vell[\pi_2,\ell]$.
For $i=1,2$, we set
\begin{equation}
	\label{eq:tilDi-D'-def}
\begin{array}{ccc}
	\widetilde{D}_i \defeq \bigoplus_{\ell=0} ^f 
\left( 	\Vell[\pi_i,\ell]\otimes_{\F}  \tilDZeroEll \right)
& \text{and}& 
\widetilde{D}' \defeq \bigoplus_{\ell=0} ^f \left( V'(\ell)\otimes_{\F}  \tilDZeroEll \right).
\end{array}
\end{equation}
Then, we have a direct sum
decomposition
$\widetilde{D}_2= \widetilde{D}_1 \oplus \widetilde{D}'$, which identifies
$\widetilde{D}_2/ \widetilde{D}_1 \cong \widetilde{D}'$.
Consider
the following commutative
diagram of $\GLring$-representations
with exact rows
\begin{equation}
	\label{diag:til-D-pi-split}
	\begin{tikzcd}
		0 \ar[r] &  \widetilde{D}_1 \ar[r]&  
		\widetilde{D}_2 \ar[r] &  
		\widetilde{D}'\ar[r] &  0\\
		0 \ar[r] &  \pi_1[\mK^{2}] \ar[u, " \widetilde{\iota}_\pi", "\wr"']\ar[r] &  
		\pi_2[\mK^{2}] \ar[r]\ar[u, " \widetilde{\iota}_\pi", "\wr"'] &
{\pi_2}[\mK^{2}]/{\pi_1 [\mK^{2}]} \ar[r] \ar[u, dashed, "\wr"']&  0,
	\end{tikzcd}
\end{equation}
coming from 
\Cref{prop:iota-tld}(ii).
We know that the top row splits
($\widetilde{D}'$ is naturally a subrepresentation of $\widetilde{D}_2$),
which implies that the bottom row also splits.

Notice that, by reasoning as in the proof of 
\Cref{cor:pi1-pi2-exact}(ii),
it is enough to only consider the case $\pi_2=\pi$.
Consider the $\GLring$-equivariant injection
$\widetilde{D}' \cong 
\pi_2[\mK^{2}]/\pi_1[\mK^{2}]\hookrightarrow
(\pi _2/ \pi _1)[\mK^{2}]$.
If it is an isomorphism,
then we can conclude 
by \eqref{diag:til-D-pi-split}.
Assume for the sake of contradiction that 
this is not the case,
and pick a $\Gt$-submodule
$E \subseteq (\pi/\pi_1)[\mK^2]$
strictly containing $\widetilde{D}' \cong \pi[\mK^{2}]/\pi_1[\mK^{2}]$,
and minimal for this property:
it gives rise to a short exact sequence
\begin{equation}
	\label{eq:1st-nonsplit}
0 \to \widetilde{D}' \to E \to \tau \to 0
\end{equation}
of $\Gamma$-representations,
for some Serre weight 
$\tau \in \JH\left((\pi/\pi_1)[\mK^{2}]\right)$.
Notice that, by \Cref{lemma:soc-ex},
the inclusions 
$\widetilde{D}' \hookrightarrow E \subseteq (\pi/\pi_1)[\mK^{2}]$
induce isomorphisms on the socles,
and this implies that \eqref{eq:1st-nonsplit}
is nonsplit.
By \Cref{lemma:vanishing-lemmas}(i)
and by \eqref{eq:tilDi-D'-def}
we must have $\tau\in W(\repr)$. 
Let $\chi \defeq \tau^{I_1}$.
By the projectivity of $\ProjGt \tau$,
we can lift the natural
$\Gt$-equivariant projection 
$\ProjGt \tau \twoheadrightarrow \tau$ to $E$:
\[
\begin{tikzcd}
0 \ar[r] &  \widetilde{D}'  \ar[r] & E \ar[r] & \tau \ar[r] & 0 .\\
	 && \ProjGt \tau \ar[u, "\beta"] \ar[ur, two heads, bend right] 
\end{tikzcd}
\]
If we let $Q\defeq \im \beta$,
then we are in the conditions to apply
\Cref{lemma:Proj-quotient} to $Q$, 
in particular $\beta$ fits into the
following commutative diagram
of $\Gt$-modules:
\begin{equation}
	\label{eq:beta-tld-def}
\begin{tikzcd}
\beta \colon \ProjGt \tau \ar[r, two heads,
" \phi_\tau"]& 
\im(\phi_\tau) \ar[r, two heads, " \widetilde{\beta}"]
& Q \ar[r, hook]& E.
\end{tikzcd}
\end{equation}

\paragraph{Step 1.}
We claim that the short exact sequence
\begin{equation}
	\label{eq:Wbar-ses}
0\to \im (\phi_\tau) \to \IW\to 
\coker (\phi_\tau)\to 0
\end{equation}
remains exact after applying
$\Hom_{\Gt} (-,E)$.

Consider the exact sequence
\begin{equation}
	\label{eq:ses-mK}
\Ext^{1}_{\Gt}(\coker (\phi_\tau), \widetilde{D}')
\to
\Ext^{1}_{\Gt}(\coker (\phi_\tau), E)\to
\Ext^{1}_{\Gt}(\coker (\phi_\tau), \tau)
\end{equation}
coming from \eqref{eq:1st-nonsplit}.
First, we show that the left-hand term of \eqref{eq:ses-mK}
vanishes.
By \eqref{eq:tilDi-D'-def},
it is enough to show that
$\Ext^{1}_{\Gt}(\coker (\phi_\tau), \tilDZeroEll)=0$
for $\ell \in \{0, \dots, f\}$.
With a dévissage, we reduce to showing that
$\Ext^{1}_{\Gt}(\tau', \tilDZeroEll)=0$
for all
$\tau' \in \JH(\coker(\phi_\tau))$,
and this follows from 
\Cref{lemma:vanishing-lemmas}(i)
and (ii).
Then, we show that the right-hand term 
of \eqref{eq:ses-mK} also vanishes.
This follows again from a dévissage on the
Jordan-H\"older constituents of 
$\coker(\phi_\tau)$, 
together with
\Cref{lemma:vanishing-lemmas}(ii)
(notice that $ \Ext^{1}_{\GLring/Z_1}(\tau', \tau)=0$
implies $ \Ext^{1}_{\Gt}(\tau', \tau)=0$).

We conclude from \eqref{eq:ses-mK} 
that $\Ext^{1}_{\Gamma}(\coker (\phi_\tau), E) =0$,
which proves our claim.
In particular, the $\Gt$-module homomorphism
$\widetilde{\beta}$ of \eqref{eq:beta-tld-def}
fits into a commutative diagram
of $\Gt$-modules:
\begin{equation}
	\label{eq:beta'-def}
\begin{tikzcd}
\im(\phi_\tau)
\ar[d, hook] \ar[r, two heads,
" \widetilde{\beta}"] & Q\ar[d, hook]  \\
\IW \ar[r, dashed, " \beta'"']& E.
\end{tikzcd}
\end{equation}
\paragraph{Step 2.}
We claim that there exists a
$\Gt$-module homomorphism
$\beta'' \colon \IW\to \pi$ 
that makes the following diagram commute: 
\begin{equation}
	\label{eq:beta''-def}
\begin{tikzcd}
&&&
\pi \ar[d, two heads] \\
\IW\ar[urrr, " \beta''", bend left = 8] \ar[r, " \beta'"']&
E \ar[r, " \subseteq ", phantom]&
(\pi/\pi_1)^{K_1} \ar[r,"\subseteq", phantom]&
 \pi/\pi_1.
\end{tikzcd}
\end{equation}
We construct $\beta''$ by showing that 
$ \pi \twoheadrightarrow \pi/\pi_1 $
remains surjective after applying 
the functor
$\Hom_{\GLring}(\IW,-) $.
Consider
the following chain of natural isomorphisms:
\[
\begin{tikzcd}
	\Hom_{\GLring}(\IW,\pi ) \ar[r] \ar[d, "\wr"']&   
	\Hom_{\GLring}(\IW,\pi/\pi_1) \ar[d, "\wr"]\\
	\Hom_{I}(\Wbar,\pi ) \ar[r] \ar[d,equal]&   
	\Hom_{I}(\Wbar,\pi/\pi_1) \ar[d,equal]\\
	\Hom_{I/Z_1}(\Wbar,\pi ) \ar[r] \ar[d, "\wr"', swap]&   
	\Hom_{I/Z_1}(\Wbar,\pi/\pi_1) \ar[d, "\wr", swap]\\
	\Hom_{I/Z_1}(\Wbar,\pi[\mI^3] ) \ar[r] &   
	\Hom_{I/Z_1}(\Wbar,(\pi/\pi_1)[\mI^3]).
\end{tikzcd}
\]
On the first square we have used Frobenius reciprocity,
while on the last one we have used
that $\Wbar$ is $\mI^{3}$-torsion,
being a quotient of 
$\W{3} = (\Proj_{I/Z_1} \chi)/\mI^{3}$.
The claim then reduces to the fact that,
by \Cref{cor:pi1-pi2-exact}
with $n=3$,
\[
	0\to \pi_1[\mI^3] \to
	 \pi[\mI^3] \to
	 (\pi/\pi_1)[\mI^3] \to 0
\]
is split exact, so it remains exact 
when we apply $\Hom_{I/Z_1} (\Wbar,-)$.

By hypothesis \ref{hypothesis:ii}
of \Cref{sec:hypotheses}
and Frobenius reciprocity,
we see that the $\Gt$-module homomorphism
$\beta''$ of \eqref{eq:beta''-def} factors as
\[
\beta'' \colon \IW \twoheadrightarrow \IndI( \chi )\to \pi.
\]
Indeed, any $I$-equivariant map $\Wbar \to \pi$
factors through $\pi^{I_1}$, which is
$I$-semisimple, so it factors though
the cosocle $\cosoc_I \Wbar=\chi$.
In particular, $\im (\beta'')$ is contained in
$\pi^{K_1}$ and has cosocle $\tau
= \cosoc_{\GLring}(\IndI (\chi))$.
But $\tau$ only occurs in $\socR \pi$ 
and not elsewhere in $\pi^{K_1}$
(using that $\DZero$ is multiplicity free, and hypothesis \ref{hypothesis:i} 
of \Cref{sec:hypotheses}),
so the image of $\beta''$ is just $\tau$.

Finally, we have
$0 \neq Q = \im(\beta) \overset{\eqref{eq:beta-tld-def}}{=}
\im(\widetilde{\beta}) \overset{\eqref{eq:beta'-def}}{\subseteq}
\im(\beta') \overset{\eqref{eq:beta''-def}}{
\twoheadleftarrow} 
\im(\beta'') = \tau$,
which forces $Q = \tau$,
contradicting $E$ nonsplit.
\end{proof}
\begin{remark}
	\label{rem:sadly-K1}
Even though 
in \Cref{prop:K1-exact}
we are mainly interested in
the exactness at the level of 
$K_1$-invariants, in the proof
we are forced to pass
through the $\mK^{2}$-torsion.
The reason is roughly as follows: 
if we take the $K_1$-coinvariants of
\eqref{eq:Wbar-ses},
we lose exactness on the left.
Indeed, one can compute that $\tau$ occurs in
the $\Gamma$-socle of $\im(\phi_\tau)/\mK$,
but not in the $\Gamma$-socle of
$\im(\phi_\tau/\mK)$.

However, (even assuming that $\widetilde{D}'$
is $K_1$-invariant!)
we cannot rule out that $\tau$ occurs
in the $\Gamma$-socle of $\widetilde{D}'$.
Hence, even though we can construct
a $\Gamma$-equivariant map 
$\widetilde{\beta} \colon\im(\phi_\tau)/\mK
\twoheadrightarrow Q$
playing the role of \eqref{eq:beta-tld-def},
this map does not factor through
the quotient
$\im(\phi_\tau)/\mK \twoheadrightarrow 
\im(\phi_\tau/\mK)$,
which precludes us from using
homological arguments on
the short exact sequence 
that plays the role of \eqref{eq:Wbar-ses}.
\end{remark}
\begin{remark}
	\label{rem:sum-K1}
As an immediate consequence of 
\Cref{prop:K1-exact},
given $\pi_1,\pi_2$ two subrepresentations
of $\pi$, then 
$(\pi_1+\pi_2)^{K_1}=\pi_1^{K_1}+\pi_2^{K_1}$.
Indeed, we always have
$(\pi_1 \cap \pi_2)^{K_1}=\pi_1^{K_1}\cap \pi_2^{K_1}$,
so when we take $K_1$-invariants
in the following isomorphism
$(\pi_1+\pi_2)/\pi_1 \cong \pi_2/(\pi_1 \cap \pi_2)$
we obtain
\[
	(\pi_1+\pi_2)^{K_1}/\pi_1^{K_1} \cong
	\pi_2^{K_1}/(\pi_1^{K_1} \cap \pi_2^{K_1})
	\cong 
	(\pi_1^{K_1}+\pi_2^{K_1})/\pi_1^{K_1}.
\]
For dimension reasons, the inclusion
$\pi_1^{K_1}+\pi_2^{K_1} \subseteq
(\pi_1+\pi_2)^{K_1}$
must then be an equality.
\end{remark}

The following corollary is a (partial)
generalisation of \cite[Corollary~3.2.7]{BHHMS4}
to the case $r\ge 1$.
\begin{corollary}[]
	\label{cor:final-corollary}	
Assume that $\repr$ is 
$\max \{9,2f+1\} $-generic.
Let $\pi_1 \subseteq \pi_2$ be two 
subrepresentations of $\pi$,  and
let $\pi ' \defeq \pi_1/\pi_2 $.
If $\repr$ is reducible,
we let $V_i(\ell)\defeq \Vell[\pi_i,\ell]$, for
$i=1,2$;
if $\repr$ is irreducible, 
we let $V_i \defeq  \Vnull[\pi_i]$, for
$i=1,2$.
\begin{enumerate}[(i)]
\item We have $\dim_{\F}  \Dvee[\pi^{\prime\vee}] =
	\lgR(\socR (\pi'))$.
In particular, $\pi'\neq 0 $ implies that
$\Dvee[\pi'] $ 
is nonzero.
\item 
If $\repr$ is reducible,
	there exists a unique set $0\le r(\pi', \ell) \le r$ of integers, for $0\le{\ell}\le f$,
such that 
\[
	\pi^{\prime K_1} \cong \bigoplus_{\ell=0} ^f 
	\DZeroEll[\ell]^{\oplus r(\pi',\ell)}.
\]
Namely, $r(\pi',\ell)= \dim_{\F}  V_2(\ell)-\dim_{\F}  V_1(\ell).$

If $\repr$ is irreducible,
there exists a unique integer $0\le r(\pi')\le r$
such that
\[
	\pi^{\prime K_1} \cong \bigoplus_{\ell=0} ^f 
	\DZero^{\oplus r(\pi')}.
\]
Namely, $r(\pi')= \dim_{\F}  V_2-\dim_{\F}  V_1.$

\item 
Let $N'\defeq \bigoplus_{\ell=0} ^f \frac{V_2(\ell)}{V_1(\ell)} \otimes_{\F} \Nell$
if $\repr$ is reducible, and
$N'\defeq  \frac{V_2}{V_1} \otimes_{\F} \Nnull$
if $\repr$ is irreducible.
The surjection
	\[
		\theta' \colon N' \twoheadrightarrow \grm \pi^{\prime \vee}
	\]
of \eqref{diagr:thetas-ses}
is an isomorphism of graded $\grL $-modules with compatible $H $-action.
In particular, $\grm(\pi^{\prime\vee})$ and
$\pi^{\prime\vee}$ are both Cohen-Macaulay
of grade $2f$.
\item The subquotient $\pi'$ is generated 
	by its $\GLring$-socle.
\item The subquotient $\pi'$ has finite length, 
bounded above by $\sum_{ \ell=0} ^f r(\pi',\ell)$
if $\repr$ is reducible, 
and by $r(\pi')$ is $\repr$ is irreducible.
\item For $i=1,2$, let 
$\widetilde{\pi}_i$ be the subrepresentation
of $\pi$ of \Cref{def:conjugate-srep}
for $\pi'=\pi_i$.
Then, we have
$\widetilde{\pi}_2 \subseteq \widetilde{\pi}_1$,
and if we set
$\widetilde{\pi}'\defeq \widetilde{\pi}_1/\widetilde{\pi}_2,$
there is an isomorphism 
$\E (\pi^{\prime\vee })\cong  \widetilde{\pi}^{\prime\vee }\otimes \mytwist $
of $\Lambda$-modules with compatible
actions of $\GLfield$.
\end{enumerate}
\end{corollary}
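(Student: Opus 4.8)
The plan is to prove the six statements in the order (ii), (iii), (vi), (i), (iv), (v), so that each uses only \Cref{prop:main}, \Cref{lemma:gr}, \Cref{cor:direction}, \Cref{prop:K1-exact}, \Cref{prop:main-memoire}, \Cref{thm:N-to-gr-iso} and \Cref{thm:finite-length-yay}, together with the parts proved before it; one may assume $\pi_1\subsetneq\pi_2\subsetneq\pi$, as the degenerate cases are immediate or already covered by \Cref{thm:N-to-gr-iso}. For (ii): the inclusion $\pi_1\subseteq\pi_2$ gives $V_1(\ell)\subseteq V_2(\ell)$ (resp.\ $V_1\subseteq V_2$) by \Cref{rem:V-are-commensurable}, and \Cref{cor:direction} identifies $\pi_i^{K_1}$ with $\bigoplus_\ell V_i(\ell)\otimes_{\F}\DZeroEll[\ell]$ (resp.\ $V_i\otimes_{\F}\DZero$). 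By \Cref{prop:K1-exact} the sequence $0\to\pi_1^{K_1}\to\pi_2^{K_1}\to\pi^{\prime K_1}\to 0$ is split, so $\pi^{\prime K_1}\cong\bigoplus_\ell\bigl(V_2(\ell)/V_1(\ell)\bigr)\otimes_{\F}\DZeroEll[\ell]$; taking $\GLring$-socles (which are multiplicity free and pairwise disjoint in $\ell$) pins down $r(\pi',\ell)=\dim_{\F}V_2(\ell)-\dim_{\F}V_1(\ell)$ uniquely, and likewise for $r(\pi')$ in the irreducible case.

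For (iii): applying \Cref{prop:main}(i) once to the subrepresentation $\pi_1\subseteq\pi$ and once to $\pi_2\subseteq\pi$ shows that the vertical maps $\theta_1$ and $\theta_2$ of the diagram \eqref{diagr:thetas-ses} of \Cref{lemma:gr}(ii) attached to the pair $\pi_1\subseteq\pi_2$ are isomorphisms. Since $\theta'$ is the restriction of $\theta_2$ to $N'=\ker(N_2\to N_1)$, commutativity of the right-hand square and injectivity of $\theta_1$ force $\theta'\colon N'\xrightarrow{\sim}\ker\psi=\gr_F(\pi^{\prime\vee})$. As $N'$ is generated in degree $0$ and $\grm(\pi^{\prime\vee})\twoheadrightarrow\gr_F(\pi^{\prime\vee})$ is onto in degree $0$, the argument of \Cref{prop:main}(iii) (i.e.\ \cite[Theorem~I.4.2.4(5)]{LvO}) shows that $F$ is the $\mI$-adic filtration, whence $\theta'\colon N'\xrightarrow{\sim}\grm(\pi^{\prime\vee})$. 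Finally, a choice of complements $V_2(\ell)=V_1(\ell)\oplus V'(\ell)$ makes the split row $0\to N'\to N_2\to N_1\to 0$ exhibit $N'$ as a direct summand of $\V^\vee\otimes_{\F}\Nnull$, which is Cohen-Macaulay of grade $2f$ by the proof of \Cref{thm:N-to-gr-iso}; hence $\grm(\pi^{\prime\vee})\cong N'$, and then $\pi^{\prime\vee}$, are Cohen-Macaulay of grade $2f$ by \cite[Proposition~III.2.2.4]{LvO}.

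For (vi): by (iii) and \Cref{prop:main}(ii) we have $\E[2f-1](\pi^{\prime\vee})=\E[2f+1](\pi_1^\vee)=0$, so $\Ext^\bullet_{\Lambda}(-,\Lambda)$ applied to $0\to\pi^{\prime\vee}\to\pi_2^\vee\to\pi_1^\vee\to 0$ yields a short exact sequence $0\to\E(\pi_1^\vee)\to\E(\pi_2^\vee)\to\E(\pi^{\prime\vee})\to 0$, and dualising gives $0\to\E(\pi^{\prime\vee})^\vee\to\E(\pi_2^\vee)^\vee\to\E(\pi_1^\vee)^\vee\to 0$, compatibly with $\E(\pi^\vee)^\vee\twoheadrightarrow\E(\pi_i^\vee)^\vee$. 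Transporting through $\kappa_\pi$ gives $\pi/\widetilde{\pi}_i\cong\E(\pi_i^\vee)^\vee\otimes(\mytwist)$, hence $\widetilde{\pi}_2\subseteq\widetilde{\pi}_1$ and $\widetilde{\pi}_1/\widetilde{\pi}_2\cong\ker\bigl(\E(\pi_2^\vee)^\vee\to\E(\pi_1^\vee)^\vee\bigr)\otimes(\mytwist)=\E(\pi^{\prime\vee})^\vee\otimes(\mytwist)$; taking $\F$-linear duals (and tracking the twist via $(M^\vee\otimes\chi)^\vee\cong M\otimes\chi^{-1}$) produces the $\GLfield$-equivariant isomorphism $\E(\pi^{\prime\vee})\cong\widetilde{\pi}^{\prime\vee}\otimes(\mytwist)$, all the maps involved being $\GLfield$-equivariant for the Kohlhaase action. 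Then (i) follows from \Cref{prop:main-memoire}(i), the isomorphism $\grm(\pi^{\prime\vee})\cong N'$ of (iii), the formula \eqref{eq:mult_id}, and the identity $\lgR(\socR(\pi'))=\sum_\ell r(\pi',\ell)\,|W(\repr)_\ell|$ coming from (ii); in particular $\pi'\neq 0$ forces $\Dvee[\pi']\neq 0$. For (iv), letting $\pi'_{\mathrm{gen}}\subseteq\pi'$ be the subrepresentation generated by $\socR(\pi')$ one has $\socR(\pi'_{\mathrm{gen}})=\socR(\pi')$, so applying (i) and the exactness of $\Dvee[-]$ (\cite[Theorem~3.1.3.7]{BHHMS2}) to $0\to\pi'_{\mathrm{gen}}\to\pi'\to\pi'/\pi'_{\mathrm{gen}}\to 0$ forces $\Dvee[\pi'/\pi'_{\mathrm{gen}}]=0$, and since by (i) a nonzero subquotient of $\pi$ has nonzero $\Dvee[-]$, we get $\pi'=\pi'_{\mathrm{gen}}$. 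Finally (v) is \Cref{thm:finite-length-yay}(ii) applied to $\pi_1\subseteq\pi_2$, the bound there being $\sum_\ell r(\pi',\ell)$ (resp.\ $r(\pi')$) by (ii).

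The main obstacle is the reduction step in (iii): the diagram chase is clean only once one has checked that the maps $\theta_1,\theta_2$ furnished by \Cref{lemma:gr}(ii) for the pair $\pi_1\subseteq\pi_2$ genuinely coincide with those furnished by \Cref{prop:main}(i) for $\pi_1\subseteq\pi$ and for $\pi_2\subseteq\pi$. This is true but not free: it requires fixing, once and for all, a basis of $\V$ adapted to the flag $V_1(\ell)\subseteq V_2(\ell)\subseteq\V$ for every $\ell$, and using the induced nested bases and complements in all the invocations of \Cref{lemma:gr} and \Cref{prop:main}. A secondary point is the bookkeeping of the $\GLfield$-equivariance in (vi), where the connecting maps of the long exact $\Ext$-sequence, the $\F$-linear duals and the twists must each be verified to intertwine Kohlhaase's action.
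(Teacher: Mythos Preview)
Your proof is correct and largely follows the paper's, with genuine divergences only in (i) and (iv). For (i), rather than computing $m_{\mathfrak{p}_0}(N')$ directly via \eqref{eq:mult_id}, the paper applies \Cref{prop:main-memoire}(ii) to $\pi_1$ and $\pi_2$ separately and subtracts, using exactness of $\Dvee[-]$ together with \Cref{lemma:soc-ex}; this avoids invoking (iii) and (ii) at that point. For (iv), the paper is much shorter: since $\pi_2$ is generated by $\socR(\pi_2)$ (\Cref{thm:finite-length-yay}(i)), its quotient $\pi'$ is generated by the image of $\socR(\pi_2)$, and \Cref{lemma:soc-ex} identifies that image with $\socR(\pi')$; no $\Dvee[-]$-vanishing argument is needed. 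In (vi) the paper dualises $0\to(\pi/\pi_2)^\vee\to(\pi/\pi_1)^\vee\to\pi^{\prime\vee}\to 0$ and uses the diagrams of \Cref{rem:conjugate-when-CM} rather than $0\to\pi^{\prime\vee}\to\pi_2^\vee\to\pi_1^\vee\to 0$, but the two routes are equivalent once Cohen--Macaulayness is known. Finally, your ``main obstacle'' about compatibility of the $\theta_i$ across different invocations of \Cref{lemma:gr} is a non-issue: unwinding Step~4 of the proof of \Cref{lemma:gr} shows that $\theta_1(\varphi\otimes\varepsilon_\lambda)=(\iota_\pi|_{\pi_1^{I_1}})^\vee(\varphi\otimes e^\lambda)$, which is independent of the chosen complement, so the maps $\theta_i$ are intrinsic to $\iota_\pi$ and the fixed basis $(v^\lambda)$ of $\DOne$.
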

Notice that the integers $r(\pi',\ell)$
and $r(\pi')$ of 
\Cref{cor:final-corollary}(ii)
extend the definition 
given in the paragraph after the proof of
\Cref{lemma:main}.
\begin{proof}
(i)
By \Cref{prop:main-memoire} the assertion
holds for $\pi_1$ and $\pi_2$,
we conclude by the exactness of $\Dvee[-]$
and \Cref{lemma:soc-ex}.

(ii)
This is \Cref{prop:K1-exact},
together with \Cref{cor:direction}.

(iii)
Consider the commutative diagram
\eqref{diagr:thetas-ses}
of \Cref{lemma:gr}(ii).
We know from \Cref{prop:main}(i)
that $\theta_1$ and $\theta_2$ 
are isomorphisms
(in \emph{loc.\ cit.}\ we take $\pi_1=\pi_1$ 
in the first case,
and $\pi_1=\pi_2$ in the second case),
so we deduce that 
$\theta' \colon N' \twoheadrightarrow 
\gr_F \pi ^{\prime \vee}$ is an isomorphism,
where $F$ is the submodule filtration
on $\pi^{\prime \vee }$
induced from $\pi^\vee$.
Finally, 
by reasoning as in the proof of
\Cref{prop:main}(iii),
we can show that $F$ is the
$\mathfrak{m}$-adic filtration.

By the proof of 
\cite[Theorem~2.1.2]{BHHMS4}
$\Nnull$ is Cohen-Macaulay of grade $2f$,
so its direct summand $\Nell$
is also Cohen-Macaulay.
Hence, 
$ \frac{V_2}{V_1} \otimes_{\F} \Nnull
 =: N' \cong \grm(\pi^{\prime\vee}) $ 
is Cohen-Macauley, and by
\cite[Proposition~III.2.2.4]{LvO}
$\pi^{\prime\vee}$ 
is also Cohen-Macaulay.

(iv)
By \Cref{thm:finite-length-yay}(i),
$\pi_2$ is generated by $\socR(\pi_2)$,
hence $\pi'$ is generated by the image of
$\socR(\pi_2)$ in $\pi'$,
which is equal to $\socR(\pi')$ by
\Cref{lemma:soc-ex}.

(v)
This is \Cref{thm:finite-length-yay}(ii).

(vi)
For $i=1,2$, 
we can consider the commutative diagram
\eqref{eq:conjugate-when-CM} of
\Cref{rem:conjugate-when-CM} for
$\pi' = \pi_i$
(which takes a simpler form
because
$\pi_i$ is Cohen-Macaulay 
by (iii)).
Then, the factorisation 
$\pi \twoheadrightarrow \pi/\pi_1
\twoheadrightarrow \pi/\pi_2$
gives rise to the factorisation
\begin{equation}
	\label{diagr:E-factorisation}
	\begin{tikzcd}
 \E(\pi^\vee)  \ar[r, two heads]\ar[d,"{ \kappa_\pi^\vee }"', "{\wr}"] 
\ar[rr, two heads, bend left=10]
& \E((\pi/\pi_1)^\vee ) 
\ar[d,"{ \kappa_\pi^\vee }", "{\wr}"'] \ar[r,dashed]
& \E((\pi/\pi_2)^\vee ) 
\ar[d,"{ \kappa_\pi^\vee }", "{\wr}"'] 
\\
\pi^\vee \otimes \mytwist  \ar[r, two heads]
\ar[rr, two heads, bend right=10]
&\widetilde{\pi}_1^\vee \otimes \mytwist \ar[r, dashed]
&\widetilde{\pi}_2^\vee \otimes \mytwist, 
\end{tikzcd}\end{equation}
where the leftmost square and the
outermost square both come from
\eqref{eq:conjugate-when-CM}.
In particular, we have the containment
$\widetilde{\pi}_2
\subseteq \widetilde{\pi}_1 $.

Now, consider the long exact sequence
induced by the short exact sequence
$
0 \to (\pi/\pi_2)^\vee  \to (\pi/\pi_1)^\vee  \to \pi^{\prime\vee }\to 0
$
after taking $ \Hom_{\Lambda}(-, \Lambda)$.
Since all the terms of the sequence
are Cohen-Macaulay of grade $2f$,
we obtain the following commutative
diagram with exact rows
\[\begin{tikzcd}
	0 \ar[r] &  	\E(\pi^{\prime\vee})\ar[r]
& \E((\pi/\pi_1)^\vee ) 
\ar[r]\ar[d,"{ \kappa_\pi^\vee \otimes (\mytwist) }"', "{\wr}"] 
\ar[dr, phantom, " \eqref{diagr:E-factorisation}"]
& \E((\pi/\pi_2)^\vee ) 
\ar[d,"{ \kappa_\pi^\vee \otimes (\mytwist)}", "{\wr}"'] \ar[r]
& 0 \\
       0 \ar[r] 
&\widetilde{\pi}^{\prime\vee}\otimes (\mytwist) \ar[r]
\ar[r]
&\widetilde{\pi}_1^\vee \otimes (\mytwist) \ar[r]
&\widetilde{\pi}_2^\vee \otimes (\mytwist)
	\ar[r]& 0,
\end{tikzcd}\]
and so ${ \kappa_\pi^\vee \otimes (\mytwist)}$ restricts
to an isomorphism 
$ \E(\pi^{\prime\vee}) \cong 
\widetilde{\pi}^{\prime\vee}\otimes (\mytwist) $.
\end{proof}

The natural generalisation of (v) 
of \cite[Corollary~3.2.7]{BHHMS4},
 i.e.\ the fact that 
for every irreducible subquotient $\overline{\pi}$
of $\pi$ we have $[\pi : \overline{\pi}]=r$,
is out of our reach.
Also out of our reach is the optimistic
expectation that $\overline{\pi}$
should only depend on $\repr$ 
(and not on $\glrepr$)
when $\pi$ is the representation \eqref{eq:piShi-dev}.
What we can show, however, is that 
$\overline{\pi}$ is always supersingular
 when we expect it to be.

\begin{corollary}[]
	\label{cor:supersingularity-for-free}
Assume that $\repr$ is 
$\max \{9,2f+1\} $-generic.
\begin{enumerate}[(i)]
\item 
Assume $\repr$ reducible split,
and consider the subrepresentation 
$\pi' \subseteq \pi$
of \Cref{thm:principal-series}.
Then, every Jordan-H\"older constituent
$\overline{\pi}$ of $\pi'$
is supersingular.
\item 
Assume $\repr$ irreducible.
Then, every Jordan-H\"older constituent
$\overline{\pi}$ of $\pi$
is supersingular.
\end{enumerate}
\end{corollary}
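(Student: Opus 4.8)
The plan is to reduce the supersingularity of a Jordan--H\"older constituent $\overline{\pi}$ to the vanishing of its ordinary parts, which can be detected on $(\varphi,\Gamma)$-modules or on the structure already established. First I would recall that, by \Cref{thm:finite-length-yay}, the representation $\pi$ (resp.\ the subrepresentation $\pi'$ of \Cref{thm:principal-series} in the reducible case) has finite length, so $\overline{\pi}$ is a genuine irreducible smooth $\GLfield$-representation, admissible with a central character. By the classification of irreducible admissible smooth mod~$p$ representations of $\GL_2(K)$, such a $\overline{\pi}$ is either supersingular, or a subquotient of a principal series $\Ind_{B(K)}^{\GLfield}\chi$ for some smooth character $\chi\colon B(K)\to\F^\times$. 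So it suffices to rule out the latter possibility.

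\medskip

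The key step is to exploit that every subquotient $\pi'$ of $\pi$ is generated by its $\GLring$-socle (\Cref{cor:final-corollary}(iv)), together with the precise description of $\socR$ via \Cref{cor:direction}: for $\repr$ reducible split the socle of the ``supersingular part'' $\pi'$ of \Cref{thm:principal-series} only involves Serre weights $\sigma \in W(\repr)$ with $0 < \ell(\sigma) < f$, while for $\repr$ irreducible the socle of $\pi$ involves all of $W(\repr)$ (which is a single $\delta$-orbit). If $\overline{\pi}$ were a subquotient of a principal series, then in particular it would embed, after passing to a suitable subquotient, into some $\Ind_{B(K)}^{\GLfield}\chi$, and one knows that the $\GLring$-socle of an irreducible principal series or of its (at most two) constituents is a Serre weight $\sigma$ lying in the $\Gamma$-socle of $\Ind_I^{\GLring}(\chi|_I)$; by the combinatorial computation in the proof of \Cref{thm:principal-series} (namely $\mathcal{P}(x_0,\dots,x_{f-1}) \cap \mathscr{D} = \{(x_0,\dots,x_{f-1})\}$, and symmetrically for the twisted version), the only Serre weights of $W(\repr)$ that can arise as the $\GLring$-socle of a constituent of a principal series are $\sigma_0$ (length $0$) and $\sigma_f$ (length $f$). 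In the reducible case these are precisely the weights \emph{removed} when passing to $\pi'$, so no constituent of $\pi'$ can have socle $\sigma_0$ or $\sigma_f$, forcing it to be supersingular. In the irreducible case, $W(\repr)$ is a single $\delta$-orbit of size $n = |W(\repr)|$, and a constituent with socle $\sigma_0$ or $\sigma_f$ would, by \Cref{thm:finite-length-yay}(ii), have to be (after quotienting by a suitable subrepresentation) small enough that its $(\varphi,\Gamma)$-module has dimension matching $\lgR(\socR(-))$ via \Cref{cor:final-corollary}(i); one then checks that a principal series constituent has $(\varphi,\Gamma)$-module of dimension $1$ (cf.\ \cite{Breuil15}), whereas every nonzero subquotient of $\pi$ has $(\varphi,\Gamma)$-module of dimension a multiple of $n \ge 2$ (again by \Cref{cor:final-corollary}(i) combined with \Cref{cor:direction}, since the $\GLring$-socle of any nonzero subquotient is a direct sum of full $\delta$-orbits), a contradiction.

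\medskip

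Concretely I would argue as follows. Let $\overline{\pi}$ be a Jordan--H\"older constituent; choose subrepresentations $\pi_1 \subseteq \pi_2$ (of $\pi'$ in the reducible case, of $\pi$ in the irreducible case) with $\pi_2/\pi_1 \cong \overline{\pi}$. By \Cref{cor:final-corollary}(iv), $\overline{\pi}$ is generated by $\socR(\overline{\pi})$, and by \Cref{cor:final-corollary}(i) together with \Cref{prop:main-memoire}(ii) we have $\dim_{\F(\!(X)\!)} \Dvee[\overline{\pi}] = \lgR(\socR(\overline{\pi}))$. On the other hand, by \Cref{cor:direction} applied to $\pi_1 \subseteq \pi_2$ and \Cref{lemma:soc-ex}, $\socR(\overline{\pi})$ is of the form $\bigoplus_\ell W'_\ell \otimes_\F \socR(\DEll)$ (reducible) or $W' \otimes_\F \socR(\Diagr)$ (irreducible) for appropriate quotient spaces, which in particular involves \emph{only} weights of $W(\repr)$, and in the reducible case of $\pi'$ only weights of length strictly between $0$ and $f$. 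Now suppose $\overline{\pi}$ is not supersingular, so it is a subquotient of some $\Ind_{B(K)}^{\GLfield}\chi$; then $\socR(\overline{\pi})$ is contained in $\socR$ of a principal series constituent, which by the above combinatorial analysis can only consist of the length-$0$ weight $\sigma_0$ and the length-$f$ weight $\sigma_f$. In the reducible case this is already absurd. In the irreducible case, $\socR(\Diagr)$ is indecomposable-supported on a single $\delta$-orbit equal to all of $W(\repr)$, so $\socR(\overline{\pi}) \ne 0$ forces it to contain a weight in every $\delta$-orbit --- here the single orbit $W(\repr)$ --- hence $\lgR(\socR(\overline{\pi})) \ge n$, whereas a principal series constituent has $\dim_{\F(\!(X)\!)}\Dvee = 1 < n$ (since $\repr$ irreducible forces $n \ge 2$), contradiction. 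Therefore $\overline{\pi}$ is supersingular, which is (i) and (ii).

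\medskip

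The main obstacle I anticipate is making the bookkeeping of which Serre weights can appear in $\socR$ of a principal series constituent fully rigorous and compatible with the $\V$-coordinates: one must be careful that the two constituents of a reducible principal series $\Ind_{B(K)}^{\GLfield}\chi$ (when it is reducible) have $\GLring$-socles that are genuinely among $\{\sigma_0,\sigma_f\}$ and not some other weight of $W(\repr)$, which is exactly the content of the combinatorial identity $\mathcal{P} \cap \mathscr{D} = \{(x_0,\dots,x_{f-1})\}$ proved inside \Cref{thm:principal-series} --- so this part of the argument can be quoted verbatim. A secondary, more technical point is confirming that $\dim_{\F(\!(X)\!)}\Dvee$ of a principal series constituent equals $1$: this is standard from \cite{Breuil15} but deserves an explicit citation.
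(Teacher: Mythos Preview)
Your argument for part (i) contains a genuine gap. You claim that ``the only Serre weights of $W(\repr)$ that can arise as the $\GLring$-socle of a constituent of a principal series are $\sigma_0$ and $\sigma_f$,'' citing the combinatorial identity $\mathcal{P}(x_0,\dots,x_{f-1}) \cap \mathscr{D} = \{(x_0,\dots,x_{f-1})\}$ from the proof of \Cref{thm:principal-series}. But that identity concerns one \emph{specific} principal series (the one with parameter $\chi_{\sigma_0}$); it says nothing about arbitrary principal series. In fact, for every $\sigma \in W(\repr)$ there is a principal series of $\GLfield$ with $\GLring$-socle exactly $\sigma$, so your claimed restriction is false. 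The sentence ``$\socR(\overline{\pi})$ is contained in $\socR$ of a principal series constituent'' is also confused: if $\overline{\pi}$ is non-supersingular and irreducible, then $\overline{\pi}$ \emph{is} such a constituent, and its socle can a priori be any Serre weight.

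The paper's proof is more elementary and avoids this issue entirely. One uses \Cref{cor:final-corollary}(ii) (not merely \Cref{cor:direction} on socles) to get that $\overline{\pi}^{K_1}$ is isomorphic to a direct sum of full blocks $\DZeroEll$ (with $1\le\ell\le f-1$ in the reducible case), so $\lgR(\socR\overline{\pi}) \ge |W(\repr)_{\ell_0}| = \binom{f}{\ell_0}$ for some such $\ell_0$, respectively $\lgR(\socR\overline{\pi}) \ge |W(\repr)| = 2^f$ in the irreducible case. On the other hand, a non-supersingular irreducible $\overline{\pi}$ (character, principal series, or special series, by \cite{BL94}) has $\lgR(\socR\overline{\pi}) \le 2$ and $\dim_\F \overline{\pi}^{I_1} \le 2$. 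Comparing gives a contradiction except in the borderline cases $f=2,\ell_0=1$ (reducible) and $f=1$ (irreducible), where one instead counts $\dim_\F\overline{\pi}^{I_1} \ge |\mathscr{P}_{\ell_0}|$, which exceeds $2$ in both cases. You already had the structural input $\socR(\overline{\pi}) \cong \bigoplus_\ell W'_\ell \otimes \socR(\DZeroEll)$ in hand; the missing step was simply to count its length and compare, rather than trying to constrain which individual weights can occur. Your alternative treatment of the irreducible edge case via $\dim_{\F(\!(X)\!)}\Dvee$ is plausible but, as you note yourself, requires knowing the exact value of $\Dvee$ on special series; the paper's $I_1$-invariants count sidesteps this.
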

\begin{proof}
Suppose by contradiction that
$\overline{\pi}$ is not supersingular.
Then, by the characterisation of 
smooth irreducible representations
given in \cite[Theorem~33]{BL94},
$\overline{\pi}$ is isomorphic to
a character, a principal series,
or a special series.
Consider
$\lg_{\GLring} ( \socR \overline{\pi})$:
we know that it is $1$ in all cases except
for a special series, 
where it is possibly $2$
(cf.\ for example \cite[Corollary~2.38]{AWS25}),
and in that case we have 
$\dim_{\F}\overline{\pi}^{I_1}=2$
(for example, because
$|B(K) \backslash \!\GL_2(K)/I_1| = 2$).

(i)
Remember that,
by \Cref{thm:principal-series},
we have a direct sum decomposition
$\pi = \pi_0^{\oplus r } \oplus 
\pi_f^{\oplus r} \oplus \pi'$,
with
$\pi_i^{K_1} \cong \DZeroEll[i]$,
for $i \in \{0,f\}$.
Let $\overline{\pi}$ be a Jordan-H\"older
constituent of $\pi'$.
By \Cref{cor:final-corollary}(ii)
we can write
$\overline{\pi}^{K_1}$ as
\begin{equation}
	\label{eq:red-pi-bar-K1}
\overline{\pi}^{K_1} \cong
 \bigoplus_{\ell=1} ^{f-1} 
\DZeroEll[\ell]^{\oplus \rell[\overline{\pi},\ell]},
\end{equation}
for some integers 
$0 \le \rell[\overline{\pi},\ell] \le r$,
not all zero
as $\ell$ runs over $1 \le \ell \le f-1$
(in particular, $f \ge  2$).
Fix an integer $1 \le \ell_0 \le f-1$
such that $\rell[\overline{\pi},\ell_{0}]\neq 0$,
and recall the set 
$W(\repr)_{\ell}$ of \Cref{def:J-and-ell}.
Then,
$\lg_{\GLring}( \socR \overline{\pi}^{K_1})
\ge |W(\repr)_{\ell_0} |= \binom {f}{\ell_0}$,
which is strictly greater than $2$ unless
$f=2$, $\ell_0=1$, where we have
$\binom f {\ell_0} =2$. 
In this case, use that 
$\dim_{\F}(\overline{\pi}^{I_1})
\ge |\mathscr{P}_{\ell_0} |=8>2$,
by \cite[§16]{BP12},
$f=2$, case~(ii).
In both cases, we have reached
a contradiction.

(ii)
The proof goes along the same lines.
Let $\overline{\pi}$ be a Jordan-H\"older
constituent of $\pi$.
By \Cref{cor:final-corollary}(ii)
we can write
$\overline{\pi}^{K_1}$ as
\begin{equation}
	\label{eq:irr-pi-bar-K1}
\overline{\pi}^{K_1} \cong
\DZero^{\oplus \rnull[\overline{\pi}]},
\end{equation}
for some nonzero integer
$1 \le \rnull[\overline{\pi}] \le r$,
and
$\lg_{\GLring} \left(\socR\overline{\pi}\right)
\ge |W(\repr)|= 2^{f}$,
which is strictly greater than $2$
unless $f=1$, in which case we use that 
$\dim_{\F}(\overline{\pi}^{I_1})
\ge |\mathscr{P}|=4>2$,
by \cite[§16]{BP12}, $f=1$, case~(iii).
In both cases, we have reached
a contradiction.
\end{proof}

\section{Global results}
	\label{sec:global}
In this section we recall our global setup
(which coincides with the ``indefinite
case'' 
of \cite[§2.6]{BHHMS4}) in more detail,
and then prove
\Cref{thm:global-part-fl},
which states that the representation
$\pi$ of \eqref{eq:piShi-def}
has finite length.

We fix a totally real number field $F$, with
ring of integers $\mathcal{O}_F$,
and we let $S_p$ denote the set of places above $p$.
We assume that $F$ is unramified
at $S_p$.
For a place $w$, we denote by $F_w$
the completion of $F$ at $w$, and
by $\mathcal{O}_{F_w} $ its ring of integers.
We let 
$\Frob_w$
be a geometric Frobenius element at $w$.
We fix a quaternion algebra
$D$, with center $F$, which is split at all places
above $p$ and at most one infinite place,
and we let $S_D$  denote the set of places 
where $D$ ramifies.
Consequently, if $\mathcal{O}_D$
is a fixed maximal order in $D$,
we can also fix isomorphisms
$(\mathcal{O}_D)_w \defeq
\mathcal{O}_D \otimes_{\mathcal{O}_F}\mathcal{O}_{F_w}
\xrightarrow{\sim} \Mring[F_w]$
for $w \notin S_D$.

We fix a continuous representation
$\glrepr \colon
\agal[F] \to \GL_2(\F)$,
and we set 
$\glrepr_w \defeq \glrepr|_{\agal[F_w]} $
for a finite place $w$ of $F$.
Let $S_{\glrepr} $ be the set of places
where $\glrepr$ ramifies. 
Then, we make the following assumptions 
on $\glrepr$:
\begin{itemize}
\item the restriction
$\glrepr|_{\Gal(\overline{F}/F(\mu_p))} $
is absolutely irreducible;
\item for all places $w \in  S_p$,
$\glrepr_w$ is $0$-generic, 
in particular $S_p \subseteq S_{\glrepr} $;
\item for all places
$w \in (S_D \cup S_{\glrepr} ) \setminus 
 S_p$,
the universal framed deformation ring
of $\glrepr_w$ is formally smooth over 
$W(\F)$.
\end{itemize}
If $D$ splits at exactly one infinite place
(the ``indefinite case''), 
we make the following choices.

We denote by $\mathbb{A}_F^{ \infty }$
the ring of finite adèles of $F$.
Given a compact open subgroup $V$ of 
$(D \otimes_{F}\mathbb{A}_F^{\infty})^{\times}$, 
then we can consider the associated smooth
projective Shimura curve $X_{V}$, 
which is defined over $F$, with the conventions of
\cite[§3.1]{BD14}.
We choose:
\begin{enumerate}[(i)]
\item
	\label{condition:place-i}
a finite place $w_1 \notin S_D \cup S_{\glrepr}$
such that:
\begin{enumerate}[]
\item then norm $\Norm(w_1)$ is not
congruent to $1\bmod p$;
\item the ratio of the eigenvalues
of $\glrepr (\Frob_{w_1} )$ is not
in $\{ 1, \Norm(w_1), \Norm(w_1)^{-1} \} $;
\item for any nontrivial root of unity
$\zeta$ in a quadratic extension of $F$,
we have $w_1 \nmid (\zeta+ \zeta^{-1} -2)$;
\end{enumerate}
\item
	\label{condition:place-ii}
 a finite set $S$ of finite places of $F$
such that: 
\begin{enumerate}[]
\item we have
$ S_D \cup S_{\glrepr} \subseteq S$
and $w_1 \notin S$;
\item for all $w \in S \setminus S_p$, 
the framed deformation ring 
$R_{\glrepr_w ^\vee } $
of $\glrepr_w ^\vee $ is formally smooth
over $W(\F)$;
\end{enumerate}
\item 
	\label{condition:place-iii}
compact open subgroups
$V \defeq  \prod _w V_w$,
with $V_w$
subgroups of 
$(\mathcal{O}_D)_w ^{\times}$
as $w$ runs over the set of places of $F$,
such that we have:
\begin{enumerate}[]
\item an equality
$V_w = (\mathcal{O}_D)_w^{\times}$
for $w \notin S \cup \{w_1\}$;
\item 
	\label{condition:place-iii-b}
the nonvanishing of
\begin{equation}
	\label{eq:pre-pi-nonvanishing}
 \Hom_{\agal[F]}(\glrepr, 
H^{1}_{ \text{ét}}(X_V \times_{F} \overline{F}, \F) ) \neq 0.
\end{equation}
\end{enumerate}
\end{enumerate}

If $D$ splits at no infinite places 
(the ``definite'' case)
we make the same choices as 
\ref{condition:place-i}---
\ref{condition:place-iii} above,
replacing \eqref{eq:pre-pi-nonvanishing}
by the condition $S(V,\F)[\mgl] \neq 0$,
where
$S(V,\F) \defeq 
\{f \colon D^{\times} \backslash\!
(D \otimes_{F}\mathbb{A}_F^{\infty })^{\times}
/V \to \F\}$,
and where $\mgl$ is generated by 
$T_w-S_w \tr(\glrepr(\Frob_w))$,
$\Norm(w)-S_w \det(\glrepr(\Frob_w))$,
for every $w \notin S \cup \{w_1\}$ such that
$V_w = (\mathcal{O}_D)_w^{\times}$,
with $T_w$, $S_w$ acting on $S(V,\F)$
via right translation on functions by
$V \left( \begin{smallmatrix}
\varpi_w & 0 \\
0 & 1 \\
\end{smallmatrix} \right)V$,
$V \left( \begin{smallmatrix}
\varpi_w & 0 \\
0 & \varpi_w \\
\end{smallmatrix} \right)V$
respectively (where $\varpi_w$
is any choosen uniformiser of $F_w$).

For convenience, set
\begin{equation}
	\label{eq:small-global-pi-def}
\pi(V) \defeq \begin{cases}
\Hom_{\Gal(\overline{F}/F)}\left(\overline{r}, H^1_{ \text{ét}} (X_{V} \times_{F}\overline{F}, \F)\right)
 & \text{in the indefinite case,} \\
S(V, \F)[\mgl]
 & \text{in the definite case,} \\
\end{cases}
\end{equation}
so that \ref{condition:place-iii}(b)
becomes $\pi(V) \neq 0$.

Now, fix a finite place $v \in  S_p$,
and let $K \defeq F_v$,
$\repr \defeq  \glrepr^\vee$,
which we assume to be semisimple.
If $V^{v} \defeq \prod _{w \neq v} V_w$,
then we want to study
the admissible smooth representation
of $\GLfield$ over $\F$
\begin{equation}
	\label{eq:global-pi-def}
\pi(V^{v}) \defeq \begin{cases}
\varinjlim_{V_v} \pi(V^{v}V_v)
 & \text{in the indefinite case,} \\
\varinjlim_{V_v} \pi(V^{v}V_v)
 & \text{in the definite case,} \\
\end{cases}
\end{equation}
where the limit is taken over all
compact open subgroups 
$V_v \unlhd (\mathcal{O}_D)_v^{\times}
\cong \GLring$
contained inside
$1+ p \Mring$.
It follows from \eqref{eq:pre-pi-nonvanishing}
that \eqref{eq:global-pi-def} is nonzero.

If we let
$\psi \colon \agal[F] \to W(\F)^{\times}$
be the Teichm\"uller lift
of $\det (\glrepr) \omega$,
and denote by $\psi_w$ its restriction to
$\agal(F_w)$,
and by $\overline{\psi}_w$ 
the reduction modulo $p$ of $\psi_w$,
then notice that \eqref{eq:global-pi-def}
has central character 
$\overline{\psi}_v^{-1} = \mytwist$.

The following is the main result of this section,
and of the article.
\begin{newthm}[]
	\label{thm:global-part-fl}
Suppose that $\repr$ is
$\max \{12,2f+1\}$-generic.
The representation $\pi(V^{v})$ of
\eqref{eq:global-pi-def}
has finite length as a 
$\GLfield$-representation. 
\end{newthm}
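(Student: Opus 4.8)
The plan is to follow the dévissage strategy sketched right after \Cref{thm:pre-main-0}, which itself mirrors the proof of \cite[Corollary~8.4.6]{BHHMS1}: I would exhibit a \emph{finite} filtration of $\pi(V^{v})$ by $\GLfield$-subrepresentations whose graded pieces embed into representations of the form $\pi(V^{v},\sigma_p^{v})$ as in \eqref{eq:piShi-dev}, and then quote the local finite length statement established in \Cref{sec:fl}. Throughout, I would use that $\repr = \glrepr^\vee|_{\agal[K]}$ is semisimple by hypothesis and $\max\{12,2f+1\}$-generic; in particular it is $\max\{9,2f+1\}$-generic and hence $2f$-generic, so that all the results of \Cref{sec:structure-pi} and \Cref{sec:CM-fl}, as well as the genericity hypotheses of \Cref{thm:pre-main-1}, apply.

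First I would set up the filtration. At the places $w\mid p$ with $w\neq v$, the completed cohomology underlying \eqref{eq:piShi-def} carries, after the usual reduction in the tower at those places (using the injectivity of the relevant transition maps, which is tautological in the definite case since $S(V,\F)\hookrightarrow S(\widetilde V,\F)$ for $\widetilde V \subseteq V$, and is standard in the indefinite case, cf.\ \cite[§2.6]{BHHMS4}), a smooth admissible action of $\prod_{w\mid p,\,w\neq v}\GLring[F_w]$ commuting with the $\GLfield$-action. Decomposing with respect to the irreducible constituents $\sigma_p^{v}=\bigotimes_{w}\sigma_w$ of this compact group that occur with nonzero multiplicity — a finite set, by admissibility at finite level together with the classification of Serre weights, exactly as in the proof of \cite[Corollary~8.4.6]{BHHMS1} — produces a finite filtration of $\pi(V^{v})$ whose successive quotients are $\GLfield$-submodules of the corresponding $\pi(V^{v},\sigma_p^{v})$ of \eqref{eq:piShi-dev}. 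Since a $\GLfield$-subrepresentation of a finite length representation is again of finite length, it then suffices to show that each $\pi(V^{v},\sigma_p^{v})$ has finite length.

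Next I would verify that each $\pi \defeq \pi(V^{v},\sigma_p^{v})$ meets the hypotheses used in \Cref{sec:fl}. It is admissible, smooth, and has central character $\mytwist$ by \cite[Theorem~1.9]{BHHMS1} and \cite[Theorem~6.3(ii)]{Wan23}, and it satisfies hypotheses \ref{hypothesis:i}--\ref{hypothesis:iv} of \Cref{sec:hypotheses}: these are exactly the properties \ref{hypothesis:pre-i}--\ref{hypothesis:pre-iv} recalled in the introduction, which hold by \cite[Theorem~8.4.2]{BHHMS1}, \cite[Proposition~6.4.6]{BHHMS1}, \cite[Theorem~8.4.1]{BHHMS1}, \cite[Theorem~8.2]{BHHMS4} and \cite[Proposition~2.6.2]{BHHMS4}, using the genericity of $\repr$. \Cref{thm:finite-length-yay} then gives that $\pi(V^{v},\sigma_p^{v})$ has finite length, bounded above by $r\cdot(f+1)$ when $\repr$ is reducible split and by $r$ when $\repr$ is irreducible, where $r\ge 1$ is the multiplicity attached to $\sigma_p^{v}$. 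Combining this with the previous paragraph concludes the proof.

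The hard part will be the construction of the \emph{finite} filtration in the second step, i.e.\ knowing that the dévissage terminates: one must control the $\prod_{w\mid p,\,w\neq v}\GLring[F_w]$-module structure of the completed cohomology well enough to guarantee that only finitely many tame types $\sigma_p^{v}$ contribute nontrivially. This is precisely the point addressed by \cite[Corollary~8.4.6]{BHHMS1} and the local--global compatibility behind it; once that is granted, the remaining steps are a combination of the local theorems of the previous sections and formal facts about finite length.
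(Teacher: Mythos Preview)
Your proposal is correct and follows essentially the same dévissage strategy as the paper's proof. One clarification worth making: the finiteness of the filtration that you flag as ``the hard part'' is actually immediate once the argument is set up as in the paper. After the reduction of \Cref{rmk:global-first-reduction} (shrinking $V^{v}$ so that each $V_w$, $w\in S_p\setminus\{v\}$, is normal in $\GLring[F_w]$), the paper applies Frobenius reciprocity from $V^{v}$ to the auxiliary level $U^{v}$ to rewrite $\pi(V^{v})$ as $\varinjlim_{V_v}\Hom_{U^{v}/V^{v}}\bigl(\bigotimes_{w}(\Ind_{V_w}^{\GLring[F_w]}1)_Z,\;\pi(V^{v}V_v)\bigr)$ and then dévissages on the Jordan--H\"older constituents of the \emph{finite-dimensional} representation $\bigotimes_{w}(\Ind_{V_w}^{\GLring[F_w]}1)_Z$. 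So no local--global compatibility beyond \Cref{thm:compilation} and \eqref{eq:small-pi-and-modular-weights} is needed to ensure the filtration terminates; finiteness is purely a consequence of $V_w$ having finite index in $\GLring[F_w]$.
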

\begin{newremark}
	\label{rmk:gl-conditions-for-free}
Up to enlarging $S$, we can always assume that
\ref{condition:place-iii}(a) is satisfied
(this, of course, makes 
\ref{condition:place-ii}(b) more restrictive).
Moreover, as was noted in
\cite[§6.2]{EGS}, a finite place $w_1$ satisfying
\ref{condition:place-i} always exists,
by \cite[Lemma~4.11]{DDT97}.
\end{newremark}
\begin{newremark}
	\label{rmk:global-first-reduction}
Notice that, for a smaller
 $V^{\prime v} \subseteq V^{v}$
we have 
$\pi(V^v) \subseteq \pi(V^{\prime v})$.
In particular, if $\pi(V^{\prime v})$ has finite length,
then so does $\pi(V^v)$,
which means that
we can assume, without loss of generality,
that
$V_w$ is contained in $1+ p\Mring[F_w]$
and is normal in $\GL_2(\mathcal{O}_{F_w} )$
for all $w \in S_p \setminus  \{v\},$
and that $V_{w_1} $ is contained in 
the subgroup of 
$(\mathcal{O}_D)_{w_1}^{\times}$
of matrices that are upper-triangular unipotent
modulo $w_1$.
\end{newremark}

To prove this theorem, 
we consider a larger auxiliary 
prime-to-$v$ level
$U^{v} \defeq  \prod _{w \neq v} U_w$, where 
\begin{equation}
	\label{eq:U-lvl-def}
\begin{cases}
U_w = V_w & \text{for } w \notin S_p, \\
U_w = (\mathcal{O}_D)_w^{\times}
\cong \GLring[F_w]
 &  \text{for } w \in S_p \setminus \{v\}.
\end{cases}
\end{equation}
By \Cref{rmk:global-first-reduction},
$U^{v}$ normalises $V^{v}$.
We also consider
an auxiliary
$\GLfield$-representation:
for a choice
of a Serre weight $\sigma_w$ of $\GLring[F_w])$
for each $w \in S_p \setminus \{v\} $,
set 
$\sigma_p^{v} \defeq 
\bigotimes_{w \in S_p \setminus \{v\} } \sigma_w $,
and define $\pi(V^{v},\sigma_p^{v})$ to be
\begin{equation}
	\label{def:global-small-pi-def}
\pi(V^{v},\sigma_p^{v}) \defeq 
\varinjlim _{V_v} \Hom_{U^{v}/V^{v}}
\left(\sigma_p^{v}, 
\pi(V^{v}V_v)
\right).
\end{equation}
Then, we have the following result,
 coming from \cite[§5.5]{GK14}:
\begin{equation}
	\label{eq:small-pi-and-modular-weights}
\pi(V^{v},\sigma_p^{v}) \neq 0
\iff \sigma_w \in W(\overline{r}_w ^\vee )
\ 
\forall w \in S_p \setminus \{v\} ,
\end{equation}
where $W(\overline{r}_w ^\vee )$
is defined as in 
\cite[§3]{BDJ10}.

Here is a compilation of results 
concerning the representation 
\eqref{def:global-small-pi-def}.
\begin{newthm}[]
	\label{thm:compilation}
For $w \in  S_p \setminus \{v\}$,
we let $\sigma_w$ be a Serre weight in
$W(\glrepr_w^\vee)$,
and set
$\sigma_p^{v} \defeq 
\bigotimes_{w \in S_p \setminus \{v\} } \sigma_w $.
Assume that $\repr$ is $12$-generic.
\begin{enumerate}[(i)]
\item 
(\cite[Theorem~8.4.2]{BHHMS1})
the hypothesis \ref{hypothesis:i}
of \Cref{sec:hypotheses}
holds for
$\pi(V^{v}, \sigma_p^{v})$;
	\item 
(\cite[Proposition~6.4.6]{BHHMS1} and \cite[Theorem~8.4.2]{BHHMS1})
the hypothesis \ref{hypothesis:ii}
of \Cref{sec:hypotheses}
 holds for
$\pi(V^{v}, \sigma_p^{v})$;
	\item
	(\cite[Theorem~8.2]{HW22} 
and \cite[Theorem~8.4.1]{BHHMS1})
the hypothesis \ref{hypothesis:iii}
of \Cref{sec:hypotheses}
holds for
$\pi(V^{v}, \sigma_p^{v})$;
\item 
(\cite[Proposition~2.6.2]{BHHMS4})
the hypothesis \ref{hypothesis:iv}
of \Cref{sec:hypotheses}
 holds for
$\pi(V^{v}, \sigma_p^{v})$.
\end{enumerate}
\end{newthm}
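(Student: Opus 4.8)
The plan is to prove \Cref{thm:compilation} by identifying $\pi(V^{v},\sigma_p^{v})$ with the $\GLfield$-representation analysed in \cite{BHHMS1}, \cite{HW22} and \cite{BHHMS4}, and then invoking the four indicated results once their running hypotheses have been reconciled with our assumptions. First I would record the elementary properties of $\pi(V^{v},\sigma_p^{v})$: it is nonzero because $\sigma_w \in W(\glrepr_w^\vee)$ for each $w \in S_p \setminus \{v\}$ (by \eqref{eq:small-pi-and-modular-weights}), it is admissible and smooth, and it carries the central character $\mytwist = \overline{\psi}_v^{-1}$, as computed after \eqref{eq:global-pi-def}. With the convention that $K^{\times}$ acts on $\DZero$ through $\mytwist$, and recalling that $\repr = \glrepr^\vee|_{\agal[K]}$ is semisimple by assumption, this is precisely the object to which the cited theorems apply. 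I would stress at the outset that all four of these results are already formulated for an arbitrary multiplicity $r \ge 1$; it is only the \emph{downstream} arguments of \cite{BHHMS2}, \cite{BHHMS4} and \cite{BHHMS5} that presuppose $r = 1$, so there is no circularity in using them here.

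I would then dispatch the four hypotheses one at a time. For \ref{hypothesis:i}, \cite[Theorem~8.4.2]{BHHMS1} yields the integer $r \ge 1$ together with a $\KK$-equivariant isomorphism $\pi^{K_1} \cong \DZero^{\oplus r}$. For \ref{hypothesis:ii}, I would combine the local multiplicity equality $[\pi[\mI^{3}]:\chi_\lambda] = [\pi[\mI]:\chi_\lambda]$ of \cite[Proposition~6.4.6]{BHHMS1} with the description of $\pi^{K_1}$ just obtained, exactly as in the minimal case. For \ref{hypothesis:iii}, I would invoke \cite[Theorem~8.2]{HW22} together with \cite[Theorem~8.4.1]{BHHMS1}: these provide the $\GLfield$-equivariant isomorphism of $\Lambda$-modules $\E(\pi^\vee) \cong \pi^\vee \otimes (\det(\repr)\omega^{-1})$ — with the $\GLfield$-action on the source given by Kohlhaase's construction of \cite[Proposition~3.2]{Koh17} — and identify $\pi^\vee$ as essentially self-dual of grade $2f$. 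For \ref{hypothesis:iv}, it is enough to cite \cite[Proposition~2.6.2]{BHHMS4}, which already computes $\dim_{\F}\Ext^i_{I/Z_1}(\chi,\pi) = \binom{2f}{i}r$ with the multiplicity $r$ in place.

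The real content, and the only laborious step, is the verification that the hypotheses of these references are satisfied in our global situation. I would check: that the auxiliary prime-to-$v$ level $U^{v}$ of \eqref{eq:U-lvl-def}, maximal at every place of $S_p \setminus \{v\}$, is the tame level for which \cite[§8]{BHHMS1} is proved; that our standing global assumptions on $\glrepr$ (absolute irreducibility of $\glrepr|_{\Gal(\overline{F}/F(\mu_p))}$, formal smoothness of the framed deformation rings at the places of $(S_D \cup S_{\glrepr}) \setminus S_p$, and $0$-genericity of each $\glrepr_w$ for $w \in S_p$) coincide with the hypotheses imposed in \cite[§8]{BHHMS1}, \cite[§8]{HW22} and \cite[§2.6]{BHHMS4}; that both the definite and the indefinite cases are covered — they are, since \cite[§2.6]{BHHMS4} treats both; and that our numerical genericity hypothesis dominates the one required by each reference, each of \cite[Theorem~8.4.2]{BHHMS1}, \cite[Proposition~6.4.6]{BHHMS1}, \cite[Theorem~8.2]{HW22}, \cite[Theorem~8.4.1]{BHHMS1} and \cite[Proposition~2.6.2]{BHHMS4} needing $\repr$ only to be $n$-generic for some $n \le 12$, so that $12$-genericity handles all of them at once. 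None of this is deep — it is bookkeeping, already implicit in the sources. The one point demanding genuine care is the alignment of the differing normalisations across the three source papers, in particular the duality and central-character conventions and the precise shape of the $\GLfield$-action entering \ref{hypothesis:iii}.
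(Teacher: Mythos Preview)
Your proposal is correct and matches the paper's approach: the paper treats \Cref{thm:compilation} as a pure compilation result and provides no separate proof beyond the citations embedded in the statement itself. Your write-up is in fact more detailed than what the paper does, since you spell out the hypothesis-matching and normalisation checks that the paper leaves entirely implicit.
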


We return to \Cref{thm:global-part-fl}.
\begin{proof}[Proof of 
\Cref{thm:global-part-fl}]
Notice that 
$ H^{1}_{\text{ét}}(X_{V^{v}V_v} \times_{F} \overline{F},\F )$
is a $U^{v}/V^{v}$-representation,
in particular it is $V^{v}$-invariant,
and we can rewrite $\pi(V^{v})$ as
\begin{align}
	\label{eq:pi-rewrite-dumb}
\pi(V^{v}) \cong &
\varinjlim _{V_v} \Hom_{V^{v}}\left(1, 
\pi(V^{v}V_v)
\right)\\
	\label{eq:pi-rewrite-Frobenius}
\cong& 
\varinjlim _{V_v} \Hom_{U^{v}/V^{v}}\left(
 \bigotimes _{w \in S_p \setminus \{v\} } 
\Ind_{V_w} ^{U_w}1, 
\pi(V^{v}V_v)
\right).
\end{align}
In \eqref{eq:pi-rewrite-dumb},
$1$ denotes the trivial representation
of $V^{v}$,
and in \eqref{eq:pi-rewrite-Frobenius}
we have used Frobenius reciprocity
from $V^{v}$ to $U^{v}$.
Since the two levels only differ at 
places in
$S_p \setminus \{v\}$,
$\Ind_{V^v} ^{U^v}1$
coincides with 
$
\Ind_{\prod_{w \in S_p \setminus \{v\}} V_w} 
^{\prod_{w \in S_p \setminus \{v\}} U_w}1 
\cong 
\bigotimes _{w \in S_p \setminus \{v\} } 
\Ind_{V_w} ^{U_w}1
\overset{\eqref{eq:U-lvl-def}}{\cong} 
\bigotimes _{w \in S_p \setminus \{v\} } 
\Ind_{V_w} ^{\GLring[F_w]}1 $.

Recall that 
the centre of $\GLring[F_w]$
acts by $\overline{\psi}^{-1} |_{\inertia[F_w]} $
on \eqref{eq:global-pi-def}.
If
$(\Ind_{V_w} ^{\GLring[F_w]}1)_Z $
denotes the maximal quotient of 
$\Ind_{V_w} ^{\GLring[F_w]}1$
on which 
the centre of $\GLring[F_w]$
acts by the character $\overline{\psi}^{-1} |_{\inertia[F_w]} $,
then
\begin{equation}
	\label{eq:pi-rewrite}
\pi(V^{v}) \cong
\varinjlim _{V_v} 
\Hom_{U^{v}/V^{v}}\!\!\left(
\bigotimes _{w \in S_p \setminus \{v\} } 
(\Ind_{V_w} ^{\GLring[F_w]}1)_Z, 
\pi(V^{v}V_v)
\right)
.
\end{equation}
If we set
$\JH_{w}
\defeq \JH((\Ind_{V_w} ^{\GLring[F_w]}1)_Z )$,
then a dévissage 
on the Jordan-H\"older constituents of
$ \bigotimes _{w \in S_p \setminus \{v\} } 
(\Ind_{V_w} ^{\GLring[F_w]}1)_Z$
(which are all of the form
$\bigotimes_{S_p \setminus \{v\} }  \sigma_w$,
for $\sigma_w \in \JH_{w}$)
gives
\begin{equation}
	\label{eq:length-devissage}
\lg_{\GLfield}  \left(\pi(V^{v})\right)
\le 
\sum _{w \in S_p \setminus \{v\} } 
\sum _{\sigma_w \in \JH_w} 
\lg_{\GLfield}  \left(
\pi\left(V^{v}, \textstyle\bigotimes_{w \in S_p \setminus \{v\} }  \sigma_w\right)
\right).
\end{equation}
Finally, notice that
each term
$ \pi\left(V^{v}, \bigotimes_{w \in S_p \setminus \{v\} }  \sigma_w\right) $
is either zero
(by \eqref{eq:small-pi-and-modular-weights})
or satisfies hypotheses
\ref{hypothesis:i}
to 
\ref{hypothesis:iv}
 of \Cref{sec:hypotheses}
(by \Cref{thm:compilation}),
 and hence has finite length 
by 
\Cref{thm:finite-length-yay}(ii),
using that $\glrepr_v$ is
$\max \{12,2f+1\}$-generic.
Since the sum in the right-hand side
of \eqref{eq:length-devissage}
is finite, we conclude
that $\pi(V^{v})$ has finite length.
\end{proof}

\printbibliography
\end{document}

\typeout{get arXiv to do 4 passes: Label(s) may have changed. Rerun}